\title[Frobenius monoidal functors from ambiadjunctions]{Frobenius monoidal functors from ambiadjunctions and their lifts to Drinfeld centers}
\date{April 17, 2025}
\author{Johannes Flake}
\address{Mathematical Institute, University of Bonn, Endenicher Allee 60, 53115 Bonn, Germany}
\email{flake@math.uni-bonn.de}
\urladdr{https://johannesflake.net}
\author{Robert Laugwitz}
\address{School of Mathematical Sciences,
University of Nottingham, University Park, Nottingham, NG7 2RD, UK}
\email{robert.laugwitz@nottingham.ac.uk}
\urladdr{https://www.nottingham.ac.uk/mathematics/people/robert.laugwitz}
\author{Sebastian Posur}
\address{University of Münster,
Fachbereich Mathematik und Informatik,
Einsteinstraße 62,
48149 Münster,
Germany}
\email{sebastian.posur@uni-muenster.de}
\newcommand{\superimpose}[2]{%
  {\ooalign{$#1\@firstoftwo#2$\cr\hfil$#1\@secondoftwo#2$\hfil\cr}}}
\newcommand{\leftexpsub}[3]{{\vphantom{#3}}^{#1}_{#2}{#3}}
\newcommand{\lYD}[1]{\leftexpsub{#1}{#1}{\mathbf{YD}}}
\newcommand\longmapsfrom{\mathrel{\reflectbox{\ensuremath{\longmapsto}}}}
\newcommand{\oop}{\mathrm{op}}
\newcommand{\BiMod}[1]{\mathbf{BiMod}\text{-}#1}
\newcommand{\Cat}{\mathbf{Cat}}
\newcommand{\Catlax}{\mathbf{Cat}^{\otimes}_{\mathrm{lax}}}
\newcommand{\Catoplax}{\mathbf{Cat}^{\otimes}_{\mathrm{oplax}}}
\newcommand{\lMod}[1]{#1\text{-}\mathbf{Mod}}
\newcommand{\rMod}[1]{\mathbf{Mod}\text{-}#1}
\newcommand{\projr}[2]{\mathrm{rproj}_{{#1},{#2}}}
\newcommand{\projl}[2]{\mathrm{lproj}_{{#1},{#2}}}
\newcommand{\projrnoarg}{\mathrm{rproj}}
\newcommand{\projlnoarg}{\mathrm{lproj}}
\newcommand{\iprojr}[2]{\mathrm{rproj}^{L,L\dashv G}_{{#1},{#2}}}
\newcommand{\iprojl}[2]{\mathrm{lproj}^{L,L\dashv G}_{{#1},{#2}}}
\newcommand{\llineator}{\mathrm{llinor}}
\newcommand{\rlineator}{\mathrm{rlinor}}
\newcommand{\rev}{\otimes\text{-}\oop}
\newcommand{\Iso}{\operatorname{Iso}}
\newcommand{\Aut}{\operatorname{Aut}}
\newcommand{\coev}{\operatorname{coev}}
\newcommand{\ev}{\operatorname{ev}}
\newcommand{\End}{\operatorname{End}}
\newcommand{\Frob}{\mathbf{Frob}}
\newcommand{\Hom}{\operatorname{Hom}}
\newcommand{\Ind}{\operatorname{Ind}}
\newcommand{\CoInd}{\operatorname{CoInd}}
\newcommand\id{{\operatorname{id}}}
\newcommand{\isomorph}{\stackrel{\sim}{\longrightarrow}}
\newcommand{\lax}{\operatorname{lax}}
\newcommand{\oplax}{\operatorname{oplax}}
\newcommand{\unit}{\operatorname{unit}}
\newcommand{\counit}{\operatorname{counit}}
\newcommand{\Emb}{\operatorname{Emb}}
\newcommand{\one}{\mathds{1}}
\newcommand{\reg}{\mathrm{reg}}
\newcommand{\Res}{\operatorname{Res}}
\newcommand{\tr}{\operatorname{tr}}
\newcommand{\inner}[1]{\left\langle #1\right\rangle}
\newcommand{\Vect}{\mathbf{Vect}}
\newcommand{\sfC}{\mathsf{C}}
\newcommand{\sfG}{\mathsf{G}}
\newcommand{\sfK}{\mathsf{K}}
\providecommand{\fr}[1]{\mathfrak{#1}}
\newcommand{\mC}{\mathbb{C}}
\newcommand{\cC}{\mathcal{C}}
\newcommand{\cD}{\mathcal{D}}
\newcommand{\cB}{\mathcal{B}}
\newcommand{\cM}{\mathcal{M}}
\newcommand{\cN}{\mathcal{N}}
\newcommand{\cZ}{\mathcal{Z}}
\newcommand{\mainfun}{G}
\newcommand{\rightadj}{R}
\newcommand{\leftadj}{L}
\newcommand{\objCa}{A}
\newcommand{\objCb}{B}
\newcommand{\objDx}{X}
\newtheoremstyle{mystyle}
  {0.5cm}                   
  {0.5cm}                   
  {\normalfont}           
  {}                      %                      %
  {\itfont\bfseries} 
  {:}                     
  {0.3cm}              
  {\thmname{#1}}
\newtheoremstyle{defstyle}
  {0.5cm}                   
  {0.5cm}                   
  {\normalfont}           
  {}     
  {\normalfont\bfseries}  
  {:}                     
  {0.3cm}              
  {\thmname{#1}\thmnumber{ #2}\thmnote{ (#3)}}
\newtheorem*{rep@theorem}{\rep@title}
\newcommand{\newreptheorem}[2]{%
\newenvironment{rep#1}[1]{%
 \def\rep@title{#2 \ref{##1}}%
 \begin{rep@theorem}}%
 {\end{rep@theorem}}}
\newtheorem{theorem}{Theorem}[section]
\newtheorem{proposition}[theorem]{Proposition}
\newtheorem{corollary}[theorem]{Corollary}
\newtheorem{lemma}[theorem]{Lemma}
\newtheorem*{theorem*}{Theorem}
\newtheorem{introtheorem}{Theorem}
\newtheorem{introcorollary}[introtheorem]{Corollary}
\theoremstyle{definition}
\newtheorem{definition}[theorem]{Definition}
\newtheorem{notation}[theorem]{Notation}
\newtheorem{context}[theorem]{Context}
\theoremstyle{remark}
\newtheorem{example}[theorem]{Example}
\newtheorem{remark}[theorem]{Remark}
\numberwithin{equation}{section}
\tikzset{mylabel/.style={fill=white,font=\small},font=\small}
\subjclass[2020]{Primary 18M05, 18M15; Secondary 16T05, 16L60}
\keywords{Monoidal category, Frobenius monoidal functor, Drinfeld center, ambiadjunction, projection formula, Hopf algebra, Yetter--Drinfeld module, Frobenius algebra}
\begin{document}

\newgeometry{top=3cm}

\begin{abstract}
We identify general conditions, formulated using the projection formula morphisms, for a functor that is simultaneously left and right adjoint to a strong monoidal functor to be a Frobenius monoidal functor.
    Moreover, we identify stronger conditions for the adjoint functor to extend to a braided Frobenius monoidal functor on Drinfeld centers building on our previous work in [\href{https://arxiv.org/abs/2402.10094}{arXiv:2402.10094}].
    As an application, we  construct concrete examples of (braided) Frobenius monoidal functors obtained from morphisms of Hopf algebras via  induction.  
\end{abstract}

\maketitle

\vspace{-10pt}

\section{Introduction}

\subsection{Motivation and main results}

Consider an \emph{ambiadjunction} $F\dashv G\dashv F$, i.e.\ a pair of functors $G\colon \cC\to \cD$ and $F\colon\cD\to \cC$ together with adjunctions $F\dashv G$ and $G\dashv F$. In this case $F$ (and $G$) is often called a \emph{Frobenius functor}. Assuming that $G$ is a strong monoidal functor, $F$ obtains both a \emph{lax} and an \emph{oplax} monoidal structure \cite{KelDoc} 
$$\lax^F_{X,Y}\colon F(X)\otimes F(Y) \to F(X\otimes Y), \qquad \oplax^F_{X,Y}\colon F(X\otimes Y)\to F(X)\otimes F(Y),$$
which are typically not isomorphisms.
It is natural to ask whether the lax and oplax structures make the Frobenius functor $F$ a \emph{Frobenius monoidal functor}, i.e.\ whether $\lax^F$ and $\oplax^F$ satisfy compatibility conditions similar to that of a Frobenius algebra. 
In general, this is false, and a general connection between Frobenius functors and Frobenius monoidal functors does not appear to be known, cf.\ \cite{nlab:frobenius_monoidal_functor} where the notions are called ``unrelated''. 

\smallskip

In this paper, we identify general conditions for the Frobenius functor $F$ to be a Frobenius monoidal functor based on natural transformations called \emph{projection formula morphisms}. These are left and right projection formula morphisms associated with both adjunctions $G\dashv F$ and $F\dashv G$ (where $A \in \cC$, $X \in \cD$):
\begin{gather}
\objCa \otimes F\objDx \xrightarrow{\projl{\objCa}{\objDx}^{F, G \dashv F}} F( \mainfun\objCa \otimes \objDx ),
\qquad\qquad
F\objDx \otimes \objCa \xrightarrow{\projr{\objDx}{\objCa}^{F, G \dashv F}} F(  \objDx \otimes \mainfun\objCa ),\label{eq:proj-formula-intro1}
\\
F( \mainfun\objCa \otimes \objDx ) \xrightarrow{\projl{\objCa}{\objDx}^{F, F \dashv G}}  \objCa \otimes F\objDx,\qquad\qquad
F(  \objDx \otimes \mainfun\objCa )  \xrightarrow{\projr{\objDx}{\objCa}^{F, F \dashv G}} F\objDx \otimes \objCa.\label{eq:proj-formula-intro2}
\end{gather}
These projection formula morphisms are a known tool in representation theory, algebraic geometry, K-theory, categorical logic and other areas, see e.g.\ \cites{Lawv,FHM} and the references listed in \cite{FLP2}*{Introduction}. We prove the following result.

\begin{introtheorem}[{See \Cref{theorem:frobenius_functors_frobenius_monoidal}}]\label{thm:A}
Assume that $\projrnoarg^{F, G \dashv F}$ and $\projrnoarg^{F, F \dashv G}$ are mutual inverses. Then, $F\colon \cD\to \cC$ with $\lax^F$ and $\oplax^F$ is a Frobenius monoidal functor.
\end{introtheorem}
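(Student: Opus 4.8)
The plan is to verify directly the two Frobenius square conditions relating $\lax^F$ (arising from $G\dashv F$) and $\oplax^F$ (arising from $F\dashv G$); being simultaneously lax and oplax monoidal is automatic from doctrinal adjunction, and a Frobenius monoidal functor requires no further condition on the unit morphisms $\one\to F\one$, $F\one\to\one$. The first step I would carry out is to re‑express both structures through the \emph{right} projection formula morphisms: writing $\epsilon$ for the counit of $G\dashv F$ and $\eta'$ for the unit of $F\dashv G$, a short unwinding of the mate formulas and the triangle identities gives
\[
\lax^F_{X,Y}=F(\id_X\otimes\epsilon_Y)\circ\projr{X}{FY}^{F, G\dashv F},
\qquad
\oplax^F_{X,Y}=\projr{X}{FY}^{F, F\dashv G}\circ F(\id_X\otimes\eta'_Y),
\]
together with the mirrored formulas using the left projection morphisms and $\epsilon_X$, $\eta'_X$.

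Next I would collect the properties of projection formula morphisms that do the work, all available from the general theory (cf.\ \cite{FLP2}): naturality of $\projr{X}{A}$ separately in $X\in\cD$ and $A\in\cC$; the associativity law writing $\projr{X}{A\otimes B}^{F, G\dashv F}$ as $F(\id_X\otimes G^2_{A,B})\circ\projr{X\otimes GA}{B}^{F, G\dashv F}\circ(\projr{X}{A}^{F, G\dashv F}\otimes\id_B)$, with $G^2$ the monoidal structure of $G$ (and the analogue for $F\dashv G$); and the ``mate identities'' $\epsilon_{X\otimes GA}\circ G(\projr{X}{A}^{F, G\dashv F})=(\epsilon_X\otimes\id_{GA})\circ G_2^{FX,A}$ and its counterpart for $\projr{X}{A}^{F, F\dashv G}$, which are essentially the definitions of these morphisms as mates ($G_2$ being the inverse of $G^2$).

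The two Frobenius conditions are then treated in parallel. For each, I would postcompose both sides with one further right projection formula morphism of the form $\projr{-}{-}^{F, G\dashv F}$ — invertible by hypothesis — chosen so that, using the displayed formula for $\oplax^F$ and the relation $\projrnoarg^{F, G\dashv F}\circ\projrnoarg^{F, F\dashv G}=\id$, the outermost $\oplax^F$ on the ``telescoping'' side drops out. On the other side, pushing this same outer projection morphism inward by naturality and the associativity law produces a subexpression $\projrnoarg^{F, G\dashv F}\circ\oplax^F$, which $\projrnoarg^{F, G\dashv F}\circ\projrnoarg^{F, F\dashv G}=\id$ again collapses. Both sides are thereby reduced to the shape $F(\varphi)\circ\projr{-}{-}^{F, G\dashv F}$ for an explicit morphism $\varphi$ of $\cD$, and it remains only to check that the two $\varphi$'s agree. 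For one of the two Frobenius equations this comes down to
\[
(\epsilon_Y\otimes\id_{GFZ})\circ G_2\circ G(\oplax^F_{Y,Z})=(\id_Y\otimes\eta'_Z)\circ\epsilon_{Y\otimes Z},
\]
and here I would substitute $\oplax^F_{Y,Z}=(\projr{Y}{FZ}^{F, G\dashv F})^{-1}\circ F(\id_Y\otimes\eta'_Z)$ — this is the point at which $\projrnoarg^{F, F\dashv G}=(\projrnoarg^{F, G\dashv F})^{-1}$ is genuinely used — apply $G$, and invoke the mate identity to collapse the left‑hand side to $\epsilon_{Y\otimes GFZ}\circ GF(\id_Y\otimes\eta'_Z)$, which equals the right‑hand side by naturality of $\epsilon$. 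The other Frobenius equation is entirely symmetric, using instead the mate identity for $\lax^F$ and once more $\projrnoarg^{F, G\dashv F}\circ\projrnoarg^{F, F\dashv G}=\id$.

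I expect the comparison of the $\varphi$'s to be the one genuinely non‑formal point: the lax structure of $F$ is built from the counit of $G\dashv F$ and the oplax structure from the unit of $F\dashv G$, and these two adjunctions share only $G$, so the required cancellation is not a triangle identity — it is precisely the invertibility hypothesis, fed through the mate identities, that supplies it. The remaining difficulty is purely the bookkeeping of tracking, through the naturality and associativity rewrites, which of the four projection morphisms and which unit or counit occurs at each step.
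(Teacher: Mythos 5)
Your proposal is correct in outline and its key ingredients check out: the formulas $\lax^F_{X,Y}=F(\id_X\otimes\counit^{G\dashv F}_Y)\circ\projr{X}{FY}^{F,G\dashv F}$ and $\oplax^F_{X,Y}=\projr{X}{FY}^{F,F\dashv G}\circ F(\id_X\otimes\unit^{F\dashv G}_Y)$ follow from the triangle identities, the ``associativity'' and ``mate'' identities you invoke are exactly the coherences of the projection formula morphisms established in \cite{FLP2} (Lemmas 3.1, 3.2, 3.4), and your displayed identity for $G(\oplax^F_{Y,Z})$ does follow from $\projrnoarg^{F,F\dashv G}=(\projrnoarg^{F,G\dashv F})^{-1}$ together with the mate identity and naturality of the counit, so the reduction of both squares \eqref{frobmon1}--\eqref{frobmon2} goes through (indeed, one of the two squares collapses almost immediately after postcomposing with the invertible $\projrnoarg^{F,G\dashv F}$ and applying the rectangle coherence; the other needs the identity you display or its $F\dashv G$-dual). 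This is, however, a genuinely different route from the paper's. The paper does not chase the Frobenius squares directly: it first proves the purely bicategorical statement (\Cref{theorem:relation_frobenius_monoidal_and_ambi}) that any internal ambiadjunction $F\dashv G\dashv F$ makes $\Phi=\Hom(G,F)$ Frobenius monoidal between endomorphism categories, then uses the mutual-inverse hypothesis once, via \Cref{theorem:strong_monoidal_to_ambiadjunction_right}, to upgrade $F\dashv G\dashv F$ to an ambiadjunction internal to $\rMod{\cC}$ with lineator $\projrnoarg^{F,F\dashv G}$, and finally identifies $\Emb\circ F$ with $\Phi\circ\Emb$ up to an isomorphism $\beta=\projrnoarg^{F,G\dashv F}$ that is simultaneously monoidal and opmonoidal (this is where the hypothesis enters a second time), transporting Frobenius monoidality along $\beta$ and along the equivalence $\cC\simeq\End_{\rMod{\cC}}(\cC)$. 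Your direct computation is more elementary and self-contained, avoiding module categories and endofunctor categories entirely; the paper's structural argument costs more setup but localizes all diagram chasing in previously established lemmas and, crucially, is reused verbatim with $\BiMod{\cC}$ in place of $\rMod{\cC}$ to obtain the Drinfeld-center result (\Cref{thm:ZF}), which your approach would not give without substantial extra work. The only caveat is that your write-up is a strategy sketch: the bookkeeping of which coherence is applied where is asserted rather than carried out, though the identities named are the right ones and are all valid.
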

The conclusion of \Cref{thm:A} also holds if instead the \emph{left} projection formula morphisms $\projlnoarg^{F, G \dashv F}$ and $\projlnoarg^{F, F \dashv G}$ are  mutual inverses. 

\smallskip

Under the stronger condition that \emph{both} the left and right projection formula morphisms are mutual inverses, we obtain braided Frobenius monoidal functors between \emph{Drinfeld centers}.

\begin{introtheorem}[{See \Cref{thm:ZF}}]\label{thm:B}
Assume that $\projrnoarg^{F, G \dashv F}$ and $\projrnoarg^{F, F \dashv G}$, as well as, $\projlnoarg^{F, G \dashv F}$ and $\projlnoarg^{F, F \dashv G}$ are mutual inverses.
Then $F$ extends to a braided Frobenius monoidal functor
$\cZ( F )\colon \cZ( \cD ) \longrightarrow \cZ( \cC )$.
\end{introtheorem}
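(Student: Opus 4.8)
The plan is to build on Theorem~A, which already gives us that $F$ together with $\lax^F$ and $\oplax^F$ is a Frobenius monoidal functor (using either pair of projection formula morphisms; here we have both). The remaining work is to show that this Frobenius monoidal functor lifts to Drinfeld centers and that the lift is braided. The strategy is to invoke the machinery from the authors' previous paper \cite{FLP2}: there, a lift of a lax or oplax monoidal functor to Drinfeld centers is shown to exist precisely when certain compatibility data -- governed by the projection formula morphisms and the half-braidings -- is available. So first I would recall the precise statement from \cite{FLP2} that produces the central functor $\cZ(F)$ on objects: an object $(X,\gamma)$ of $\cZ(\cD)$ (with $\gamma$ its half-braiding) is sent to $FX$ equipped with a half-braiding built from $F\gamma$ conjugated by the projection formula isomorphisms. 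Concretely, for $A\in\cC$ one forms the composite
\begin{equation*}
A\otimes FX \xrightarrow{\projl{A}{X}^{F,G\dashv F}} F(GA\otimes X) \xrightarrow{F\gamma_{GA}} F(X\otimes GA) \xrightarrow{\projr{X}{A}^{F,F\dashv G}} FX\otimes A,
\end{equation*}
and one checks this is a half-braiding on $FX$. The hypothesis that both $\projrnoarg^{F,G\dashv F},\projrnoarg^{F,F\dashv G}$ and $\projlnoarg^{F,G\dashv F},\projlnoarg^{F,F\dashv G}$ are mutually inverse is exactly what is needed to make this half-braiding natural, invertible, and compatible with the tensor structure on $\cZ(\cD)$.

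Next I would verify the three things required of $\cZ(F)$: that it is well-defined as a functor $\cZ(\cD)\to\cZ(\cC)$ (the assigned half-braiding satisfies the hexagon/compatibility axiom of a half-braiding), that $\lax^F$ and $\oplax^F$ lift to morphisms in $\cZ(\cC)$ (i.e.\ they are compatible with the half-braidings of $F(X\otimes Y)$ and $FX\otimes FY$), and that the resulting functor is \emph{braided} (the assigned half-braidings intertwine the braidings of $\cZ(\cD)$ and $\cZ(\cC)$ in the appropriate sense). For the first point, I would either cite the corresponding result of \cite{FLP2} directly or reprove it by a string-diagram computation using naturality of the projection formula morphisms together with the interchange of $F\gamma$ with the lax/oplax structure maps, which in turn follows from the compatibility of $\lax^F,\oplax^F$ with $G$ being strong monoidal. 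For the braided statement, the key observation is that the half-braiding on $\cZ(F)(X,\gamma)$ was built \emph{from} $\gamma$, so intertwining is close to automatic once the projection formula morphisms are recognized as isomorphisms that are themselves ``built from'' unit/counit data compatible with the monoidal structure.

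The main obstacle I expect is the coherence bookkeeping: checking that $\lax^F$ and $\oplax^F$, when decorated with the new half-braidings, actually land in $\cZ(\cC)$ -- that is, that each of the two structure maps is a morphism of half-braided objects. This amounts to a compatibility between (a) the half-braiding on $F(X\otimes Y)$ coming from the tensor half-braiding $\gamma_X\otimes\gamma_Y$ on $X\otimes Y$ in $\cZ(\cD)$ and (b) the half-braiding on $FX\otimes FY$ coming from the tensor product in $\cZ(\cC)$ of the half-braidings on $FX$ and $FY$. Unwinding both sides produces a large diagram in $\cC$ whose commutativity reduces, after repeated use of naturality and the inverse relations among the four projection formula morphisms, to the Frobenius compatibility already established in Theorem~A (equivalently, to the ``projection formula is compatible with itself'' relations proved in \cite{FLP2}). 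I would isolate this as the central lemma, prove it with one careful string diagram, and then assemble the rest -- well-definedness on objects, functoriality, and the braided axiom -- as comparatively formal consequences. Throughout, I would lean on \cite{FLP2} to avoid re-deriving the general lifting criterion, citing it for the statement that a Frobenius monoidal functor whose lax and oplax parts each satisfy the centrality condition lifts to a Frobenius monoidal functor on centers, and that it is braided when the centrality data is induced by the source half-braidings as above.
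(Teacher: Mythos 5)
Your plan is essentially correct, but it proves the theorem by a genuinely different route than the paper. You work with the concrete lift: take the half-braiding on $FX$ obtained by conjugating $F(c^X)$ with projection formula morphisms, observe that the hypothesis makes the lax-side and oplax-side constructions agree, and then verify that $\lax^F$ and $\oplax^F$ are morphisms in $\cZ(\cC)$, with the Frobenius axioms and braidedness following formally (the former because the structure maps of $\cZ(F)$ have the same underlying morphisms as those of $F$, the forgetful functor $\cZ(\cC)\to\cC$ is faithful, and \Cref{thm:A} applies; the latter by citing \cite{FLP2}*{Theorems 4.10 and 4.15}). The paper never performs a centrality check at the level of the center: it upgrades the ambiadjunction to one internal to $\BiMod{\cC}$ (\Cref{theorem:strong_monoidal_to_ambiadjunction}), applies the abstract ambiadjunction result \Cref{theorem:relation_frobenius_monoidal_and_ambi} to make $\Phi=\Hom(G,F)$ Frobenius monoidal between $\End_{\BiMod{\cC}}$-categories (\Cref{proposition:immediate_consequences}), and then transports along the strong monoidal embedding $\cZ(\cD^G)\to\cZ'(\cD^G)$, the monoidal-and-opmonoidal comparison isomorphism $\beta$ of \Cref{lemma:compare_ZR_Phi_lax} and \Cref{lemma:compare_ZL_Phi_oplax}, and the equivalence $\cZ(\cC)\simeq\cZ'(\cC)$ (\Cref{proposition:ZF_on_modules}), before composing with $\cZ(\cD)\to\cZ(\cD^G)$; braidedness is cited from \cite{FLP2} exactly as you propose. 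Your route is shorter if one takes \cite{FLP2} as given; the paper's route reuses the machinery of \Cref{thm:A} wholesale, avoids any new diagram chase, and yields the intermediate statement for $\cZ(\cD^G)$ along the way.

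Two points in your narrative deserve correction, though neither is fatal. First, the centrality of $\lax^F$ and $\oplax^F$ does not reduce to the Frobenius compatibility of \Cref{thm:A}; it is precisely the content of \cite{FLP2}*{Theorems 4.10 and 4.15} (equivalently, of the projection-formula coherence lemmas), which you in any case plan to cite. Relatedly, invertibility of the projection formula morphisms alone already makes each candidate half-braiding natural, invertible and tensor-compatible; the mutual-inverse hypothesis is needed only so that the half-braiding produced from the adjunction $G\dashv F$ coincides with the one produced from $F\dashv G$, so that a single functor carries both the lax and the oplax central structure. Second, your composite defining the half-braiding runs in the direction opposite to the paper's $c^{FX}_A$ and uses $\projrnoarg^{F,F\dashv G}$ where the paper uses $(\projlnoarg^{F,G\dashv F})^{-1}$; under the stated hypothesis these conventions agree up to inversion, so this is harmless, but it should be fixed consistently in a written-up proof.
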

The lax and oplax monoidal structures for $\cZ(F)$ are inherited from those of the underlying functor $F$ which $\cZ(F)$ extends in the sense that the diagram
$$
\xymatrix@R=15pt{
\cZ(\cD)\ar[d]\ar[rr]^{\cZ(F)}&&\cZ(\cC)\ar[d]\\
\cD\ar[rr]^F&&\cC
}
$$
is a commutative diagram of Frobenius monoidal functors, where the vertical arrows denote the forgetful functors of the Drinfeld center. 

\Cref{thm:B} extends our previous work in \cite{FLP2}*{Theorem B} where it was shown that $\cZ(F)$ extends to Drinfeld centers as a braided lax monoidal functor given a monoidal adjunction $G\dashv F$, and to an oplax monoidal functor given an oplax monoidal adjunction $F\dashv G$, provided that the corresponding projection formula morphisms are invertible. The results in the present paper show that these lax and oplax structures satisfy the compatibility conditions of a Frobenius monoidal functor if, in addition, the projection formula morphisms of the two adjunctions are mutual inverses.  

\subsection{Background and related work}

The study of Frobenius algebras draws motivation from several applications to mathematical physics in topological and conformal field theory. Frobenius algebras internal to monoidal categories describe boundary conditions of conformal field theories \cites{FRS,FFRS} and their extension structure \cite{HKL}. Special classes of commutative Frobenius algebras in braided tensor categories are used to construct modular tensor categories \cites{KO,DS,LW3,HLR}. 

\smallskip

The \emph{Drinfeld center} (also called \emph{monoidal center}) \cites{Maj2,JS} construction assigns a braided monoidal category $\cZ(\cC)$ to a monoidal category $\cC$ and constitutes a key construction in quantum algebra and quantum topology, see e.g.\ \cites{BK,Kas,EGNO,TV}. The Drinfeld center may be seen as a categorical analogue of the usual center of a monoid, where commutativity is no longer a condition but additional data, consisting of a coherent system of \emph{half-braidings} $X\otimes Y\to Y\otimes X$ that witness $X$ commuting with other objects $Y$ in a manner compatible with tensor products (cf.\ \Cref{sec:centers-endo}).

\smallskip

Frobenius functors are a more general class of functors between monoidal categories than strong monoidal functors. In general, a strong monoidal functor $G\colon \cC\to \cD$ does not lift to a strong monoidal functor between the  Drinfeld centers $\cZ(\cC)$ and $\cZ(\cD)$ and braided strong monoidal functors between (non-degenerate or modular) tensor categories rarely exist. However, this papers' methods show how to obtain braided \emph{Frobenius} monoidal functors between the centers. The more general class of braided Frobenius monoidal functors still preserves commutative Frobenius algebras and can therefore be used to construct new examples of such algebra objects from known ones.

\smallskip

The notion of a Frobenius monoidal functor appeared in \cites{Sz1,Sz2,MS,AM,CMZ}, with focus on the separable case and in \cite{DP} in general.  More recently, Frobenius monoidal functors appeared in \cites{Bal1,Bal,FHL,HLR,Yad}. While \cites{FHL,HLR} construct braided Frobenius monoidal functors between Drinfeld centers of (twisted) $G$-graded vector spaces, the papers \cites{Bal,Yad} are concerned with the question as to when the right adjoint of a strong monoidal functor of abelian tensor categories is a Frobenius monoidal functor. Our present paper contributes new general results to both lines of inquiry. 

\smallskip

The invertibility of the projection formula morphisms plays a key role in the theory of Hopf monads, where they are called (co)Hopf operators \cite{BLV}. Our \Cref{thm:A} is related to results of \cite{Bal} and \cite{Yad} where it was shown, in the case when the projection formula morphisms are invertible and certain (co)monadicity conditions hold, that $F$ is Frobenius monoidal if and only if $F(\one_\cD)$ is a Frobenius algebra. Very recently, \cite{SY}*{Proposition~6.8} showed that certain classes of tensor functors of unimodular finite tensor categories have right adjoints that are Frobenius monoidal. Our result \Cref{thm:A} does not impose such monadicity, finiteness, or unimodularity assumptions.

\subsection{Description of results}

We start with the most general result: Consider a diagram of functors
\[
\begin{tikzpicture}[mylabel/.style={fill=white}]
      \coordinate (r) at (3,0);
      \node (A) {$\cC$};
      \node (B) at ($(A) + (r)$) {$\cD$};
      \draw[->, out = 20, in = 180-20] (A) to node[mylabel]{$G$} (B);
      \draw[<-, out = -20, in = 180+20] (A) to node[mylabel]{$F$} (B);
\end{tikzpicture} 
\]
and the diagram of functors between endofunctor categories
\[
\begin{tikzpicture}[mylabel/.style={fill=white}]
      \coordinate (r) at (5,0);
      \node (A) {$\End(\cC)$};
      \node (B) at ($(A) + (r)$) {$\End(\cD)$};
      \draw[->, out = 15, in = 180-15] (A) to node[mylabel]{$\Gamma := \Hom( F, G )$} (B);
      \draw[<-, out = -15, in = 180+15] (A) to node[mylabel]{$\Phi := \Hom( G, F )$} (B);
\end{tikzpicture} 
\]
obtained by composition with $G$ and $F$. That is,
\[
\Gamma( A ) = \Hom(F,G)(A) = GAF, \qquad 
\Phi( X ) = \Hom(G,F)(X) = FXG,
\]
for $A \in \End( \cC )$, $X \in \End( \cD )$. 
Endofunctor categories are \emph{monoidal categories}, with the tensor product given by composition, even when $\cC$ and $\cD$ are not themselves monoidal categories. We observe that 
\begin{itemize}
    \item an adjunction $G \dashv F$ induces an oplax-lax adjunction $\Gamma \dashv \Phi$,
    \item an adjunction $F \dashv G$ induces an oplax-lax adjunction $\Phi \dashv \Gamma$.
\end{itemize}
Our first result shows that \emph{Frobenius functors} induce \emph{Frobenius monoidal functors} on endofunctor categories.

\begin{introtheorem}[{See \Cref{theorem:relation_frobenius_monoidal_and_ambi}}]\label{thm:C}
If $G$ is a Frobenius functor, i.e.\ we have an ambiadjunction $F \dashv G \dashv F$, then $\Phi$ and $\Gamma$ are Frobenius monoidal functors.
\end{introtheorem}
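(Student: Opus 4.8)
The plan is to compute the lax and oplax monoidal structures carried by $\Phi$ and $\Gamma$ explicitly, and then deduce the two Frobenius compatibility conditions from the interchange law for $2$-cells in $\Cat$. Recall from the discussion preceding the statement that $G\dashv F$ equips $\Gamma$ with an oplax and $\Phi$ with a lax monoidal structure (forming the oplax--lax adjunction $\Gamma\dashv\Phi$), while $F\dashv G$ equips $\Phi$ with an oplax and $\Gamma$ with a lax structure (forming $\Phi\dashv\Gamma$). Thus $\Phi$ and $\Gamma$ each already carry a lax and an oplax monoidal structure, and only the Frobenius compatibility between them remains to be established.

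The first step is to write these structures out. Let $\varepsilon\colon FG\Rightarrow\id_\cC$ and $\eta\colon\id_\cD\Rightarrow GF$ be the counit and unit of $F\dashv G$, and $\varepsilon'\colon GF\Rightarrow\id_\cD$ and $\eta'\colon\id_\cC\Rightarrow FG$ those of $G\dashv F$. Using the convention $P\otimes Q=P\circ Q$ on the endofunctor categories, I expect that for $X,Y\in\End(\cD)$ the lax and oplax structures on $\Phi$ are the whiskerings $\lax^\Phi_{X,Y}=\id_{FX}\ast\varepsilon'\ast\id_{YG}\colon FXGFYG\Rightarrow FXYG$, contracting via the counit of $G\dashv F$ the copy of $GF$ that separates the two $\Phi$-blocks, and $\oplax^\Phi_{X,Y}=\id_{FX}\ast\eta\ast\id_{YG}\colon FXYG\Rightarrow FXGFYG$, inserting a copy of $GF$ via the unit of $F\dashv G$, with unit data $\lax^\Phi_0=\eta'\colon\id_\cC\Rightarrow FG=\Phi(\id_\cD)$ and $\oplax^\Phi_0=\varepsilon\colon\Phi(\id_\cD)=FG\Rightarrow\id_\cC$; the structures on $\Gamma$ are obtained by interchanging the roles $F\leftrightarrow G$, $\varepsilon\leftrightarrow\varepsilon'$, $\eta\leftrightarrow\eta'$. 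Confirming that these whiskerings really are the structures produced by the two oplax--lax adjunctions --- which amounts to unwinding the relevant mates and simplifying by the triangle identities --- is the one point where anything beyond formal $2$-categorical calculus enters.

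Granting these formulas, the Frobenius conditions are immediate. Expanding $\oplax^\Phi_{X\otimes Y,\,Z}\circ\lax^\Phi_{X,\,Y\otimes Z}$ and $(\lax^\Phi_{X,Y}\otimes\id_{\Phi Z})\circ(\id_{\Phi X}\otimes\oplax^\Phi_{Y,Z})$ with the formulas above and discarding associators (trivial since $\otimes=\circ$), both sides become the single horizontal composite $\id_{FX}\ast\varepsilon'\ast\id_Y\ast\eta\ast\id_{ZG}$, in which $\varepsilon'$ contracts the $GF$ between the $X$- and $Y$-slots while $\eta$ fills the site between the $Y$- and $Z$-slots. The two sides are precisely the two orders in which these two whiskerings --- localized along $1$-cells with disjoint support in the pasting --- may be carried out, and they agree by the interchange law; the mirror Frobenius condition reduces the same way to $\id_{FX}\ast\eta\ast\id_Y\ast\varepsilon'\ast\id_{ZG}$. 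Hence $\Phi$ is a Frobenius monoidal functor, and $\Gamma$ follows by the symmetry above (equivalently, by applying the $\Phi$-statement to the ambiadjunction viewed in $\cD$).

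So the only genuine obstacle is the bookkeeping of the first step: producing the explicit whiskered-(co)unit descriptions and matching them to the structures coming out of the two oplax--lax adjunctions. Once that is in place the Frobenius axioms cost nothing beyond the interchange law --- the concrete manifestation of the fact that an ambiadjunction $F\dashv G\dashv F$ makes $GF$ and $FG$ into Frobenius monoids in $\End(\cD)$ and $\End(\cC)$, with $\Phi$ and $\Gamma$ built by whiskering against them.
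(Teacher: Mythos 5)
Your proposal is correct and follows essentially the same route as the paper: the lax/oplax structures on $\Phi$ and $\Gamma$ are exactly the whiskered (co)units you write down (this is how they are defined in \Cref{proposition:induced_end_adjunctions}, so no further mate-unwinding is needed for this theorem), and the paper's proof of \Cref{theorem:relation_frobenius_monoidal_and_ambi} likewise verifies the two Frobenius diagrams by the interchange law, handling $\Gamma$ by the same symmetry you invoke.
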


In fact, the above theorem holds for internal (ambi)adjunctions in any bicategory instead of that of categories, functors, and natural transformations. As a special case, for a monoidal category $\cC$ viewed as a single-object bicategory $\mathbf{B}(\cC)$, the above result recovers the fact that if an object $A$ has a left and right dual $B$, then both $A\otimes B$ and $B\otimes A$ are Frobenius algebras in $\cC$, see \Cref{ex-classical-Frob}.

\smallskip

We are particularly interested in the bicategories $\rMod{\cC}$ and $\BiMod{\cC}$ or right $\cC$-modules, respectively, $\cC$-bimodules of a monoidal category $\cC$. Here, the endofunctor categories can be identified as the following monoidal categories:
\begin{gather*}
    \End_{\rMod{\cC}}(\cC)\simeq \cC, \qquad \End_{\BiMod{\cC}}(\cC)\simeq \cZ(\cC) .
\end{gather*}
The monoidal category $\cZ(\cC)$ is the \emph{Drinfeld center} of $\cC$ which is, in addition, braided monoidal. A key idea of the present paper is to apply \Cref{thm:C} to induce a Frobenius monoidal structure for the functor $F$ and for an induced functor on Drinfeld centers.  
For this, we require internal ambiadjunctions in $\rMod{\cC}$, or $\BiMod{\cC}$ starting with a strong monoidal functor $G\colon \cC\to \cD$ and an ambiadjunction 
$F\dashv G\dashv F$.
In this setup (called \Cref{context:main} in the text), we fix lax and oplax monoidal structures on $F$ by adjunction.  
Building on our work in \cite{FLP2}, we employ the projection formula morphisms \eqref{eq:proj-formula-intro1}--\eqref{eq:proj-formula-intro2} as a key tool in our constructions.

\Cref{thm:A} is proved by first showing that the right projection formula morphisms being mutual inverses upgrades the ambiadjunction $F\dashv G\dashv F$ to an ambiadjunction internal to $\rMod{\cC}$, see \Cref{theorem:strong_monoidal_to_ambiadjunction_right}. Under the equivalence $\End_{\rMod{\cC}}(\cC)\simeq \cC$, pre-composing $\Hom(G,F)$ with the strong monoidal functor $\cD\hookrightarrow \End_{\rMod{\cC}}(\cD)$ recovers $F$. However, now, by \Cref{thm:C}, $F$ is a composition of Frobenius monoidal functors and hence itself Frobenius monoidal.

\smallskip

We remark that the datum of an ambiadjunction $F\dashv G\dashv F$ is not canonical. There exist non-equivalent data of such ambiadjunctions and the category formed by them is equivalent to $\Aut(F)$, see \Cref{remark:ambiadjunctions_classified_by_aut}. Hence, whether the results \Cref{thm:A} or \Cref{thm:B} hold is dependent on the choice of an ambiadjunction. We can, however, show that if there exists an ambiadjunction $F\dashv G\dashv F$ such that the right projection formula morphisms are mutually inverse, then the space of such ambiadjunction data is governed by $\Aut_\cD(\one)$, see \Cref{proposition:EndF}, where $\one$ denotes the tensor unit.

\smallskip

To prove \Cref{thm:B}, we employ the following characterization of $F$ being a $\cC$-bimodule functor, using that a strong monoidal functor $G\colon\cC\to\cD$ as above induces a $\cC$-bimodule structure on $\cD$ which we denote by $\cD^G$, see \Cref{def:DG}.

\begin{introtheorem}[{See \Cref{theorem:strong_monoidal_to_ambiadjunction}}]\label{thm:D}
The following statements are equivalent.
\begin{enumerate}
    \item The projection formula morphisms \eqref{eq:proj-formula-intro1}--\eqref{eq:proj-formula-intro2} are mutual inverses.
    \item The functor $F\colon \cD^G \rightarrow \cC$ has the structure of a $\cC$-bimodule functor making $F \dashv G \dashv F$ an ambiadjunction of $\cC$-bimodules.
\end{enumerate}
\end{introtheorem}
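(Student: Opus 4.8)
The plan is to establish the equivalence of (1) and (2) by unwinding what it means for $F$ to be a $\cC$-bimodule functor compatible with both adjunctions, and matching the required structure morphisms against the projection formula morphisms. Recall that the strong monoidal structure on $G$ makes $\cD = \cD^G$ a $\cC$-bimodule via $A \cdot X := GA \otimes X$ and $X \cdot A := X \otimes GA$; moreover $G\colon \cC \to \cD^G$ is automatically a strong $\cC$-bimodule functor (its bimodule constraints are built from the monoidal structure of $G$). A $\cC$-bimodule structure on $F\colon \cD^G \to \cC$ consists of natural isomorphisms $\lambda_{A,X}\colon A \otimes F X \isomorph F(A \cdot X) = F(GA \otimes X)$ and $\rho_{X,A}\colon FX \otimes A \isomorph F(X \cdot A) = F(X \otimes GA)$ satisfying pentagon/triangle axioms. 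The candidates for these are exactly the left and right projection formula morphisms $\projl{A}{X}^{F, G\dashv F}$ and $\projr{X}{A}^{F, G\dashv F}$ of the adjunction $G \dashv F$; the condition that $F \dashv G \dashv F$ is an ambiadjunction \emph{of $\cC$-bimodules} forces, in addition, that the unit and counit of the \emph{other} adjunction $F \dashv G$ be $\cC$-bimodule natural transformations, and tracing that condition through mate calculus shows it is equivalent to $\projl{A}{X}^{F, F\dashv G}$ and $\projr{X}{A}^{F, F \dashv G}$ being the inverses of the former.

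First I would show (2) $\Rightarrow$ (1): given that $F$ carries bimodule constraints $\lambda, \rho$ making both adjunctions internal to $\BiMod{\cC}$, I would invoke the general fact (e.g.\ from \cite{FLP2}, or a direct mate argument) that for an adjunction between bimodule functors the canonically induced projection formula morphisms are precisely the bimodule constraints — so $\lambda = \projlnoarg^{F,G\dashv F}$ and the constraint coming from viewing $F$ as left adjoint to $G$ via the counit/unit of $F\dashv G$ is $\projlnoarg^{F,F\dashv G}$, and likewise on the right. Since $\lambda$ is by hypothesis an isomorphism and the two adjunctions are compatible (the structure maps of one are mates of the other), the two projection formula morphisms on each side are forced to be mutually inverse. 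The key bookkeeping is that in $\BiMod{\cC}$ an internal ambiadjunction means all four structure 2-cells (two units, two counits) are bimodule-natural; the bimodule-naturality of the $F\dashv G$ counit, when mated across, literally spells out the equation $\projlnoarg^{F,F\dashv G} \circ \projlnoarg^{F,G\dashv F} = \id$ (and the other composite from the unit).

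For (1) $\Rightarrow$ (2): assuming all four projection formula morphisms in \eqref{eq:proj-formula-intro1}--\eqref{eq:proj-formula-intro2} are pairwise mutually inverse, I would \emph{define} the bimodule constraints on $F$ by $\lambda := \projlnoarg^{F, G\dashv F}$ and $\rho := \projrnoarg^{F, G\dashv F}$; these are isomorphisms by hypothesis. I would then verify the bimodule functor axioms (the two "pentagon" compatibilities with the associator and the "triangle" with the unitor, plus the mixed left/right compatibility). These follow from the coherence of the projection formula morphisms — the standard fact that projection formula morphisms associated to an adjunction, when the ambient functor ($G$) is monoidal, are coherent with associators and unitors; this is essentially a diagram chase using the triangle identities of $G \dashv F$ together with the monoidal axioms of $G$, and a version of it is presumably already recorded in \cite{FLP2}. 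Then I must check that $F \dashv G \dashv F$ becomes an ambiadjunction of bimodules, i.e.\ that the units and counits of \emph{both} adjunctions are bimodule-natural. For $G\dashv F$ this is automatic from the way $\lambda, \rho$ were defined (that is what "projection formula morphism" encodes). For $F \dashv G$ I would use the hypothesis that $\projlnoarg^{F,F\dashv G}$ is the inverse of $\projlnoarg^{F, G\dashv F} = \lambda$ (and similarly on the right): this inverse relation is exactly the statement that the second adjunction's structure maps are the mates, in the bimodule 2-category, of the first's, hence bimodule-natural.

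The main obstacle I anticipate is the careful verification of the bimodule-functor coherence axioms for $\lambda = \projlnoarg^{F,G\dashv F}$ and $\rho = \projrnoarg^{F,G\dashv F}$, and — more subtly — the precise identification "inverse projection formula morphism $=$ mate in $\BiMod{\cC}$". Both are mate/triangle-identity diagram chases, conceptually routine but notationally heavy; the risk is a sign-or-side error in which adjunction's unit goes where. I would organize this by first proving a clean lemma: for functors $G \dashv F$ with $G$ strong monoidal, the left projection formula morphism $\projlnoarg^{F,G\dashv F}$ is the unique bimodule constraint making $F$ lax as a bimodule functor and $G\dashv F$ a bimodule adjunction, and dually $\projlnoarg^{F,F\dashv G}$ is the unique one making $F \dashv G$ a bimodule adjunction; then (1)$\Leftrightarrow$(2) is the observation that these two coincide up to inverse precisely when one adjunction is the bimodule-mate of the other, which — since the underlying adjunctions already form an ambiadjunction — happens iff the stated invertibility holds. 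The left/right symmetry lets me do each argument once.
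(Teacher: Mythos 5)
Your proposal is correct and takes essentially the same route as the paper: the lineators are identified with the projection formula morphisms, the results of \cite{FLP2} (Proposition~3.22 and Lemma~3.24 and their duals) supply the bimodule functor coherences and the internal adjunctions $(G,\oplax^G)\dashv (F,(\projnoarg^{F,G\dashv F})^{-1})$ and $(F,\projnoarg^{F,F\dashv G})\dashv (G,\oplax^G)$, and the converse direction forces any lineators realizing (2) to equal both $\projlnoarg^{F,F\dashv G},\projrnoarg^{F,F\dashv G}$ and the inverses of $\projlnoarg^{F,G\dashv F},\projrnoarg^{F,G\dashv F}$, whence mutual inverseness. The only notable difference is that where you propose a mate-calculus diagram chase for this forcing step, the paper obtains it more cheaply from uniqueness of adjoints internal to $\rMod{\cC}$ (the comparison $2$-isomorphism between the two module-functor structures on $F$ forgets to the identity of $F$, and $\id_F$ being a module transformation pins down the lineator), proving the one-sided statement first and then combining it with its left-sided version.
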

We can now apply \Cref{thm:C} to the ambiadjunction of $\cC$-bimodules from \Cref{thm:D}. Using strong monoidal functors 
$$\cZ(\cD)\hookrightarrow\End_{\BiMod{\cC}}(\cD^G)\quad \text{ and }\quad \End_{\BiMod{\cC}}(\cC)\simeq \cZ(\cC),$$ the functor $\Hom(G,F)$ yields the Frobenius monoidal functor of \Cref{thm:B}.

\smallskip

The functor $\cZ(F)$ in \Cref{thm:B} is a special case of both the lax and oplax monoidal functors 
$$\cZ(R)\colon \cZ(\cD)\to \cZ(\cC)\quad \text{ and }\quad \cZ(L)\colon \cZ(\cD)\to \cZ(\cC)$$
obtained from a right adjoint $R$ and a left adjoint $L$ to $G$ in \cite{FLP2}*{Theorems~4.10 and 4.15}. For an ambiadjunction $F=R=L$ the induced functors on the Drinfeld centers are equal if the projection formulas are mutual inverses. In this case, we show that no further assumptions are needed so that $F$ is Frobenius monoidal.

In the special case of $G$ being restriction along an inclusion of finite groups, the Frobenius monoidal functor $\cZ(F)$ was already known \cite{FHL}*{Appendix~B} and extended to the twisted case in \cite{HLR}.

\smallskip

The techniques developed in this paper enable the construction of a vast supply of examples of Frobenius monoidal functors obtained, e.g., from induction along morphisms of Hopf algebras $\varphi\colon K\to H$. We  explore this class of examples in \Cref{sec:Hopf}. The datum of an isomorphism $\Ind_\varphi\cong \CoInd_\varphi$ corresponds to a \emph{Frobenius morphism} $\tr\colon H\to K$ (see \Cref{thm:ind-coind}, which is obtained from \cite{MN}). We show that the right (or left) projection formula morphisms are mutual inverses if and only if $\tr$ is a morphism of right (or left, respectively) $H$-comodules (see \Cref{prop:Hopf-right} and \Cref{thm:Hopf}). 

\begin{introcorollary}[{See \Cref{prop:Hopf-right}, \Cref{cor:Ind-Frob}, and \Cref{cor:ZInd-Frob}}]\label{cor:E}
     Assume that $\varphi\colon K\to H$ is a morphism of Hopf algebras such that $H$ is finitely generated projective as a left $K$-module and that there exists a Frobenius morphism $\tr\colon H\to K$. 
\begin{enumerate}[(i)]
    \item
     If $\tr$ is a morphism of right $H$-comodules, then $\Ind_\varphi\colon \lMod{K}\to \lMod{H}$ is a Frobenius monoidal functor.
    \item
If $\tr$ is a morphism of $H$-$H$-bicomodules, then $\Ind_\varphi$ induces a braided Frobenius monoidal functor $\cZ(\Ind_\varphi)\colon \cZ(\lMod{K})\to \cZ(\lMod{H})$.
\item A Frobenius morphism $\tr$ satisfying (i) or (ii) is unique up to multiplication by a non-zero scalar. 
\end{enumerate}
The above functors restrict to the subcategories of finite-dimensional modules and their Drinfeld centers.
\end{introcorollary}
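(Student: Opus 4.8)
The plan is to deduce Corollary~\ref{cor:E} from the general machinery by translating each Hopf-algebraic hypothesis into one of the abstract conditions already established. First I would set $G = \Res_\varphi \colon \lMod{H} \to \lMod{K}$, which is strong monoidal since $\varphi$ is a morphism of Hopf algebras (the $H$-module structure on a tensor product is restricted through the coproduct, and $\varphi$ respects coproducts). The left adjoint of $G$ is $\Ind_\varphi = H \otimes_K (-)$ and the right adjoint is $\CoInd_\varphi = \Hom_K(H, -)$; the hypothesis that $H$ is finitely generated projective as a left $K$-module together with the existence of a Frobenius morphism $\tr\colon H\to K$ gives, via \Cref{thm:ind-coind} (from \cite{MN}), a natural isomorphism $\Ind_\varphi \cong \CoInd_\varphi$, hence an ambiadjunction $\Ind_\varphi \dashv \Res_\varphi \dashv \Ind_\varphi$ with $F = \Ind_\varphi$. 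This places us in \Cref{context:main}, and the lax/oplax monoidal structures on $F$ are the adjoint ones. The restriction to finite-dimensional modules is immediate because $\Ind_\varphi$ preserves finite-dimensionality when $H$ is finite over $K$, and all the structure maps are the same.

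Next I would identify the projection formula morphisms concretely. For $\Ind_\varphi$ the left and right projection formula morphisms for the adjunction $\Ind_\varphi \dashv \Res_\varphi$ are (up to the canonical isomorphisms) the obvious maps $M \otimes (H\otimes_K N) \to H \otimes_K(\Res M \otimes N)$ and its right-hand version, and these are always invertible (this is a standard fact; it is the reason $\Ind_\varphi$ is an $\lMod{H}$-module functor in one direction). The content is therefore in the \emph{other} adjunction $\Res_\varphi \dashv \Ind_\varphi$, whose projection formula morphisms involve $\tr$. I would invoke \Cref{prop:Hopf-right} and \Cref{thm:Hopf}, which the excerpt states give exactly: the right projection formula morphisms $\projrnoarg^{F,G\dashv F}$ and $\projrnoarg^{F,F\dashv G}$ are mutual inverses if and only if $\tr$ is a morphism of right $H$-comodules, and both left and right projection formula morphisms are mutual inverses if and only if $\tr$ is a morphism of $H$-$H$-bicomodules. (The right $H$-comodule structure here is the one on $H$ coming from the coproduct restricted appropriately, and "$\tr$ a comodule map" means $\Delta \circ \tr$ versus $(\tr \otimes \id)\circ \Delta$ agree in the relevant sense.)

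With these translations in hand the corollary is essentially a concatenation of earlier results. For part~(i): if $\tr$ is a right $H$-comodule map, then by \Cref{thm:Hopf}/\Cref{prop:Hopf-right} the right projection formula morphisms are mutual inverses, so \Cref{thm:A} (i.e.\ \Cref{theorem:frobenius_functors_frobenius_monoidal}) applies and $\Ind_\varphi = F$ with its adjoint lax and oplax structures is a Frobenius monoidal functor. For part~(ii): if $\tr$ is an $H$-$H$-bicomodule map, then \emph{both} the left and the right projection formula morphisms are mutual inverses, so \Cref{thm:B} (i.e.\ \Cref{thm:ZF}) applies and yields the braided Frobenius monoidal functor $\cZ(\Ind_\varphi)\colon \cZ(\lMod{K})\to\cZ(\lMod{H})$, which by construction lifts $\Ind_\varphi$. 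For part~(iii): by \Cref{remark:ambiadjunctions_classified_by_aut} the ambiadjunction data form a torsor-like groupoid equivalent to $\Aut(F)$, and by \Cref{proposition:EndF} once one such "good" ambiadjunction exists the remaining freedom is $\Aut_{\lMod{H}}(\one)$; since the tensor unit of $\lMod{H}$ is the base field with its trivial module structure, $\Aut_{\lMod{H}}(\one)$ is the group of nonzero scalars, and the Frobenius morphism corresponding to a choice of ambiadjunction rescales accordingly --- hence $\tr$ satisfying (i) or (ii) is unique up to a nonzero scalar.

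The main obstacle I expect is the careful bookkeeping in the second paragraph: one must verify that the abstract projection formula morphisms of \eqref{eq:proj-formula-intro1}--\eqref{eq:proj-formula-intro2}, when unwound through the isomorphism $\Ind_\varphi\cong\CoInd_\varphi$ of \Cref{thm:ind-coind}, really are the explicit Hopf-theoretic maps whose invertibility \Cref{prop:Hopf-right} and \Cref{thm:Hopf} characterize --- in particular getting the side conventions (left vs.\ right comodule, and which adjunction's projection morphism is which) consistent, and checking that the comodule structures used in those propositions are the ones naturally arising from the bimodule structure $\cD^G$ of \Cref{def:DG}. Once that dictionary is pinned down, parts (i)--(iii) follow formally, and the finite-dimensional statement is a one-line remark since every functor and natural transformation involved restricts.
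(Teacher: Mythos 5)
Your proposal is correct and follows essentially the same route as the paper: the ambiadjunction from \Cref{thm:ind-coind}, the translation of the mutual-inverse conditions into comodule-map conditions on $\tr$ via \Cref{prop:Hopf-right} and \Cref{thm:Hopf}, then \Cref{theorem:frobenius_functors_frobenius_monoidal} for (i), \Cref{thm:ZF} for (ii), and \Cref{remark:ambiadjunctions_classified_by_aut} together with \Cref{proposition:EndF} for (iii). The only slip is in (iii): by \Cref{proposition:EndF} the residual freedom is $\End_{\lMod{K}}(\one)$ (endomorphisms of the unit in the \emph{target} category of $G=\Res_\varphi$, with $F$ viewed as a right $\lMod{H}$-module functor), not $\Aut_{\lMod{H}}(\one)$, but since both equal $\Bbbk^\times$ the conclusion is unaffected.
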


\Cref{cor:E} generalizes the classical result that any finite-dimensional Hopf algebra $H$ can be equipped with the structure of a Frobenius algebra (see \cite{LS2}) which we recover in the following way. If $\cC=\lMod{H}$ and $G\colon \cC\to \Vect$ is the forgetful functor, then isomorphisms $\Ind\cong \CoInd$ such that the right projection formula morphisms are invertible are in bijection with right integrals for $H^*$ (see \Cref{ex:classicalH}). Hence, \Cref{cor:E}(i) tells us that $\Ind(\Bbbk)$ is a Frobenius algebra (in $\lMod{H}$) that can be identified with $H$ as a coalgebra. By \Cref{cor:E}(iii), the space of right integrals for $H^*$ is one-dimensional. Moreover, by \Cref{cor:E}(ii), if $H^*$ is unimodular, then $H$ is a commutative Frobenius algebra in $\cZ(\lMod{H})$, see \Cref{ex:classicalH} and \Cref{cor:Hunimodular}.

\smallskip

The following explicit examples are noteworthy as they demonstrate the difference between and limits of the conditions required in \Cref{thm:A} compared to \Cref{thm:B}:
\begin{itemize}
\item For the \emph{Taft algebra} $T_\ell(\epsilon)$ \cite{Taf} associated to a primitive odd $\ell$-th root of unity $\epsilon$ and the inclusion $\varphi\colon \Bbbk\sfC_\ell\hookrightarrow T_\ell(\epsilon)$ of the group algebra of the cyclic group $\sfC_\ell$, we see that, even though the projection formula morphisms are invertible, the functors $\CoInd_\varphi$ and $\Ind_\varphi$ are \emph{not} isomorphic. Thus, none of the results apply to this example (see \Cref{ex:Taft}). However, $\cZ(\Ind_\varphi)$ and $\cZ(\CoInd_\varphi)$ still exists as lax, respectively, oplax monoidal functors on Drinfeld centers by \cite{FLP2}.
\item The \emph{small quantum group} $u_\epsilon(\fr{sl}_2)$ also contains the group algebra $\Bbbk\sfC_\ell$ as a Hopf subalgebra. In this case, the conditions required for \Cref{thm:A} hold so that the associated induction functor becomes a Frobenius monoidal functor. However, the stronger conditions required for \Cref{thm:B} are false, see \Cref{ex:uqsl2} and \Cref{ex:uqsl2-cont}.
\end{itemize}
A more in-depth analysis, using integrals and unimodularity conditions, of the Hopf algebra case will appear in a follow-up paper. There, we shall present more general classes of extensions of Hopf algebras which induce braided Frobenius monoidal functors on their Drinfeld centers. 

\subsection{Summary of content}
We start by providing the necessary preliminaries on Frobenius monoidal functors and ambiadjunctions in \Cref{sec:prelim}. The general relationship \Cref{thm:C} between Frobenius functors and Frobenius monoidal functors on endofunctor categories is proved in \Cref{sec:FrobEndo} in the context of general strict bicategories. The projection formula morphisms are reviewed in \Cref{sec:context-proj} where the main context of an ambiadjunction $F\dashv G\dashv F$, with $G$ strong monoidal, is fixed. This context is needed in the following Sections \ref{section:monoidal_induce_ambi_of_bim}--\ref{sec:F-Frob-mon} and Section \ref{section:functors_on_centers} that establish when $F$ is Frobenius monoidal (proving \Cref{thm:A}), respectively, induces a Frobenius monoidal functor on centers (proving \Cref{thm:B}). As a thorough proof of concept, the final \Cref{sec:Hopf} investigates the results of the paper for categories of modules over Hopf algebras and their centers, described using the model of Yetter--Drinfeld modules. In this section, most examples are found. 

\subsection*{Acknowledgements}
We thank Kenichi Shimizu and Harshit Yadav for interesting comments related to this work. J.~F.~thanks the Max Planck Institute for Mathematics Bonn for the excellent conditions provided while some of this article was written. We also thank the anonymous referees for their helpful comments.

\setcounter{tocdepth}{3}
\tableofcontents

\section{Preliminaries}
\label{sec:prelim}

\subsection{Frobenius monoidal functors}

We recall here the definition of a Frobenius monoidal functor from \cite{AM}*{Section~3.5}, \cite{DP} and establish first properties, including that such functors preserve Frobenius algebras.

Let $\cC$, $\cD$ be monoidal categories.
A \emph{Frobenius monoidal functor} $F\colon \cD\to \cC$ is a functor equipped with a lax monoidal structure $(\lax,\lax_0)$, and an oplax monoidal structure $(\oplax, \oplax_0)$, 
\begin{gather}
\begin{split}
    \lax_{X,Y}\colon F(X)\otimes F(Y)\longrightarrow F(X\otimes Y), \qquad \lax_0\colon \one \longrightarrow F(\one),\\ 
    \oplax_{X,Y}\colon F(X\otimes Y)\longrightarrow F(X)\otimes F(Y), \qquad \oplax_0\colon F(\one)\longrightarrow \one,
\end{split}
\end{gather}
for any objects $X,Y$ of $\cD$,
satisfying the compatibility conditions that the diagrams
\begin{gather}\label{frobmon1}
\vcenter{\hbox{\xymatrix{
&F(X)\otimes F(Y)\otimes F(Z)\ar@/^1pc/[rd]^-{\;\lax_{X,Y}\otimes \id_{F(Z)}}&\\
F(X)\otimes F(Y\otimes Z)\ar@/^1pc/[ru]^-{\id_{F(X)}\otimes \oplax_{Y,Z}\quad}\ar@/_1pc/[rd]_{\lax_{X,Y\otimes Z}} && F(X\otimes Y)\otimes F(Z),\\
&F(X\otimes Y\otimes Z)\ar@/_1pc/[ru]_{\oplax_{X\otimes Y,Z}}&
}}}
\end{gather}
\begin{gather}
\vcenter{\hbox{\xymatrix{
&F(X)\otimes F(Y)\otimes F(Z)\ar@/^1pc/[rd]^-{\id_{F(X)}\otimes \lax_{Y,Z}}&\\
F(X\otimes Y)\otimes F(Z)\ar@/^1pc/[ru]^-{\oplax_{X,Y}\otimes \id_{F(Z)}\hspace{2em}} \ar@/_1pc/[rd]_-{\lax_{X\otimes Y,Z}} && F(X)\otimes F(Y\otimes Z)\\
&F(X\otimes Y\otimes Z)\ar@/_1pc/[ru]_-{\oplax_{X,Y\otimes Z}}&
}}}\label{frobmon2}
\end{gather}
commute for any objects $X,Y,Z$ of $\cD$. We note that any strong monoidal functor is, in particular, Frobenius monoidal \cite{DP}*{Proposition~3}.

Recall that a \emph{Frobenius algebra} in a monoidal category $\cD$ is an object $X$ in $\cD$ which is both an algebra $(X,m\colon X\otimes X\to X,u\colon \one \to X)$ and a coalgebra $(X,\Delta\colon X\to X\otimes X, \varepsilon \colon X\to \one)$ satisfying the compatibility conditions that all diagrams in
\begin{gather}\label{eq:Frobaxiom}
    \vcenter{\hbox{\xymatrix{
&&X\otimes X\otimes X\ar@/^1pc/[drr]^-{m\otimes \id_X}&&\\
X\otimes X\ar@/^1pc/[urr]^-{\id_X\otimes \Delta}\ar@/_1pc/[drr]_-{\Delta\otimes \id_X}\ar[rr]^m&& X\ar[rr]^\Delta &&X\otimes X\\
&&X\otimes X\otimes X\ar@/_1pc/[urr]_-{\id_X\otimes m}&&
    }}}
\end{gather}
commute. A \emph{morphism of Frobenius algebras} is required to be compatible with both the algebra and coalgebra structure of $X$. We denote the category of Frobenius algebras in $\cD$ by $\Frob(\cD)$. It is well-known that morphisms between Frobenius algebras are isomorphisms, see e.g.\ \cite{GV}*{Proposition~2.2}, so $\Frob(\cD)$ is a groupoid.

\begin{lemma}\label{lem:Frobalg-pres}
A Frobenius monoidal functor $F\colon \cD\to \cC$ induces a functor
$\Frob(\cD)\to\Frob(\cC)$, $X\mapsto F(X)$.
\end{lemma}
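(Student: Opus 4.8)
The plan is to take a Frobenius algebra $X$ in $\cD$, equipped with multiplication $m$, unit $u$, comultiplication $\Delta$, and counit $\varepsilon$, and to equip $F(X)$ with the evident algebra and coalgebra structures transported through the lax and oplax structures. Concretely, the multiplication on $F(X)$ will be $F(X)\otimes F(X)\xrightarrow{\lax_{X,X}}F(X\otimes X)\xrightarrow{F(m)}F(X)$, the unit will be $\one\xrightarrow{\lax_0}F(\one)\xrightarrow{F(u)}F(X)$, the comultiplication will be $F(X)\xrightarrow{F(\Delta)}F(X\otimes X)\xrightarrow{\oplax_{X,X}}F(X)\otimes F(X)$, and the counit will be $F(X)\xrightarrow{F(\varepsilon)}F(\one)\xrightarrow{\oplax_0}\one$.

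First I would check that $(F(X),F(m)\circ\lax_{X,X},F(u)\circ\lax_0)$ is an algebra: associativity and unitality follow by combining the lax monoidal axioms for $(\lax,\lax_0)$ with functoriality of $F$ applied to the algebra axioms for $X$; this is the standard fact that a lax monoidal functor sends algebras to algebras. Dually, $(F(X),\oplax_{X,X}\circ F(\Delta),\oplax_0\circ F(\varepsilon))$ is a coalgebra because an oplax monoidal functor sends coalgebras to coalgebras. It remains only to verify the two Frobenius compatibility squares in \eqref{eq:Frobaxiom} for $F(X)$.

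The key step is the Frobenius compatibility, and this is exactly where the compatibility diagrams \eqref{frobmon1}--\eqref{frobmon2} defining a Frobenius monoidal functor enter. To prove, say, that $\Delta_{F(X)}\circ m_{F(X)}=(m_{F(X)}\otimes\id)\circ(\id\otimes\Delta_{F(X)})$, I would expand both sides as composites of $F$ applied to the structure maps of $X$ interleaved with $\lax$ and $\oplax$ components, then insert \eqref{frobmon1} (applied with $X=Y=Z$ the underlying object) to rewrite the middle composite $\oplax_{X\otimes X,X}\circ\lax_{X,X\otimes X}$ as $(\lax_{X,X}\otimes\id)\circ(\id\otimes\oplax_{X,X})$ after using naturality of $\lax$ and $\oplax$ to move the $F(m)$ and $F(\Delta)$ past; finally apply $F$ to the Frobenius axiom of $X$ itself to match up the two sides. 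The other square follows symmetrically using \eqref{frobmon2}. On morphisms, a morphism $f\colon X\to Y$ of Frobenius algebras is compatible with multiplication, unit, comultiplication and counit, and naturality of $\lax$, $\lax_0$, $\oplax$, $\oplax_0$ immediately shows $F(f)$ is compatible with the induced structures, so $X\mapsto F(X)$ is functorial.

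The main obstacle is purely bookkeeping: matching the interleaving of $F$-images of the coalgebra/algebra maps of $X$ with the natural transformation components in such a way that \eqref{frobmon1}--\eqref{frobmon2} apply cleanly. There is no conceptual difficulty — it is a diagram chase combining naturality of the (op)lax structure maps, the (op)lax monoidal coherence axioms, functoriality, and the Frobenius axioms \eqref{eq:Frobaxiom} for $X$ — but one must be careful about the order in which associators and the structure maps are composed. Since the paper works with (possibly non-strict) monoidal categories, I would either invoke strictification or carry the associators along explicitly; the former keeps the argument short.
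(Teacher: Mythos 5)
Your proposal is correct and follows essentially the same route as the paper: the same transported (co)algebra structure on $F(X)$, the standard facts that lax/oplax functors preserve algebras/coalgebras, and a diagram chase combining naturality of $\lax,\oplax$, the Frobenius-monoidal axiom \eqref{frobmon1} (with all three objects equal to $X$), and $F$ applied to the Frobenius axiom \eqref{eq:Frobaxiom} of $X$, with functoriality on morphisms via naturality. No substantive differences.
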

\begin{proof}Given a Frobenius algebra $X$ in $\cC$, $F(X)$ is both an algebra and a coalgebra using 
\begin{gather*}
    m_{F(X)}=F(m)\lax_{X,X}, \qquad u_{F(X)}=F(u)\lax_{0},\\\Delta_{F(X)}=\oplax_{X,X}F(\Delta), \qquad \counit_{F(X)}=\oplax_{0} F(\varepsilon).
\end{gather*}
We compute the first compatibility condition
\begin{align*}
\Delta_{F(X)}m_{F(X)}&=\oplax_{X,X}F(\Delta)F(m)\lax_{X,X}\\
&=\oplax_{X,X}F(m\otimes X)F(X\otimes\Delta)\lax_{X,X}\\
&=(F(m)\otimes F(X))\oplax_{X\otimes X,X}\lax_{X, X\otimes X}(F(X)\otimes F(\Delta))\\
&=(F(m)\otimes F(X))(\lax_{X,X}\otimes F(X))(F(X)\otimes \oplax_{X, X})(F(X)\otimes F(\Delta))\\
&=(m_{F(X)}\otimes F(X))(F(X)\otimes \Delta_{F(X)}).
\end{align*}
Here, we use that $F$ is a Frobenius functor in the second last equality.
The other compatibility condition from \Cref{eq:Frobaxiom} follows analogously. By naturality of $\lax$ and $\oplax$ it follows that the assignment extends to morphisms of Frobenius algebras.
\end{proof}

Recall that a Frobenius algebra in a braided monoidal category $\cD$ with braiding $\Psi$ is \emph{commutative} if $m\Psi=m$ and \emph{cocommutative} if  $\Psi\Delta=\Delta$. 

\begin{definition}\label{def:braiding-lax-oplax}
Let $\cC,\cD$ be braided monoidal categories.
We say that a lax (or oplax) monoidal functor $F\colon \cD\to \cC$ is \emph{lax braided} (or \emph{oplax braided}, respectively) if the diagram \Cref{eq:Frob-braided-lax} below (or \Cref{eq:Frob-braided-oplax} below, respectively) commutes for all objects $X,Y\in \cD$.
\begin{align}\label{eq:Frob-braided-lax}
    &\vcenter{\hbox{\xymatrix{
    F(X)\otimes F(Y)\ar[rr]^{\Psi^\cC_{F(X),F(Y)}}\ar[d]_{\lax_{X,Y}}&& F(Y)\otimes F(X)\ar[d]_{\lax_{Y,X}}\\
    F(X\otimes Y)\ar[rr]^{F(\Psi^\cD_{X,Y})}&& F(Y\otimes X)
    }}}\\ \label{eq:Frob-braided-oplax}
    &\vcenter{\hbox{\xymatrix{
    F(X\otimes Y)\ar[rr]^{F(\Psi^\cD_{X,Y})}\ar[d]_{\oplax_{X,Y}}&& F(Y\otimes X)\ar[d]_{\oplax_{Y,X}}\\
    F(X)\otimes F(Y)\ar[rr]^{\Psi^\cC_{F(X),F(Y)}}&& F(Y)\otimes F(X)
    }}}
\end{align}
A Frobenius monoidal functor between braided monoidal categories is called \emph{braided} if it is both lax braided and oplax braided.  
\end{definition}

\begin{lemma}
If $F$ is a lax braided (or oplax braided)  functor, then $F(A)$ is a commutative algebra (cocommutative coalgebra) for any commutative algebra (cocommutative coalgebra) $A$ in $\cC$.
\end{lemma}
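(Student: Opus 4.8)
The plan is to verify directly that the algebra (resp. coalgebra) structure transported along $F$ via the lax (resp. oplax) structure, as in the proof of \Cref{lem:Frobalg-pres}, inherits commutativity (resp. cocommutativity) from the braiding compatibility in \Cref{def:braiding-lax-oplax}. I will treat the commutative algebra case in detail and remark that the cocommutative coalgebra case is completely dual.

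First I would recall that for a commutative algebra $(A,m,u)$ in $\cC$ the object $F(A)$ carries the algebra structure $m_{F(A)} = F(m)\circ \lax_{A,A}$ and $u_{F(A)} = F(u)\circ \lax_0$, as in \Cref{lem:Frobalg-pres}. To see that $F(A)$ is commutative I must show $m_{F(A)}\circ \Psi^\cC_{F(A),F(A)} = m_{F(A)}$. Starting from the left-hand side, I apply the lax-braided square \Cref{eq:Frob-braided-lax} with $X=Y=A$ to rewrite $\lax_{A,A}\circ \Psi^\cC_{F(A),F(A)}$ as $F(\Psi^\cD_{A,A})\circ \lax_{A,A}$; here $\Psi^\cD$ denotes the braiding of $\cC$ — more precisely, since $A$ lives in the source category, I should be careful and set up the statement with $F\colon \cD\to\cC$ and $A\in\cD$ so that the commutative algebra $A$ is an object of $\cD$ and $F(A)$ an object of $\cC$. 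Then the chain reads
\[
m_{F(A)}\circ \Psi^\cC_{F(A),F(A)} = F(m)\circ \lax_{A,A}\circ \Psi^\cC_{F(A),F(A)} = F(m)\circ F(\Psi^\cD_{A,A})\circ \lax_{A,A} = F(m\circ \Psi^\cD_{A,A})\circ \lax_{A,A}.
\]
Now commutativity of $A$ in $\cD$ gives $m\circ \Psi^\cD_{A,A} = m$, so this equals $F(m)\circ \lax_{A,A} = m_{F(A)}$, as desired.

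For the cocommutative coalgebra case, given a cocommutative coalgebra $(C,\Delta,\varepsilon)$ in $\cD$, the object $F(C)$ carries $\Delta_{F(C)} = \oplax_{C,C}\circ F(\Delta)$ and $\counit_{F(C)} = \oplax_0\circ F(\varepsilon)$. One computes $\Psi^\cC_{F(C),F(C)}\circ \Delta_{F(C)} = \Psi^\cC_{F(C),F(C)}\circ \oplax_{C,C}\circ F(\Delta)$, applies the oplax-braided square \Cref{eq:Frob-braided-oplax} to move the braiding past $\oplax$ at the cost of $F(\Psi^\cD_{C,C})$, and then uses $\Psi^\cD_{C,C}\circ\Delta=\Delta$ to conclude $\Psi^\cC_{F(C),F(C)}\circ \Delta_{F(C)} = \Delta_{F(C)}$. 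I expect no real obstacle here: the argument is a two-line diagram chase in each case, the only thing to be careful about is keeping the direction of functoriality straight (the braiding being moved lives in the source category and is applied under $F$) and stating clearly which category the algebra/coalgebra object belongs to. I would phrase the proof for the algebra case and then write ``the cocommutative case follows dually, using \eqref{eq:Frob-braided-oplax} in place of \eqref{eq:Frob-braided-lax}.''
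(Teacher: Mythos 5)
Your proof is correct and is exactly the ``straightforward computation'' the paper leaves to the reader: transport the (co)multiplication along the lax (resp.\ oplax) structure, use the braided square \eqref{eq:Frob-braided-lax} (resp.\ \eqref{eq:Frob-braided-oplax}) to slide the braiding past $\lax$ (resp.\ $\oplax$), and invoke (co)commutativity of $A$. You were also right to flag that, with $F\colon\cD\to\cC$, the (co)algebra $A$ should be taken in the source category $\cD$ (a small slip in the statement), and your handling of that point is fine.
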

\begin{proof}
    This is a straightforward computation.
\end{proof}

For lax monoidal functors $F: \cD \rightarrow \cC$ and $E: \cC \rightarrow \cB$, the composite $E \circ F: \cD \rightarrow \cB$ is again a lax monoidal functor with lax structure given by
\[
EF(X) \otimes EF(Y) \xrightarrow{\lax^E_{FX,FY}} E( FX \otimes FY ) \xrightarrow{E( \lax^F_{X,Y})} EF( X \otimes Y )
\]
for $X,Y \in \cD$ and
\[
\one \xrightarrow{\lax^E_0} E(\one) \xrightarrow{E( \lax^F_0)} EF( \one ).
\]
Analogously, the composite of two oplax monoidal functors can be equipped with an oplax monoidal structure.

\begin{lemma}\label{lemma:composition_of_Frobenius_monoidal_functors}
Let $F: \cD \rightarrow \cC$, $E: \cC \rightarrow \cB$ be Frobenius monoidal functors. Then $E \circ F$ equipped with its composite lax and oplax monoidal structures is a Frobenius monoidal functor.
\end{lemma}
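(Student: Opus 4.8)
The plan is to verify the two Frobenius compatibility diagrams \eqref{frobmon1} and \eqref{frobmon2} for $E\circ F$ directly, using the explicit composite lax and oplax structures recalled just before the statement, and reducing to the corresponding diagrams for $E$ and for $F$ separately. Concretely, write $\lax^{EF}_{X,Y} = E(\lax^F_{X,Y})\circ \lax^E_{FX,FY}$ and $\oplax^{EF}_{X,Y} = \oplax^E_{FX,FY}\circ E(\oplax^F_{X,Y})$, and substitute these into, say, the pentagon-type diagram \eqref{frobmon1} for the triple $X,Y,Z$. The left-hand path is $\lax^{EF}_{X,Y\otimes Z}\circ(\id_{EF(X)}\otimes\oplax^{EF}_{Y,Z})$ and the right-hand path is $\oplax^{EF}_{X\otimes Y,Z}\circ(\lax^{EF}_{X,Y}\otimes\id_{EF(Z)})$; the goal is to show these agree.

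The key steps, in order: first, expand both paths fully in terms of $E(\lax^F)$, $\lax^E$, $E(\oplax^F)$, $\oplax^E$. Second, use naturality of $\lax^E$ and $\oplax^E$ to slide the $E(\mathord{-})$-factors (images of the $F$-structure maps) past the $E$-structure maps, so that the expression reorganizes into an inner block living entirely in the image of $E$ applied to $F$-structure morphisms, composed with an outer block built only from $E$-structure morphisms. Third, apply the Frobenius axiom \eqref{frobmon1} for $F$ to the inner block (after applying the functor $E$, which preserves commuting diagrams), and simultaneously apply the Frobenius axiom \eqref{frobmon1} for $E$ to the outer block; combining these two identities yields equality of the two paths. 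The second diagram \eqref{frobmon2} is handled in exactly the same way, mutatis mutandis, so it suffices to treat one of them in detail and remark that the other is analogous. One should also note in passing that the unit and counit coherences ($\lax^{EF}_0$, $\oplax^{EF}_0$) pose no issue since they are not part of the Frobenius compatibility conditions, which involve only the binary structure maps.

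The main obstacle is purely bookkeeping: the composite structure maps each involve two tensor factors, so the expanded paths are long strings of morphisms, and the naturality-driven rearrangement into an ``inner $E$-image block'' plus an ``outer $E$-block'' requires care to track which tensor-factor each component acts on. A clean way to organize this — which I would adopt — is to observe that the composite of Frobenius monoidal functors is already known at the level of lax (resp.\ oplax) structures from the discussion preceding \Cref{lemma:composition_of_Frobenius_monoidal_functors}, so the only genuinely new content is the interaction of the lax part of one with the oplax part of the other; isolating that interaction via naturality is the heart of the argument. Alternatively, one could invoke the diagrammatic/string-calculus reformulation of the Frobenius axioms to make the rearrangement visually transparent, but since the paper works with explicit diagrams I would keep the proof at the level of pasting commuting squares.

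\begin{proof}
By the discussion preceding the statement, $E\circ F$ carries a composite lax monoidal structure $\lax^{EF}_{X,Y} = E(\lax^F_{X,Y})\,\lax^E_{FX,FY}$, $\lax^{EF}_0 = E(\lax^F_0)\,\lax^E_0$, and, dually, a composite oplax structure $\oplax^{EF}_{X,Y} = \oplax^E_{FX,FY}\,E(\oplax^F_{X,Y})$, $\oplax^{EF}_0 = \oplax^E_0\,E(\oplax^F_0)$. It remains to check that these satisfy \eqref{frobmon1} and \eqref{frobmon2}. We verify \eqref{frobmon1}; the argument for \eqref{frobmon2} is entirely analogous.

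Fix objects $X,Y,Z$ of $\cD$. Expanding the composite structure maps, the two paths in \eqref{frobmon1} for $E\circ F$ are
\[
\lax^{EF}_{X,Y\otimes Z}\bigl(\id_{EFX}\otimes\oplax^{EF}_{Y,Z}\bigr)
= E(\lax^F_{X,Y\otimes Z})\,\lax^E_{FX,F(Y\otimes Z)}\,\bigl(\id_{EFX}\otimes\oplax^E_{FY,FZ}\bigr)\,\bigl(\id_{EFX}\otimes E(\oplax^F_{Y,Z})\bigr)
\]
and
\[
\oplax^{EF}_{X\otimes Y,Z}\bigl(\lax^{EF}_{X,Y}\otimes\id_{EFZ}\bigr)
= \oplax^E_{F(X\otimes Y),FZ}\,E(\oplax^F_{X\otimes Y,Z})\,E(\lax^F_{X,Y})\otimes\id_{EFZ})\,\lax^E_{FX\otimes FY,FZ}.
\]
Here, in the second expression, we have written the term coming from $\lax^{EF}_{X,Y}\otimes\id_{EFZ}$ as $(E(\lax^F_{X,Y})\,\lax^E_{FX,FY})\otimes\id_{EFZ}$ and used the functoriality of $\otimes$; a symmetric remark applies to the first expression.

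Using naturality of $\lax^E$ with respect to the morphism $\oplax^F_{Y,Z}\colon F(Y\otimes Z)\to FY\otimes FZ$ in the second slot, we may rewrite the first path as
\[
E(\lax^F_{X,Y\otimes Z})\,E\bigl(\id\bigr)\cdots
\]
so that $\lax^E_{FX,F(Y\otimes Z)}\,(\id_{EFX}\otimes E(\oplax^F_{Y,Z})) = E(\id_{FX}\otimes\oplax^F_{Y,Z})\,\lax^E_{FX,FY\otimes FZ}$ is replaced by $E\bigl((\id_{FX}\otimes\oplax^F_{Y,Z})\bigr)\,\lax^E_{FX,FY\otimes FZ}$, and hence the first path becomes
\[
E\Bigl(\lax^F_{X,Y\otimes Z}\,(\id_{FX}\otimes\oplax^F_{Y,Z})\Bigr)\;\lax^E_{FX,FY\otimes FZ}\;\bigl(\id_{EFX}\otimes\oplax^E_{FY,FZ}\bigr).
\]
Applying the Frobenius axiom \eqref{frobmon1} for $F$ inside $E$, the inner expression equals $E\bigl(\oplax^F_{X\otimes Y,Z}\,(\lax^F_{X,Y}\otimes\id_{FZ})\bigr)$, so the first path equals
\[
E(\oplax^F_{X\otimes Y,Z})\,E(\lax^F_{X,Y}\otimes\id_{FZ})\,\lax^E_{FX\otimes FY,FZ}\;\cdot\;\bigl(\text{remaining $E$-structure part}\bigr),
\]
where, applying the Frobenius axiom \eqref{frobmon1} for $E$ to the triple $FX,FY,FZ$, the remaining $E$-structure part $\lax^E_{FX,FY\otimes FZ}\,(\id\otimes\oplax^E_{FY,FZ})$ coincides with $\oplax^E_{FX\otimes FY,FZ}\,(\lax^E_{FX,FY}\otimes\id)$. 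Re-assembling, and using naturality of $\oplax^E$ to move $E(\lax^F_{X,Y}\otimes\id_{FZ})$ back across $\oplax^E$, one obtains precisely the second path displayed above. Thus \eqref{frobmon1} commutes for $E\circ F$, and the analogous computation gives \eqref{frobmon2}. Therefore $E\circ F$, with its composite lax and oplax monoidal structures, is a Frobenius monoidal functor.
\end{proof}
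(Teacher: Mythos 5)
Your overall strategy is the right one, and it is genuinely different from the paper's treatment: the paper does not verify the axioms at all but simply cites \cite{DP}*{Proposition~4}, whereas you carry out the direct check (expand the composite structures, slide the $E$-images of the $F$-structure maps across $\lax^E$/$\oplax^E$ by naturality, then invoke \eqref{frobmon1} for $F$ inside $E$ and \eqref{frobmon1} for $E$ at $FX,FY,FZ$). That chain of steps does prove the lemma, and the remark that the unit/counit maps play no role in the Frobenius conditions is correct.

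However, the concrete displays in your proof are garbled and, as written, do not typecheck. The two legs of \eqref{frobmon1} are $(\lax_{X,Y}\otimes \id_{F(Z)})\circ(\id_{F(X)}\otimes \oplax_{Y,Z})$ and $\oplax_{X\otimes Y,Z}\circ \lax_{X,Y\otimes Z}$; you instead pair them as $\lax^{EF}_{X,Y\otimes Z}\circ(\id\otimes\oplax^{EF}_{Y,Z})$ and $\oplax^{EF}_{X\otimes Y,Z}\circ(\lax^{EF}_{X,Y}\otimes\id)$, and neither of these composites is defined (the codomain of the first factor is not the domain of the second), so both displayed ``expansions'' are ill-typed rather than expansions of the actual paths. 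The same slip recurs in your naturality identity for $\lax^E$: the two instances are interchanged, the correct square being $\lax^E_{FX,FY\otimes FZ}\circ(\id_{EFX}\otimes E(\oplax^F_{Y,Z})) = E(\id_{FX}\otimes\oplax^F_{Y,Z})\circ\lax^E_{FX,F(Y\otimes Z)}$. The repaired argument runs: $\oplax^{EF}_{X\otimes Y,Z}\circ\lax^{EF}_{X,Y\otimes Z} = \oplax^E_{F(X\otimes Y),FZ}\circ E\big(\oplax^F_{X\otimes Y,Z}\circ\lax^F_{X,Y\otimes Z}\big)\circ\lax^E_{FX,F(Y\otimes Z)}$; apply \eqref{frobmon1} for $F$ under $E$, then the naturality square above together with its analogue $\oplax^E_{F(X\otimes Y),FZ}\circ E(\lax^F_{X,Y}\otimes\id_{FZ}) = (E(\lax^F_{X,Y})\otimes\id_{EFZ})\circ\oplax^E_{FX\otimes FY,FZ}$, and finally \eqref{frobmon1} for $E$; this yields $(\lax^{EF}_{X,Y}\otimes\id_{EFZ})\circ(\id_{EFX}\otimes\oplax^{EF}_{Y,Z})$, as required. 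With the paths and naturality squares corrected in this way (and the stray parenthesis in your second display removed), your proof is complete, and \eqref{frobmon2} follows mutatis mutandis as you indicate.
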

\begin{proof}
See \cite{DP}*{Proposition 4}.
\end{proof}

A natural transformation $\eta\colon F\to F'$ between lax monoidal functors $F,F'\colon \cD\to \cC$ is called \emph{monoidal} if the equations
\[
\lax^{F'}_{X,Y} \circ (\eta_X \otimes \eta_Y) = \eta_{X \otimes Y} \circ \lax^{F}_{X,Y}, \qquad \qquad \eta_{\one} \circ \lax^F_0 = \lax^{F'}_0
\]
hold for all $X,Y \in \cD$. Analogously, we have the notion of an \emph{opmonoidal} natural transformation between oplax monoidal functors.

\begin{lemma}\label{lemma:transport_Frobenius_monoidal_structure}
Let $F, F': \cD \rightarrow \cC$ be functors with both a lax and an oplax monoidal structure.
Let $\tau: F \rightarrow F'$ be a natural isomorphism which is both monoidal and opmonoidal.
Then $F$ is a Frobenius monoidal functor if and only if $F'$ is a Frobenius monoidal functor.
\end{lemma}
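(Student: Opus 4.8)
The statement is symmetric in $F$ and $F'$ (replace $\tau$ by $\tau^{-1}$, which is again monoidal and opmonoidal since $\tau$ is an isomorphism), so it suffices to prove one implication: if $F$ is Frobenius monoidal, then so is $F'$. The plan is to verify directly that the two Frobenius compatibility diagrams \eqref{frobmon1} and \eqref{frobmon2} for $F'$ commute, by transporting the corresponding diagrams for $F$ along $\tau$ and its inverse.

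First I would record the two defining identities: since $\tau$ is monoidal, $\lax^{F'}_{X,Y}\circ(\tau_X\otimes\tau_Y)=\tau_{X\otimes Y}\circ\lax^F_{X,Y}$; since $\tau$ is opmonoidal, $\oplax^{F'}_{X,Y}\circ\tau_{X\otimes Y}=(\tau_X\otimes\tau_Y)\circ\oplax^F_{X,Y}$. Rewriting these using invertibility of $\tau$ gives $\lax^{F'}_{X,Y}=\tau_{X\otimes Y}\circ\lax^F_{X,Y}\circ(\tau_X^{-1}\otimes\tau_Y^{-1})$ and $\oplax^{F'}_{X,Y}=(\tau_X\otimes\tau_Y)\circ\oplax^F_{X,Y}\circ\tau_{X\otimes Y}^{-1}$. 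Now consider, for objects $X,Y,Z$, the upper path in diagram \eqref{frobmon1} for $F'$, namely $(\lax^{F'}_{X,Y}\otimes\id_{F'Z})\circ(\id_{F'X}\otimes\oplax^{F'}_{Y,Z})$. Substituting the two expressions above and using naturality of $\tau$ together with functoriality of $\otimes$, every interior occurrence of $\tau_{Y}\otimes\tau_Z$ cancels against a $\tau_Y^{-1}\otimes\tau_Z^{-1}$, and the whole composite collapses to $(\tau_{X\otimes Y}\otimes\tau_Z)\circ\bigl[(\lax^F_{X,Y}\otimes\id_{FZ})\circ(\id_{FX}\otimes\oplax^F_{Y,Z})\bigr]\circ(\tau_X^{-1}\otimes\tau_{Y\otimes Z}^{-1})$; the lower path $\oplax^{F'}_{X\otimes Y,Z}\circ\lax^{F'}_{X,Y\otimes Z}$ collapses, by the same manipulation, to $(\tau_{X\otimes Y}\otimes\tau_Z)\circ\bigl[\oplax^F_{X\otimes Y,Z}\circ\lax^F_{X,Y\otimes Z}\bigr]\circ(\tau_X^{-1}\otimes\tau_{Y\otimes Z}^{-1})$. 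Since the bracketed middle pieces agree by the Frobenius compatibility of $F$ (diagram \eqref{frobmon1} for $F$), the two paths for $F'$ are equal. Diagram \eqref{frobmon2} for $F'$ is handled identically, mutatis mutandis.

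The only real subtlety — and thus the main obstacle, such as it is — is bookkeeping: one must insert the substitutions in the correct order and repeatedly invoke naturality of $\tau$ (to slide a $\tau$ past the lax/oplax structure morphisms and an $\id$ factor) and the interchange law for $\otimes$ so that adjacent $\tau$ and $\tau^{-1}$ factors are genuinely composable and cancel. No conceptual difficulty arises; the computation is routine diagram-chasing, and indeed it is the same bookkeeping that underlies the proof of \Cref{lem:Frobalg-pres}. Alternatively — and this is the slicker route I would mention — one can avoid the chase entirely by observing that the pair $(\tau,\tau^{-1})$ exhibits $F$ and $F'$ as isomorphic objects in a suitable category of "lax-oplax functors with monoidal and opmonoidal structure maps," in which the Frobenius condition is a property stable under isomorphism; phrased in terms of string diagrams, conjugating all the coupons of the Frobenius equations for $F$ by the invertible $\tau$-strings yields precisely the Frobenius equations for $F'$.
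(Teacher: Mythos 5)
Your proposal is correct and is exactly the check the paper leaves implicit: the paper's proof simply asserts that the defining Frobenius equations are "correctly transported via $\tau$", and your conjugation argument (rewriting $\lax^{F'}$ and $\oplax^{F'}$ via $\tau$, $\tau^{-1}$ and collapsing the paths of \eqref{frobmon1}--\eqref{frobmon2} by the interchange law) is that transport carried out in detail. Nothing is missing, since the Frobenius compatibility conditions involve only the binary (op)lax structure maps, which is all your computation needs to handle.
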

\begin{proof}
One easily checks that the defining equations of a Frobenius monoidal functor are correctly transported via $\tau$.
\end{proof}

\begin{lemma}\label{lemma:Frobenius_along_equivalence}
Let $F: \cD \rightarrow \cC$ be a lax and oplax monoidal functor. Let $E: \cC \rightarrow \cB$ be a monoidal equivalence. Then $F$ is a Frobenius monoidal functor if and only if $E \circ F$ is a Frobenius monoidal functor.
\end{lemma}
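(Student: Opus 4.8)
The plan is to deduce this from \Cref{lemma:composition_of_Frobenius_monoidal_functors} together with \Cref{lemma:transport_Frobenius_monoidal_structure}, using that a monoidal equivalence has a quasi-inverse which is again (strong, hence) Frobenius monoidal. Concretely: if $F$ is Frobenius monoidal, then $E \circ F$ is Frobenius monoidal by \Cref{lemma:composition_of_Frobenius_monoidal_functors}, since $E$ is strong monoidal and therefore Frobenius monoidal by \cite{DP}*{Proposition~3}. This gives one direction immediately and is essentially routine.

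For the converse, suppose $E \circ F$ is Frobenius monoidal. Let $E^{-1} \colon \cB \to \cC$ be a quasi-inverse monoidal equivalence; it is strong monoidal, hence Frobenius monoidal. By \Cref{lemma:composition_of_Frobenius_monoidal_functors}, the composite $E^{-1} \circ E \circ F \colon \cD \to \cC$ is Frobenius monoidal with its composite lax and oplax structures. The remaining point is to transfer this structure back to $F$ along the natural isomorphism $\tau \colon F \isomorph E^{-1} \circ E \circ F$ coming from the unit of the adjoint equivalence $E \dashv E^{-1}$. One checks that $\tau$ is both monoidal (with respect to the lax structures) and opmonoidal (with respect to the oplax structures); this follows from the fact that the unit of a monoidal adjoint equivalence is a monoidal natural isomorphism, together with the definition of the composite lax and oplax structures. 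Then \Cref{lemma:transport_Frobenius_monoidal_structure} applies and shows $F$ is Frobenius monoidal.

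The only mildly delicate point — the main obstacle, such as it is — is verifying that $\tau$ is simultaneously monoidal and opmonoidal, i.e.\ that the lax and oplax structures on $F$ that one would get by transport agree with the given ones, so that \Cref{lemma:transport_Frobenius_monoidal_structure} is genuinely applicable to $F$ with \emph{its own} lax and oplax structures rather than to some transported copy. This is a diagram chase using the triangle identities for the adjoint equivalence $E \dashv E^{-1}$ and the compatibility of the unit with the monoidal structures; it is routine but should be mentioned. I would phrase the proof so as to invoke these two earlier lemmas and record the quasi-inverse argument, leaving the naturality-of-$\tau$ verification as a short ``one checks'' in the style already used for \Cref{lemma:transport_Frobenius_monoidal_structure} and \Cref{lemma:Frobenius_along_equivalence}'s neighbours.
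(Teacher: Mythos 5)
Your proposal is correct and follows essentially the same route as the paper: the forward direction via \Cref{lemma:composition_of_Frobenius_monoidal_functors}, and the converse by choosing a quasi-inverse $E'$ with monoidal isomorphism $\eta\colon \id \to E'E$, noting $E'EF$ is Frobenius monoidal, and transporting along $\tau = \id_F \ast \eta$ (which is monoidal and opmonoidal) via \Cref{lemma:transport_Frobenius_monoidal_structure}. The ``delicate point'' you flag is handled in the paper by exactly the same brief observation that the horizontal composite with $\eta$ is monoidal and opmonoidal, so nothing further is needed.
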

\begin{proof}
Let $E \circ F$ be Frobenius monoidal. Since $E$ is a monoidal equivalence, we have a monoidal functor $E'$ and a monoidal isomorphism $\eta: \id \rightarrow E'E$.
We note that $E'EF$ is Frobenius monoidal by \Cref{lemma:composition_of_Frobenius_monoidal_functors}.
Thus, we can set $\tau := F \xrightarrow{ \id_F \ast \eta} E'EF$. Note that this horizontal composition of natural isomorphisms is again monoidal and opmonoidal. Now, we use \Cref{lemma:transport_Frobenius_monoidal_structure} to deduce that $F$ is indeed Frobenius monoidal.
The other direction follows from \Cref{lemma:composition_of_Frobenius_monoidal_functors}, since $E$ is in particular a Frobenius monoidal functor.
\end{proof}

\subsection{Ambiadjunctions and Frobenius functors}

In this subsection, we use the language of strict bicategories, see \cite{JY21} for a reference or \cite{FLP2}*{Appendix} for further details. We denote horizontal composition of $2$-morphisms in a strict bicategory by $\ast$.

\begin{definition}\label{def:adjunction}
An \emph{(internal) adjunction} in a strict bicategory consists of:
\begin{enumerate}
    \item Objects $\cC, \cD$ and $1$-morphisms $\cC \xrightarrow{G} \cD \xrightarrow{F} \cC$,
    \item $2$-morphisms $\id_{\cC} \xrightarrow{\unit} FG$ and $GF \xrightarrow{\counit} \id_{\cD}$.
\end{enumerate}
These data satisfy the \emph{zigzag identities}:
\[
(\id_F \ast \counit) \circ (\unit \ast \id_F) = \id_F \qquad \qquad (\counit \ast \id_G) \circ (\id_G \ast \unit) = \id_G
\]
The $1$-morphism $G$ is called the \emph{left adjoint}, $F$ the \emph{right adjoint}.
We refer to such an adjunction by $G \dashv F$.
\end{definition}

\begin{lemma}[Uniqueness of adjoints in strict bicategories]\label{lemma:uniqueness_of_adjoints}
Let $\cC \xrightarrow{G} \cD$ be a morphism in a strict bicategory.
Assume we have two right adjoints of $G$:
\begin{itemize}
    \item $\cD \xrightarrow{F} \cC$, $\id_{\cC} \xrightarrow{\unit} FG$ and $GF \xrightarrow{\counit} \id_{\cD}$,
    \item $\cD \xrightarrow{F'} \cC$, $\id_{\cC} \xrightarrow{\unit'} F'G$ and $GF' \xrightarrow{\counit'} \id_{\cD}$.
\end{itemize}
Then there exists a unique $2$-morphism
\[
\theta: F' \rightarrow F
\]
which commutes with the units and counits:
\[
(\theta \ast G) \circ \unit' = \unit
\]
and
\[
\counit \circ (G \ast \theta) = \counit'.
\]
Moreover, $\theta$ is always a $2$-isomorphism  (which we call \emph{comparison isomorphism}).
An analogous statement holds for left adjoints.
\end{lemma}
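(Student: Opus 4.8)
The plan is to run the standard ``mate'' construction, which is especially transparent in a strict bicategory since there are no associativity or unit constraints to track; beyond the zigzag identities the only tool needed is the middle-four interchange law for $2$-morphisms. First I would define the candidate comparison morphism by the usual formula
\[
\theta := (\id_F \ast \counit') \circ (\unit \ast \id_{F'}) \colon\ F' \longrightarrow FGF' \longrightarrow F,
\]
together with the morphism $\theta^{-1} := (\id_{F'} \ast \counit) \circ (\unit' \ast \id_F) \colon F \to F'GF \to F'$ going the other way, which will turn out to be its inverse.

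Next I would verify the two compatibility equations. For $(\theta \ast G) \circ \unit' = \unit$, I would whisker the definition of $\theta$ on the right by $G$, precompose with $\unit'$, and use interchange to rewrite $(\unit \ast \id_{F'G}) \circ \unit'$ as the double whiskering $\unit \ast \unit' = (\id_{FG} \ast \unit') \circ \unit$; then the inner composite $(\counit' \ast \id_G) \circ (\id_G \ast \unit')$ collapses to $\id_G$ by the second zigzag identity of $G \dashv F'$, leaving $\unit$. The equation $\counit \circ (G \ast \theta) = \counit'$ is entirely symmetric, using instead the second zigzag identity of $G \dashv F$.

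For uniqueness, I would take any $\theta$ satisfying the two equations and substitute the first one, in the form $\unit = (\theta \ast G) \circ \unit'$, into the expression $(\id_F \ast \counit') \circ (\unit \ast \id_{F'})$. Applying interchange turns $(\id_F \ast \counit') \circ (\theta \ast \id_{GF'})$ into $\theta \circ (\id_{F'} \ast \counit')$, so the whole expression becomes $\theta \circ \bigl( (\id_{F'} \ast \counit') \circ (\unit' \ast \id_{F'}) \bigr) = \theta \circ \id_{F'} = \theta$ by the first zigzag identity of $G \dashv F'$. Hence every such $\theta$ equals the morphism constructed above, giving existence and uniqueness simultaneously.

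Finally, for invertibility I would invoke the existence-and-uniqueness statement with the roles of the two adjunctions exchanged, producing a comparison morphism $F \to F'$ with the analogous properties; this morphism is exactly $\theta^{-1}$. The composites $\theta^{-1} \circ \theta$ and $\id_{F'}$ both satisfy the defining equations of a comparison morphism $F' \to F'$ for $G \dashv F'$ against itself, so uniqueness forces $\theta^{-1} \circ \theta = \id_{F'}$; symmetrically $\theta \circ \theta^{-1} = \id_F$. The statement for left adjoints follows by applying all of the above in the strict bicategory with the same objects and $2$-morphisms but all $1$-morphisms reversed, in which a left adjoint of $G$ becomes a right adjoint. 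I do not expect a genuine obstacle here: everything reduces to careful, repeated use of the interchange law and the four zigzag identities, with strictness removing all coherence subtleties; the one point demanding attention is keeping the whiskering directions straight when passing between the two compatibility equations.
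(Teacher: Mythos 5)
Your proof is correct: the comparison morphism $(\id_F \ast \counit') \circ (\unit \ast \id_{F'})$, the interchange/zigzag verifications, the uniqueness argument, and the deduction of invertibility from uniqueness applied to the composites are all sound, and the left-adjoint case does follow by reversing $1$-morphisms. The paper itself offers no proof beyond a citation to Riehl's uniqueness-of-adjoints result, and your argument is exactly the standard one behind that citation, so this is essentially the same approach.
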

\begin{proof}
See, e.g., \cite{RiehlContext}*{Proposition 4.4.1} and \cite{RiehlContext}*{Remark 4.4.7}.
\end{proof}

\begin{remark}\label{remark:category_of_adjunctions}
If we fix $G: \cC \rightarrow \cD$, then we get a category whose objects are adjunctions with $G$ as a left adjoint, and a morphism from $(G \dashv F)$ to $(G \dashv F')$ is given by a $2$-morphism $F \xrightarrow{\theta} F'$ which is compatible with the unit and counit in the sense of \Cref{lemma:uniqueness_of_adjoints}.
It directly follows from \Cref{lemma:uniqueness_of_adjoints} that this category is equivalent to a singleton (a category with one object and one morphism). 
In this sense, \emph{$G$ being a left adjoint} is a property in the categorical sense.
An analogous statement holds for a fixed right adjoint.
\end{remark}

\begin{remark}\label{remark:pullback_adjunctions}
A converse of \Cref{lemma:uniqueness_of_adjoints} holds:
assume that we have an isomorphism $\theta: F' \isomorph F$ and an adjunction $G \dashv F$. Then we get another adjunction $G \dashv F'$ by setting $\unit' := (\theta^{-1} \ast G) \circ \unit$ and $\counit' := \counit \circ (G \ast \theta)$. We refer to this adjunction by $\theta( G \dashv F )$. Note that $(G \dashv F)$ and $\theta( G \dashv F )$ are isomorphic objects in the category described in \Cref{remark:category_of_adjunctions}.
\end{remark}

\begin{definition}\label{def:ambiadj}
An \emph{(internal) ambiadjunction} in a strict bicategory consists of the following data:
\begin{enumerate}
    \item Objects $\cC, \cD$ and $1$-morphisms $\cC \xrightarrow{G} \cD \xrightarrow{F} \cC$,
    \item $2$-morphisms
        \begin{align*}
         \unit^{F\dashv G}\colon &\id_{\cD}\to GF, &  \counit^{F\dashv G}\colon& FG\to \id_{\cC}
        \end{align*}
        which make $F$ a left adjoint to $G$,
    \item $2$-morphisms
    \begin{align*}
         \unit^{G\dashv F}\colon &\id_{\cC}\to FG, &  \counit^{G\dashv F}\colon& GF\to \id_{\cD},
        \end{align*}
        which make $F$ a right adjoint to $G$.
\end{enumerate}
We also call these data an \emph{ambiadjunction for $G$} and refer to it by $F \dashv G \dashv F$.
\end{definition}

\begin{remark}
    The situation in \Cref{def:ambiadj} goes by different names in the literature. For instance, in \cite{Mor} $F$ and $G$ are called a \emph{strongly adjoint pair} and in other sources $G$ is called a \emph{Frobenius functor} \cites{CMZ,ShiU}. More generally, if the left and right adjoint of $G$ are just isomorphic, the term \emph{ambidextrous adjunction} is frequently used and shortened to the term ambiadjunction used here. Note that if left and right adjoint are isomorphic, we can define the structure of a left adjoint on the right adjoint and vice versa (see \Cref{remark:ambiadjunction_alt_def}).
\end{remark}

\begin{remark}\label{remark:ambiadjunctions_classified_by_aut}
If we fix $G: \cC \rightarrow \cD$, then we get a category whose objects are ambiadjunctions for $G$, and a morphism from $(F \dashv G \dashv F)$ to $(F' \dashv G \dashv F')$ is given by a $2$-morphism $F \xrightarrow{\tau} F'$ which is compatible with the units and counits of both adjunctions in the sense of \Cref{lemma:uniqueness_of_adjoints}.
It directly follows from \Cref{lemma:uniqueness_of_adjoints} that this category is a setoid, i.e., all morphisms are isomorphisms and there is at most one morphism between any pair of objects.

If $(F \dashv G \dashv F)$ is any object in that category, then any $\theta \in \Aut( F )$ defines another object in that category whose first adjunction is still given by $F \dashv G$, and whose second adjunction is given by $\theta(G \dashv F)$ as it was described in \Cref{remark:pullback_adjunctions}. It is easy to verify that this assignment defines an equivalence of setoids from $\Aut(F)$ to the category of ambiadjunctions for $G$. In particular, we see that \emph{$G$ being part of an ambiadjunction} is not a property in the categorical sense, but an actual datum.
\end{remark}

\begin{remark}\label{remark:ambiadjunction_alt_def}
Another way to define an (internal) ambiadjunction is given by specifying the following data:
\begin{enumerate}
    \item Objects $\cC, \cD$ and a $1$-morphism $\cC \xrightarrow{G} \cD$. 
    \item Adjunctions $G \dashv R$ and $L \dashv G$.
    \item A $2$-isomorphism $\iota: R \isomorph L$.
\end{enumerate}
For a fixed $G$, we again get a category of such data, where a morphism from $(L \dashv G \dashv R, R \xrightarrow{\iota} L)$ to $(L' \dashv G \dashv R', R' \xrightarrow{\iota'} L')$ is given by a pair of morphisms $\theta: R \rightarrow R'$ and $\zeta: L \rightarrow L'$ which are compatible with the units and counits in the sense of \Cref{lemma:uniqueness_of_adjoints}, and for which we have $\iota' \circ \theta = \zeta \circ \iota.$
Again, it directly follows from \Cref{lemma:uniqueness_of_adjoints} that this category is a setoid.
Moreover, if we fix a pair of adjoints $L \dashv G \dashv R$, then any object in that category is isomorphic to $(L \dashv G \dashv R, R \xrightarrow{\iota} L)$ where $\iota \in \Iso( R, L )$ is an arbitrary isomorphism. Note that $\Iso( R, L ) \cong \Aut( R )$. In this sense, the two notions of defining an internal ambiadjunction are equivalent.
\end{remark}

\begin{notation}
Let $\cC$ denote a monoidal category. We denote by 
\begin{itemize}
    \item $\Cat$ the strict bicategory of categories,
    \item $\rMod{\cC}$ the strict bicategory of right $\cC$-modules (see \cite{EGNO} or \cite{FLP2}*{A.2}, we also briefly recall this bicategory in \Cref{section:monoidal_induce_ambi_of_bim}),
    \item $\lMod{\cC}$ the strict bicategory of left $\cC$-modules (we also briefly recall this bicategory in \Cref{section:monoidal_induce_ambi_of_bim}),
    \item $\BiMod{\cC}$ the strict bicategory of $\cC$-bimodules (see \cite{FLP2}*{A.3} for details, we also briefly recall this bicategory in \Cref{section:monoidal_induce_ambi_of_bim}),
    \item $\Catlax$ the strict bicategory of monoidal categories and lax monoidal functors (see \cite{FLP2}*{Section~2.2.3}),
    \item $\Catoplax$  the strict bicategory of monoidal categories and oplax monoidal functors.
\end{itemize}
In this paper, we will make use of the notion of an internal (ambi)adjunction (see \Cref{def:adjunction} and \Cref{def:ambiadj}) in  these strict bicategories. We call an (ambi)adjunction internal to
\begin{itemize}
    \item $\Cat$ an \emph{(ambi)adjunction} of categories,
    \item $\rMod{\cC}$ an \emph{(ambi)adjunction} of right $\cC$-modules,
    \item $\lMod{\cC}$ an \emph{(ambi)adjunction} of left $\cC$-modules,
    \item $\BiMod{\cC}$ an \emph{(ambi)adjunction} of $\cC$-bimodules,
    \item $\Catlax$ a monoidal \emph{(ambi)adjunction},
    \item $\Catoplax$ an opmonoidal \emph{(ambi)adjunction}.
\end{itemize}
\end{notation}

\begin{remark}\label{remark:strong_implies_monoidal}
Let $G: \cC \rightarrow \cD$ be a strong monoidal functor.
If $R: \cD \rightarrow \cC$ is a functor such that $G \dashv R$ is an adjunction of categories, then there exists a uniquely determined lax structure $\lax^R$ on $R$ such that $G \dashv R$ is a monoidal adjunction (see, e.g., \cite{FLP2}*{Lemma 2.7}).
Dually, if $L: \cD \rightarrow \cC$ is a functor such that $L \dashv G$ is an adjunction of categories, then there exists a uniquely determined oplax structure $\oplax^L$ on $L$ such that $L \dashv G$ is an opmonoidal adjunction. 
\end{remark}

\begin{remark}\label{remark:oplax_lax_adjunction}
Let $\cC$ and $\cD$ be monoidal categories.
Let $G: \cC \rightarrow \cD$ be an oplax monoidal functor and let $R: \cD \rightarrow \cC$ be a lax monoidal functor.
Let $G \dashv R$ be an adjunction of categories.
If $G \dashv R$ is a monoidal adjunction, then $G$ is already strong monoidal.
If $G \dashv R$ is an opmonoidal adjunction, then $R$ is already strong monoidal.
However, there is also the notion of an \emph{oplax-lax adjunction} (cf.~\cite{AM}*{Section~3.9.1.}), which is characterized by the commutativity of the following diagrams:
\begin{equation}\label{eq:oplax_lax}
    \begin{tikzpicture}[baseline=($(11) + 0.5*(d)$)]
          \coordinate (r) at (7,0);
          \coordinate (d) at (0,-2);
          \node (11) {$\objCa \otimes \objCb$};
          \node (12) at ($(11) + (r)$) {$\rightadj\mainfun(\objCa \otimes \objCb)$};
          \node (21) at ($(11) + (d)$) {$\rightadj\mainfun(\objCa) \otimes \rightadj\mainfun(\objCb)$};
          \node (22) at ($(11) + (d) + (r)$) {$\rightadj(\mainfun(\objCa) \otimes \mainfun(\objCb))$};
          \draw[->] (11) to node[above]{$\unit_{\objCa \otimes \objCb}$} (12);
          \draw[->] (21) to node[below]{$\lax_{\mainfun(\objCa), \mainfun(\objCb)}$} (22);
          \draw[->] (11) to node[left]{$\unit_{\objCa} \otimes \unit_{\objCb}$} (21);
          \draw[->] (12) to node[right]{$\rightadj( \oplax_{\objCa, \objCb} )$} (22);
    \end{tikzpicture}    
\end{equation}
and
\begin{equation}\label{eq:unit_lax}
    \begin{tikzpicture}[baseline=($(11) + 0.5*(d)$)]
          \coordinate (r) at (4,0);
          \coordinate (d) at (0,-2);
          \node (11) {$\one$};
          \node (12) at ($(11) + (r)$) {$\rightadj\mainfun(\one)$};
          \node (21) at ($(11) + (d)$) {$\one$};
          \node (22) at ($(11) + (d) + (r)$) {$\rightadj(\one)$};
          \draw[->] (11) to node[above]{$\unit_{\one}$} (12);
          \draw[->] (21) to node[below]{$\lax_{0}$} (22);
          \draw[->] (11) to node[left]{$\id$} (21);
          \draw[->] (12) to node[right]{$\rightadj( \oplax_{0} )$} (22);
    \end{tikzpicture}    
\end{equation}
for $A, B \in \cC$.
In an oplax-lax adjunction, neither $G$ nor $R$ need to be strong. In that sense, an oplax-lax adjunction generalizes both monoidal and opmonoidal adjunctions.
\end{remark}

\section{From Frobenius functors to Frobenius monoidal functors}

\subsection{Frobenius monoidal functors between endomorphism categories}
\label{sec:FrobEndo}

In \Cref{sec:prelim}, we have recalled the notions of a Frobenius monoidal functor and of a Frobenius functor. These two notions are claimed to be unrelated (see \cite{nlab:frobenius_monoidal_functor}). Indeed, the notion of a Frobenius monoidal functor only makes sense in the context of monoidal categories, whereas the notion of a Frobenius functor makes sense in the context of an arbitrary bicategory.

In this section, we show that at least Frobenius functors induce Frobenius monoidal functors between endomorphism categories. We recall that endomorphism categories are always monoidal categories, and hence, it makes sense to look for Frobenius monoidal functors between them.

\begin{definition}\label{definition:monoidal_end}
Let $\cC$ be an object in a strict bicategory.
We denote by $\End( \cC )$ its strict monoidal \emph{endomorphism category}. The objects are given by endomorphisms $\cC \rightarrow \cC$. The morphisms are given by $2$-morphisms of such endomorphisms. The tensor product is given by composition of endomorphisms. The tensor unit is given by $\id_{\cC}$.
\end{definition}

\begin{proposition}\label{proposition:induced_end_adjunctions}
Let
\[
\begin{tikzpicture}[mylabel/.style={fill=white}]
      \coordinate (r) at (3,0);
      \node (A) {$\cC$};
      \node (B) at ($(A) + (r)$) {$\cD$};
      \draw[->, out = 30, in = 180-30] (A) to node[mylabel]{$G$} (B);
      \draw[<-, out = -30, in = 180+30] (A) to node[mylabel]{$F$} (B);
\end{tikzpicture} 
\]
be morphisms in a strict bicategory.
Consider the functors between endomorphism categories
\[
\begin{tikzpicture}[mylabel/.style={fill=white}]
      \coordinate (r) at (5,0);
      \node (A) {$\End(\cC)$};
      \node (B) at ($(A) + (r)$) {$\End(\cD)$};
      \draw[->, out = 15, in = 180-15] (A) to node[mylabel]{$\Gamma := \Hom( F, G )$} (B);
      \draw[<-, out = -15, in = 180+15] (A) to node[mylabel]{$\Phi := \Hom( G, F )$} (B);
\end{tikzpicture} 
\]
given by
\[
\Gamma( A ) = \Hom(F,G)(A) = GAF
\]
and
\[
\Phi( X ) = \Hom(G,F)(X) = FXG
\]
for $A \in \End( \cC )$, $X \in \End( \cD )$.
Then an internal adjunction $G \dashv F$ induces an oplax-lax adjunction $\Gamma \dashv \Phi$ (see \Cref{remark:oplax_lax_adjunction}) with the following structure morphisms for $A,B \in \End( \cC )$, $X,Y \in \End( \cD )$:

\textbf{Lax structure on $\Phi$:}
\begin{align*}
\lax^{\Phi}_0 &:= \big(\id_{\cC} \xrightarrow{\unit^{G \dashv F}} FG = \Phi( \id_{\cD} )\big) \\
\lax^{\Phi}_{X,Y} &:= \big(\Phi(X)\Phi(Y) = FXGFYG \xrightarrow{FX \ast \counit^{G \dashv F} \ast YG} FXYG = \Phi(XY)\big)
\end{align*}

\textbf{Oplax structure on $\Gamma$:}
\begin{align*}
\oplax^{\Gamma}_0 &:= \big( \Gamma( \id_{\cC} ) = GF \xrightarrow{\counit^{G \dashv F}} \id_{\cD} \big) \\
\oplax^{\Gamma}_{A,B} &:= \big( \Gamma(AB) = GABF \xrightarrow{GA \ast \unit^{G \dashv F} \ast BF} GAFGBF = \Gamma(A)\Gamma(B)\big)
\end{align*}

\textbf{Unit of the adjunction $\Gamma \dashv \Phi$:}
\begin{align*}
    \unit^{\Gamma \dashv \Phi} := \big( A \xrightarrow{\unit^{G \dashv F} \ast A \ast \unit^{G \dashv F}} FGAFG = \Phi\Gamma A\big)
\end{align*}

\textbf{Counit of the adjunction $\Gamma \dashv \Phi$:}
\begin{align*}
    \counit^{\Gamma \dashv \Phi} := \big( \Gamma\Phi(X) = GFXGF \xrightarrow{\counit^{G \dashv F} \ast X \counit^{G \dashv F}} X\big)
\end{align*}
\end{proposition}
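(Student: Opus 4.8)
\textbf{Proof proposal for \Cref{proposition:induced_end_adjunctions}.}

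The plan is to verify directly that the stated data fit the definition of an oplax-lax adjunction, namely: (i) $\lax^\Phi$ makes $\Phi$ a lax monoidal functor on $\End(\cD)$; (ii) $\oplax^\Gamma$ makes $\Gamma$ an oplax monoidal functor on $\End(\cC)$; (iii) $\unit^{\Gamma\dashv\Phi}$ and $\counit^{\Gamma\dashv\Phi}$ satisfy the two zigzag identities; and (iv) the two coherence squares \eqref{eq:oplax_lax} and \eqref{eq:unit_lax} relating these four pieces of data commute. Throughout, the only inputs are the interchange law (strictness of the ambient bicategory), functoriality of $\ast$, and the two zigzag identities for $G\dashv F$. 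It is worth noting at the outset that because the tensor products on $\End(\cC)$ and $\End(\cD)$ are strict (composition of $1$-morphisms), the associativity and unitality axioms for the (op)lax structures will reduce to genuine equalities rather than coherence diagrams with associators.

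First I would check (i): lax associativity for $\Phi$ says that the two ways of going from $FXGFYGFZG$ to $FXYZG$ agree; unpacking, both sides equal $FX\ast\counit\ast Y\ast\counit\ast ZG$ by the interchange law, since $\counit$ applied in the two middle slots commute with each other (they act on disjoint ``regions'' $GF$). Lax unitality for $\Phi$ uses the zigzag identities: e.g.\ the left unit constraint asks that $FXG \xrightarrow{\unit\ast XG} FGFXG \xrightarrow{F\ast\counit\ast XG} FXG$ is the identity, which is exactly $(\id_F\ast\counit)\circ(\unit\ast\id_F)=\id_F$ whiskered by $XG$ on the right. The right unit constraint is handled symmetrically using the other zigzag identity whiskered by $FX$ on the left. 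Step (ii) is formally dual: oplax coassociativity for $\Gamma$ again reduces to a single expression $GA\ast\unit\ast B\ast\unit\ast BF$ via interchange, and oplax counitality uses the same two zigzag identities, whiskered appropriately.

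Next I would verify (iii), the zigzag identities for $\Gamma\dashv\Phi$. One of them asks that the composite
$\Phi(X)\xrightarrow{\unit^{\Gamma\dashv\Phi}_{\Phi(X)}}\Phi\Gamma\Phi(X)\xrightarrow{\Phi(\counit^{\Gamma\dashv\Phi}_X)}\Phi(X)$
is the identity; writing $\Phi(X)=FXG$ and expanding, this unwinds to a composite of four whiskered copies of $\unit$ and $\counit$ which collapses, via two applications of the $G\dashv F$ zigzag identities (one in an ``$F\cdots$'' slot, one in a ``$\cdots G$'' slot), to $\id_{FXG}$. The other zigzag identity for $\Gamma$ is symmetric. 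Finally, for (iv), the square \eqref{eq:oplax_lax} with $G,R$ replaced by $\Gamma,\Phi$ asks that two composites $AB\to \Phi(\Gamma A\cdot\Gamma B)$ agree; both sides expand, via interchange, into the same string $\unit\ast A\ast\unit\ast B\ast\unit$ (with the necessary whiskerings), so the square commutes; the unit square \eqref{eq:unit_lax} is immediate since $\lax^\Phi_0=\unit^{G\dashv F}$ and the relevant composite along the other side is $\Gamma$-counit followed by $\Phi$ applied to $\oplax^\Gamma_0$, which by a single zigzag identity is again $\unit^{G\dashv F}$.

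I expect the main obstacle to be purely bookkeeping: keeping track of exactly which slot each whiskered $\unit$ or $\counit$ lives in when several appear in one string, and confirming that the interchange law lets them be reordered/cancelled as claimed. There is no conceptual difficulty — every identity needed is either the interchange law or one of the two zigzag identities for $G\dashv F$ — so the write-up is essentially a sequence of string-diagram (or pasting-diagram) manipulations. A clean way to present it would be to draw the four relevant pasting diagrams and indicate the zigzag cancellations, rather than manipulating long formulas; alternatively one can simply cite that these are the standard ``mates/doctrinal adjunction'' constructions specialized to $\End$, but since the paper wants explicit structure morphisms, carrying out the verification in coordinates as above is the more self-contained route.
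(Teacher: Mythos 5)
Your proposal is correct and follows essentially the same route as the paper: a direct verification of the (op)lax structures, the zigzag identities for $\Gamma \dashv \Phi$, and the two oplax-lax compatibility squares, all by whiskering and the interchange law together with the zigzag identities for $G \dashv F$. The only caveat is bookkeeping precision: for instance, in the square \eqref{eq:oplax_lax} the path through $\lax^{\Phi}_{\Gamma A,\Gamma B}$ reduces to $\unit \ast A \ast \unit \ast B \ast \unit$ only after cancelling a middle $\unit$--$\counit$ pair by a zigzag identity (not by interchange alone), exactly as in the paper's computation.
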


\begin{proof}
Suppose given an internal adjunction $G \dashv F$.
We show that the unitality constraints hold for $\lax^{\Phi}$. We compute for $X \in \End( \cD )$:
\begin{align*}
&\phantom{=}\Phi(X) \xrightarrow{\lax_0^{\Phi} \ast \id_{\Phi(X)}} \Phi(\id_{\cD})\Phi(X) \xrightarrow{\lax^{\Phi}_{\id_{\cD},X}} \Phi(X) & &\\
&= FXG \xrightarrow{\unit^{G \dashv F} \ast FXG} FGFXG \xrightarrow{F \ast \counit^{G \dashv F} \ast XG} FXG & & \text{(def)}\\
&= \id_{\Phi(X)} & & \text{(zig zag)}
\end{align*}
The second unitality constraint holds by a similar computation.
Next, we show that the associativity constraint holds for $\lax^{\Phi}$. We compute for $X, Y, Z \in \End( \cD )$:
\begin{align*}
&\phantom{=}\Phi(X)\Phi(Y)\Phi(Z) \xrightarrow{\id \ast \lax_{Y,Z}} \Phi(X)\Phi(YZ)\xrightarrow{\lax_{X,YZ}} \Phi(XYZ) & &\\
&=FXGFYGFZG \xrightarrow{FXGFY\counit^{G \dashv F}ZG} FXGFYZG \xrightarrow{FX\counit^{G \dashv F} YZG} FXYZG & & \text{(def)}
\end{align*}
which is equal to
\begin{align*}
&\phantom{=}\Phi(X)\Phi(Y)\Phi(Z) \xrightarrow{\lax_{XY,Z}} \Phi(XY)\Phi(Z)\xrightarrow{\lax_{XY,Z}} \Phi(XYZ) & &\\
&=FXGFYGFZG \xrightarrow{FX\counit^{G\dashv F}YGFZG} FXYGFZG \xrightarrow{FXY\counit^{G \dashv F}ZG} FXYZG & & \text{(def)}
\end{align*}
by the interchange law. Thus, $\lax^{\Phi}$ defines a lax structure on $\Phi$.
Similarly, $\oplax^{\Gamma}$ defines an oplax structure on $\Gamma$.

Next, we compute the zigzag identities of $\Gamma \dashv \Phi$:
\begin{align*}
&\phantom{=}\Gamma( A ) \xrightarrow{\Gamma(\unit_A^{\Gamma \dashv \Phi})} \Gamma\Phi\Gamma(A) \xrightarrow{\counit_{\Gamma(A)}^{\Gamma \dashv \Phi}} \Gamma(A) \\
&= GAF \xrightarrow{G \ast \unit^{G \dashv F} \ast A \ast \unit^{G \dashv F} \ast F} GFGAFGF \xrightarrow{\counit^{G \dashv F} \ast GAF \ast \counit^{G \dashv F}} GAF \\
&= \id_{\Gamma(A)}
\end{align*}
where the last equality holds due to the zigzag identity of $G \dashv F$. Similarly, the other zigzag identity holds for $\Gamma \dashv \Phi$.

Last, we compute the compatibilities \Cref{eq:unit_lax} and \Cref{eq:oplax_lax} that turn $\Gamma \dashv \Phi$ into an oplax-lax adjunction:

\begin{align*}
&\phantom{=} \id_{\cC} \xrightarrow{\unit_{\id_{\cC}}^{\Gamma \dashv \Phi}} \Phi\Gamma( \id_{\cC} ) \xrightarrow{\Phi( \oplax_0^{\Gamma} )} \Phi( \id_{\cC} ) & \\
&= \id_{\cC} \xrightarrow{\unit^{G \dashv F} \ast \unit^{G \dashv F}} FGFG \xrightarrow{F \ast \counit^{G \dashv F} \ast G} FG & \\
&= \id_{\cC} \xrightarrow{\unit^{G \dashv F}} FG \xrightarrow{\unit^{G \dashv F} \ast F \ast G} FGFG \xrightarrow{F \ast \counit^{G \dashv F} \ast G} FG & \text{(interchange)}\\
&= \id_{\cC} \xrightarrow{\unit^{G \dashv F}} FG \xrightarrow{\id_{F} \ast G} FG & \text{(zigzag)}\\
&= \id_{\cC} \xrightarrow{\lax_0^{\Phi}} FG & \text{(def)}
\end{align*}
and
\begin{align*}
&\phantom{=} AB \xrightarrow{\unit^{\Gamma \dashv \Phi}_A \ast \unit^{\Gamma \dashv \Phi}_B} \Phi\Gamma(A) \Phi\Gamma(B) \xrightarrow{\lax^{\Phi}_{\Gamma(A), \Gamma(B)}} \Phi( \Gamma(A)\Gamma(B) ) & \\
&= AB \xrightarrow{\unit^{G \dashv F} \ast A \ast (\unit^{G \dashv F} \ast \unit^{G \dashv F}) \ast B \ast \unit^{G \dashv F}} FGAFG FGBFG \\&\phantom{= AB}\xrightarrow{FGA(F \ast \counit^{G \dashv F} \ast G)BFG} FGAFGBFG & \text{(def)}\\
&= AB \xrightarrow{\unit^{G \dashv F} \ast A \ast \unit^{G \dashv F} \ast B \ast \unit^{G \dashv F}} FGAFGBFG & \text{(zigzag)}\\
&= AB \xrightarrow{\unit^{G \dashv F} \ast AB \ast \unit^{G \dashv F}} FGABFG \xrightarrow{FGA \ast \unit^{G \dashv F} \ast BFG} FGAFGBFG & \text{(interchange)}\\
&= AB \xrightarrow{\unit^{\Gamma \dashv \Phi}_{AB}} \Phi\Gamma(AB) \xrightarrow{\Phi( \oplax^{\Gamma}_{A,B} )} \Phi( \Gamma(A)\Gamma(B) )
\end{align*}
This proves the claim.
\end{proof}

\begin{theorem}[A relationship between Frobenius monoidal functors and Frobenius functors]\label{theorem:relation_frobenius_monoidal_and_ambi}
In the context of \Cref{proposition:induced_end_adjunctions}: if $G$ is a Frobenius functor, i.e., if we have an internal ambiadjunction $F \dashv G \dashv F$, then $(\Phi, \lax^{\Phi}, \oplax^{\Phi})$ and $(\Gamma, \lax^{\Gamma}, \oplax^{\Gamma})$ are Frobenius monoidal functors.
\end{theorem}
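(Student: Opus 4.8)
By \Cref{proposition:induced_end_adjunctions}, the adjunction $G \dashv F$ already equips $\Phi$ with the lax structure $\lax^\Phi$ and $\Gamma$ with the oplax structure $\oplax^\Gamma$. Given the ambiadjunction $F \dashv G \dashv F$, the adjunction $F \dashv G$ also fits the hypotheses of \Cref{proposition:induced_end_adjunctions} \emph{with the roles of $G$ and $F$ swapped}: applying that proposition to $F \dashv G$ produces an oplax-lax adjunction $\Phi \dashv \Gamma$, hence an oplax structure $\oplax^\Phi$ on $\Phi$ (from $F \dashv G$ seen as ``$G' := F \dashv F' := G$'') and a lax structure $\lax^\Gamma$ on $\Gamma$. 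Concretely, $\oplax^\Phi_0 = (\Phi(\id_\cD) = FG \xrightarrow{\counit^{F\dashv G}} \id_\cC)$ and $\oplax^\Phi_{X,Y} = (FXYG \xrightarrow{FX \ast \unit^{F \dashv G} \ast YG} FXGFYG)$, and dually for $\lax^\Gamma$. So both $\Phi$ and $\Gamma$ carry a lax and an oplax structure; what remains is to verify the two Frobenius compatibility diagrams \eqref{frobmon1} and \eqref{frobmon2} for each.

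\textbf{Carrying it out for $\Phi$.} Fix $X, Y, Z \in \End(\cD)$. Both legs of \eqref{frobmon1} are $2$-morphisms $FXGFYZG \to FXYGFZG$ (after expanding $\Phi$), built from $\counit^{G\dashv F}$ (the piece of $\lax^\Phi$) and $\unit^{F\dashv G}$ (the piece of $\oplax^\Phi$). The plan is to expand each composite using the definitions from \Cref{proposition:induced_end_adjunctions} and its mirror, then reduce both sides to the same normal form using only the interchange law (to slide the $\counit^{G\dashv F}$ and $\unit^{F\dashv G}$ past one another, since they act on disjoint tensor factors $F\,X\,G|F\,Y|Z\,G$ etc.) and a single zigzag identity. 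The point is that the composite $GF \xrightarrow{\unit^{F\dashv G} \text{ or rather its relative}} $ never actually needs to be collapsed: after the interchange moves, one leg factors through $FXG(FYGF)ZG$ and the other through $FX(GFYG F)ZG$ in a way that both equal $FX \ast (\counit^{G\dashv F}\!\! \ast\!\! \unit^{F\dashv G}) \ast ZG$ composed appropriately — and crucially \emph{no} relation between the two adjunctions (no compatibility of $F \dashv G$ with $G \dashv F$) is invoked, only the interchange law. Diagram \eqref{frobmon2} is handled by the symmetric computation, sliding the operators in the opposite order.

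\textbf{Deducing the statement for $\Gamma$.} Rather than repeat the calculation, I would observe that $\Gamma$ and $\Phi$ are related by a duality: the ambiadjunction $F \dashv G \dashv F$ in a strict bicategory $\mathcal{B}$ is the same as an ambiadjunction $G \dashv F \dashv G$ read in $\mathcal{B}^{\mathrm{co}}$ (reversing $2$-morphisms) or, more simply, the assignment $(\cC, \cD, G, F) \mapsto (\cD, \cC, F, G)$ with the two adjunctions interchanged sends the $\Phi$-statement to the $\Gamma$-statement. Indeed under this symmetry $\Hom(G,F)$ becomes $\Hom(F,G)$, i.e.\ $\Phi \leftrightarrow \Gamma$, and $\lax^\Phi, \oplax^\Phi$ get exchanged with $\oplax^\Gamma, \lax^\Gamma$; since being a Frobenius monoidal functor is self-dual under swapping the lax and oplax structures together with reversing the ambient $2$-cells, the Frobenius compatibility for $\Gamma$ follows formally from that for $\Phi$.

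\textbf{Main obstacle.} The only real work is the bookkeeping in the $\Phi$-computation: keeping track of which $\counit$/$\unit$ belongs to which of the two adjunctions, and organizing the interchange-law moves so that both legs of \eqref{frobmon1}--\eqref{frobmon2} land on a common expression. I expect no conceptual difficulty — in particular, I expect \emph{not} to need any interaction axiom between the adjunctions $G\dashv F$ and $F\dashv G$ beyond their separate zigzag identities — but the string of horizontal/vertical composites is long enough that presenting it cleanly (ideally via string diagrams, or via a short sequence of labelled equalities as in the proof of \Cref{proposition:induced_end_adjunctions}) is where care is required.
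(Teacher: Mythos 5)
Your proposal is correct and takes essentially the same route as the paper: the oplax structure on $\Phi$ (and lax structure on $\Gamma$) comes from applying \Cref{proposition:induced_end_adjunctions} to the second adjunction $F \dashv G$, both Frobenius diagrams for $\Phi$ collapse because the relevant $\counit^{G\dashv F}$ and $\unit^{F\dashv G}$ act on disjoint parts of the composite and so slide past each other by the interchange law alone (no interaction axiom between the two adjunctions), and the statement for $\Gamma$ follows by the same swap symmetry the paper invokes. One cosmetic quibble: the "single zigzag identity" you mention is not actually needed in the Frobenius check (zigzags only enter in establishing the (op)lax structures and the adjunction in \Cref{proposition:induced_end_adjunctions}), as your own later remark that only the interchange law is invoked already indicates.
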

\begin{proof}
By symmetry, it suffices to prove the claim for $\Phi$.
We show that $\Phi$ satisfies \Cref{frobmon1}. For $X,Y,Z \in \End( \cD )$, we have:
\begin{align*}
&\phantom{=} \Phi(X) \Phi(YZ) \xrightarrow{\Phi(X) \ast \oplax^{\Phi}_{Y,Z}} \Phi(X)\Phi(Y)\Phi(Z) \xrightarrow{\lax^{\Phi}_{X,Y} \ast \Phi(Z)} \Phi(XY)\Phi(Z) \\
&= FXG FYZG \xrightarrow{FXG FY\unit^{F \dashv G} ZG} FXG FYG FZG \xrightarrow{FX\counit^{G \dashv F}YG ZG} FXYG FZG \\
&= FXG FYZG \xrightarrow{FX\counit^{G \dashv F}YZG} FXYZG \xrightarrow{FXY\unit^{F \dashv G} ZG} FXYG FZG \\
&= \Phi(X) \Phi(YZ) \xrightarrow{\lax^{\Phi}_{X,YZ}} \Phi(XYZ) \xrightarrow{\oplax^{\Phi}_{XY,Z}} \Phi(XY)\Phi(Z)
\end{align*}
In this computation, the second equation follows by the interchange law.
The computation that $\Phi$ satisfies \Cref{frobmon2} is similar.
\end{proof}

\begin{example}\label{ex-classical-Frob}
Every monoidal category $\cC$ can be regarded as a bicategory $\mathbf{B}(\cC)$ with a single object \cite{JY21}*{Example 2.1.19}: $1$-morphisms of $\mathbf{B}(\cC)$ are the objects of $\cC$, their composition is given by the tensor product of $\cC$, and $2$-morphisms of $\mathbf{B}(\cC)$ are the morphisms of $\cC$.
An internal ambiadjunction in $\mathbf{B}(\cC)$ corresponds to objects $A, B \in \cC$ where $B$ is both a left and a right dual of $A$.
Diagrammatically, we have
\begin{center}
    \begin{tikzpicture}[ baseline=(A)]
          \coordinate (r) at (3.5,0);
          \node (A) {$\bullet$};
          \node (B) at ($(A) + (r)$) {$\bullet$};
          \draw[->, out = 30, in = 180-30] (A) to node[above]{$A$} (B);
          \draw[<-, out = -30, in = 180+30] (A) to node[below]{$B$} (B);
          \node[rotate=90] (t) at ($(A) + 0.45*(r)$) {$\vdash$};
          \node[rotate=-90] (t) at ($(A) + 0.55*(r)$) {$\vdash$};
    \end{tikzpicture}  
\end{center}
where $\bullet$ denotes the single object of $\mathbf{B}(\cC)$.

Now, we have that $\End_{\mathbf{B}(\cC)}( \bullet ) \simeq \cC$ as monoidal categories. Thus, when we apply \Cref{theorem:relation_frobenius_monoidal_and_ambi} in this situation, we obtain an ambiadjunction
\begin{center}
    \begin{tikzpicture}[ baseline=(A)]
          \coordinate (r) at (3.5,0);
          \node (A) {$\cC$};
          \node (B) at ($(A) + (r)$) {$\cC$};
          \draw[->, out = 30, in = 180-30] (A) to node[above]{$\Phi( C ) = A \otimes C \otimes B$} (B);
          \draw[<-, out = -30, in = 180+30] (A) to node[below]{$\Psi( C ) = B \otimes C \otimes A$} (B);
          \node[rotate=90] (t) at ($(A) + 0.45*(r)$) {$\vdash$};
          \node[rotate=-90] (t) at ($(A) + 0.55*(r)$) {$\vdash$};
    \end{tikzpicture}  
\end{center}
in which both functors are Frobenius monoidal functors. Concretely, the lax structure on $\Phi$ is given by
\[
A \otimes C \otimes B \otimes A \otimes D \otimes B \xrightarrow{A \otimes C \otimes \ev \otimes D \otimes B} A \otimes C \otimes D \otimes B
\]
and the oplax structure on $\Phi$ is given by
\[
A \otimes C \otimes D \otimes B
\xrightarrow{A \otimes C \otimes \coev \otimes D \otimes B} 
A \otimes C \otimes B \otimes A \otimes D \otimes B
\]
for $C,D \in \cC$, where $\ev: B \otimes A \rightarrow \one$ denotes the evaluation for $B$ as a left dual of $A$ and $\coev: \one \rightarrow B \otimes A$ the coevaluation for $B$ as a right dual for $A$.
In particular, the objects $A \otimes B$ and $B \otimes A$ can be regarded as Frobenius algebras internal to $\cC$.
\end{example}

\subsection{A context for the projection formulas in an ambiadjunction}
\label{sec:context-proj}

It is the goal of this short section to introduce and explain \Cref{context:main}, which will be the context for our main theorems in \Cref{sec:F-Frob-mon} and \Cref{section:functors_on_centers}.

For an oplax-lax adjunction $G\dashv R$ (see \Cref{remark:oplax_lax_adjunction}), consider the following natural transformations:
\begin{gather}\label{eq:proj}
\projl{\objCa}{\objDx}^{R, G \dashv R}\colon \objCa \otimes \rightadj\objDx \xrightarrow{\unit_{\objCa}^{G \dashv R} \otimes \id} \rightadj\mainfun(\objCa) \otimes \rightadj\objDx \xrightarrow{\lax_{\mainfun\objCa,\objDx}^R} \rightadj( \mainfun\objCa \otimes \objDx )
\\\label{eq:projr}
\projr{\objDx}{\objCa}^{R, G \dashv R}\colon \rightadj\objDx \otimes \objCa \xrightarrow{\id \otimes \unit_{\objCa}^{G \dashv R} } \rightadj\objDx \otimes \rightadj\mainfun(\objCa)  \xrightarrow{\lax_{X,GA}^R} \rightadj(  \objDx \otimes \mainfun\objCa )
\end{gather}
for $A \in \cC$, $X \in \cD$.
We call $\projlnoarg^{R, G \dashv R}$ and $\projrnoarg^{R, G \dashv R}$ the (\emph{left} or \emph{right}, respectively) \emph{projection formula morphism (of $R$ in the context of $G \dashv R$)}.

Dually, for an oplax-lax adjunction $L\dashv G$, consider the following natural transformations:
\begin{gather}\label{eq:iproj}
\projl{\objCa}{\objDx}^{L, L \dashv G}\colon \leftadj( \mainfun\objCa \otimes \objDx ) \xrightarrow{\oplax_{\mainfun\objCa,\objDx}^L}\leftadj\mainfun(\objCa) \otimes \leftadj\objDx \xrightarrow{\counit_{\objCa}^{L \dashv G} \otimes \id}  \objCa \otimes \leftadj\objDx\\\label{eq:iprojr}
\projr{\objDx}{\objCa}^{L, L \dashv G}\colon \leftadj(  \objDx \otimes \mainfun\objCa )  \xrightarrow{\oplax_{\mainfun\objCa,\objDx}^L}  \leftadj\objDx \otimes \leftadj\mainfun(\objCa) \xrightarrow{\id \otimes \counit_{\objCa}^{L \dashv G} } \leftadj\objDx \otimes \objCa
\end{gather}
for $A \in \cC$, $X \in \cD$.
We call $\projlnoarg^{L, L \dashv G}$ and $\projrnoarg^{L, L \dashv G}$ the (\emph{left} or \emph{right}, respectively) \emph{projection formula morphism (of $L$ in the context of $L \dashv G$)}.

\begin{definition}\label{def:proj-formulas-hold}
We say that the 
\begin{itemize}
    \item \emph{left projection formula holds for $R$ (in the context of $G \dashv R$)} if $\projlnoarg^{R, G \dashv R}$ is an isomorphism,
    \item \emph{right projection formula holds for $R$ (in the context of $G \dashv R$)} if $\projrnoarg^{R, G \dashv R}$ is an isomorphism,
    \item \emph{projection formula holds for $R$ (in the context of $G \dashv R$)} if both $\projlnoarg^{R, G \dashv R}$ and $\projrnoarg^{R, G \dashv R}$ are isomorphisms.
\end{itemize}
Similarly, we define that the \emph{(left/right) projection formula holds for $L$ (in the context of $L \dashv G$)} depending on whether $\projlnoarg^{L, L \dashv G}$ and $\projrnoarg^{L, L \dashv G}$ are isomorphisms.  
\end{definition}

\begin{context}\label{context:main}
We describe a context which we will use for our main theorems in \Cref{sec:F-Frob-mon} and \Cref{section:functors_on_centers}:
\begin{itemize}
    \item $G: \cC \rightarrow \cD$ is a strong monoidal functor.
    \item $F: \cD \rightarrow \cC$ is a functor.
    \item We have an ambiadjunction $F\dashv G\dashv F$ of categories (i.e., internal to $\Cat$).
    \item We regard $G \dashv F$ as a monoidal adjunction and $F \dashv G$ as an opmonoidal adjunction by \Cref{remark:strong_implies_monoidal}. In particular, we both have a lax and an oplax structure on $F$.
\end{itemize}
\end{context}

\begin{definition}\label{definition:mutual_inverses}
    In \Cref{context:main}, we say that the
    \emph{projection formulas hold for $F$ regarded both as a left and as a right adjoint} if the morphisms $\projlnoarg^{F, F \dashv G}$, $\projrnoarg^{F, F \dashv G}$, $\projlnoarg^{F, G \dashv F}$, and $\projrnoarg^{F, G \dashv F}$ are isomorphisms. We say that the
    \emph{left projection formula morphisms are mutual inverses} if we have 
    \[\projlnoarg^{F, F \dashv G} = (\projlnoarg^{F, G \dashv F})^{-1}.\]
    We say that the \emph{right projection formula morphisms are mutual inverses} if we have 
    \[\projrnoarg^{F, F \dashv G} = (\projrnoarg^{F, G \dashv F})^{-1}.\]
    We say that the \emph{projection formula morphisms are mutual inverses} if both the left and the right projection formulas are mutually inverses.
\end{definition}

\begin{lemma}\label{lemma:braided_context}
In \Cref{context:main}, assume that the right or left projection formula morphisms are mutual inverses.  Additionally, assume that $(\cC,\Psi^\cC)$ and $(\cD,\Psi^\cD)$ are braided such that $G\colon \cC\to \cD$ is a braided functor.
Then the projection formula morphisms are mutual inverses.
\end{lemma}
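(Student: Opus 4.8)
The plan is to use the braidings of $\cC$ and $\cD$ to identify each left projection formula morphism with the corresponding right one up to conjugation by an isomorphism, so that ``being mutual inverses'' is transported between the two sides. The key preliminary fact is that, in \Cref{context:main}, if $G$ is a braided functor then $F$ equipped with the lax structure $\lax^F$ coming from $G\dashv F$ is lax braided, and $F$ equipped with the oplax structure $\oplax^F$ coming from $F\dashv G$ is oplax braided (in the sense of \Cref{def:braiding-lax-oplax}). This is a doctrinal-adjunction statement: since $G$ is strong monoidal, $\lax^F_{X,Y}$ is the mate along $G\dashv F$ of the composite of $\counit^{G\dashv F}_X\otimes\counit^{G\dashv F}_Y$ with the inverse of the monoidal structure isomorphism of $G$, and the lax-braided square \eqref{eq:Frob-braided-lax} for $F$ passes under this adjunction to an instance of naturality of $\Psi^{\cD}$ with respect to $\counit^{G\dashv F}$ combined with the hypothesis that $G$ is braided; dually, $\oplax^F_{X,Y}$ is the mate along $F\dashv G$ of $\unit^{F\dashv G}_X\otimes\unit^{F\dashv G}_Y$ transported through $G$, and $F$ is oplax braided for the same reason. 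I expect this to be the main obstacle (if no convenient reference is available, this mate computation supplies it); everything else is bookkeeping with naturality of the braidings.

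Granting this, I would verify the two identities
\begin{align*}
F(\Psi^{\cD}_{\mainfun\objCa,\objDx})\circ\projl{\objCa}{\objDx}^{F,G\dashv F} &= \projr{\objDx}{\objCa}^{F,G\dashv F}\circ\Psi^{\cC}_{\objCa,F\objDx},\\
\Psi^{\cC}_{\objCa,F\objDx}\circ\projl{\objCa}{\objDx}^{F,F\dashv G} &= \projr{\objDx}{\objCa}^{F,F\dashv G}\circ F(\Psi^{\cD}_{\mainfun\objCa,\objDx})
\end{align*}
for all $\objCa\in\cC$, $\objDx\in\cD$. Each is a short computation: unfold the definition \eqref{eq:proj}--\eqref{eq:iprojr} of the projection formula morphism, slide the braiding past the relevant (co)unit using naturality of $\Psi^{\cC}$, and then apply the lax-braided square \eqref{eq:Frob-braided-lax} (resp.\ the oplax-braided square \eqref{eq:Frob-braided-oplax}) for $F$ at the pair $(\mainfun\objCa,\objDx)$. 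Since $\Psi^{\cC}$ and $\Psi^{\cD}$ are isomorphisms, these identities exhibit $\projl{\objCa}{\objDx}^{F,G\dashv F}$ as the conjugate of $\projr{\objDx}{\objCa}^{F,G\dashv F}$ by $\Psi^{\cC}_{\objCa,F\objDx}$ and $F(\Psi^{\cD}_{\mainfun\objCa,\objDx})$, and similarly on the $F\dashv G$ side.

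Finally, assume the right projection formula morphisms are mutual inverses, so that $\projr{\objDx}{\objCa}^{F,F\dashv G}$ and $\projr{\objDx}{\objCa}^{F,G\dashv F}$ are mutually inverse isomorphisms. Solving the first identity for $\projl{\objCa}{\objDx}^{F,G\dashv F}$ and the second for $\projl{\objCa}{\objDx}^{F,F\dashv G}$ and composing them in both orders, the conjugating isomorphisms $\Psi^{\cC}$ and $F(\Psi^{\cD})$ cancel and one obtains $\projl{\objCa}{\objDx}^{F,F\dashv G}\circ\projl{\objCa}{\objDx}^{F,G\dashv F}=\id$ and $\projl{\objCa}{\objDx}^{F,G\dashv F}\circ\projl{\objCa}{\objDx}^{F,F\dashv G}=\id$; hence the left projection formula morphisms are mutual inverses as well, and by \Cref{definition:mutual_inverses} all four projection formula morphisms are isomorphisms with both pairs mutual inverses. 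The case where one instead starts from the left projection formula morphisms being mutual inverses is handled symmetrically, since the two displayed identities are equalities of isomorphisms that may be read in either direction.
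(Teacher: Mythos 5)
Your proposal is correct and takes essentially the same route as the paper: the paper simply cites \cite{FLP2}*{Proposition~2.5 and Lemma~3.13} for the fact that the left and right projection formula morphisms are conjugate to one another by the braidings (for both adjunctions $G\dashv F$ and $F\dashv G$), and then transfers the mutual-inverse property exactly as you do. Your two displayed identities (stated with the braidings in the opposite, but equally valid, direction to the formula quoted in the paper) together with the mate/doctrinal-adjunction argument that $F$ is lax braided for $\lax^F$ and oplax braided for $\oplax^F$ are precisely the content the paper outsources to the earlier reference, and your cancellation step is sound because the same conjugating isomorphisms $\Psi^{\cC}_{A,FX}$ and $F(\Psi^{\cD}_{GA,X})$ appear in both identities.
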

\begin{proof}
By \cite{FLP2}*{Proposition~2.5 and Lemma~3.13}, the left projection formula morphism is obtained from the right projection formula morphism via the formula
\[
R( \Psi^{\cD}_{X,FA} ) \circ \projr{X}{A} = \projl{A}{X} \circ \Psi^{\cC}_{FX,A}
\]
for $A \in \cC$, $X \in \cD$.
This formula holds for both the monoidal adjunction $G\dashv F$ and the opmonoidal adjunction $F\dashv G$ by duality. If the right (left, resp.) projection formula morphisms are mutual inverses, then it follows that the left (right, resp.) projection formula morphisms are also mutual inverses.
\end{proof}

\subsection{When do monoidal functors induce ambiadjunctions of (bi)module categories?}\label{section:monoidal_induce_ambi_of_bim}

Let $\cC$ and $\cD$ be strict monoidal categories and let $G: \cC \rightarrow \cD$ be a strong monoidal functor. 

The goal of this section is to give a sufficient criterion for $G$ to become a Frobenius functor internal to $\rMod{\cC}$ and $\BiMod{\cC}$. The results of this section will later be of central importance in proving the main results of our paper.

Recall that a \emph{right $\cC$-module category} is a category $\cM$ equipped with a right action functor
$\triangleleft: \cM \times \cC \rightarrow \cM$
and an isomorphism 
$M \triangleleft (A \otimes B) \xrightarrow{\sim} (M \triangleleft A) \triangleleft B$
natural in $M \in \cM$, $A, B \in \cC$ which satisfy certain coherences, cf.\ \cite{EGNO}*{Section~7.1}.

A \emph{right $\cC$-module functor} between $\cC$-modules $\cM$ and $\cN$ is
a functor $F: \cM \rightarrow \cN$ between the underlying categories equipped with an isomorphism (called the \emph{right lineator})
\begin{align*}
\rlineator_{M,A}: F(M \triangleleft A) \xrightarrow{\sim} F(M) \triangleleft A
\end{align*}
natural in $A \in \cC$, $M \in \cM$.
Again, these data satisfy certain coherences, cf. \cite{EGNO}*{Definition~7.2.1} (for the left version). We denote a right $\cC$-module functor via the pair 
$(F, \rlineator)$.
A right $\cC$-module transformation between two right $\cC$-module functors is given by a natural transformation that commutes with the right lineators. We obtain a strict bicategory $\rMod{\cC}$ of right $\cC$-modules. Analogously, we obtain a strict bicategory of left $\cC$-modules $\lMod{\cC}$. We denote the left action functor of a left $\cC$-module $\cM$ by $\triangleright: \cC \times \cM \rightarrow \cM$.

Recall that a \emph{$\cC$-bimodule category} is a category $\cM$ equipped both with the structure of a left and right $\cC$-module together with an isomorphism 
$(A \triangleright M) \triangleleft B\isomorph A \triangleright (M \triangleleft B)$
natural in $A,B \in \cC$, $M \in \cM$. These data satisfy certain coherences, see \cite{FLP2}*{Definition A.15} for details.

In this paper, we will only make use of the following two kinds of $\cC$-(bi)module categories:

\begin{example}
The monoidal category $\cC$ can be equipped with left and right action functors given by the tensor product, i.e., 
\begin{align*}
A \triangleright B = A \otimes B & & B \triangleleft A = B \otimes A
\end{align*}
for $A, B \in \cC$. These actions and the associator turn $\cC$ into a $\cC$-(bi)module category.
\end{example}

\begin{definition}\label{def:DG}
If $G: \cC \rightarrow \cD$ is a strong monoidal functor, then $\cD$ can be equipped with left and right action functors given by 
\begin{align*}
A \triangleright X = G(A) \otimes X & & X \triangleleft A = X \otimes G(A)
\end{align*}
for $A \in \cC$, $X \in \cD$, see \cite{FLP2}*{Example A.22}. We denote\footnote{This notation is not to be confused with the equivariantization of a category with a group action, which is sometimes also denoted similarly.} the resulting $\cC$-(bi)module category by $\cD^G$. 
\end{definition}

A \emph{$\cC$-bimodule functor} between $\cC$-bimodules $\cM$ and $\cN$ is
a functor $F: \cM \rightarrow \cN$ between the underlying categories equipped with isomorphisms (called \emph{left/right lineators})
\begin{align*}
\llineator_{A,M}: F(A \triangleright M) \xrightarrow{\sim} A \triangleright F(M) & & \rlineator_{M,A}: F(M \triangleleft A) \xrightarrow{\sim} F(M) \triangleleft A
\end{align*}
natural in $A \in \cC$, $M \in \cM$.
These lineators turn $F$ into both a left and right $\cC$-module functor and satisfy a further coherence, see \cite{FLP2}*{Definition A.16} for details.
We denote a $\cC$-bimodule functor via the triple 
\[(F, \llineator, \rlineator).\]
A transformation between two $\cC$-bimodule functors is given by a natural transformation that commutes with both the left and the right lineators.
We obtain a strict bicategory $\BiMod{\cC}$ of $\cC$-bimodule categories.

\begin{example}\label{example:G_as_bimodule_functor}
A strong monoidal functor $G: \cC \rightarrow \cD$ gives rise to the $\cC$-bimodule functor
\[
\cC \xrightarrow{(G, \oplax^G, \oplax^G)} \cD^G,    
\]
i.e., its left and right lineators are given by the oplax structure of $G$:
\[
\llineator^G_{A,B}=\rlineator^G_{A,B}=\oplax^G_{A,B}\colon G(A\otimes B) \to GA\otimes GB.
\]
\end{example}

\begin{example}[\cite{FLP2}*{Section 3.4}]\label{example:R_as_module_functor}
If $G\dashv R$ is a monoidal adjunction such that the right projection formula holds for $R$ (see \Cref{def:proj-formulas-hold}), then 
\[
\cD^G\xrightarrow{(R, \rlineator^R)} \cC
\]
is a right $\cC$-module functor, with right lineator given by
\begin{align*}
\rlineator^R_{X,A}&=(\projr{X}{A}^{R, G \dashv R})^{-1}\colon  R(X\otimes GA)\to RX\otimes A
\end{align*}
for $A \in \cC$, $X \in \cD$.
Likewise, if the left projection formula holds for $R$, then
\[
\cD^G\xrightarrow{(R, \llineator^R)} \cC
\]
is a left $\cC$-module functor, with left lineator given by
\begin{align*}
\llineator^R_{A,X}=(\projl{A}{X}^{R, G \dashv R})^{-1}\colon  R(GA\otimes X)\to A\otimes RX
\end{align*}
for $A \in \cC$, $X \in \cD$.
If both the left and right projection formula holds for $R$, then we get a $\cC$-bimodule functor \cite{FLP2}*{Proposition~3.22}.
\end{example}

\begin{example}[Dual to \Cref{example:R_as_module_functor}]\label{example:L_as_module_functor}
If $L\dashv G$ is an opmonoidal adjunction such that the right projection formula holds for $L$, then
\[
\cD^G\xrightarrow{(L, \rlineator^L)} \cC
\]
is a right $\cC$-module functor, with right lineator given by
\begin{align*}
\rlineator^L_{X,A}=(\projr{X}{A}^{L, L \dashv G})\colon  L(X\otimes GA)\to LX\otimes A
\end{align*}
for $A \in \cC$, $X \in \cD$.
Likewise, if the left projection formula holds for $L$, then
\[
\cD^G\xrightarrow{(L, \llineator^L)} \cC
\]
is a left $\cC$-module functor, with left lineator given by
\begin{align*}
\llineator^L_{A,X}=(\projl{A}{X}^{L, L \dashv G})\colon  L(GA\otimes X)\to A\otimes LX
\end{align*}
for $A \in \cC$, $X \in \cD$.
If both the left and right projection formula holds for $L$, then we get a $\cC$-bimodule functor.
\end{example}

\begin{lemma}\label{lemma:module_adjunction_R}
Let $G\dashv R$ be a monoidal adjunction. 
\begin{enumerate}
    \item If the right projection formula holds for $R$, then we have an adjunction in $\rMod{\cC}$: \[(\cC \xrightarrow{(G, \oplax^G)} \cD^G) \dashv
(\cD^G\xrightarrow{(R, \rlineator^R)} \cC).\]
\item If the left projection formula holds for $R$, then we have an adjunction in $\lMod{\cC}$: \[(\cC \xrightarrow{(G, \oplax^G)} \cD^G) \dashv
(\cD^G\xrightarrow{(R, \llineator^R)} \cC).\]
\item If the projection formula holds for $R$, then we have an adjunction in $\BiMod{\cC}$: \[(\cC \xrightarrow{(G, \oplax^G, \oplax^G)} \cD^G) \dashv
(\cD^G\xrightarrow{(R, \llineator^R, \rlineator^R)} \cC).\]
\end{enumerate}
\end{lemma}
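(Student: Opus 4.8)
The plan is to observe that in all three cases the two candidate $1$-morphisms are already available — $G$ as a (bi)module functor by \Cref{example:G_as_bimodule_functor}, and $R$, with the inverse projection formula morphisms as its lineators, as a (bi)module functor by \Cref{example:R_as_module_functor} — so the only thing left to verify is that the unit $\unit=\unit^{G\dashv R}$ and counit $\counit=\counit^{G\dashv R}$ of the underlying categorical adjunction lift to $2$-morphisms in the relevant bicategory. Once this is done the zigzag identities come for free: the forgetful $2$-functor $\rMod{\cC}\to\Cat$ (resp.\ $\lMod{\cC}\to\Cat$, $\BiMod{\cC}\to\Cat$) sends a module functor, resp.\ module transformation, to its underlying functor, resp.\ natural transformation, hence is injective on $2$-cells, and the zigzag identities already hold for $G\dashv R$ in $\Cat$.

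For part (1), the composite $RG\colon\cC\to\cC$ carries the right lineator $\rlineator^{RG}_{A,B}=\rlineator^R_{GA,B}\circ R(\oplax^G_{A,B})$ while $\id_\cC$ has trivial lineator, so $\unit$ being a right $\cC$-module transformation $\id_\cC\Rightarrow RG$ amounts to $\rlineator^R_{GA,B}\circ R(\oplax^G_{A,B})\circ\unit_{A\otimes B}=\unit_A\otimes\id_B$ for all $A,B\in\cC$. Since $\rlineator^R_{GA,B}=(\projr{GA}{B}^{R,G\dashv R})^{-1}$ and, by \eqref{eq:projr}, $\projr{GA}{B}^{R,G\dashv R}\circ(\unit_A\otimes\id_B)=\lax^R_{GA,GB}\circ(\unit_A\otimes\unit_B)$, this is equivalent to $R(\oplax^G_{A,B})\circ\unit_{A\otimes B}=\lax^R_{GA,GB}\circ(\unit_A\otimes\unit_B)$; using $\oplax^G=(\lax^G)^{-1}$ this is exactly the statement that $\unit$ is a monoidal natural transformation for the monoidal adjunction $G\dashv R$ of \Cref{remark:strong_implies_monoidal}. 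Dually, $GR\colon\cD^G\to\cD^G$ carries the right lineator $\rlineator^{GR}_{X,A}=\oplax^G_{RX,A}\circ G(\rlineator^R_{X,A})$, so $\counit$ being a right $\cC$-module transformation $GR\Rightarrow\id_{\cD^G}$ reduces to $\counit_{X\otimes GA}\circ G(\projr{X}{A}^{R,G\dashv R})=(\counit_X\otimes\id_{GA})\circ\oplax^G_{RX,A}$; expanding $\projr{X}{A}^{R,G\dashv R}$ via \eqref{eq:projr} and using naturality of $\lax^G$ in its second argument together with the triangle identity $\counit_{GA}\circ G(\unit_A)=\id_{GA}$, this follows from the monoidality of $\counit$ for $G\dashv R$ specialised to the tensor factor $GA$. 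This proves (1).

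Part (2) is the left-handed mirror of part (1): one replaces right lineators, right module functors and right projection formula morphisms throughout by their left counterparts, invokes the left-hand halves of \Cref{example:G_as_bimodule_functor} and \Cref{example:R_as_module_functor}, and runs the same computation. For part (3), when the full projection formula holds, \Cref{example:R_as_module_functor} (via \cite{FLP2}*{Proposition~3.22}) makes $R$ a $\cC$-bimodule functor $(R,\llineator^R,\rlineator^R)$ and $(G,\oplax^G,\oplax^G)$ is a $\cC$-bimodule functor; since a $\cC$-bimodule transformation is precisely a natural transformation compatible with both lineators, the computations in (1) and (2) already show that $\unit$ and $\counit$ are $\cC$-bimodule transformations, and the zigzag identities are again inherited from $\Cat$.

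I expect the main obstacle to be purely organisational: pinning down the composite (bi)module structures on $RG$ and $GR$, and keeping straight that $\rlineator^R$ is the \emph{inverse} of a projection formula morphism. Once this is in place, each module-transformation axiom collapses onto a monoidality identity for $\unit$ or $\counit$ that is already part of the data of the monoidal adjunction $G\dashv R$ — strongness of $G$ entering only through $\oplax^G=(\lax^G)^{-1}$ — so no essentially new computation is required.
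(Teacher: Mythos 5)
Your proposal is correct, and it is worth noting that the paper itself gives no computation here: its proof is a one-line citation of \cite{FLP2}*{Lemma~3.24} (restricted to the one-sided cases for parts (1) and (2)), so your argument supplies in-house the verification that the paper outsources. Your reduction is sound: the composite lineators on $RG$ and $GR$ are as you state, the unit condition collapses precisely to the compatibility square \eqref{eq:oplax_lax} of \Cref{remark:oplax_lax_adjunction} — equivalently, monoidality of $\unit$ for the monoidal adjunction $G\dashv R$, with strongness of $G$ entering only through $\oplax^G=(\lax^G)^{-1}$ — and the counit condition follows from monoidality of $\counit$, naturality of the oplax structure of $G$ in its second argument, and the triangle identity $\counit_{GA}\circ G(\unit_A)=\id_{GA}$; the zigzag identities are indeed inherited since a module transformation is determined by its underlying natural transformation. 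Two cosmetic remarks: where you invoke ``naturality of $\lax^G$'' you in fact use naturality of $\oplax^G$ (equivalent, as the two are mutually inverse), and in part (3) your computations only establish the transformation axioms for $\unit$ and $\counit$ — the additional bimodule coherence tying $\llineator^R$ to $\rlineator^R$ is not something you verify but is correctly delegated to \Cref{example:R_as_module_functor}, i.e.\ \cite{FLP2}*{Proposition~3.22}, just as the one-sided lineator coherences for $R$ are.
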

\begin{proof}
Follows from \cite{FLP2}*{Lemma 3.24} and its restriction to one-sided cases.
\end{proof}

\begin{lemma}[$\oop$-dual version of \Cref{lemma:module_adjunction_R}]\label{lemma:module_adjunction_L}
Let $L\dashv G$ be an opmonoidal adjunction.
\begin{enumerate}
    \item If the right projection formula holds for $L$, then we have an adjunction in $\rMod{\cC}$: \[(\cD^G\xrightarrow{(L, \rlineator^L)} \cC) \dashv
(\cC \xrightarrow{(G, \oplax^G, \oplax^G)} \cD^G).\]
\item If the left projection formula holds for $L$, then we have an adjunction in $\lMod{\cC}$: \[(\cD^G\xrightarrow{(L, \llineator^L)} \cC) \dashv
(\cC \xrightarrow{(G, \oplax^G, \oplax^G)} \cD^G).\]
\item If the projection formula holds for $L$, then we have an adjunction in $\BiMod{\cC}$: \[(\cD^G\xrightarrow{(L, \llineator^L, \rlineator^L)} \cC) \dashv
(\cC \xrightarrow{(G, \oplax^G, \oplax^G)} \cD^G).\]
\end{enumerate}
\end{lemma}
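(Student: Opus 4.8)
The plan is to deduce the lemma from \Cref{lemma:module_adjunction_R} by $\oop$-duality. Passing to opposite categories (reversing all morphisms but keeping the tensor products), a strong monoidal functor $G\colon\cC\to\cD$ becomes a strong monoidal functor $G^{\oop}\colon\cC^{\oop}\to\cD^{\oop}$; an opmonoidal adjunction $L\dashv G$ becomes a \emph{monoidal} adjunction $G^{\oop}\dashv L^{\oop}$, since taking opposites interchanges the two adjoints and the oplax structure $\oplax^L$ dualizes to the lax structure on $L^{\oop}$ that \Cref{remark:strong_implies_monoidal} produces; the $\cC$-bimodule $\cD^G$ becomes the $\cC^{\oop}$-bimodule $(\cD^{\oop})^{G^{\oop}}$; and, inspecting the defining composites \eqref{eq:projr} and \eqref{eq:iprojr}, the morphism $\projr{X}{A}^{L^{\oop}, G^{\oop}\dashv L^{\oop}}$ is, as an arrow, the morphism $\projr{X}{A}^{L, L\dashv G}$ (the unit of $G^{\oop}\dashv L^{\oop}$ being the counit of $L\dashv G$, and $\lax^{L^{\oop}}$ being $\oplax^L$), and similarly on the left. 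Hence the left, right, or two-sided projection formula holds for $L^{\oop}$ in the context $G^{\oop}\dashv L^{\oop}$ precisely when it holds for $L$ in the context $L\dashv G$.

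Granting these translations, I would apply \Cref{lemma:module_adjunction_R} to the data $(\cC^{\oop},\cD^{\oop},G^{\oop},L^{\oop})$: assuming the right projection formula for $L^{\oop}$, it gives an adjunction $(G^{\oop},\oplax^{G^{\oop}})\dashv(L^{\oop},\rlineator^{L^{\oop}})$ in $\rMod{\cC^{\oop}}$, and analogously in $\lMod{\cC^{\oop}}$ and $\BiMod{\cC^{\oop}}$ for the one-sided and two-sided hypotheses. This is then transported back along the canonical $2$-equivalence $(-)^{\oop}$ between $\rMod{\cC^{\oop}}$ and $(\rMod{\cC})^{\mathrm{co}}$, which sends a right $\cC$-module to its opposite category, a module functor to the functor carrying the inverse lineator, and reverses $2$-cells. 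Combined with the elementary observation that an adjunction $a\dashv b$ in a bicategory $\mathcal{B}^{\mathrm{co}}$ is exactly an adjunction $b\dashv a$ in $\mathcal{B}$, this yields the asserted adjunction $(L,\rlineator^L)\dashv(G,\oplax^G)$ in $\rMod{\cC}$, and likewise for parts~(2) and~(3). Finally one checks that the two inversions of the lineator — the one built into \Cref{example:R_as_module_functor} and the one built into $(-)^{\oop}$ — cancel, so that the right lineator obtained for $L$ is indeed $\rlineator^L_{X,A}=\projr{X}{A}^{L, L\dashv G}$ (and similarly on the left), matching \Cref{example:L_as_module_functor}; thus the adjunctions produced are the stated ones.

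There is no genuine difficulty here: the statement is formally $\oop$-dual to \Cref{lemma:module_adjunction_R} (equivalently, to \cite{FLP2}*{Lemma 3.24} and its one-sided restrictions), and the entire argument is bookkeeping. The step needing the most care is keeping the superimposed dualities straight — the opposite of a category, the passage between a bicategory and its $\mathrm{co}$-dual, and the two resulting interchanges of left and right adjoint — so as to land on the one-sided and two-sided adjunctions with precisely the lineators of \Cref{example:L_as_module_functor} rather than their inverses or the opposite-handed versions. As an alternative that sidesteps these dualities, one can argue directly: endow $L$ with the lineators of \Cref{example:L_as_module_functor} and verify that $\unit^{L\dashv G}$ and $\counit^{L\dashv G}$ are then $\cC$-(bi)module natural transformations, a short diagram chase from the definition of $\projrnoarg^{L, L\dashv G}$ in terms of $\oplax^L$ and $\counit^{L\dashv G}$ together with the triangle identities, parallel to the proof of \Cref{lemma:module_adjunction_R}.
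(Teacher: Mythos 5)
Your proposal is correct and matches the paper's intent exactly: the paper offers no separate proof, presenting the lemma as the $\oop$-dual of \Cref{lemma:module_adjunction_R} (itself a citation of \cite{FLP2}*{Lemma 3.24}), and your careful bookkeeping of the dualities — opposite categories turning the opmonoidal adjunction $L\dashv G$ into a monoidal adjunction $G^{\oop}\dashv L^{\oop}$, identifying the projection formula morphisms, and checking that the two lineator inversions cancel so that $\rlineator^L=\projrnoarg^{L,L\dashv G}$ — is precisely the argument the paper leaves implicit. The direct verification you sketch as an alternative is also fine but not needed.
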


\begin{theorem}[From strong monoidal functors to ambiadjunctions in $\rMod{\cC}$]\label{theorem:strong_monoidal_to_ambiadjunction_right}
In \Cref{context:main}, assume that the right projection formulas hold for $F$ regarded both as a left and as a right adjoint.
Then the following statements are equivalent:
\begin{enumerate}
    \item The right projection formula morphisms are mutual inverses (\Cref{definition:mutual_inverses}).
    \item There exists a right lineator $\rlineator$ that turns $F$ into a right $\cC$-module functor of type $\cD^G \rightarrow \cC$ and gives rise to an ambiadjunction in $\rMod{\cC}$
\begin{center}
    \begin{tikzpicture}[ baseline=(A)]
          \coordinate (r) at (3.5,0);
          \node (A) {$\cC$};
          \node (B) at ($(A) + (r)$) {$\cD^G$};
          \draw[->, out = 30, in = 180-30] (A) to node[above]{$(G, \oplax^G)$} (B);
          \draw[<-, out = -30, in = 180+30] (A) to node[below]{$(F, \rlineator )$} (B);
          \node[rotate=90] (t) at ($(A) + 0.45*(r)$) {$\vdash$};
          \node[rotate=-90] (t) at ($(A) + 0.55*(r)$) {$\vdash$};
    \end{tikzpicture}    
\end{center}
whose underlying ambiadjunction of categories equals $F \dashv G \dashv F$.
\end{enumerate}
Moreover, if these statements hold, then we necessarily have $\rlineator = \projrnoarg^{F, F \dashv G}$.
\end{theorem}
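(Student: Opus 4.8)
The plan is to deduce both implications from \Cref{example:R_as_module_functor}, \Cref{example:L_as_module_functor}, \Cref{lemma:module_adjunction_R}(1), \Cref{lemma:module_adjunction_L}(1), and the uniqueness of adjoints in the strict bicategory $\rMod{\cC}$ (\Cref{lemma:uniqueness_of_adjoints}). Under the standing hypothesis of the theorem there are two natural candidates for a right lineator on $F$: applying \Cref{example:R_as_module_functor} to the monoidal adjunction $G\dashv F$ gives the right $\cC$-module functor $(F,(\projrnoarg^{F,G\dashv F})^{-1})\colon\cD^G\to\cC$, and applying \Cref{example:L_as_module_functor} to the opmonoidal adjunction $F\dashv G$ gives the right $\cC$-module functor $(F,\projrnoarg^{F,F\dashv G})\colon\cD^G\to\cC$. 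Condition (1) is exactly the statement that these two lineators coincide.

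\emph{(1) $\Rightarrow$ (2).} Set $\rlineator:=\projrnoarg^{F,F\dashv G}=(\projrnoarg^{F,G\dashv F})^{-1}$, so that $(F,\rlineator)$ is simultaneously the right $\cC$-module functor of \Cref{example:R_as_module_functor} and that of \Cref{example:L_as_module_functor}. Then \Cref{lemma:module_adjunction_R}(1) yields an adjunction $(G,\oplax^G)\dashv(F,\rlineator)$ in $\rMod{\cC}$, and \Cref{lemma:module_adjunction_L}(1) yields an adjunction $(F,\rlineator)\dashv(G,\oplax^G)$ in $\rMod{\cC}$. Each lifts the corresponding monoidal, resp.\ opmonoidal, adjunction of \Cref{context:main}, and therefore has the same underlying unit and counit as the respective categorical adjunction of $F\dashv G\dashv F$. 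By \Cref{def:ambiadj} the two of them together constitute an ambiadjunction $(F,\rlineator)\dashv(G,\oplax^G)\dashv(F,\rlineator)$ in $\rMod{\cC}$ whose underlying ambiadjunction of categories is $F\dashv G\dashv F$; this is (2).

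\emph{(2) $\Rightarrow$ (1), and the final clause.} Suppose $\rlineator$ turns $F$ into a right $\cC$-module functor $\cD^G\to\cC$ for which $(F,\rlineator)\dashv(G,\oplax^G)\dashv(F,\rlineator)$ in $\rMod{\cC}$ lifts $F\dashv G\dashv F$. Look at the half $(G,\oplax^G)\dashv(F,\rlineator)$ lifting the categorical adjunction $G\dashv F$. By \Cref{example:R_as_module_functor} and \Cref{lemma:module_adjunction_R}(1) there is also the adjunction $(G,\oplax^G)\dashv(F,(\projrnoarg^{F,G\dashv F})^{-1})$ in $\rMod{\cC}$, again lifting $G\dashv F$ with the same unit and counit. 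By \Cref{lemma:uniqueness_of_adjoints} applied in $\rMod{\cC}$ there is a comparison $2$-isomorphism $(F,\rlineator)\to(F,(\projrnoarg^{F,G\dashv F})^{-1})$ commuting with units and counits; its underlying natural transformation is the $\Cat$-comparison of $F$ with itself, which by uniqueness of that comparison is $\id_F$. A right $\cC$-module transformation whose underlying natural transformation is $\id_F$ forces the two right lineators to agree, so $\rlineator=(\projrnoarg^{F,G\dashv F})^{-1}$. Applying the dual argument to the half $(F,\rlineator)\dashv(G,\oplax^G)$, using \Cref{example:L_as_module_functor} and \Cref{lemma:module_adjunction_L}(1), gives $\rlineator=\projrnoarg^{F,F\dashv G}$. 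Combining the two identities yields $\projrnoarg^{F,F\dashv G}=(\projrnoarg^{F,G\dashv F})^{-1}$, which is (1), and the second identity is precisely the asserted formula $\rlineator=\projrnoarg^{F,F\dashv G}$.

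All the substantive verifications are already packaged inside the cited Examples and Lemmas (and ultimately in \cite{FLP2}*{Lemma~3.24}). The one point requiring care is the bicategorical bookkeeping in the second implication: one must check that the module-adjunctions being compared genuinely share the underlying unit and counit of the categorical adjunction $G\dashv F$ (so that \Cref{lemma:uniqueness_of_adjoints} in $\Cat$ forces the comparison to be $\id_F$), and that a $2$-morphism in $\rMod{\cC}$ with underlying identity pins the lineator down uniquely. I expect this to be the main obstacle, and would resolve it by unwinding the construction of the module-adjunction in \cite{FLP2}*{Lemma~3.24} to confirm that its unit and counit are literally those of the underlying categorical adjunction.
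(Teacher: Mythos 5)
Your proposal is correct and follows essentially the same route as the paper's proof: both directions are obtained from \Cref{lemma:module_adjunction_R} and \Cref{lemma:module_adjunction_L} (giving the two module adjunctions with lineators $(\projrnoarg^{F,G\dashv F})^{-1}$ and $\projrnoarg^{F,F\dashv G}$), and the converse direction uses the comparison isomorphism of \Cref{lemma:uniqueness_of_adjoints} in $\rMod{\cC}$, identified as $\id_F$ after applying the forgetful pseudofunctor to $\Cat$, so that the module-transformation coherence pins down $\rlineator$ on both sides. Your closing caveat about the lifted adjunctions sharing the underlying unit and counit is exactly what the cited \cite{FLP2}*{Lemma 3.24} supplies, so no gap remains.
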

\begin{proof}
By \Cref{lemma:module_adjunction_R} and \Cref{lemma:module_adjunction_L}, we have the following adjunctions in $\rMod{\cC}$:
\[
\big(G, \oplax^G\big) \dashv \big(F,(\projrnoarg^{F, G \dashv F})^{-1}\big) \qquad \text{and} \qquad \big(F, \projrnoarg^{F, F \dashv G}\big) \dashv \big(G, \oplax^G \big).
\]
Now, if the first statement holds, then we have an equality
\[
\big(F, (\projrnoarg^{F, G \dashv F})^{-1}\big) = \big(F,\projrnoarg^{F, F \dashv G}\big) 
\]
and hence the desired ambiadjunction in $\rMod{\cC}$ of the second statement.

Next, we show that the second statement implies the first statement.
By assumption, we have two right adjoints of 
$\big(G, \oplax^G\big)$ in $\rMod{\cC}$, namely
$(F,\rlineator)$ and $\big(F,(\projrnoarg^{F, G \dashv F})^{-1}\big)$.
From \Cref{lemma:uniqueness_of_adjoints} we obtain the comparison isomorphism
\[
\theta: (F,\rlineator) \isomorph \big(F,(\projrnoarg^{F, G \dashv F})^{-1}\big)
\]
in $\rMod{\cC}$. If we apply the forgetful pseudofunctor $\rMod{\cC} \rightarrow \Cat$ to $\theta$, then our assumptions imply that we obtain the comparison isomorphism that compares $F$ with itself. Hence, $\theta = \id_F$.
But the coherence conditions of $\id_F$ being a $\cC$-bimodule transformation already imply $\rlineator = (\projrnoarg^{F, G \dashv F})^{-1}$.
Likewise, we have two left adjoints of $\big(G, \oplax^G\big)$ in $\rMod{\cC}$, and the same line of arguments gives us $\rlineator = \projrnoarg^{F, F \dashv G}$. The first statement follows.
\end{proof}

\begin{theorem}[From strong monoidal functors to ambiadjunctions in $\BiMod{\cC}$]\label{theorem:strong_monoidal_to_ambiadjunction}
In \Cref{context:main}, assume that the projection formulas hold for $F$ regarded both as a left and as a right adjoint. Then the following statements are equivalent:
\begin{enumerate}
    \item The projection formula morphisms are mutual inverses (\Cref{definition:mutual_inverses}).
    \item There exist lineators $\llineator$ and $\rlineator$
that turn $F$ into a $\cC$-bimodule functor of type $\cD^G \rightarrow \cC$ and give rise to an ambiadjunction in $\BiMod{\cC}$
\begin{center}
    \begin{tikzpicture}[ baseline=(A)]
          \coordinate (r) at (3.5,0);
          \node (A) {$\cC$};
          \node (B) at ($(A) + (r)$) {$\cD^G$};
          \draw[->, out = 30, in = 180-30] (A) to node[above]{$(G, \oplax^G, \oplax^G)$} (B);
          \draw[<-, out = -30, in = 180+30] (A) to node[below]{$(F, \llineator, \rlineator )$} (B);
          \node[rotate=90] (t) at ($(A) + 0.45*(r)$) {$\vdash$};
          \node[rotate=-90] (t) at ($(A) + 0.55*(r)$) {$\vdash$};
    \end{tikzpicture}    
\end{center}
whose underlying ambiadjunction of categories equals $F \dashv G \dashv F$.
\end{enumerate}
Moreover, if these statements hold, then we necessarily have the equalities
\begin{align*}
\llineator = \projlnoarg^{F, F \dashv G}, & & \rlineator = \projrnoarg^{F, F \dashv G}.
\end{align*}
\end{theorem}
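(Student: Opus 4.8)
The plan is to run the argument of \Cref{theorem:strong_monoidal_to_ambiadjunction_right} one level up, now in $\BiMod{\cC}$ instead of $\rMod{\cC}$, using the bimodule parts~(3) of \Cref{lemma:module_adjunction_R} and \Cref{lemma:module_adjunction_L}. Since, by hypothesis, the full (two-sided) projection formula holds for $F$ regarded both as a left and as a right adjoint, these lemmas supply two adjunctions in $\BiMod{\cC}$: one coming from the monoidal adjunction $G \dashv F$,
\[
\big(G, \oplax^G, \oplax^G\big) \dashv \big(F, (\projlnoarg^{F, G \dashv F})^{-1}, (\projrnoarg^{F, G \dashv F})^{-1}\big),
\]
and one coming from the opmonoidal adjunction $F \dashv G$,
\[
\big(F, \projlnoarg^{F, F \dashv G}, \projrnoarg^{F, F \dashv G}\big) \dashv \big(G, \oplax^G, \oplax^G\big),
\]
where in both cases the underlying $2$-cells are exactly the units and counits of the categorical ambiadjunction $F \dashv G \dashv F$.

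For $(1) \Rightarrow (2)$ I would argue that if the left and right projection formula morphisms are mutual inverses, then the right adjoint appearing in the first display and the left adjoint appearing in the second display are literally the same $\cC$-bimodule functor $(F, \llineator, \rlineator)$, with $\llineator = \projlnoarg^{F, F \dashv G} = (\projlnoarg^{F, G \dashv F})^{-1}$ and $\rlineator = \projrnoarg^{F, F \dashv G} = (\projrnoarg^{F, G \dashv F})^{-1}$. Concatenating the two adjunctions yields the asserted ambiadjunction in $\BiMod{\cC}$, and since the forgetful pseudofunctor $\BiMod{\cC} \to \Cat$ returns precisely the units and counits of $F \dashv G \dashv F$, the underlying ambiadjunction of categories is $F \dashv G \dashv F$, as claimed.

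For $(2) \Rightarrow (1)$, suppose we are given such an ambiadjunction. Then $(F, \llineator, \rlineator)$ is a right adjoint of $(G, \oplax^G, \oplax^G)$ in $\BiMod{\cC}$, and so is the right adjoint in the first display; by \Cref{lemma:uniqueness_of_adjoints} there is a unique comparison $2$-isomorphism $\theta$ between them, and applying the forgetful pseudofunctor to $\Cat$ — under which both underlying categorical adjunctions coincide with the given $G \dashv F$ — forces the image of $\theta$ to be $\id_F$ by uniqueness in $\Cat$, hence $\theta = \id_F$. The two coherence conditions of $\id_F$ being a $\cC$-bimodule transformation then pin down $\llineator = (\projlnoarg^{F, G \dashv F})^{-1}$ and $\rlineator = (\projrnoarg^{F, G \dashv F})^{-1}$. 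Repeating the same argument on the left-adjoint side, using the second display together with \Cref{lemma:module_adjunction_L}(3), gives $\llineator = \projlnoarg^{F, F \dashv G}$ and $\rlineator = \projrnoarg^{F, F \dashv G}$. Comparing the two resulting expressions for $\llineator$ and for $\rlineator$ yields the mutual-inverse identities of \Cref{definition:mutual_inverses}, which is statement~(1); the same comparison simultaneously establishes the final pair of equalities $\llineator = \projlnoarg^{F, F \dashv G}$, $\rlineator = \projrnoarg^{F, F \dashv G}$.

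The only slightly delicate point — and it is exactly the one already handled in \Cref{theorem:strong_monoidal_to_ambiadjunction_right} — is that the comparison isomorphism between the two bimodule right adjoints must descend to $\id_F$ after forgetting, so that its bimodule-transformation coherences (one governing $\llineator$, one governing $\rlineator$) determine both lineators at once. Once that is in place, the rest is a formal application of uniqueness of adjoints in strict bicategories together with the explicit units and counits supplied by \Cref{lemma:module_adjunction_R} and \Cref{lemma:module_adjunction_L}; I do not expect any essentially new difficulty beyond the two-sided bookkeeping.
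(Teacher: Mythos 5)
Your proof is correct and takes essentially the same route as the paper: the paper simply combines \Cref{theorem:strong_monoidal_to_ambiadjunction_right} with its left-sided version, and your argument inlines exactly that comparison-isomorphism reasoning (via \Cref{lemma:uniqueness_of_adjoints} and parts (3) of \Cref{lemma:module_adjunction_R} and \Cref{lemma:module_adjunction_L}) directly at the level of $\BiMod{\cC}$. No gaps.
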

\begin{proof}
Combine \Cref{theorem:strong_monoidal_to_ambiadjunction_right} and its left-sided version.
\end{proof}

\subsection{Conditions for Frobenius functors to be Frobenius monoidal functors}
\label{sec:F-Frob-mon}

It is the goal of this section to prove the following first main theorem of the paper.

\begin{theorem}\label{theorem:frobenius_functors_frobenius_monoidal}
In \Cref{context:main}, assume that the right projection formula morphisms are mutual inverses. Then $(F, \lax^F, \oplax^F)$ is a Frobenius monoidal functor.
\end{theorem}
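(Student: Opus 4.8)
The plan is to reduce the statement to the purely bicategorical \Cref{theorem:relation_frobenius_monoidal_and_ambi} by transporting the ambiadjunction $F\dashv G\dashv F$ into the bicategory $\rMod{\cC}$ and then exhibiting $F$ as a composite of Frobenius monoidal functors. First I would note that if the right projection formula morphisms are mutual inverses then, in particular, $\projrnoarg^{F, F\dashv G}$ and $\projrnoarg^{F, G\dashv F}$ are isomorphisms, so the hypotheses of \Cref{theorem:strong_monoidal_to_ambiadjunction_right} hold. That theorem provides an internal ambiadjunction in $\rMod{\cC}$,
\[
\big(\cC \xrightarrow{(G,\oplax^G)} \cD^G\big) \dashv \big(\cD^G \xrightarrow{(F,\projrnoarg^{F,F\dashv G})} \cC\big) \dashv \big(\cC \xrightarrow{(G,\oplax^G)} \cD^G\big),
\]
whose underlying ambiadjunction of categories is the given one. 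Applying \Cref{theorem:relation_frobenius_monoidal_and_ambi} to this ambiadjunction internal to $\rMod{\cC}$, the functor
\[
\Phi \;:=\; \Hom\big((G,\oplax^G),(F,\projrnoarg^{F,F\dashv G})\big)\colon \End_{\rMod{\cC}}(\cD^G) \longrightarrow \End_{\rMod{\cC}}(\cC)
\]
is Frobenius monoidal, with lax structure induced by the first adjunction and oplax structure by the second.

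Next I would splice $\Phi$ between two strong monoidal functors. On the target side there is the monoidal equivalence $E\colon \End_{\rMod{\cC}}(\cC) \xrightarrow{\sim} \cC$, $T\mapsto T(\one)$, whose monoidal structure is given by the right lineators. On the source side there is a strong monoidal functor $J\colon \cD \to \End_{\rMod{\cC}}(\cD^G)$, $Z\mapsto Z\otimes(-)$, well defined because $(Z\otimes X)\triangleleft A = Z\otimes(X\triangleleft A)$ in $\cD^G$ and strong monoidal because composition of these endofunctors is the tensor product of $\cD$. The composite
\[
\cD \xrightarrow{\;J\;} \End_{\rMod{\cC}}(\cD^G) \xrightarrow{\;\Phi\;} \End_{\rMod{\cC}}(\cC) \xrightarrow{\;E\;} \cC
\]
is Frobenius monoidal by \Cref{lemma:composition_of_Frobenius_monoidal_functors}, and on underlying data it sends $Z$ to $\Phi(Z\otimes(-))(\one) = F(Z\otimes G(\one))$, i.e.\ to $F(Z)$; hence this composite has $F$ as its underlying functor.

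It remains to check that the lax and oplax structures the composite puts on $F$ agree with $\lax^F$ and $\oplax^F$ fixed in \Cref{context:main}. Unwinding the construction, the composite's lax structure at $(Z,W)$ is the map $F(\id_Z\otimes\counit^{G\dashv F}_W)$ precomposed with the inverse of the right lineator of $\Phi(Z\otimes(-))$ evaluated at $(\one,F(W))$, where that lineator is assembled from $\oplax^G$ and $\projrnoarg^{F,F\dashv G}$; using the zigzag identities of $G\dashv F$ and the characterization of $\lax^F$ as the mate of the strong monoidal structure of $G$ along $G\dashv F$ (\Cref{remark:strong_implies_monoidal}), one verifies this equals $\lax^F$, and dually the composite's oplax structure equals $\oplax^F$ (now using $F\dashv G$, $\unit^{F\dashv G}$, and $\projrnoarg^{F,F\dashv G}$ again, which is legitimate precisely because the right projection formula morphisms are mutual inverses). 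With the structures identified, \Cref{lemma:transport_Frobenius_monoidal_structure} — applied to the identity natural transformation, or to the evident coherence isomorphism if one does not strictify — shows that $(F,\lax^F,\oplax^F)$ is a Frobenius monoidal functor. I expect this final verification, namely threading the lineators through $E$, $\Phi$, and $J$ and matching them against the mates defining $\lax^F$ and $\oplax^F$, to be the only genuine obstacle; everything preceding it is formal.
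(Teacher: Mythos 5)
Your proposal is correct and follows essentially the same route as the paper: invoke \Cref{theorem:strong_monoidal_to_ambiadjunction_right} to upgrade $F\dashv G\dashv F$ to an ambiadjunction in $\rMod{\cC}$, apply \Cref{theorem:relation_frobenius_monoidal_and_ambi} to get $\Phi$ Frobenius monoidal, and transfer back to $F$ through the strong monoidal embedding $\cD\to\End_{\rMod{\cC}}(\cD^G)$ and the equivalence $\cC\simeq\End_{\rMod{\cC}}(\cC)$, using \Cref{lemma:composition_of_Frobenius_monoidal_functors}, \Cref{lemma:transport_Frobenius_monoidal_structure} and \Cref{lemma:Frobenius_along_equivalence}. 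The final identification you defer (matching the transferred lax and oplax structures with $\lax^F$ and $\oplax^F$ via the mutual-inverse hypothesis, zigzag identities and mates) is exactly the content the paper packages as the monoidal/opmonoidal $2$-isomorphism $\beta=\projrnoarg$ in \Cref{lemma:compare_R_Phi} and \Cref{lemma:dual_compare_R_Phi}, and it does go through as you indicate.
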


We start with the following immediate consequence of \Cref{theorem:relation_frobenius_monoidal_and_ambi} and \Cref{theorem:strong_monoidal_to_ambiadjunction_right}.

\begin{proposition}\label{proposition:phi_frobenius_right_mod}
In \Cref{context:main}, assume that the right projection formula morphisms are mutual inverses. Then we get Frobenius monoidal functors
\[
\begin{tikzpicture}[mylabel/.style={fill=white}]
      \coordinate (r) at (8,0);
      \node (A) {$\End_{\rMod{\cC}}(\cC)$};
      \node (B) at ($(A) + (r)$) {$\End_{\rMod{\cC}}(\cD^G)$};
      \draw[->, out = 15, in = 180-15] (A) to node[mylabel]{$\Gamma = \Hom( F, G )$} (B);
      \draw[<-, out = -15, in = 180+15] (A) to node[mylabel]{$\Phi = \Hom( G, F )$} (B);
\end{tikzpicture} 
\]
and oplax-lax adjunctions $\Gamma \dashv \Phi$ and $\Phi \dashv \Gamma$.
\end{proposition}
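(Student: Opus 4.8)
The plan is to obtain the statement as a direct specialization of the machinery of \Cref{sec:FrobEndo} to the strict bicategory $\rMod{\cC}$, using \Cref{theorem:strong_monoidal_to_ambiadjunction_right} to first promote the ambiadjunction of categories to one internal to $\rMod{\cC}$, and then feeding this into \Cref{proposition:induced_end_adjunctions} and \Cref{theorem:relation_frobenius_monoidal_and_ambi}.

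First I would apply \Cref{theorem:strong_monoidal_to_ambiadjunction_right}. In \Cref{context:main}, the hypothesis that the right projection formula morphisms are mutual inverses implies in particular that the relevant right projection formula morphisms are isomorphisms, so the theorem produces a right lineator $\rlineator = \projrnoarg^{F, F \dashv G}$ turning $F$ into a right $\cC$-module functor of type $\cD^G \to \cC$, together with an internal ambiadjunction
\[
(F, \rlineator) \dashv (G, \oplax^G) \dashv (F, \rlineator)
\]
in $\rMod{\cC}$ (with objects $\cC$ and $\cD^G$) whose underlying ambiadjunction of categories is $F \dashv G \dashv F$. Equivalently, $(G, \oplax^G)$ is a Frobenius functor internal to $\rMod{\cC}$.

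Next I would invoke \Cref{proposition:induced_end_adjunctions} and \Cref{theorem:relation_frobenius_monoidal_and_ambi}, both stated for an arbitrary strict bicategory, now applied to $\rMod{\cC}$ with the two objects $\cC$, $\cD^G$ and the two $1$-morphisms $(G, \oplax^G)$, $(F, \rlineator)$. Applying \Cref{proposition:induced_end_adjunctions} to the constituent adjunction $(G, \oplax^G) \dashv (F, \rlineator)$ yields the oplax-lax adjunction $\Gamma \dashv \Phi$ between $\End_{\rMod{\cC}}(\cC)$ and $\End_{\rMod{\cC}}(\cD^G)$, a lax structure $\lax^{\Phi}$ on $\Phi = \Hom(G,F)$ and an oplax structure $\oplax^{\Gamma}$ on $\Gamma = \Hom(F,G)$; applying it, with the roles of the two objects and of the two $1$-morphisms interchanged, to the constituent adjunction $(F, \rlineator) \dashv (G, \oplax^G)$ yields the oplax-lax adjunction $\Phi \dashv \Gamma$ together with the remaining lax structure $\lax^{\Gamma}$ on $\Gamma$ and oplax structure $\oplax^{\Phi}$ on $\Phi$. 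Finally, since $(G, \oplax^G)$ is a Frobenius functor in $\rMod{\cC}$, \Cref{theorem:relation_frobenius_monoidal_and_ambi} shows that $(\Phi, \lax^{\Phi}, \oplax^{\Phi})$ and $(\Gamma, \lax^{\Gamma}, \oplax^{\Gamma})$ satisfy the Frobenius compatibility conditions, hence are Frobenius monoidal functors. This delivers all the claimed structure.

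The proof requires essentially no new computation; the only point demanding care — and the closest thing to an obstacle — is the bookkeeping needed to identify, when the ambient bicategory of \Cref{proposition:induced_end_adjunctions} and \Cref{theorem:relation_frobenius_monoidal_and_ambi} is taken to be $\rMod{\cC}$, the endomorphism categories appearing there with $\End_{\rMod{\cC}}(\cC)$ and $\End_{\rMod{\cC}}(\cD^G)$, and to check that the lax and oplax structures produced by the two applications of \Cref{proposition:induced_end_adjunctions} are exactly the ones fed into \Cref{theorem:relation_frobenius_monoidal_and_ambi}. Since all the heavy lifting has already been done at the level of general bicategories, this last verification is purely formal.
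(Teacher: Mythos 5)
Your proposal is correct and matches the paper's argument: the paper states this proposition as an immediate consequence of \Cref{theorem:strong_monoidal_to_ambiadjunction_right} (which upgrades $F\dashv G\dashv F$ to an ambiadjunction internal to $\rMod{\cC}$) combined with \Cref{proposition:induced_end_adjunctions} and \Cref{theorem:relation_frobenius_monoidal_and_ambi} applied in the bicategory $\rMod{\cC}$, exactly as you describe. Your write-up merely makes explicit the bookkeeping that the paper leaves implicit.
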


\begin{remark}\label{remark:ZprimeDG_explicitly}
We describe the category $\End_{\rMod{\cC}}(\cD^G)$ explicitly. Its objects are endofunctors $K: \cD \rightarrow \cD$
equipped with isomorphisms (right lineators)
$\rlineator_{X,A}: K(X \otimes G(A)) \isomorph K(X) \otimes G(A)$
natural in $A \in \cC$, $X \in \cD$ such that the following diagrams commute for $A, B \in \cC$, $X \in \cD$:

\begin{equation}\label{equation:K_rlinor_coh}
   \begin{tikzpicture}[baseline=($(21)$)]
  \coordinate (r) at (4,0);
  \coordinate (d) at (0,-2.5);
  \node (11) {$K( X \otimes  G(A\otimes B) )$};
  \node (21) at ($(11) +(d) - (r)$) {$K( X \otimes GA \otimes GB )$};
  \node (22) at ($(11) + (d) + (r)$) {$K(X) \otimes G(A \otimes B) $};
  \node (31) at ($(21) + (d)$) {$ K(X \otimes GA )  \otimes GB$};
  \node (32) at ($(22) + (d)$) {$K(X) \otimes GA\otimes GB$};
  \draw[->] (11) to node[above left]{$K(X \otimes \oplax^G_{A,B} )$} (21);
  \draw[->] (11) to node[above right]{$\rlineator_{X, A\otimes B}$} (22);
  \draw[->] (21) to node[left]{$\rlineator_{X \otimes GA,B}$} (31);
  \draw[->] (22) to node[right]{$K(X) \otimes \oplax^G_{A,B} $} (32);
  \draw[->] (31) to node[below]{$\rlineator_{X,A} \otimes GB$} (32);
\end{tikzpicture} 
\end{equation}

\begin{equation}\label{equation:K_rlinor_one_coh}
    \begin{tikzpicture}[baseline=($(11) + 0.5*(d)$)]
      \coordinate (r) at (4,0);
      \coordinate (d) at (0,-2);
      \node (11) {$K( X \otimes G\one )$};
      \node (12) at ($(11) + 2*(r)$) {$ K(X) \otimes G\one$};
      \node (2) at ($(11) + (d) + (r)$) {$K(X)$};
      \draw[->] (11) to node[above]{$\rlineator_{X, \one}$} (12);
      \draw[->] (11) to node[left, xshift=-0.5em]{$K(X \otimes \oplax^G_{\one} )$} (2);
      \draw[->] (12) to node[right, xshift=0.5em]{$K(X) \otimes \oplax^G_{\one} $} (2);
    \end{tikzpicture} 
\end{equation}

A morphism from $(K,\rlineator^K)$ to $(M, \rlineator^M)$ is given by a natural transformation $\tau: K \rightarrow M$ such that the following diagram commutes for $A \in \cC$, $X \in \cD$:
\begin{equation}\label{equation:tau_rlin_coh}
\begin{tikzpicture}[ baseline=($(11) + 0.5*(d)$)]
      \coordinate (r) at (7,0);
      \coordinate (d) at (0,-2);
      \node (11) {$K( X \otimes GA)$};
      \node (12) at ($(11) + (r)$) {$K(X) \otimes GA$};
      \node (21) at ($(11) + (d)$) {$M(X \otimes GA)$};
      \node (22) at ($(11) + (d) + (r)$) {$M(X) \otimes GA$};
      \draw[->] (11) to node[above]{$\rlineator^K_{X,A}$} (12);
      \draw[->] (21) to node[below]{$\rlineator^M_{X,A}$} (22);
      \draw[->] (11) to node[left]{$\tau_{X \otimes GA}$} (21);
      \draw[->] (12) to node[right]{$\tau_{X} \otimes GA$} (22);
\end{tikzpicture}
\end{equation}
\end{remark}

Now, we compare the monoidal categories $\cD$ with $\End_{\rMod{\cC}}(\cD^G)$ and  $\cC$ with $\End_{\rMod{\cC}}(\cC)$.

\begin{lemma}\label{lemma:DvsEndrightDG}
Let $G\colon \cC\to \cD$ be a strong monoidal functor. Then we have a strong monoidal functor
\begin{align*}
\Emb\colon \cD &\to \End_{\rMod{\cC}}(\cD^G) \\
X &\mapsto ( X\otimes(-), \rlineator^{(X \otimes -)} )
\end{align*}
where
\[
\rlineator^{(X \otimes -)}_{Y,A}: X \otimes Y \otimes GA \xrightarrow{\id} X \otimes Y \otimes GA
\]
for $A \in \cC$, $Y \in \cD$. Moreover, the underlying natural isomorphism of
\[
\lax^{\Emb}: \Emb( X )\Emb( Y ) \rightarrow \Emb( X \otimes Y )
\]
is given by the identity of $(X \otimes Y \otimes -): \cD \rightarrow \cD$ for $X,Y \in \cD$, and the underlying natural isomorphism of
\[
\lax_0^{\Emb}: \id_{\cD^G} \rightarrow \Emb( \one_{\cD} )
\]
is given by the identity of $\id_{\cD^G}$.
\end{lemma}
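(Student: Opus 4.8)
The plan is to use strictness of $\cD$ to force every relevant structure morphism to be an identity, so that the lemma reduces to three routine verifications: that each $\Emb(X)$ is an object of $\End_{\rMod{\cC}}(\cD^G)$, that $\Emb$ is a functor, and that the asserted identity morphisms assemble into a strong monoidal structure. Throughout I would rely on the explicit description of $\End_{\rMod{\cC}}(\cD^G)$ and its morphisms in \Cref{remark:ZprimeDG_explicitly}, together with the fact that the right action on $\cD^G$ is $Y \triangleleft A = Y \otimes GA$ with module associativity constraint $\id_Y \otimes \oplax^G_{A,B}$ (see \Cref{def:DG}).

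First I would check well-definedness on objects. Since $\otimes$ on $\cD$ is strictly associative, $(X \otimes -)(Y \triangleleft A) = X \otimes Y \otimes GA = ((X \otimes -)Y) \triangleleft A$ as objects, so the identity is a legitimate right lineator $\rlineator^{(X\otimes -)}_{Y,A}$. With all lineators set to identities, the coherence diagram \eqref{equation:K_rlinor_coh} collapses to the equality $\id_{X\otimes Y} \otimes \oplax^G_{A,B} = \id_{X \otimes Y} \otimes \oplax^G_{A,B}$, and \eqref{equation:K_rlinor_one_coh} collapses likewise, so $\Emb(X)$ is indeed an object of $\End_{\rMod{\cC}}(\cD^G)$. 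For a morphism $f\colon X \to X'$ of $\cD$, the natural transformation $f \otimes (-)$ makes \eqref{equation:tau_rlin_coh} commute trivially, since both lineators are identities and $(f \otimes \id_Y)\otimes \id_{GA} = f \otimes \id_{Y \otimes GA}$ in strict $\cD$; hence $\Emb(f) := f \otimes (-)$ is a morphism in $\End_{\rMod{\cC}}(\cD^G)$, and $\Emb$ respects composition and identities because $\otimes$ does.

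For the monoidal structure, I would recall that the tensor product of $\End_{\rMod{\cC}}(\cD^G)$ is composition of right $\cC$-module endofunctors, and that the right lineator of a composite $K \circ M$ is $\rlineator^K_{M(Y),A} \circ K(\rlineator^M_{Y,A})$; when both $\rlineator^K$ and $\rlineator^M$ are identities, so is the composite. Therefore $\Emb(X)\Emb(Y) = (X \otimes Y \otimes (-), \id)$ coincides on the nose with $\Emb(X \otimes Y)$, and the monoidal unit $\id_{\cD^G} = (\id_{\cD}, \id)$ coincides with $\Emb(\one_\cD)$; thus $\lax^{\Emb}_{X,Y} := \id$ and $\lax_0^{\Emb} := \id$ are well-defined isomorphisms in $\End_{\rMod{\cC}}(\cD^G)$ (the compatibility \eqref{equation:tau_rlin_coh} being trivial). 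Naturality of $\lax^{\Emb}$ is exactly the interchange law $(f \otimes -) \ast (g \otimes -) = (f \otimes g)\otimes (-)$, and the associativity and unit axioms of a lax monoidal functor hold because every morphism appearing in them is an identity; since $\lax^{\Emb}$ and $\lax_0^{\Emb}$ are invertible, $\Emb$ is strong monoidal.

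The only point requiring genuine care is the bookkeeping: unwinding the coherence conditions of \Cref{remark:ZprimeDG_explicitly} and, more importantly, pinning down the formula for the right lineator of a composite of right $\cC$-module functors, so as to confirm that it is the identity whenever both factors carry identity lineators. Once that is settled, everything else is an immediate consequence of strictness of $\cD$.
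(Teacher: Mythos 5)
Your proposal is correct and follows essentially the same route as the paper's proof: both reduce all coherences \eqref{equation:K_rlinor_coh}, \eqref{equation:K_rlinor_one_coh}, \eqref{equation:tau_rlin_coh} and the strong monoidality axioms to trivial equalities using strictness and the fact that all lineators involved are identities. The only difference is that you spell out explicitly the composite-lineator formula $\rlineator^K_{M(Y),A}\circ K(\rlineator^M_{Y,A})$, which the paper leaves implicit; this is a harmless (indeed welcome) extra detail.
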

\begin{proof}
    First, one has to check that $(X\otimes(-),\rlineator^{X\otimes -})$ satisfies the coherences of a right $\cC$-module endofunctor of $\cD^G$. However, these coherences \eqref{equation:K_rlinor_coh} and\eqref{equation:K_rlinor_one_coh} degenerate to simple equalities of morphisms and hold trivially.
Moreover, for any morphism $f\colon X\to Y$ in $\cD$, one checks that $(f \otimes -): (X \otimes -) \rightarrow (Y \otimes -)$ satisfies the coherence  \eqref{equation:tau_rlin_coh} which also amounts to a simple equality of morphisms and which holds trivially.

Last, the identity of $(X \otimes Y \otimes -)$ turns $\Emb$ into a strong monoidal functor.
This is clear as the right lineators of $\Emb( X)\Emb( Y)$ and 
$\Emb( X\otimes Y )$ are both given by identity morphisms and we have that $\id_{\cD^G} = \Emb( \one_{\cD} )$ as right $\cC$-module endomorphisms of $\cD^G$. Thus, the coherences for $\Emb$ being a strong monoidal functor hold trivially.
\end{proof}

\begin{remark}\label{remark:C_equivalent_to_EndC}
It is well-known that if $G$ is the identity functor of $\cC$, then \Cref{lemma:DvsEndrightDG} gives us a monoidal equivalence
\[
\cC \simeq \End_{\rMod{\cC}}(\cC).
\]
See, e.g., \cite{EGNO}*{Proof of Theorem 2.8.5}.
\end{remark}

\begin{remark}\label{remark:rproj_as_a_module_morphism}
Let $G \dashv R$ be a monoidal adjunction such that the projection formula holds for $R$ and let $X \in \cD$.
Then $R(X \otimes G(-)): \cC \rightarrow \cC$ can be regarded as an object in $\End_{\rMod{\cC}}(\cC)$ with right lineator given as follows ($A,B \in \cC$):
\[
R(X \otimes G(A \otimes B)) \xrightarrow{R(X \otimes \oplax^G_{A,B})} R(X \otimes G(A) \otimes G(B)) \xrightarrow{(\projr{X \otimes GA}{B}^{R, G \dashv R})^{-1}} R(X \otimes G(A)) \otimes B.
\]
Moreover, we have the following isomorphism in $\End_{\rMod{\cC}}(\cC)$ (see \cite{FLP2}*{Lemma 3.4}):
\[
\beta_{X}: (RX \otimes -) \xrightarrow{\projr{X}{-}^{R, G \dashv R}} R(X \otimes G(-)),
\]
i.e., $R(X \otimes G(-))$ corresponds to the object $RX$ via the equivalence of \Cref{remark:C_equivalent_to_EndC}.
\end{remark}

\begin{lemma}\label{lemma:compare_R_Phi}
Let $G \dashv R$ be a monoidal adjunction such that the projection formula holds for $R$.
Then we obtain the following diagram in $\Catlax$
\begin{center}
\begin{tikzpicture}[baseline=($(D) + 0.5*(d)$)]
      \coordinate (r) at (10,0);
      \coordinate (d) at (0,-1.5);
      
      \node (D) {$\cD$};
      \node (C2) at ($(D) + (r)$) {$\cC$};
      \node (Dp) at ($(D) + (d)$) {$\End_{\rMod{\cC}}( \cD^G )$};
      \node (Cp2) at ($(Dp) + (r)$) {$\End_{\rMod{\cC}}( \cC )$};
      
      \draw[->] (D) to node[above]{$R$} (C2);

      \draw[->] (Dp) to node[below]{$\Phi = \Hom( G, R )$} (Cp2);

      \draw[->] (D) to  node[left]{$\Emb$} (Dp);
      \draw[->] (C2) to node[right]{$\Emb$} (Cp2);

      \draw[-implies,double equal sign distance,shorten >=3em,shorten <=3em] (C2) to node[below]{$\beta$} (Dp);
\end{tikzpicture}
\end{center}
where $\beta$ is a $2$-isomorphism in $\Catlax$ (i.e., a monoidal natural isomorphism) given by
\[
\beta_{X}: (RX \otimes -) \xrightarrow{\projr{X}{-}^{R, G \dashv R}} R(X \otimes G(-))
\]
for $X \in \cD$.
\end{lemma}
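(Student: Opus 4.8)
The plan is to check directly that $\beta$ is a monoidal natural isomorphism between the composite lax monoidal functors $\Emb\circ R$ and $\Phi\circ\Emb$ from $\cD$ to $\End_{\rMod{\cC}}(\cC)$, which is exactly what a $2$-isomorphism in $\Catlax$ filling the square amounts to. Here $\Phi=\Hom(G,R)$ denotes the induced functor $\End_{\rMod{\cC}}(\cD^G)\to\End_{\rMod{\cC}}(\cC)$; since $G\dashv R$ is an adjunction in $\rMod{\cC}$ by \Cref{lemma:module_adjunction_R}(1), \Cref{proposition:induced_end_adjunctions} (applied in $\rMod{\cC}$ with $F=R$) equips $\Phi$ with its lax structure $\lax^{\Phi}$, with unit part $\lax^{\Phi}_{0}=\unit^{G\dashv R}$.

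First I would make all the data explicit. On objects, $(\Emb\circ R)(X)=(RX\otimes-)$ with identity right lineator, while $(\Phi\circ\Emb)(X)=R(X\otimes G(-))$ carries the right lineator recorded in \Cref{remark:rproj_as_a_module_morphism}; that each component $\beta_{X}=\projr{X}{-}^{R,G\dashv R}$ is an isomorphism \emph{inside} $\End_{\rMod{\cC}}(\cC)$, i.e.\ is compatible with these right lineators, is exactly \Cref{remark:rproj_as_a_module_morphism} (via \cite{FLP2}*{Lemma 3.4}). So it remains to see that $\beta$ is natural in $X$ and monoidal. For the lax structures on the two composites, the key point is that both copies of $\Emb$ carry the identity lax structure by \Cref{lemma:DvsEndrightDG}: hence the composite lax structure on $\Emb\circ R$ has binary component $\lax^{R}_{X,Y}\otimes(-)$ and unit part $\lax^{R}_{0}\otimes(-)$, while that on $\Phi\circ\Emb$ has binary component $\lax^{\Phi}_{\Emb X,\Emb Y}$ and unit part $\unit^{G\dashv R}$.

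Naturality of $\beta$ in $X$ is immediate from the naturality of $\projrnoarg^{R,G\dashv R}$ in its first argument. For monoidality I would evaluate the two coherence squares at an arbitrary object $A\in\cC$, expand the Godement composite $\beta_{X}\ast\beta_{Y}$ by whiskering, and unfold $\projrnoarg^{R,G\dashv R}$ using its definition \eqref{eq:projr}. Using only the interchange law together with the associativity (resp.\ left-unit) axiom of $\lax^{R}$, the binary coherence collapses to the single identity
\[
R(\id_{X}\otimes\counit^{G\dashv R}_{Z})\circ\projr{X}{RZ}^{R,G\dashv R}=\lax^{R}_{X,Z}\qquad(X,Z\in\cD),
\]
and the unit coherence collapses to $\projr{\one}{A}^{R,G\dashv R}\circ(\lax^{R}_{0}\otimes\id_{A})=\unit^{G\dashv R}_{A}$. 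The displayed identity follows by expanding $\projr{X}{RZ}^{R,G\dashv R}$, commuting $R(\id_{X}\otimes\counit^{G\dashv R}_{Z})$ past $\lax^{R}$ by naturality in the second variable, and then invoking the zigzag identity $R(\counit^{G\dashv R}_{Z})\circ\unit^{G\dashv R}_{RZ}=\id_{RZ}$ of the adjunction $G\dashv R$; the unit identity is settled the same way, using the left-unit axiom of $\lax^{R}$ in place of associativity.

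I expect the monoidality step to be the only real obstacle: it requires tracking the composite lax structures correctly through the equivalence $\End_{\rMod{\cC}}(\cC)\simeq\cC$ of \Cref{remark:C_equivalent_to_EndC} and through the whiskering that defines $\beta_{X}\ast\beta_{Y}$. This stays manageable precisely because $\Emb$ has identity lax structure, so that after evaluating at $A\in\cC$ every term is a composite of $\lax^{R}$, $\unit^{G\dashv R}$, and $\counit^{G\dashv R}$ only, at which point the two reductions above finish the argument.
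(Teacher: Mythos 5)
Your proposal is correct and takes essentially the same route as the paper's proof: the components of $\beta$ are morphisms in $\End_{\rMod{\cC}}(\cC)$ by \Cref{remark:rproj_as_a_module_morphism}, and the binary monoidality square is verified via the same decomposition into an associativity/interchange rectangle plus a triangle settled by naturality of $\lax^R$ and the zigzag identity, the two pieces the paper cites as \cite{FLP2}*{Lemmas 3.1 and 3.2} rather than proving inline. The only addition is that you check the unit coherence explicitly, which the paper leaves implicit.
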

\begin{proof}
The two embedding functors are strong monoidal by \Cref{lemma:DvsEndrightDG}.
The functor $\Phi$ is lax by \Cref{proposition:induced_end_adjunctions}.
The functor $R$ is lax since $G \dashv R$ is a monoidal adjunction.
Thus, all functors lie in $\Catlax$.

Moreover, $\beta$ is compatible with the right lineators of its source and range by \Cref{remark:rproj_as_a_module_morphism}. Next, we take a look at the following diagram
\begin{equation}\label{equation:beta_monoidal}
  \begin{tikzpicture}[ baseline=(21)]
    \coordinate (r) at (8,0);
    \coordinate (d) at (0,-2);
    
    \node (11) {$RX \otimes RY \otimes A$};
    \node (12) at ($(11)+(r)$) {$R( X \otimes Y ) \otimes A$};

    \node (21) at ($(11) + (d)$){$RX \otimes R(Y \otimes G(A))$};
    
    \node (31) at ($(21) + (d)$){$R( X \otimes G(R(Y \otimes G(A))))$};
    \node (32) at ($(31)+(r)$) {$R(X \otimes Y \otimes G(A))$};
    
    \draw[->,thick] (11) to node[above]{$\lax^R_{X,Y} \otimes A$}(12);
    \draw[->,thick] (31) to node[below]{$R(X \otimes \counit_{Y \otimes G(A)})$}(32);

    \draw[->,thick] (11) to node[left]{$RX \otimes \projr{Y}{A}$}(21);
    \draw[->,thick] (21) to node[left]{$\projr{X}{R(Y \otimes GA)}$}(31);
    \draw[->,thick] (12) to node[right]{$\projr{X \otimes Y}{A}$}(32);
    
    \draw[->,thick] (21) to node[above]{$\lax^R_{X, Y \otimes GA}$}(32);
  \end{tikzpicture}
\end{equation}
where $A \in \cC$, $X,Y \in \cD$. Note that the composition of the left vertical arrows is $(\beta_X \ast \beta_Y)_A$, the vertical right arrow is $(\beta_{X\otimes Y})_A$, the horizontal upper arrow is the lax structure of $(RX \otimes -)$, and the horizontal lower arrow is the lax structure of $R(X \otimes G(-))$. Thus, the commutativity of the outer square encodes that $\beta$ is a monoidal transformation. To prove this commutativity, we note that the inner rectangle commutes by \cite{FLP2}*{Lemma 3.2} and that the inner triangle commutes by \cite{FLP2}*{Lemma 3.1}.
Since $\beta$ is an isomorphism, the claim follows.
\end{proof}

\begin{lemma}[$\oop$-dual version of \Cref{lemma:compare_R_Phi}]\label{lemma:dual_compare_R_Phi}
Let $L \dashv G$ be an opmonoidal adjunction such that the projection formula holds for $L$.
Then we obtain the following diagram in $\Catoplax$
\begin{center}
\begin{tikzpicture}[baseline=($(D) + 0.5*(d)$)]
      \coordinate (r) at (10,0);
      \coordinate (d) at (0,-1.5);
      
      \node (D) {$\cD$};
      \node (C2) at ($(D) + (r)$) {$\cC$};
      \node (Dp) at ($(D) + (d)$) {$\End_{\rMod{\cC}}( \cD^G )$};
      \node (Cp2) at ($(Dp) + (r)$) {$\End_{\rMod{\cC}}( \cC )$};
      
      \draw[->] (D) to node[above]{$L$} (C2);

      \draw[->] (Dp) to node[below]{$\Phi = \Hom( G, L )$} (Cp2);

      \draw[->] (D) to  node[left]{$\Emb$} (Dp);
      \draw[->] (C2) to node[right]{$\Emb$} (Cp2);

      \draw[-implies,double equal sign distance,shorten >=3em,shorten <=3em] (C2) to node[below]{$\beta$} (Dp);
\end{tikzpicture}
\end{center}
where $\beta$ is a $2$-isomorphism in $\Catoplax$ (i.e., an opmonoidal natural isomorphism) given by
\[
\beta_{X}: (LX \otimes -) \xrightarrow{ (\projr{X}{-}^{L, L \dashv G})^{-1} } L(X \otimes G(-))
\]
for $X \in \cD$.
\end{lemma}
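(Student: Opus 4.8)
The plan is to run the proof of \Cref{lemma:compare_R_Phi} verbatim in the $\otimes$-$\oop$-dual setting, reading ``oplax'' for ``lax'' throughout and invoking the $\oop$-dual forms of the auxiliary results cited there. \textbf{Step 1: everything lives in $\Catoplax$.} The two embeddings are strong monoidal — hence in particular oplax — by \Cref{lemma:DvsEndrightDG}; the functor $L$ carries an oplax monoidal structure because $L\dashv G$ is an opmonoidal adjunction; and $\Phi=\Hom(G,L)$ is oplax by the $\oop$-dual of \Cref{proposition:induced_end_adjunctions}, i.e.\ by the statement that, when $L$ is a \emph{left} adjoint, the adjunction $L\dashv G$ induces an oplax--lax adjunction $\Phi\dashv\Gamma$ between endofunctor categories with $\Phi$ oplax. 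Concretely $\oplax^{\Phi}_{X,Y}\colon \Phi(XY)=LXYG\to LXGLYG=\Phi(X)\Phi(Y)$ is assembled from the unit $\unit^{L\dashv G}\colon\id_{\cD}\to GL$, dually to the lax structure on $\Phi$ used in \Cref{lemma:compare_R_Phi}.

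\textbf{Step 2: $\beta$ is a well-defined isomorphism in $\End_{\rMod{\cC}}(\cC)$.} The right $\cC$-module structure on the target $L(X\otimes G(-))$ is the one from \Cref{example:L_as_module_functor}, whose right lineator is $\projr{}{}^{L, L\dashv G}$ itself — \emph{not} its inverse; this is precisely why the component $\beta_X\colon(LX\otimes-)\to L(X\otimes G(-))$ here is $(\projr{X}{-}^{L, L\dashv G})^{-1}$ rather than the projection morphism, in contrast with \Cref{lemma:compare_R_Phi}. Compatibility of each $\beta_X$ with the right lineators of source and target is the $\oop$-dual of \cite{FLP2}*{Lemma 3.4} (i.e.\ the $\oop$-dual of \Cref{remark:rproj_as_a_module_morphism}), and naturality in $X$ is immediate since $\projrnoarg^{L, L\dashv G}$ is a natural transformation.

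\textbf{Step 3 (the substantive point): $\beta$ is opmonoidal.} As in \Cref{lemma:compare_R_Phi}, this means checking compatibility of $\beta$ with the oplax structures of $\Emb\circ L$ and $\Phi\circ\Emb$ together with their oplax unit constraints. The first reduces to the commutativity of the $\oop$-dual of diagram \eqref{equation:beta_monoidal}: a pentagon whose top edge is the oplax structure of $(LX\otimes-)$, whose bottom edge is the oplax structure of $L(X\otimes G(-))$, and whose vertical composites assemble $\beta_X\ast\beta_Y$ on one side and $\beta_{X\otimes Y}$ on the other. This pentagon decomposes, via an interior diagonal, into an inner rectangle and an inner triangle, which commute by the $\oop$-dual versions of \cite{FLP2}*{Lemma 3.2} and \cite{FLP2}*{Lemma 3.1} respectively; the unit compatibility is a short separate computation using a zigzag identity for $L\dashv G$. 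Since $\beta$ is an isomorphism, this finishes the proof.

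I do not expect a genuine obstacle: the whole statement is the $\otimes$-$\oop$-dual of \Cref{lemma:compare_R_Phi}, and one could alternatively derive it formally by applying \Cref{lemma:compare_R_Phi} to the opmonoidal adjunction $L\dashv G$ regarded as a monoidal adjunction $G^{\oop}\dashv L^{\oop}$ of the $\otimes$-opposite monoidal categories and then transporting the conclusion back. The only real work is bookkeeping — keeping the oplax conventions, the direction of $\beta$, and the placement of inverses on the projection-formula morphisms mutually consistent — together with confirming that \cite{FLP2}*{Lemmas 3.1, 3.2, and 3.4} hold in the required dual forms, which they do, being stated there in both one-sided variants.
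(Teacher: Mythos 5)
Your proposal is correct and takes essentially the same route as the paper, which offers no separate argument for this lemma and treats it exactly as you do: as the $\otimes$-$\oop$-dual of \Cref{lemma:compare_R_Phi}, with the inverse placed on $\projrnoarg^{L,L\dashv G}$ because the left-adjoint projection morphism points from $L(X\otimes G(-))$ to $LX\otimes(-)$, and with the dual forms of \cite{FLP2}*{Lemmas 3.1, 3.2, 3.4} doing the same work as in the lax case. One minor correction: the unit compatibility of $\beta$ follows from the counitality axiom of the oplax structure $\oplax^L$ (dually, from unitality of $\lax^R$ in \Cref{lemma:compare_R_Phi}), not from a zigzag identity of $L\dashv G$.
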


We are now able to give a proof of \Cref{theorem:frobenius_functors_frobenius_monoidal}. 

\begin{proof}[Proof of \Cref{theorem:frobenius_functors_frobenius_monoidal}]
Since the right projection formula morphisms are mutual inverses, the natural isomorphisms $\beta$ of \Cref{lemma:compare_R_Phi} and \Cref{lemma:dual_compare_R_Phi} coincide. Thus, we have a natural isomorphism
\[
\big( \cD \xrightarrow{\Emb} \End_{\rMod{\cC}}( \cD^G ) \xrightarrow{\Phi} \End_{\rMod{\cC}}( \cC ) \big) \cong
\big( \cD \xrightarrow{ F } \cC \xrightarrow{\Emb} \End_{\rMod{\cC}}( \cC ) \big)
\]
which is both monoidal and opmonoidal.
The functor $\Emb$ is strong monoidal by \Cref{lemma:DvsEndrightDG} and $\Phi$ is Frobenius monoidal by \Cref{proposition:phi_frobenius_right_mod}.
Thus, the functor on the left-hand side is Frobenius monoidal as a composite of two Frobenius monoidal functors (\Cref{lemma:composition_of_Frobenius_monoidal_functors}).
It follows that the functor on the right-hand side $\Emb \circ F$ is also Frobenius monoidal by \Cref{lemma:transport_Frobenius_monoidal_structure}.
Last, since $\cC \xrightarrow{\Emb} \End_{\rMod{\cC}}( \cC )$ is a monoidal equivalence (\Cref{remark:C_equivalent_to_EndC}), it follows that $F$ is Frobenius monoidal by \Cref{lemma:Frobenius_along_equivalence}.
\end{proof}

Recall that for a monoidal category $\cC$, its $\otimes$-opposite $\cC^{\rev}$ is the monoidal category that has the same underlying category as $\cC$ and whose tensor product is given by 
$A \otimes_{\cC^{\rev}} B := B \otimes_{\cC} A.$
If we replace $\cC$ and $\cD$ by their $\otimes$-opposites, then we get the following reinterpretation of \Cref{theorem:frobenius_functors_frobenius_monoidal}:

\begin{corollary}[Left-sided version of \Cref{theorem:frobenius_functors_frobenius_monoidal}]
In \Cref{context:main}, assume that the left projection formula morphisms are mutual inverses. Then $(F, \lax^F, \oplax^F)$ is a Frobenius monoidal functor.
\end{corollary}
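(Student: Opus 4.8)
The plan is to deduce this corollary from \Cref{theorem:frobenius_functors_frobenius_monoidal} by passing to the $\otimes$-opposite categories $\cC^{\rev}$ and $\cD^{\rev}$, as announced in the paragraph preceding the statement. The first step is to check that applying $(-)^{\rev}$ keeps us inside \Cref{context:main}: a strong monoidal functor $G\colon\cC\to\cD$ is again strong monoidal as a functor $\cC^{\rev}\to\cD^{\rev}$ (with its multiplicator $\multiplicator^{G}$ read with the two factors transposed), the ambiadjunction $F\dashv G\dashv F$ is a statement about the underlying categories and hence unaffected, and by the uniqueness part of \Cref{remark:strong_implies_monoidal} the lax (resp.\ oplax) structure on $F\colon\cD^{\rev}\to\cC^{\rev}$ that this context selects is exactly the transpose of the original one, i.e.\ $\lax^{F,\rev}_{X,Y}=\lax^{F}_{Y,X}$ and $\oplax^{F,\rev}_{X,Y}=\oplax^{F}_{Y,X}$, with nullary components unchanged.

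Next I would unwind the definitions \eqref{eq:proj}--\eqref{eq:iprojr} in the opposite categories. Using the transposition of the (op)lax structures just recorded, the \emph{right} projection formula morphism of $F$ for the adjunction $G\dashv F$, computed in $\cC^{\rev},\cD^{\rev}$, is identified under $X\otimes_{\cC^{\rev}}Y = Y\otimes_{\cC}X$ with the \emph{left} projection formula morphism $\projlnoarg^{F,\,G\dashv F}$ in $\cC,\cD$; likewise the right projection formula morphism of $F$ for $F\dashv G$ in $\cC^{\rev},\cD^{\rev}$ becomes $\projlnoarg^{F,\,F\dashv G}$ in $\cC,\cD$. Note that $(-)^{\rev}$ swaps ``left'' and ``right'' but does \emph{not} interchange the two adjunctions $G\dashv F$ and $F\dashv G$ (equivalently, the lax and oplax roles). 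Consequently, the hypothesis that $\projlnoarg^{F,\,F\dashv G}$ and $\projlnoarg^{F,\,G\dashv F}$ are mutual inverses in $\cC,\cD$ is precisely the hypothesis that the right projection formula morphisms of $F$ are mutual inverses in $\cC^{\rev},\cD^{\rev}$.

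I would then apply \Cref{theorem:frobenius_functors_frobenius_monoidal} to the data $\big(G\colon\cC^{\rev}\to\cD^{\rev},\ F,\ F\dashv G\dashv F\big)$ to conclude that $(F,\lax^{F,\rev},\oplax^{F,\rev})$ is a Frobenius monoidal functor $\cD^{\rev}\to\cC^{\rev}$. Finally I would transport this conclusion back: a functor carrying a lax and an oplax structure is Frobenius monoidal as a functor $\cD^{\rev}\to\cC^{\rev}$ if and only if it is Frobenius monoidal as a functor $\cD\to\cC$, because reversing the order of the three tensor factors $X,Y,Z$ turns the commuting diagram \eqref{frobmon1} into \eqref{frobmon2} and vice versa; hence the pair of Frobenius compatibilities holds in one framing exactly when it holds in the other. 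This gives that $(F,\lax^{F},\oplax^{F})$ is Frobenius monoidal, as claimed.

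The only point requiring care is the identification in the second paragraph: one must verify that $(-)^{\rev}$ exchanges ``left'' with ``right'' projection formula morphisms while preserving the labelling by the adjunction (lax versus oplax). Beyond that, the argument is a routine transport of structure along the strict monoidal, tensor-reversing, involutive operation $(-)^{\rev}$, and I anticipate no further obstacle.
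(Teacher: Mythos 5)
Your proposal is correct and is exactly the paper's argument: the corollary is obtained by applying \Cref{theorem:frobenius_functors_frobenius_monoidal} to $\cC^{\rev}$ and $\cD^{\rev}$, under which the left projection formula morphisms become the right ones (with the adjunction labels preserved) and the two Frobenius compatibility diagrams \eqref{frobmon1} and \eqref{frobmon2} are interchanged. You have merely spelled out the transport-of-structure checks that the paper leaves implicit.
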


For a functor $G$ to be part of an ambiadjunction is not a property but a datum as there may be different non-isomorphic choices of ambiadjunctions $F\dashv G\dashv F$. Indeed, the setoid of ambiadjunctions for $G$ is equivalent to that of automorphisms of $F$, see \Cref{remark:ambiadjunctions_classified_by_aut}. Hence, results such as \Cref{theorem:frobenius_functors_frobenius_monoidal} on whether $F$ defines a Frobenius monoidal functor depend on the choice of datum of the ambiadjunction (within \Cref{context:main}).

We end this section by providing a result that can be used to describe the mentioned space of automorphism internal to the bicategory of right $\cC$-modules $\rMod{\cC}$ in order to control the ambiguity of the choice of isomorphism of left and right adjoints.

\begin{proposition}\label{proposition:EndF}
Let $G\dashv R$ be a monoidal adjunction such that the right projection formula holds for $R$.
Then we have an isomorphism of monoids
\begin{align*}
\End_{\cD}(\one) &\isomorph \End_{\Hom_{\rMod{\cC}}( \cD^G, \cC )}(R)\\
a &\longmapsto (RX \cong R(\one \otimes X) \xrightarrow{R(a\otimes X)} R(\one \otimes X) \cong RX)_{X \in \cD}
\end{align*}
where we regard $R$ as a morphism in $\rMod{\cC}$ by \Cref{lemma:module_adjunction_R}
\end{proposition}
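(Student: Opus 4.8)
The plan is to exhibit the stated map $\Psi\colon\End_\cD(\one)\to\End_{\Hom_{\rMod{\cC}}(\cD^G,\cC)}(R)$, $a\mapsto\eta^a$ with $\eta^a_X:=R(a\otimes\id_X)$, as a composite of two bijections, and to verify separately — and directly — that it is a homomorphism of monoids; the two bijections themselves need not be monoid maps. Throughout I suppress associativity and unit constraints and identify $G(\one_\cC)$ with $\one_\cD=:\one$ along the structure isomorphism of $G$. The easy verifications come first. The family $\eta^a$ is natural in $X$ by bifunctoriality of $\otimes$; it is a morphism in $\Hom_{\rMod{\cC}}(\cD^G,\cC)$, i.e.\ compatible with the right lineator $\rlineator^R=(\projr{X}{A}^{R,G\dashv R})^{-1}$ of \Cref{example:R_as_module_functor}, because applying naturality of $\projr{X}{A}^{R,G\dashv R}$ in the $\cD$-variable to the morphism $a\otimes\id_X\colon X\to X$, together with $(a\otimes\id_X)\otimes\id_{GA}=a\otimes\id_{X\otimes GA}$, yields exactly the square $\eta^a_{X\otimes GA}\circ\projr{X}{A}^{R,G\dashv R}=\projr{X}{A}^{R,G\dashv R}\circ(\eta^a_X\otimes\id_A)$ required by \Cref{remark:ZprimeDG_explicitly}; and $\Psi$ is a monoid homomorphism since $\eta^{\id_\one}=\id_R$ and $\eta^a_X\circ\eta^b_X=R\big((a\otimes\id_X)\circ(b\otimes\id_X)\big)=R\big((a\circ b)\otimes\id_X\big)=\eta^{a\circ b}_X$ by functoriality of $R$ and the interchange law.

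For bijectivity I would first pass to mates. By part (1) of \Cref{lemma:module_adjunction_R}, the right projection formula for $R$ upgrades $G\dashv R$ to an adjunction $(G,\oplax^G)\dashv(R,\rlineator^R)$ internal to $\rMod{\cC}$ whose underlying adjunction of categories is $G\dashv R$, so it has the same unit $\unit$ and counit $\counit$. In any bicategory the two mate operations between the $2$-endomorphisms of a left adjoint and of its right adjoint are mutually inverse (see \cite{JY21}), so here mate-taking is a bijection
\[
\End_{\Hom_{\rMod{\cC}}(\cC,\cD^G)}(G)\;\isomorph\;\End_{\Hom_{\rMod{\cC}}(\cD^G,\cC)}(R),\qquad \sigma\longmapsto\sigma^\flat,\quad \sigma^\flat_X=R(\counit_X)\circ R(\sigma_{RX})\circ\unit_{RX}.
\]

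Next I would identify the source monoid with $\End_\cD(\one)$. A right $\cC$-module transformation $\sigma\colon G\to G$ must commute with the lineators $\rlineator^G=\oplax^G$; specialising the first tensor slot to $\one_\cC$ gives $\oplax^G_{\one_\cC,B}\circ\sigma_B=(\sigma_{\one_\cC}\otimes\id_{GB})\circ\oplax^G_{\one_\cC,B}$, so $\sigma$ is determined by $\sigma_{\one_\cC}\in\End_\cD(G\one_\cC)=\End_\cD(\one)$. Conversely every $a\in\End_\cD(\one)$ arises from the family $\sigma^a_B:=(\oplax^G_{\one_\cC,B})^{-1}\circ(a\otimes\id_{GB})\circ\oplax^G_{\one_\cC,B}$, which one checks to be a right $\cC$-module transformation using the coherences of $\oplax^G$. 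Hence $a\mapsto\sigma^a$ is a bijection $\End_\cD(\one)\isomorph\End_{\Hom_{\rMod{\cC}}(\cC,\cD^G)}(G)$. Finally I compute its mate: from $\sigma^a_{GRX}=a\otimes\id_{GRX}$, from naturality of the endomorphism $a\otimes(-)$ of $\id_\cD$ applied to $\counit_X\colon GRX\to X$ (so $\counit_X\circ(a\otimes\id_{GRX})=(a\otimes\id_X)\circ\counit_X$), and from the zigzag identity $R(\counit_X)\circ\unit_{RX}=\id_{RX}$, one gets $(\sigma^a)^\flat_X=R(a\otimes\id_X)\circ R(\counit_X)\circ\unit_{RX}=R(a\otimes\id_X)=\eta^a_X$. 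Thus $\Psi$ equals the composite of the bijections $a\mapsto\sigma^a$ and $\sigma\mapsto\sigma^\flat$, hence is bijective, and being also a monoid homomorphism it is an isomorphism of monoids.

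I expect the main obstacle to be the two "determined by the value at $\one$" steps on the $G$-side: verifying that the candidate $\sigma^a$ satisfies all $\cC$-module-transformation coherences, and keeping the coherence bookkeeping (unitors, the comparison isomorphism $G\one_\cC\cong\one_\cD$) under control so that $(\sigma^a)^\flat$ matches $\eta^a$ on the nose. Carrying the argument out in a strictified model, as is done elsewhere in the paper, reduces both to routine interchange-law computations; the conceptual content is simply that right $\cC$-module functors out of $\cC$ are freely generated by their value at $\one_\cC$, combined with uniqueness of adjoints in $\rMod{\cC}$.
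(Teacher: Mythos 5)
Your proof is correct, and it relies on the same underlying ingredients as the paper's — the adjunction internal to $\rMod{\cC}$ provided by \Cref{lemma:module_adjunction_R}, together with an ``evaluate at the tensor unit'' step — but it factors the bijection differently. The paper transports $\End_{\Hom_{\rMod{\cC}}(\cD^G,\cC)}(R)$ to $\Hom_{\End_{\rMod{\cC}}(\cC)}(\id_\cC, RG)$ by pre/post-composing with the unit and counit of the internal adjunction, then invokes the monoidal equivalence $\cC\simeq\End_{\rMod{\cC}}(\cC)$ of \Cref{remark:C_equivalent_to_EndC} and finally the ordinary adjunction $G\dashv R$ to reach $\End_{\cD}(\one)$; you instead use the mate bijection $\End(G)\cong\End(R)$ attached to the internal adjunction and prove by hand that a right $\cC$-module endotransformation of $G\colon\cC\to\cD^G$ is determined by (and freely reconstructed from) its component at $\one_\cC$ — which is precisely the special case of the cited equivalence that is actually needed. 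Your mate formula, the reconstruction $\sigma^a_B=(\oplax^G_{\one,B})^{-1}\circ(a\otimes\id_{GB})\circ\oplax^G_{\one,B}$, and the computation $(\sigma^a)^\flat_X=R(a\otimes\id_X)$ via naturality and a zigzag identity are all correct. What your route adds is exactly what the paper compresses into its last sentence: an explicit check that $R(a\otimes\id_X)$ is compatible with $\rlineator^R=(\projrnoarg^{R,G\dashv R})^{-1}$ (via naturality of the projection formula morphism), that the assignment is a monoid homomorphism, and that the composite bijection is literally the stated map. What the paper's route buys is that the evaluation-at-$\one$ step is outsourced to a standard equivalence rather than re-verified, and no calculus of mates is needed beyond transport along the unit and counit.
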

\begin{proof}
We have a bijection
\begin{align*}
\End_{\Hom_{\rMod{\cC}}( \cD^G, \cC )}(R) &\isomorph \Hom_{\End_{\rMod{\cC}}( \cC )}( \id_\cC, RG ) \\
(R \xrightarrow{\alpha} R) &\longmapsto (\id_\cC \xrightarrow{\unit} RG \xrightarrow{\alpha \ast G} RG) \\
(R \xrightarrow{\beta \ast R} RGR \xrightarrow{R \ast \counit} R) &\longmapsfrom (\id_\cC \xrightarrow{\beta} RG)
\end{align*}
Moreover, by \Cref{remark:C_equivalent_to_EndC}, we have a bijection
\begin{align*}
\Hom_{\End_{\rMod{\cC}}( \cC )}( \id_\cC, RG ) &\isomorph \Hom_{\cC}( \id_\cC(\one), RG(\one) ) \\
\beta &\longmapsto \beta_{\one}
\end{align*}
Last, we have bijections
\[
\Hom_{\cC}( \id_\cC(\one), RG(\one) ) \cong \Hom_{\cD}( G(\one), G(\one))  \cong \End_{\cD}(\one).
\]
The resulting bijection is given as in the statement of this proposition, which clearly is a monoid morphism.
\end{proof}

\begin{remark}
    The above result \Cref{proposition:EndF} implies that the space of automorphisms of $R$ internal to $\rMod{\cC}$ is given by the automorphisms of $\one_\cD$. In particular, if $\cD$ is a tensor category in the sense of \cite{EGNO}, then the projection formula holds for $R$ as $\cD$ is rigid \cite{FLP2}*{Corollary~3.20} and this space of automorphisms is simply given by the units $\Bbbk^\times$ of the field $\Bbbk$. This is the case for the examples studied in \Cref{sec:Hopf}.
\end{remark}

\section{Frobenius monoidal functors on Drinfeld centers}\label{section:functors_on_centers}

The main result of this section, \Cref{thm:ZF}, shows how to induce Frobenius monoidal functors on Drinfeld centers. 
From the previous sections' results \Cref{theorem:relation_frobenius_monoidal_and_ambi} and \Cref{theorem:strong_monoidal_to_ambiadjunction}, we immediately get the following result  (analogous to the one-sided version \Cref{proposition:phi_frobenius_right_mod}).

\begin{proposition}\label{proposition:immediate_consequences}
In \Cref{context:main}, assume that the projection formula morphisms are mutual inverses. Then we get Frobenius monoidal functors
\[
\begin{tikzpicture}[mylabel/.style={fill=white}]
      \coordinate (r) at (8,0);
      \node (A) {$\End_{\BiMod{\cC}}(\cC)$};
      \node (B) at ($(A) + (r)$) {$\End_{\BiMod{\cC}}(\cD^G)$};
      \draw[->, out = 15, in = 180-15] (A) to node[mylabel]{$\Gamma = \Hom( F, G )$} (B);
      \draw[<-, out = -15, in = 180+15] (A) to node[mylabel]{$\Phi = \Hom( G, F )$} (B);
\end{tikzpicture} 
\]
and oplax-lax adjunctions $\Gamma \dashv \Phi$ and $\Phi \dashv \Gamma$.
\end{proposition}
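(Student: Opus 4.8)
The plan is to run the two-sided (bimodule) analogue of the argument that deduced \Cref{proposition:phi_frobenius_right_mod} from \Cref{theorem:strong_monoidal_to_ambiadjunction_right} and \Cref{theorem:relation_frobenius_monoidal_and_ambi}. Namely, the hypothesis upgrades the ambiadjunction $F\dashv G\dashv F$ of categories to an ambiadjunction internal to $\BiMod{\cC}$, and then \Cref{theorem:relation_frobenius_monoidal_and_ambi}, applied in that strict bicategory, produces the asserted Frobenius monoidal functors on the endomorphism categories together with the two oplax-lax adjunctions. All of the genuine content has already been established in \Cref{theorem:strong_monoidal_to_ambiadjunction} and \Cref{theorem:relation_frobenius_monoidal_and_ambi}, so the proof is essentially a matter of assembling these.

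First I would invoke \Cref{theorem:strong_monoidal_to_ambiadjunction}: in \Cref{context:main}, the assumption that the projection formula morphisms are mutual inverses (\Cref{definition:mutual_inverses}) is exactly statement~(1) of that theorem, and in particular it forces all four projection formula morphisms to be isomorphisms. Hence statement~(2) holds, giving lineators $\llineator=\projlnoarg^{F, F\dashv G}$ and $\rlineator=\projrnoarg^{F, F\dashv G}$ which make $(F,\llineator,\rlineator)\colon\cD^G\to\cC$ a $\cC$-bimodule functor fitting into an ambiadjunction
\[
(F,\llineator,\rlineator)\dashv(G,\oplax^G,\oplax^G)\dashv(F,\llineator,\rlineator)
\]
internal to $\BiMod{\cC}$ whose underlying ambiadjunction of categories is $F\dashv G\dashv F$. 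Equivalently, the $1$-morphism $(G,\oplax^G,\oplax^G)$ is a Frobenius functor in the strict bicategory $\BiMod{\cC}$.

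Next I would apply \Cref{theorem:relation_frobenius_monoidal_and_ambi} --- which is formulated and proved for an internal ambiadjunction in an arbitrary strict bicategory --- to this ambiadjunction, taking the ambient bicategory to be $\BiMod{\cC}$ (a strict bicategory, as recorded in \Cref{sec:prelim}). Its conclusion is exactly that $\Phi=\Hom(G,F)$ and $\Gamma=\Hom(F,G)$, which act on the endomorphism categories $\End_{\BiMod{\cC}}(\cD^G)$ and $\End_{\BiMod{\cC}}(\cC)$ by $X\mapsto FXG$ and $A\mapsto GAF$ respectively, carry the Frobenius monoidal structures $(\Phi,\lax^\Phi,\oplax^\Phi)$ and $(\Gamma,\lax^\Gamma,\oplax^\Gamma)$ coming from \Cref{proposition:induced_end_adjunctions}. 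For the adjunctions, I would invoke \Cref{proposition:induced_end_adjunctions} twice: once for the internal adjunction $G\dashv F$ in $\BiMod{\cC}$, which yields the oplax-lax adjunction $\Gamma\dashv\Phi$, and once for the internal adjunction $F\dashv G$ (the other half of the ambiadjunction), which --- since passing to $F\dashv G$ exchanges the roles of the two $1$-morphisms, hence of $\Gamma$ and $\Phi$ --- yields the oplax-lax adjunction $\Phi\dashv\Gamma$.

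There is no serious obstacle; the only point requiring (routine) care is that the cited bicategorical results apply verbatim with $\BiMod{\cC}$ in place of $\Cat$, which holds simply because $\BiMod{\cC}$ is strict. One could additionally record the identification $\End_{\BiMod{\cC}}(\cC)\simeq\cZ(\cC)$ and the strong monoidal embedding $\cZ(\cD)\hookrightarrow\End_{\BiMod{\cC}}(\cD^G)$, but these are not needed for the statement as phrased and enter only later, in the proof of \Cref{thm:ZF}.
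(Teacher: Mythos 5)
Your proposal is correct and follows exactly the paper's own (implicit) argument: the paper derives this proposition "immediately" by combining \Cref{theorem:strong_monoidal_to_ambiadjunction} (to upgrade $F\dashv G\dashv F$ to an ambiadjunction internal to $\BiMod{\cC}$) with \Cref{theorem:relation_frobenius_monoidal_and_ambi} and \Cref{proposition:induced_end_adjunctions} applied in the strict bicategory $\BiMod{\cC}$, just as you do. Your additional remark that being mutual inverses already forces the four projection formula morphisms to be isomorphisms, so the hypothesis of \Cref{theorem:strong_monoidal_to_ambiadjunction} is met, is a correct and harmless point of care.
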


The category $\End_{\BiMod{\cC}}(\cC)$ is a way to model the so-called Drinfeld center $\cZ(\cC)$ of $\cC$.
It is the goal of this section to prove that \Cref{proposition:immediate_consequences} yields a braided Frobenius monoidal functor from the Drinfeld center of $\cD$ to the Drinfeld center of $\cC$.

\subsection{Centers and endomorphism categories}
\label{sec:centers-endo}

For any $\cC$-bimodule $\cM$, there is the notion of its \emph{center} $\cZ(\cM)$, see \cite{GNN}*{Section~2B} and \cite{Gre}*{Section~2.3}.
In this paper, we will only make use of $\cZ(\cD^G)$.
Thus, for convenience, we state the definition of the center explicitly for $\cD^G$.

\begin{definition}\label{def::Z}
Let $G: \cC \rightarrow \cD$ be a strong monoidal functor.
The \emph{center} $\cZ(\cD^G)$ of the $\cC$-bimodule $\cD^G$ is the following category.
Its objects are given by pairs $(X,c^X)$, where $X\in\cD$ and $c^X$ is a \emph{half-braiding}, i.e., an isomorphism 
$$c^X_A \colon X\triangleleft A = X \otimes G(A) \isomorph G(A) \otimes X = A\triangleright X,$$
natural in $A \in \cC$, satisfying the coherences that the diagram 
\begin{equation}\label{eq:Ztensorcomp}
\vcenter{\hbox{\xymatrix{
X\otimes G(A\otimes B) \ar[rrr]^{c^X_{A\otimes B}}\ar[d]_{X \otimes \oplax^G_{A,B}}&&&G(A\otimes B) \otimes X\ar[d]^{\oplax^G_{A,B} \otimes X}\\
X \otimes G(A) \otimes G(B)\ar[d]_{c_A^X \otimes G(B)}&&&G(A) \otimes G(B) \otimes X\\
G(A) \otimes X \otimes G(B)\ar[rrr]^{\id}&&& G(A) \otimes X \otimes G(B) \ar[u]_{G(A) \otimes c^X_B}
}}}
\end{equation}
commutes for all $A,B\in\cC$ and that the diagram
\begin{equation}\label{eq:Zunitcomp}
    \xymatrix{
    X \otimes G(\one) \ar[drr]_{X \otimes \oplax^G_0\quad}\ar[rrrr]^{c^X_\one} &&&& G(\one) \otimes X,\ar[dll]^{\oplax^G_0 \otimes X} \\
    &&X&&
    }
\end{equation}
commutes.

A morphism $f\colon (X,c^X)\to(Y,c^Y)$ in $\cZ(\cD^G)$ is given by a morphism $f\in\Hom_\cD(X,Y)$ which commutes with the half-braidings, i.e., for all objects $A$ of $\cC$, the following diagram commutes:
\begin{align}\label{eq:Zmorcomp}
\xymatrix{
X\otimes G(A)\ar[d]^{f\otimes G(A)}\ar[rrr]^{c^X_A} &&& G(A) \otimes X\ar[d]^{G(A)\otimes f}\\
Y\otimes G(A)\ar[rrr]^{c^Y_A}  &&& G(A)\otimes Y.
}
\end{align}

Moreover, $\cZ(\cD^G)$ has a monoidal structure given by $(X,c^X)\otimes(Y,c^Y)=(X\otimes Y, c^{X\otimes Y})$, where
$$
c^{X\otimes Y}_A
:=
X\otimes Y \otimes G(A)
\xrightarrow{(X\otimes c^Y_A)} X\otimes G(A)\otimes Y
\xrightarrow{(c^X_A \otimes Y)} G(A)\otimes X\otimes Y
$$
for $A \in \cC$, $(X,c^X),(Y,c^Y) \in \cZ(\cD^G)$ (see {\cite{Maj2}*{Definition 4.2}}). The tensor unit of $\cZ(\cD^G)$ is given by the tensor unit $\one_{\cD}$ in $\cD$ with the half-braiding given by the identity:
\[
 \one_{\cD} \triangleleft A = \one_{\cD} \otimes G(A) = G(A) \otimes \one_{\cD} = A\triangleright \one_{\cD}.
\]
\end{definition}

\begin{definition}
Let $\cC$ be a strict monoidal category.
The center $\cZ(\cC):=\cZ(\cC^{\id_\cC})$ is called the \emph{Drinfeld center of $\cC$}.
\end{definition}

\begin{remark}
By \cite{JS}*{Proposition~4}, $\cZ(\cC)$ has a braiding given by 
$$
\Psi_{(A,c^A),(B,c^B)}
:= c^A_B \colon A\otimes B \isomorph B\otimes A
$$
for $A, B \in \cC$.
\end{remark}
 
Next, we compare the notion of a center with the endomorphism category of a bimodule category.

\begin{definition}\label{definition:center_prime} Let $\cM$ be a $\cC$-bimodule. Define
$$\cZ'(\cM):=\End_{\BiMod{\cC}}(\cM)$$
to be the category of $\cC$-bimodule endofunctors on $\cM$.
\end{definition}

\begin{remark}
Composition $KL=K\circ L$ makes $\cZ'(\cM)$ a strict monoidal category, see \Cref{definition:monoidal_end}.
\end{remark}

\begin{remark}\label{remark:ZprimeDG_explicitly2}
We spell out \Cref{definition:center_prime} explicitly in the case of $\cD^G$.
Objects in $\cZ'(\cD^G)$ are endofunctors 
\[
K: \cD \rightarrow \cD
\]
equipped with isomorphisms (left and right lineators)
\begin{align*}
\llineator_{A,X}: K(G(A) \otimes X) \isomorph G(A) \otimes K(X) & & \rlineator_{X,A}: K(X \otimes G(A)) \isomorph K(X) \otimes G(A)
\end{align*}
natural in $A \in \cC$, $X \in \cD$ such that \Cref{equation:K_rlinor_coh}, \Cref{equation:K_rlinor_one_coh} and
the following diagrams commute for $A, B \in \cC$, $X \in \cD$:

\begin{equation}\label{equation:K_llinor_coh}
   \begin{tikzpicture}[baseline=($(21)$)]
  \coordinate (r) at (4,0);
  \coordinate (d) at (0,-2.5);
  \node (11) {$K( G(A\otimes B) \otimes X)$};
  \node (21) at ($(11) +(d) - (r)$) {$K( GA \otimes GB \otimes X)$};
  \node (22) at ($(11) + (d) + (r)$) {$G(A \otimes B) \otimes K(X)$};
  \node (31) at ($(21) + (d)$) {$GA \otimes K(GB\otimes X)$};
  \node (32) at ($(22) + (d)$) {$GA\otimes GB\otimes K(X)$};
  \draw[->] (11) to node[above left]{$K(\oplax^G_{A,B} \otimes X)$} (21);
  \draw[->] (11) to node[above right]{$\llineator_{A\otimes B,X}$} (22);
  \draw[->] (21) to node[left]{$\llineator_{A,GB\otimes X}$} (31);
  \draw[->] (22) to node[right]{$\oplax^G_{A,B} \otimes K(X)$} (32);
  \draw[->] (31) to node[below]{$GA\otimes \llineator_{B,X}$} (32);
\end{tikzpicture} 
\end{equation}

\begin{equation}\label{equation:K_llinor_one_coh}
    \begin{tikzpicture}[baseline=($(11) + 0.5*(d)$)]
      \coordinate (r) at (4,0);
      \coordinate (d) at (0,-2);
      \node (11) {$K( G\one \otimes X)$};
      \node (12) at ($(11) + 2*(r)$) {$G\one \otimes K(X)$};
      \node (2) at ($(11) + (d) + (r)$) {$K(X)$};
      \draw[->] (11) to node[above]{$\llineator_{\one,X}$} (12);
      \draw[->] (11) to node[left, xshift=-0.5em]{$K(\oplax^G_{\one} \otimes X)$} (2);
      \draw[->] (12) to node[right]{$\oplax^G_{\one} \otimes K(X)$} (2);
    \end{tikzpicture} 
\end{equation}

\begin{equation}\label{equation:K_llinor_rlinor_coh}
  \begin{tikzpicture}[baseline=(21)]
    \coordinate (r) at (6,0);
    \coordinate (d) at (0,-2);
    
    \node (11) {$K( GA \otimes X \otimes GB)$};
    \node (12) at ($(11)+(r)$) {$K(GA \otimes X \otimes GB)$};

    \node (21) at ($(11) + (d) - 0.25*(r)$){$GA \otimes K(X \otimes GB)$};
    \node (22) at ($(12)+ (d) + 0.25*(r)$) {$K(GA \otimes X) \otimes GB$};

    \node (31) at ($(11)+2*(d)$) {$GA \otimes KX \otimes GB$};
    \node (32) at ($(12)+2*(d)$) {$GA \otimes KX \otimes GB$};

    \draw[<-,thick] (31) to node[left]{$GA \otimes \rlineator_{X,B}$}(21);
    \draw[<-,thick] (21) to node[left]{$\llineator_{A,X\otimes GB}$}(11);
    \draw[<-,thick] (12) to node[above]{$\id$}(11);

    \draw[<-,thick] (32) to node[below]{$\id$}(31);
    \draw[<-,thick] (32) to node[right]{$\llineator_{A,X} \otimes GB$}(22);
    \draw[<-,thick] (22) to node[right,xshift=0.2em]{$\rlineator_{GA \otimes X,B}$}(12);
  \end{tikzpicture}
\end{equation}

A morphism from $(K, \llineator^K, \rlineator^K)$ to $(M, \llineator^M, \rlineator^M)$ in $\cZ'(\cD^G)$ is given by a natural transformation $\tau: K \rightarrow M$ such that \Cref{equation:tau_rlin_coh} and the following diagram commute for $A \in \cC$, $X \in \cD$:
\begin{equation}\label{equation:tau_llin_coh}
\begin{tikzpicture}[ baseline=($(11) + 0.5*(d)$)]
      \coordinate (r) at (7,0);
      \coordinate (d) at (0,-2);
      \node (11) {$K( GA\otimes X)$};
      \node (12) at ($(11) + (r)$) {$GA\otimes K(X)$};
      \node (21) at ($(11) + (d)$) {$M(GA\otimes  X)$};
      \node (22) at ($(11) + (d) + (r)$) {$GA\otimes M(X)$};
      \draw[->] (11) to node[above]{$\llineator^K_{A,X}$} (12);
      \draw[->] (21) to node[below]{$\llineator^M_{A,X}$} (22);
      \draw[->] (11) to node[left]{$\tau_{GA\otimes X}$} (21);
      \draw[->] (12) to node[right]{$GA\otimes \tau_{X}$} (22);
\end{tikzpicture}
\end{equation}

\end{remark}

\begin{example}
    If $\cC$ is the trivial monoidal category with only one object, then any category $\cM$ is a $\cC$-bimodule category in a canonical way and $\cZ'(\cM)$ is simply the monoidal category of all endofunctors on $\cM$. 
\end{example}

Next, we compare the monoidal categories $\cZ'(\cD^G)$ and $\cZ( \cD^G )$.

\begin{lemma}\label{prop::Z-vs-Z-prime} 
Let $G\colon \cC\to \cD$ be a strong monoidal functor. Then we have a strong monoidal functor
\begin{align*}
\Emb\colon \cZ(\cD^G) &\to \cZ'(\cD^G) \\
(X,c^X) &\mapsto (X\otimes(-),\llineator^{(X,c^X)},\rlineator^{(X,c^X)})
\end{align*}
where
\[
\llineator^{(X,c^X)}_{A,Y}: X \otimes GA \otimes Y \xrightarrow{c^X_A \otimes Y} GA \otimes X \otimes Y
\]
and
\[
\rlineator^{(X,c^X)}_{Y,A}: X \otimes Y \otimes GA \xrightarrow{\id} X \otimes Y \otimes GA
\]
for $A \in \cC$, $Y \in \cD$. The underlying natural isomorphism of
\[
\lax^{\Emb}: \Emb( (X,c^X) )\Emb( (Y,c^Y) ) \rightarrow \Emb( (X,c^X) \otimes (Y,c^Y) )
\]
is given by the identity of $(X \otimes Y \otimes -): \cD \rightarrow \cD$ for $(X,c^X), (Y,c^Y) \in \cZ( \cD^G )$, and the underlying natural isomorphism of
\[
\lax_0^{\Emb}: \id_{\cD^G} \rightarrow \Emb( \one_{\cZ( \cD^G )} )
\]
is given by the identity of $\id_{\cD^G}$.
\end{lemma}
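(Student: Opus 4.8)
The plan is to follow the pattern of the proof of \Cref{lemma:DvsEndrightDG}, the only genuinely new ingredient being that the \emph{left} lineator now carries the half-braiding data. First I would verify that, for a fixed $(X,c^X)\in\cZ(\cD^G)$, the triple $(X\otimes(-),\llineator^{(X,c^X)},\rlineator^{(X,c^X)})$ really is an object of $\cZ'(\cD^G)$, i.e.\ a $\cC$-bimodule endofunctor of $\cD^G$. Since the right lineator $\rlineator^{(X,c^X)}$ is the identity, the coherences \eqref{equation:K_rlinor_coh} and \eqref{equation:K_rlinor_one_coh} degenerate to trivial equalities exactly as in \Cref{lemma:DvsEndrightDG}, and the mixed coherence \eqref{equation:K_llinor_rlinor_coh} collapses to a tautology of the form $c^X_A\otimes Y\otimes GB = c^X_A\otimes Y\otimes GB$. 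The substantive point is that, after substituting $\llineator^{(X,c^X)}_{A,Y}=c^X_A\otimes Y$ and using strictness of $\cD$, the coherences \eqref{equation:K_llinor_coh} and \eqref{equation:K_llinor_one_coh} for $K=X\otimes(-)$ become precisely the half-braiding axioms \eqref{eq:Ztensorcomp} and \eqref{eq:Zunitcomp} for $c^X$, tensored on the right by the relevant object of $\cD$; hence they hold.

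Next I would check functoriality. A morphism $f\colon(X,c^X)\to(Y,c^Y)$ in $\cZ(\cD^G)$ yields the natural transformation $f\otimes(-)\colon X\otimes(-)\to Y\otimes(-)$; its compatibility with the right lineators \eqref{equation:tau_rlin_coh} is trivial as both are identities, while its compatibility with the left lineators \eqref{equation:tau_llin_coh} amounts to the equality $((GA\otimes f)\circ c^X_A)\otimes Z=(c^Y_A\circ(f\otimes GA))\otimes Z$, which is the defining condition \eqref{eq:Zmorcomp} on $f$ tensored with $Z$. Preservation of identities and of composition of morphisms is immediate from the interchange law, so $\Emb$ is a well-defined functor $\cZ(\cD^G)\to\cZ'(\cD^G)$.

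Finally I would treat the monoidal structure. For the lax constraint I would show that the identity natural transformation of $X\otimes Y\otimes(-)$ underlies a morphism $\Emb(X,c^X)\Emb(Y,c^Y)\to\Emb((X,c^X)\otimes(Y,c^Y))$ in $\cZ'(\cD^G)$: the right lineators on both sides are identities, while the composite left lineator of $\Emb(X,c^X)\circ\Emb(Y,c^Y)$ at $(A,Z)$ is $(c^X_A\otimes Y\otimes Z)\circ(X\otimes c^Y_A\otimes Z)=c^{X\otimes Y}_A\otimes Z$ by the very definition of $c^{X\otimes Y}$ in \Cref{def::Z}, and this is the left lineator of $\Emb((X,c^X)\otimes(Y,c^Y))$. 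Likewise $\Emb(\one_{\cZ(\cD^G)})=(\one_\cD\otimes(-),\dots)$ has all lineators equal to identities, since the half-braiding of the unit of $\cZ(\cD^G)$ is the identity, so it coincides on the nose with $\id_{\cD^G}$ and $\lax_0^{\Emb}$ is the identity. Since all structure $2$-morphisms are identities and $\cD$ is strict — so that $\cZ(\cD^G)$ and $\cZ'(\cD^G)$ are strict monoidal and $(X\otimes Y)\otimes(-)=X\otimes(Y\otimes(-))$ holds strictly — the associativity and unitality axioms for $\Emb$ hold trivially; indeed $\Emb$ is even strict monoidal. There is no real obstacle here; the only thing demanding attention throughout is the bookkeeping that identifies each nontrivial coherence diagram of $\cZ'(\cD^G)$ with the corresponding defining axiom of $\cZ(\cD^G)$.
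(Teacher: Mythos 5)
Your proposal is correct and follows essentially the same route as the paper's proof: verifying the object coherences of $\cZ'(\cD^G)$ (the left-lineator ones reducing to \eqref{eq:Ztensorcomp} and \eqref{eq:Zunitcomp}, the rest degenerating to trivial equalities), checking morphism compatibility via \eqref{eq:Zmorcomp}, and observing that the composite left lineator equals $c^{X\otimes Y}_A\otimes Z$ so that the monoidal structure is given by identities and the coherences hold trivially. Nothing is missing.
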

\begin{proof}
First, we check that $(X\otimes(-),\llineator^{(X,c^X)},\rlineator^{(X,c^X)})$ satisfies the coherences of \Cref{remark:ZprimeDG_explicitly2} for objects:
the coherence \eqref{equation:K_llinor_coh} follows directly from \Cref{eq:Ztensorcomp}.
The coherence \eqref{equation:K_llinor_one_coh} follows directly from \Cref{eq:Zunitcomp}.
The coherences \eqref{equation:K_rlinor_coh}, \eqref{equation:K_rlinor_one_coh}, and \eqref{equation:K_llinor_rlinor_coh} degenerate to simple equalities of morphisms and hold trivially.

Next, let $f\colon (X,c^X)\to(Y,c^Y)$ be a morphism in $\cZ(\cD^G)$.
We need to check that $(f \otimes -): (X \otimes -) \rightarrow (Y \otimes -)$ satisfies the coherences of \Cref{remark:ZprimeDG_explicitly2} for morphisms:
the coherence \eqref{equation:tau_llin_coh} follows directly from \Cref{eq:Zmorcomp}.
The coherence \eqref{equation:tau_rlin_coh} degenerates to a simple equality of morphisms and holds trivially.

Last, we need to check that the identity of $(X \otimes Y \otimes -)$ turns $\Emb$ into a strong monoidal functor.
The right lineators of $\Emb( (X,c^X) )\Emb( (Y,c^Y) )$ and 
$\Emb( (X,c^X) \otimes (Y,c^Y) )$ are both given by identity morphisms.
The left lineator of $\Emb( (X,c^X) )\Emb( (Y,c^Y) )$ is given by
\[
X \otimes Y \otimes GA \otimes Z \xrightarrow{X \otimes c^Y_A \otimes Z} X \otimes GA \otimes Y \otimes Z \xrightarrow{c^X_A \otimes Y \otimes Z} GA \otimes X \otimes Y \otimes Z
\]
for $A \in \cC$, $Z \in \cD$.
The left lineator of $\Emb( (X,c^X) \otimes (Y,c^Y) ) = \Emb( (X \otimes Y, c^{X \otimes Y} ) )$ is given by $c^{X \otimes Y}_A \otimes Z$.
Thus, we really have an equality $\Emb( (X,c^X) )\Emb( (Y,c^Y) ) = \Emb( (X,c^X) \otimes (Y,c^Y) )$ not only on the level of functors, but on the level of objects in $\cZ'(\cD^G)$, and this equality is natural in $(X,c^X), (Y,c^Y) \in \cZ( \cD^G )$.
Moreover, we have  $\id_{\cD^G} = \Emb( \one_{\cZ( \cD^G )} )$ as objects in $\cZ'(\cD^G)$. Thus, the coherences of $\Emb$ for being a strong monoidal functor hold trivially.
\end{proof}

\begin{remark}\label{remark:equivalence_Z_Zp}
It is well-known, see e.g.\ \cite{EGNO}*{Proposition 7.13.8.}, that if $G$ is the identity functor of $\cC$, then \Cref{prop::Z-vs-Z-prime} gives us a monoidal equivalence
\[
\cZ( \cC ) \simeq \cZ'( \cC^{\id_\cC} ).
\]
\end{remark}

\subsection{Induced functors on centers}

We require the following two results from \cite{FLP2}.

\begin{lemma}\label{lemma:ZRDG_is_lax}
Suppose given a monoidal adjunction $G \dashv R$ such that the projection formula holds for $R$.
Then we obtain a lax monoidal functor
\begin{align*}
\cZ( \cD^G ) &\xrightarrow{\cZ( R )} \cZ( \cC ) \\
(X, c^X) &\longmapsto (RX, c^{RX})
\end{align*}
where
\[
c_A^{RX}=\big(RX\otimes A\xrightarrow{\projr{X}{A}^{R, G \dashv R}}R(X\otimes GA)\xrightarrow{R(c^X_{A})}R(GA\otimes X)\xrightarrow{(\projl{A}{X}^{R, G \dashv R})^{-1}}A\otimes RX \big)
\]
for $A\in \cC$ and where the lax structure is directly inherited from the lax structure of $R$, i.e., it is given by
\begin{equation}
\lax^{\cZ(R)}_{ (X,c^X), (Y,c^Y) } = \lax^R_{X,Y} \qquad \qquad\lax^{\cZ(R)}_0 = \lax^R_0    
\end{equation}
for $(X,c^X), (Y,c^Y) \in \cZ( \cD^G )$.
\end{lemma}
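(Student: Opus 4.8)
The idea is to realise $\cZ(R)$ as a composite of three lax monoidal functors and then to match this composite with the explicit description in the statement. Since the projection formula holds for $R$, \Cref{lemma:module_adjunction_R}(3) provides an adjunction $\big(\cC \xrightarrow{(G,\oplax^G,\oplax^G)} \cD^G\big) \dashv \big(\cD^G \xrightarrow{(R,\llineator^R,\rlineator^R)} \cC\big)$ internal to the strict bicategory $\BiMod{\cC}$. Applying \Cref{proposition:induced_end_adjunctions} to this internal adjunction in $\BiMod{\cC}$ yields an oplax--lax adjunction $\Gamma \dashv \Phi$ between the endomorphism categories $\End_{\BiMod{\cC}}(\cC) = \cZ'(\cC^{\id_\cC})$ and $\End_{\BiMod{\cC}}(\cD^G) = \cZ'(\cD^G)$; in particular $\Phi = \Hom(G,R)\colon \cZ'(\cD^G) \to \cZ'(\cC^{\id_\cC})$ carries the lax monoidal structure $\lax^\Phi$ described there. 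Precomposing with the strong monoidal functor $\Emb\colon \cZ(\cD^G) \to \cZ'(\cD^G)$ of \Cref{prop::Z-vs-Z-prime} and postcomposing with a monoidal pseudo-inverse of the monoidal equivalence $\Emb\colon \cZ(\cC) \to \cZ'(\cC^{\id_\cC})$ of \Cref{remark:equivalence_Z_Zp}, I obtain a lax monoidal functor $\cZ(\cD^G) \to \cZ(\cC)$, being a composite of lax monoidal functors (here I use that strong monoidal functors and pseudo-inverses of monoidal equivalences are in particular lax, and that lax monoidal functors compose, as recalled before \Cref{lemma:composition_of_Frobenius_monoidal_functors}).

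The second step is to identify this composite with the functor $\cZ(R)$ of the statement. On underlying functors, $\Emb$ sends $(X,c^X)$ to $X \otimes (-)$, then $\Phi$ sends it to $R\big(X \otimes G(-)\big)$, and the pseudo-inverse equivalence evaluates a $\cC$-bimodule endofunctor at $\one_\cC$; thus $(X,c^X)$ is sent to $R(X \otimes G\one_\cC) \cong RX$ via the evident unitor, which I absorb along a monoidal natural isomorphism. The half-braiding produced this way is the composite of the left and right lineators of the $\cC$-bimodule endofunctor $\Phi(\Emb(X,c^X)) = R(X\otimes G(-))$ evaluated at $\one_\cC$; unwinding those lineators (built from $\oplax^G$, the counit of $G\dashv R$, and the lineators $\rlineator^R = (\projrnoarg^{R,G\dashv R})^{-1}$, $\llineator^R = (\projlnoarg^{R,G\dashv R})^{-1}$ of \Cref{example:R_as_module_functor}) and using the zigzag identities together with the definitions \eqref{eq:proj}--\eqref{eq:projr} of the projection formula morphisms recovers exactly $(\projl{A}{X}^{R,G\dashv R})^{-1} \circ R(c^X_A) \circ \projr{X}{A}^{R,G\dashv R}$. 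Likewise, tracing the lax structure: $\Emb$ and the pseudo-inverse contribute only identities up to the unitors, while $\lax^\Phi_{K,L} = RK \ast \counit^{G\dashv R} \ast LG$, evaluated at $\one_\cC$ and transported along the unitors, reproduces the lax structure $\lax^R$ of the monoidal adjunction $G\dashv R$; hence the composite is (monoidally isomorphic to) $(\cZ(R), \lax^R)$ as stated, and functoriality together with the lax monoidal axioms are inherited from the composite.

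The main obstacle is this second step: the bookkeeping that the abstractly produced half-braiding and lax structure coincide with the stated formulas, which rests on the naturality of the projection formula morphisms in the $\cD$-variable and on their compatibility with $\lax^R$ (\cite{FLP2}*{Lemmas~3.1 and 3.2}). An essentially equivalent, more hands-on route bypasses the endomorphism categories and verifies the lemma directly: (i) $c^{RX}_A$ is natural in $A$ and invertible, which is immediate as each factor is; (ii) $c^{RX}$ satisfies the center coherences \eqref{eq:Ztensorcomp} (with $G$ replaced by $\id_\cC$) and \eqref{eq:Zunitcomp}, by a pasting argument using that $c^X$ satisfies them in $\cZ(\cD^G)$, naturality of $\projrnoarg$ and $\projlnoarg$, and their interaction with $\lax^R$ and iterated tensor products; (iii) $R(f)$ is a morphism in $\cZ(\cC)$ for central $f$, by naturality of $\projrnoarg$, $\projlnoarg$; and (iv) $\lax^R_{X,Y}$ and $\lax^R_0$ are morphisms in $\cZ(\cC)$, which is precisely the monoidality of the natural isomorphism $\beta$ of \Cref{lemma:compare_R_Phi}, i.e.\ the commutativity of \eqref{equation:beta_monoidal}. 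The lax monoidal axioms for $\cZ(R)$ then follow from those for $R$, since the forgetful functor $\cZ(\cC)\to\cC$ is faithful and strict monoidal with $\lax^{\cZ(R)} = \lax^R$. In this route the crux is again step (iv), equivalently step (ii): the compatibility of the projection formula morphisms with the lax structure of $R$.
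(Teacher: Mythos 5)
Your proposal is correct in substance, but it takes a genuinely different route from the paper: the paper's entire proof of this lemma is the citation ``Follows from \cite{FLP2}*{Proposition 4.8}'', whereas you give a self-contained derivation. Your first route --- factoring through $\Emb\colon \cZ(\cD^G)\to\cZ'(\cD^G)$, the lax functor $\Phi=\Hom(G,R)$ obtained from the internal adjunction of \Cref{lemma:module_adjunction_R}(3) via \Cref{proposition:induced_end_adjunctions}, and a monoidal quasi-inverse of $\Emb\colon\cZ(\cC)\to\cZ'(\cC)$ --- is exactly the strategy the paper deploys only later (for \Cref{theorem:frobenius_functors_frobenius_monoidal} and \Cref{proposition:ZF_on_modules}), here applied to a single monoidal adjunction; it buys existence of a lax structure for free, at the price of the transport bookkeeping you acknowledge, which amounts to re-proving the comparison that the paper packages separately (and only after this lemma) as \Cref{lemma:compare_ZR_Phi_lax}, so you rightly do not cite that lemma. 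Your second, direct route is presumably close to the proof of the cited result in \cite{FLP2}. One refinement to your step (iv): the monoidality of $\beta$ from \Cref{lemma:compare_R_Phi}, i.e.\ the outer square of \eqref{equation:beta_monoidal}, only encodes the compatibility of $\lax^R$ with the \emph{right} projection formula morphisms; since $c^{RX}$ also involves $(\projlnoarg^{R,G\dashv R})^{-1}$, checking that $\lax^R_{X,Y}$ and $\lax^R_0$ are morphisms in $\cZ(\cC)$ additionally needs the left-handed analogue (compatibility of $\lax^R$ with $\projlnoarg$), the interchange of the two lineators coming from $R$ being a $\cC$-bimodule functor (\cite{FLP2}*{Proposition~3.22}), naturality in the middle $R(c^X_A)$ step, and the unit case; similarly your step (ii) tacitly uses the left-handed versions of \cite{FLP2}*{Lemmas 3.4 and 3.5}. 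None of this is an obstruction --- you correctly identify the crux as the interaction of the projection formula morphisms with $\lax^R$ --- but ``precisely the monoidality of $\beta$'' understates what must be pasted together.
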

\begin{proof}
Follows from \cite{FLP2}*{Proposition 4.8}.
\end{proof}

\begin{lemma}[$\oop$-dual version of \Cref{lemma:ZRDG_is_lax}]\label{lemma:ZLDG_is_oplax}
Suppose given an opmonoidal adjunction $L \dashv G$ such that the projection formula holds for $L$.
Then we obtain an oplax monoidal functor
\begin{align*}
\cZ( \cD^G ) &\xrightarrow{\cZ( L )} \cZ( \cC ) \\
(X, c^X) &\longmapsto (LX, c^{LX})
\end{align*}
where
\[
c_A^{LX}=\big(LX\otimes A\xrightarrow{ (\projr{X}{A}^{L, L \dashv G})^{-1} }L(X\otimes GA)\xrightarrow{L(c^X_{A})}L(GA\otimes X)\xrightarrow{(\projl{A}{X}^{L, L \dashv G})}A\otimes LX \big)
\]
for $A\in \cC$ and where the oplax structure is directly inherited from the oplax structure of $L$, i.e., it is given by
\begin{equation}
\oplax^{\cZ(L)}_{ (X,c^X), (Y,c^Y) } = \oplax^L_{X,Y} \qquad \qquad\oplax^{\cZ(L)}_0 = \oplax^L_0    
\end{equation}
for $(X,c^X), (Y,c^Y) \in \cZ( \cD^G )$.
\end{lemma}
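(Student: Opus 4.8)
The plan is to obtain this as the $\oop$-dual of \Cref{lemma:ZRDG_is_lax}, so that essentially no new computation is needed. Write $\cC^{\oop}$ and $\cD^{\oop}$ for the categories with reversed morphisms; these are again monoidal with the same tensor product and unit, $G^{\oop}\colon\cC^{\oop}\to\cD^{\oop}$ is again strong monoidal, and reversing the $2$-cells of the adjunction $L\dashv G$ produces an adjunction $G^{\oop}\dashv L^{\oop}$ of categories in which $G^{\oop}$ is the left adjoint. Under this duality the oplax monoidal structure on $L$ becomes a lax monoidal structure on $L^{\oop}$, so $G^{\oop}\dashv L^{\oop}$ is a monoidal adjunction of the kind required by \Cref{lemma:ZRDG_is_lax}. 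Comparing \eqref{eq:iprojr}--\eqref{eq:iproj} with \eqref{eq:projr}--\eqref{eq:proj}, the morphisms $\projrnoarg^{L,L\dashv G}$ and $\projlnoarg^{L,L\dashv G}$ are precisely the images (with reversed direction) of $\projrnoarg^{L^{\oop},G^{\oop}\dashv L^{\oop}}$ and $\projlnoarg^{L^{\oop},G^{\oop}\dashv L^{\oop}}$; hence the projection formula holds for $L^{\oop}$ in the context $G^{\oop}\dashv L^{\oop}$ precisely when it holds for $L$ in the context $L\dashv G$.

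Next I would record the compatibility of the center construction with taking opposites. The $\cC^{\oop}$-bimodule $(\cD^{\oop})^{G^{\oop}}$ is literally the opposite bimodule $(\cD^G)^{\oop}$, with the same action formulas read on the opposite underlying category, and inverting half-braidings gives monoidal equivalences $\cZ\big((\cD^{\oop})^{G^{\oop}}\big)\simeq\cZ(\cD^G)^{\oop}$ and $\cZ(\cC^{\oop})\simeq\cZ(\cC)^{\oop}$: an object $(X,c^X)$ in the center of the opposite bimodule is the same data as an object of $\cZ(\cD^G)$ with the half-braiding inverted, the coherences \eqref{eq:Ztensorcomp}--\eqref{eq:Zunitcomp} are interchanged under this, and the tensor product of half-braidings dualizes to that of $\cZ(\cD^G)$. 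Since we only claim an oplax monoidal --- not a braided --- functor, the fact that the braiding is reversed under these equivalences is irrelevant here.

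With these identifications in place, applying \Cref{lemma:ZRDG_is_lax} to the monoidal adjunction $G^{\oop}\dashv L^{\oop}$ yields a lax monoidal functor $\cZ\big((\cD^{\oop})^{G^{\oop}}\big)\to\cZ(\cC^{\oop})$, i.e.\ a lax monoidal functor $\cZ(\cD^G)^{\oop}\to\cZ(\cC)^{\oop}$, which is the same thing as an oplax monoidal functor $\cZ(\cD^G)\to\cZ(\cC)$; this is $\cZ(L)$. It remains to read off the structure maps: substituting $R=L^{\oop}$ in the formula for $c^{RX}$ in \Cref{lemma:ZRDG_is_lax}, using the correspondences $\projrnoarg^{L^{\oop},G^{\oop}\dashv L^{\oop}}\leftrightarrow(\projrnoarg^{L,L\dashv G})^{-1}$ and $(\projlnoarg^{L^{\oop},G^{\oop}\dashv L^{\oop}})^{-1}\leftrightarrow\projlnoarg^{L,L\dashv G}$ and passing back to $\cC$, reproduces exactly the displayed composite defining $c^{LX}$, and the lax structure of $\cZ(L^{\oop})$ inherited from $L^{\oop}$ becomes the oplax structure $\oplax^L$ of $L$.

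I expect the only real difficulty to be bookkeeping: keeping the several ``$\oop$''s --- on $\cC$, on $\cD$, on the bimodule, on the two centers, and on the adjective ``monoidal'' --- consistent, and in particular pinning down that $\cZ$ of an opposite bimodule is the opposite of $\cZ$ with the stated inversion of half-braidings. Once that is settled, the statement is a formal transport of the already established \Cref{lemma:ZRDG_is_lax}, as with the other $\oop$-dual lemmas in this paper.
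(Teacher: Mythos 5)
Your proposal is correct and matches the paper's intent: the paper offers no separate proof for \Cref{lemma:ZLDG_is_oplax}, treating it precisely as the $\oop$-dual (arrow-reversal) of \Cref{lemma:ZRDG_is_lax}, which is the transport argument you carry out, including the identifications $\cZ\big((\cD^{\oop})^{G^{\oop}}\big)\cong\cZ(\cD^G)^{\oop}$ via inverted half-braidings and the matching of projection formula morphisms. Your bookkeeping recovers exactly the stated formula for $c^{LX}$ and the inherited oplax structure, so no gap remains.
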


The next lemma compares the functor $\cZ(R)$ from \Cref{lemma:ZRDG_is_lax} with the functor $\Phi$ from \Cref{proposition:immediate_consequences}.

\begin{lemma}\label{lemma:compare_ZR_Phi_lax}
Let $G \dashv R$ be a monoidal adjunction such that the projection formula holds for $R$.

Then we obtain the following diagram in $\Catlax$
\begin{center}
\begin{tikzpicture}[baseline=($(D) + 0.5*(d)$)]
      \coordinate (r) at (10,0);
      \coordinate (d) at (0,-1.5);
      
      \node (D) {$\cZ(\cD^G)$};
      \node (C2) at ($(D) + (r)$) {$\cZ(\cC)$};
      \node (Dp) at ($(D) + (d)$) {$\cZ'( \cD^G )$};
      \node (Cp2) at ($(Dp) + (r)$) {$\cZ'( \cC )$};
      
      \draw[->] (D) to node[above]{$\cZ( R )$} (C2);

      \draw[->] (Dp) to node[below]{$\Phi = \Hom( G, R )$} (Cp2);

      \draw[->] (D) to  node[left]{$\Emb$} (Dp);
      \draw[->] (C2) to node[right]{$\Emb$} (Cp2);

      \draw[-implies,double equal sign distance,shorten >=3em,shorten <=3em] (C2) to node[below]{$\beta$} (Dp);
\end{tikzpicture}
\end{center}
where $\beta$ is a $2$-isomorphism in $\Catlax$ (i.e., a monoidal natural isomorphism) given by
\[
\beta_{(X,c^X)}: (RX \otimes -) \xrightarrow{\projr{X}{-}^{R, G \dashv R}} R(X \otimes G(-))
\]
for $(X, c^X) \in \cZ(\cD^G)$.

\end{lemma}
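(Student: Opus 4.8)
The plan is to run the same argument as in \Cref{lemma:compare_R_Phi}: the statement is deduced from that (right-module) lemma together with exactly one further coherence, the compatibility of $\beta$ with the half-braidings, i.e.\ with the \emph{left} lineators. First I would verify that the square lies in $\Catlax$. Both copies of $\Emb$ are strong monoidal by \Cref{prop::Z-vs-Z-prime} and \Cref{remark:equivalence_Z_Zp}; $\cZ(R)$ is lax by \Cref{lemma:ZRDG_is_lax}; and $\Phi=\Hom(G,R)$ is lax by \Cref{proposition:induced_end_adjunctions} applied in the bicategory $\BiMod{\cC}$ to the internal adjunction $(G,\oplax^G,\oplax^G)\dashv(R,\llineator^R,\rlineator^R)$ supplied by \Cref{lemma:module_adjunction_R}. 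The transformation $\beta$ is invertible because the right projection formula holds for $R$.

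Next I would use the strict monoidal forgetful pseudofunctors $\cZ'(\cD^G)\to\End_{\rMod{\cC}}(\cD^G)$ and $\cZ'(\cC)\to\End_{\rMod{\cC}}(\cC)$ that drop the left lineators. These are the identity on underlying endofunctors and on underlying natural transformations, so in particular faithful on $2$-cells; they carry the square of the statement onto the square of \Cref{lemma:compare_R_Phi} (the left $\Emb$ becoming the composite $\cZ(\cD^G)\to\cD\xrightarrow{\Emb}\End_{\rMod{\cC}}(\cD^G)$, and likewise on the right) and carry $\beta$ onto the natural transformation $\beta$ of that lemma, since the underlying endofunctors $(RX\otimes-)$, $R(X\otimes G(-))$ and the components $\projrnoarg^{R,G\dashv R}$ coincide. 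Because these functors are monoidal and faithful on $2$-cells, \Cref{lemma:compare_R_Phi} then yields for free that each $\beta_{(X,c^X)}$ is a natural isomorphism compatible with the right lineators of $\Emb(\cZ(R)(X,c^X))$ and $\Phi(\Emb(X,c^X))$, and that the family $\beta$ is a monoidal natural transformation --- all of these being properties detected after forgetting the left structure.

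The remaining and main step is to check that each component $\beta_{(X,c^X)}\colon(RX\otimes-)\to R(X\otimes G(-))$ also satisfies the left-lineator coherence \eqref{equation:tau_llin_coh}, so that it is a genuine morphism in $\cZ'(\cC)$. The source $(RX\otimes-)=\Emb(\cZ(R)(X,c^X))$ carries the left lineator $c^{RX}_A\otimes(-)$ with $c^{RX}$ the half-braiding of \Cref{lemma:ZRDG_is_lax}, while the target $R(X\otimes G(-))=\Phi(\Emb(X,c^X))$ carries as left lineator the composite of the left lineators of the bimodule functors $G$, $\Emb(X,c^X)$ (whose left lineator is $c^X_A\otimes(-)$ by \Cref{prop::Z-vs-Z-prime}), and $R$ (whose left lineator is $(\projlnoarg^{R,G\dashv R})^{-1}$). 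Substituting the definition $c^{RX}_A=(\projlnoarg^{R,G\dashv R})^{-1}\circ R(c^X_A)\circ\projrnoarg^{R,G\dashv R}$, the diagram \eqref{equation:tau_llin_coh} for $\beta$ unwinds into a diagram in $\cC$ whose commutativity I would obtain from naturality of $\projrnoarg^{R,G\dashv R}$ and $\projlnoarg^{R,G\dashv R}$, the half-braiding coherence \eqref{eq:Ztensorcomp} of $c^X$, the $\cC$-bimodule-functor coherences of $R$ (\cite{FLP2}*{Proposition~3.22}), and the tensor-compatibility coherences of the projection formula morphisms from \cite{FLP2}*{Section~3} (the same \cite{FLP2}*{Lemma~3.1} and \cite{FLP2}*{Lemma~3.2} used in the proof of \Cref{lemma:compare_R_Phi}, now applied to $\projlnoarg^{R,G\dashv R}$ as well). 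Once this holds, $\beta$ is a natural isomorphism between the lax monoidal functors $\Emb\circ\cZ(R)$ and $\Phi\circ\Emb$ landing in $\cZ'(\cC)$, and it is monoidal by the previous step, hence a $2$-isomorphism in $\Catlax$. I expect this last diagram chase to be the real obstacle: it is the one piece not covered by the right-module case, and the bookkeeping of the half-braiding $c^X$ through the two presentations of the left lineator is where the care is needed.
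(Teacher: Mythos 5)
Your proposal is correct and follows essentially the same route as the paper: reduce the right-lineator compatibility and the monoidality of $\beta$ to \Cref{lemma:compare_R_Phi} (the paper does this via \Cref{remark:rproj_as_a_module_morphism} and the observation that the monoidality square is, on underlying natural transformations, exactly the outer square of \Cref{equation:beta_monoidal}), and then check separately that each component $\beta_{(X,c^X)}$ is compatible with the left lineators. For the chase you defer, the paper completes it by splitting the relevant rectangle into three squares handled by \cite{FLP2}*{Lemma 3.4}, naturality of $\projrnoarg^{R, G \dashv R}$ applied to $c^X_A$, and \cite{FLP2}*{Lemma 3.5}; in particular, the half-braiding coherence \eqref{eq:Ztensorcomp} and the bimodule-functor coherences of $R$ that you list are not actually needed, since $c^X$ enters only through naturality of the right projection formula morphism.
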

\begin{proof}
The two embedding functors are strong monoidal by \Cref{prop::Z-vs-Z-prime}.
The functor $\Phi$ is lax by \Cref{proposition:induced_end_adjunctions}.
The functor $\cZ(R)$ is lax by \Cref{lemma:ZRDG_is_lax}.
Thus, all functors lie in $\Catlax$.

We need to show that the components of $\beta$ lie in $\cZ'( \cC )$. For this, let $(X, c^X) \in \cZ(\cD^G)$. Then we have the following diagram:
\begin{center}
  \begin{tikzpicture}[every node/.style={scale=0.8}]
    \coordinate (r) at (4.1,0);
    \coordinate (d) at (0,-2);
    
    \node (11) {$RX \otimes A \otimes B$};
    \node (12) at ($(11)+(r)$) {$R(X \otimes GA) \otimes B$};
    \node (13) at ($(12) + (r)$) {$R(GA \otimes X) \otimes B$};
    \node (14) at ($(13) + (r)$) {$A \otimes RX \otimes B$};

    \node (21) at ($(11) + (d)$){$R(X \otimes G(A \otimes B) )$};
    \node (22) at ($(21)+(r)$) {$R( X \otimes GA \otimes GB )$};
    \node (23) at ($(22) + (r)$) {$R( GA \otimes X \otimes GB )$};
    \node (24) at ($(23) + (r)$) {$A \otimes R( X \otimes GB )$};
    
    \draw[->,thick] (11) to node[above]{$\projr{X}{A} \otimes B$}(12);
    \draw[->,thick] (12) to node[above]{$R( c^X_A) \otimes B$} (13);
    \draw[->,thick] (13) to node[above]{$\projl{A}{X}^{-1} \otimes B$} (14);

    \draw[->,thick] (21) to node[below]{$R( X \otimes \oplax^G_{A,B} )$}(22);
    \draw[->,thick] (22) to node[below]{$R( c^X_A \otimes GB )$}(23);
    \draw[->,thick] (23) to node[below]{$\projl{A}{X \otimes GB}^{-1}$}(24);

    \draw[->,thick] (11) to node[left]{$\projr{X}{A \otimes B}$} (21);
    \draw[->,thick] (12) to node[left]{$\projr{X \otimes GA}{B}$}(22);
    \draw[->,thick] (13) to node[left]{$\projr{GA \otimes X}{B}$}(23);
    \draw[->,thick] (14) to node[left]{$A \otimes \projr{X}{B}$}(24);
  \end{tikzpicture}
\end{center}
The composition of the morphisms in the top row give the left lineator of the source of $\beta$, i.e., of $\Emb \cZ( R )( ( X, c^X ) )$.
The composition of the morphisms in the bottom row give the left lineator of the range of $\beta$, i.e., of $\Phi\Emb( ( X, c^X ) )$.
The commutativity of the outer rectangle thus encodes the compatibility of $\beta$ with the left lineators.
We show the commutativity of all three small rectangles: the left rectangle commutes by \cite{FLP2}*{Lemma 3.4}, the middle rectangle commutes by naturality of the projection formula morphism, and the right rectangle commutes by \cite{FLP2}*{Lemma 3.5}.

Furthermore, by \Cref{remark:rproj_as_a_module_morphism},
$\beta$ is compatible with the right lineators.
Hence, the components of $\beta$ lie in $\cZ'( \cC )$.

Next, we need to show that $\beta$ is a monoidal transformation. By definition, this means that the following diagram commutes for all $(X,c^X), (Y,c^Y) \in \cZ( \cD^G )$:
\begin{center}
  \begin{tikzpicture}[mylabel/.style={fill=white}]
    \coordinate (r) at (9,0);
    \coordinate (d) at (0,-2);
    
    \node (11) {$\Emb( \cZ(R)( (X,c^X) ) ) \circ \Emb( \cZ(R)( (Y,c^Y) ) )$};
    \node (12) at ($(11)+(r)$) {$\Emb( \cZ(R)( (X,c^X) \otimes (Y, c^Y) ) )$};

    \node (21) at ($(11) + (d)$){$\Phi( \Emb( (X,c^X) ) ) \circ \Phi( \Emb( (Y, c^Y) ) )$};
    \node (22) at ($(21)+(r)$) {$\Phi( \Emb( (X,c^X) \otimes (Y,c^Y) ) )$};
    
    \draw[->,thick] (11) to node[above]{$\lax^{\Emb \circ \cZ(R)}_{(X,c^X),(Y,c^Y)}$}(12);

    \draw[->,thick] (21) to node[above]{$\lax^{\Phi \circ \Emb}_{(X,c^X),(Y,c^Y)}$}(22);

    \draw[->,thick] (11) to node[left]{$\beta_{(X,c^X)} \ast \beta_{(Y,c^Y)}$}(21);
    \draw[->,thick] (12) to node[right]{$\beta_{(X,c^X) \otimes (Y,c^Y)}$}(22);
  \end{tikzpicture}
\end{center}
On the level of functors and natural transformations, this square is given by the outer square of the diagram in \Cref{equation:beta_monoidal}.

Last, since the projection formula holds, $\beta$ is also an isomorphism.
The claim follows.
\end{proof}

\begin{lemma}[$\oop$-dual version of \Cref{lemma:compare_ZR_Phi_lax}]\label{lemma:compare_ZL_Phi_oplax}
Let $L \dashv G$ be an opmonoidal adjunction such that the projection formula holds for $L$.
Then we obtain the following diagram in $\Catoplax$
\begin{center}
\begin{tikzpicture}[baseline=($(D) + 0.5*(d)$)]
      \coordinate (r) at (10,0);
      \coordinate (d) at (0,-1.5);
      
      \node (D) {$\cZ(\cD^G)$};
      \node (C2) at ($(D) + (r)$) {$\cZ(\cC)$};
      \node (Dp) at ($(D) + (d)$) {$\cZ'( \cD^G )$};
      \node (Cp2) at ($(Dp) + (r)$) {$\cZ'( \cC )$};
      
      \draw[->] (D) to node[above]{$\cZ( L )$} (C2);

      \draw[->] (Dp) to node[below]{$\Phi = \Hom( G, L )$} (Cp2);

      \draw[->] (D) to  node[left]{$\Emb$} (Dp);
      \draw[->] (C2) to node[right]{$\Emb$} (Cp2);

      \draw[-implies,double equal sign distance,shorten >=3em,shorten <=3em] (C2) to node[below]{$\beta$} (Dp);
\end{tikzpicture}
\end{center}
where $\beta$ is a $2$-isomorphism in $\Catoplax$ (i.e., an opmonoidal natural isomorphism) given by
\[
\beta_{(X,c^X)}: (LX \otimes -) \xrightarrow{ (\projr{X}{-}^{L, L \dashv G})^{-1} } L(X \otimes G(-))
\]
for $(X, c^X) \in \cZ(\cD^G)$. \qedsymbol
\end{lemma}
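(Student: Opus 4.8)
The plan is to deduce this statement from \Cref{lemma:compare_ZR_Phi_lax} by passing to opposite monoidal categories, exactly as the earlier $\oop$-dual results (\Cref{lemma:dual_compare_R_Phi}, \Cref{lemma:ZLDG_is_oplax}) are deduced from their counterparts. Explicitly, I would equip $\cC$ and $\cD$ with the opposite monoidal structures $\cC^{\oop}$, $\cD^{\oop}$ (same tensor product, morphisms reversed). An opmonoidal adjunction $L\dashv G$ then becomes a monoidal adjunction $G^{\oop}\dashv L^{\oop}$ with $G^{\oop}$ strong monoidal, and the projection formula morphisms $\projrnoarg^{L,L\dashv G}\colon L(X\otimes GA)\to LX\otimes A$ become, after reversal, precisely the morphisms $\projrnoarg^{L^{\oop},G^{\oop}\dashv L^{\oop}}\colon L^{\oop}X\otimes A\to L^{\oop}(X\otimes G^{\oop}A)$; hence the projection formula holds for $L^{\oop}$ in the context $G^{\oop}\dashv L^{\oop}$, and \Cref{lemma:compare_ZR_Phi_lax} applies with $R:=L^{\oop}$.

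Next I would match up the remaining data under $(-)^{\oop}$. One checks that $\cZ(\cD^G)$, $\cZ'(\cD^G)=\End_{\BiMod{\cC}}(\cD^G)$, and the embedding $\Emb$ of \Cref{prop::Z-vs-Z-prime} are compatible with passing to opposites: a half-braiding $c^X_A\colon X\otimes GA\isomorph GA\otimes X$ becomes the half-braiding $(c^X_A)^{-1}$ for $(\cD^{\oop})^{G^{\oop}}$ (with the diagrams \Cref{eq:Ztensorcomp} and \Cref{eq:Zunitcomp} passing over under inversion), the bimodule lineators invert likewise, the functor $\Phi=\Hom(G,L)$ on endofunctor categories corresponds to $\Hom(G^{\oop},L^{\oop})$, and the oplax monoidal functor $\cZ(L)$ of \Cref{lemma:ZLDG_is_oplax} corresponds to the lax monoidal functor $\cZ(L^{\oop})$ of \Cref{lemma:ZRDG_is_lax}. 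Under these identifications the square of \Cref{lemma:compare_ZR_Phi_lax} for $G^{\oop}\dashv L^{\oop}$, which lives in $\Catlax$ and carries the monoidal natural isomorphism $\beta'_{(X,c^X)}=\projrnoarg^{L^{\oop},G^{\oop}\dashv L^{\oop}}$, is carried to the desired square in $\Catoplax$ with the opmonoidal natural isomorphism $\beta_{(X,c^X)}=(\projrnoarg^{L,L\dashv G})^{-1}$, as asserted.

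The main obstacle I anticipate is purely one of bookkeeping: being careful about how the bimodule category $\cD^G$, its center $\cZ(\cD^G)$, and the $\cC$-bimodule endofunctor category $\cZ'(\cD^G)$ transform under $(-)^{\oop}$ — in particular that no unwanted reversal of the tensor product on $\cZ(\cD^G)$ is introduced and that the coherences for half-braidings and for left/right lineators survive inversion. Everything else is formal. Should this global-duality argument prove awkward to write cleanly, the alternative I would fall back on is to re-run the proof of \Cref{lemma:compare_ZR_Phi_lax} verbatim: replace every lax/monoidal structure by the corresponding oplax/opmonoidal one, replace the projection formula morphisms by their inverses wherever they occur, take the two reference diagrams to be the evident oplax analogues of those in the proof of \Cref{lemma:compare_ZR_Phi_lax} (and of \Cref{equation:beta_monoidal}), and invoke the $\oop$-dual versions of \cite{FLP2}*{Lemmas 3.1, 3.2, 3.4, 3.5} together with naturality of the projection formula morphisms to obtain their commutativity. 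Either way, the argument is formal and introduces no new ideas beyond those already used for \Cref{lemma:compare_ZR_Phi_lax}.
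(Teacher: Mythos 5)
Your proposal is correct and matches the paper's (implicit) argument: the paper states this lemma with no written proof precisely because it is the $\oop$-dual of \Cref{lemma:compare_ZR_Phi_lax}, obtained by reversing morphisms (keeping the tensor products), under which the opmonoidal adjunction $L\dashv G$ becomes a monoidal adjunction, lax/oplax and the projection formula morphisms dualize as you describe, and half-braidings and lineators pass over via inversion. Your fallback of re-running the proof with dualized structures and the $\oop$-dual citations is likewise the standard reading of such a dual statement, so no gap remains.
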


\begin{proposition}\label{proposition:ZF_on_modules}
In \Cref{context:main}, assume that the projection formula morphisms are mutual inverses.
Then we obtain a Frobenius monoidal functor
\begin{align*}
\cZ( \cD^G ) &\xrightarrow{\cZ( F )} \cZ( \cC ) \\
(X, c^X) &\longmapsto (FX, c^{FX})
\end{align*}
where
\[
c_A^{FX}=\big(FX\otimes A\xrightarrow{\projr{X}{A}^{F, G \dashv F}}F(X\otimes GA)\xrightarrow{F(c^X_{A})}F(GA\otimes X)\xrightarrow{(\projl{A}{X}^{F, G \dashv F})^{-1}}A\otimes FX \big)
\]
for $A\in \cC$ and where the lax and oplax structure is directly inherited from the lax and oplax structure of $F$, i.e., it is given by
\begin{equation}
\lax^{\cZ(F)}_{ (X,c^X), (Y,c^Y) } = \lax^F_{X,Y} \qquad \qquad\lax^{\cZ(F)}_0 = \lax^F_0    
\end{equation}
\begin{equation}
\oplax^{\cZ(F)}_{ (X,c^X), (Y,c^Y) } = \oplax^F_{X,Y} \qquad \qquad\oplax^{\cZ(F)}_0 = \oplax^F_0    
\end{equation}
for $(X,c^X), (Y,c^Y) \in \cZ( \cD^G )$.
\end{proposition}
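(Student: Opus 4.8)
The plan is to mirror the proof of \Cref{theorem:frobenius_functors_frobenius_monoidal}, now carried out inside $\BiMod{\cC}$ with Drinfeld centers in place of endofunctor categories.

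First I would assemble the lax and oplax structures separately. Applying \Cref{lemma:ZRDG_is_lax} to the monoidal adjunction $G\dashv F$ yields a lax monoidal functor $\cZ(\cD^G)\to\cZ(\cC)$ sending $(X,c^X)$ to $(FX,c^{FX})$ with $c^{FX}$ as in the statement (built from $\projrnoarg^{F,G\dashv F}$ and $(\projlnoarg^{F,G\dashv F})^{-1}$) and with lax structure $\lax^F$; dually, applying \Cref{lemma:ZLDG_is_oplax} to the opmonoidal adjunction $F\dashv G$ yields an oplax monoidal functor sending $(X,c^X)$ to $(FX,c^{FX})$ with half-braiding built from $(\projrnoarg^{F,F\dashv G})^{-1}$ and $\projlnoarg^{F,F\dashv G}$ and with oplax structure $\oplax^F$. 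The hypothesis that the projection formula morphisms are mutual inverses (\Cref{definition:mutual_inverses}) is precisely what identifies these two half-braiding formulas, so the two constructions produce one and the same functor $\cZ(F)$, now equipped simultaneously with the lax structure $\lax^F$ and the oplax structure $\oplax^F$.

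It then remains to verify the Frobenius compatibilities \Cref{frobmon1}--\Cref{frobmon2} for $\cZ(F)$. Here I would invoke \Cref{lemma:compare_ZR_Phi_lax} with $R=F$ and \Cref{lemma:compare_ZL_Phi_oplax} with $L=F$: since the projection formula morphisms are mutual inverses, the monoidal natural isomorphism $\beta$ of the former and the opmonoidal natural isomorphism $\beta$ of the latter have identical components, namely $\beta_{(X,c^X)}\colon (FX\otimes-)\xrightarrow{\projr{X}{-}^{F,G\dashv F}}F(X\otimes G(-))$. This single natural isomorphism therefore witnesses
$$\bigl(\cZ(\cD^G)\xrightarrow{\Emb}\cZ'(\cD^G)\xrightarrow{\Phi}\cZ'(\cC)\bigr)\;\cong\;\bigl(\cZ(\cD^G)\xrightarrow{\cZ(F)}\cZ(\cC)\xrightarrow{\Emb}\cZ'(\cC)\bigr)$$
and is both monoidal and opmonoidal. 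Now $\Emb$ is strong monoidal by \Cref{prop::Z-vs-Z-prime} and $\Phi=\Hom(G,F)$ is Frobenius monoidal by \Cref{proposition:immediate_consequences}, so the left-hand composite is Frobenius monoidal by \Cref{lemma:composition_of_Frobenius_monoidal_functors}; transporting along the monoidal-and-opmonoidal isomorphism $\beta$ via \Cref{lemma:transport_Frobenius_monoidal_structure} shows $\Emb\circ\cZ(F)$ is Frobenius monoidal. Finally, since $\Emb\colon\cZ(\cC)\to\cZ'(\cC)$ is a monoidal equivalence (\Cref{remark:equivalence_Z_Zp}), \Cref{lemma:Frobenius_along_equivalence} gives that $\cZ(F)$ itself is Frobenius monoidal, with the asserted inherited lax and oplax structures.

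I expect the only genuinely delicate point to be the very first step: confirming that the lax-side functor produced from $G\dashv F$ and the oplax-side functor produced from $F\dashv G$ coincide on the nose, including their half-braidings and their action on morphisms (both act as $F$). This is a bookkeeping check of which of the four projection formula morphisms enters each factor, and it is exactly where \Cref{definition:mutual_inverses} is used; everything after that is a formal transfer of the Frobenius property along strong monoidal functors and one monoidal equivalence, just as in \Cref{theorem:frobenius_functors_frobenius_monoidal}.
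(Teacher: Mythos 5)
Your proposal is correct and matches the paper's own proof essentially step for step: setting $L=R=F$, observing that the mutual-inverses hypothesis identifies the functors (and half-braidings) coming from \Cref{lemma:ZRDG_is_lax} and \Cref{lemma:ZLDG_is_oplax}, then using \Cref{lemma:compare_ZR_Phi_lax} and \Cref{lemma:compare_ZL_Phi_oplax} to get a single monoidal-and-opmonoidal isomorphism $\beta$, and transferring the Frobenius property via \Cref{proposition:immediate_consequences}, \Cref{lemma:composition_of_Frobenius_monoidal_functors}, \Cref{lemma:transport_Frobenius_monoidal_structure}, and \Cref{lemma:Frobenius_along_equivalence}. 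No gaps; the "delicate point" you flag is handled exactly as in the paper.
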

\begin{proof}
We set $L := F$ and $R := F$. Note that the arguments are very similar to the arguments in the proof of \Cref{theorem:frobenius_functors_frobenius_monoidal}.

Our assumptions imply that $\cZ( R )$ (obtained from \Cref{lemma:ZRDG_is_lax}) and $\cZ( L )$ (obtained from \Cref{lemma:ZLDG_is_oplax}) are equal as functors, and both given by the functor $\cZ( F )$ as defined in the statement.
It follows that $\cZ( F )$ has the lax and oplax structures as defined in the statement.

Moreover, our assumptions imply that we can apply both \Cref{lemma:compare_ZR_Phi_lax} and \Cref{lemma:compare_ZL_Phi_oplax}.
Using the notation of \Cref{lemma:compare_ZR_Phi_lax}, we obtain a natural isomorphism
\[
\big( \cZ( \cD^G ) \xrightarrow{\Emb} \cZ'( \cD^G ) \xrightarrow{\Phi} \cZ'( \cC ) \big) \cong
\big( \cZ( \cD^G ) \xrightarrow{ \cZ( F ) } \cZ( \cC ) \xrightarrow{\Emb} \cZ'( \cC ) \big)
\]
which is both monoidal and opmonoidal.
The functor $\Emb$ is strong monoidal by \Cref{prop::Z-vs-Z-prime} and $\Phi$ is Frobenius monoidal by \Cref{proposition:immediate_consequences}.
Thus, the functor on the left-hand side is Frobenius monoidal as a composite of two Frobenius monoidal functors (\Cref{lemma:composition_of_Frobenius_monoidal_functors}).
It follows that the functor on the right-hand side $\Emb \circ \cZ( F )$ is also Frobenius monoidal by \Cref{lemma:transport_Frobenius_monoidal_structure}.
Last, since $\Emb: \cZ( \cC) \rightarrow \cZ'( \cC )$ is a monoidal equivalence (\Cref{remark:equivalence_Z_Zp}), it follows that $\cZ( F )$ is Frobenius monoidal by \Cref{lemma:Frobenius_along_equivalence}.
\end{proof}

We are now ready to derive the main result of this section giving Frobenius monoidal functors on Drinfeld centers which, by slight abuse of notation, we also denote by $\cZ(F)$.

\begin{theorem}[Main induction theorem]\label{thm:ZF}
In \Cref{context:main}, assume that the projection formula morphisms are mutual inverses.
Then we obtain a braided Frobenius monoidal functor
\begin{align*}
\cZ( \cD ) &\xrightarrow{\cZ( F )} \cZ( \cC ) \\
(X, c^X) &\longmapsto (FX, c^{FX})
\end{align*}
where
\[
c_A^{FX}=\big(FX\otimes A\xrightarrow{\projr{X}{A}^{F, G \dashv F}}F(X\otimes GA)\xrightarrow{F(c^X_{GA})}F(GA\otimes X)\xrightarrow{(\projl{A}{X}^{F, G \dashv F})^{-1}}A\otimes FX \big)
\]
for $A\in \cC$ and where the lax and oplax structure is directly inherited from the lax and oplax structure of $F$, i.e., it is given by
\begin{equation}
\lax^{\cZ(F)}_{ (X,c^X), (Y,c^Y) } = \lax^F_{X,Y} \qquad \qquad\lax^{\cZ(F)}_0 = \lax^F_0    
\end{equation}
\begin{equation}
\oplax^{\cZ(F)}_{ (X,c^X), (Y,c^Y) } = \oplax^F_{X,Y} \qquad \qquad\oplax^{\cZ(F)}_0 = \oplax^F_0    
\end{equation}
for $(X,c^X), (Y,c^Y) \in \cZ( \cD^G )$.

\end{theorem}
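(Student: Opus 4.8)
The plan is to derive \Cref{thm:ZF} from \Cref{proposition:ZF_on_modules}, which already produces a Frobenius monoidal functor $\cZ(F)\colon\cZ(\cD^G)\to\cZ(\cC)$, by passing through a strict monoidal \emph{restriction functor} $\cZ(\cD)\to\cZ(\cD^G)$, and then to check the two braiding compatibilities of \Cref{def:braiding-lax-oplax} directly.

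First I would construct the restriction functor. An object $(X,c^X)\in\cZ(\cD)$ has a half-braiding $c^X_{-}$ against all of $\cD$; restricting it to the objects $G(A)$ gives the datum $A\mapsto c^X_{G(A)}$, and the bimodule-center coherences \Cref{eq:Ztensorcomp} and \Cref{eq:Zunitcomp} follow from the hexagon and unit axioms of $c^X$ in $\cZ(\cD)$ together with the naturality of $c^X$ at $\oplax^G_{A,B}$ and $\oplax^G_0$ --- the only point where the strong monoidal structure of $G$ is used. On morphisms the functor is the identity assignment, and it is strict monoidal because the half-braiding of a tensor product and of the unit is computed by the same formula in $\cZ(\cD)$ and in $\cZ(\cD^G)$. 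Composing it with the functor $\cZ(F)$ of \Cref{proposition:ZF_on_modules} and tracking the constraints, one gets a functor $\cZ(\cD)\to\cZ(\cC)$ sending $(X,c^X)$ to $(FX,c^{FX})$ with $c^{FX}_A$ the composite in the statement --- the component $c^X_A$ of \Cref{proposition:ZF_on_modules} is now $c^X_{G(A)}$ --- and with lax and oplax structures $\lax^F$ and $\oplax^F$; since the restriction functor is strong monoidal, \Cref{lemma:composition_of_Frobenius_monoidal_functors} shows the composite is Frobenius monoidal. As the forgetful functors $\cZ(\cD)\to\cD$ and $\cZ(\cC)\to\cC$ are strict monoidal, this also records that the functor extends $F$.

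It remains to verify that $\cZ(F)$ is lax braided and oplax braided. The key auxiliary identities, obtained from the definition of the projection formula morphisms in \Cref{sec:context-proj} and the zigzag identities of $G\dashv F$ and $F\dashv G$, are
\[
\lax^F_{X,Y}=F(X\otimes\counit^{G\dashv F}_Y)\circ\projr{X}{FY}^{F,G\dashv F},
\qquad
\oplax^F_{X,Y}=\projr{X}{FY}^{F,F\dashv G}\circ F(X\otimes\unit^{F\dashv G}_Y),
\]
along with their left-handed analogues using the maps $\projlnoarg^{F,G\dashv F}$ and $\projlnoarg^{F,F\dashv G}$. For \Cref{eq:Frob-braided-lax} one inserts the formula for the half-braiding component $c^{FX}_{FY}$ into $\lax^F_{Y,X}\circ c^{FX}_{FY}$, cancels $\projl{FY}{X}^{F,G\dashv F}$ against its inverse, and applies the naturality of $c^X$ at $\counit^{G\dashv F}_Y\colon G(FY)\to Y$, arriving at $F(c^X_Y)\circ\lax^F_{X,Y}$, which is the other side of \Cref{eq:Frob-braided-lax}. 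For \Cref{eq:Frob-braided-oplax} one first rewrites $c^{FX}_A$ in the dual form $\projl{A}{X}^{F,F\dashv G}\circ F(c^X_{G(A)})\circ(\projr{X}{A}^{F,F\dashv G})^{-1}$ --- valid precisely because the \emph{left} projection formula morphisms are mutual inverses --- and runs the $\oop$-dual of the previous computation, with $\unit^{F\dashv G}$ in place of $\counit^{G\dashv F}$ and $\oplax^F$ in place of $\lax^F$. (That $\cZ(F)$ is lax braided is also \cite{FLP2}*{Theorem B} applied to the monoidal adjunction $G\dashv F$.)

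I expect the main obstacle to be bookkeeping rather than anything conceptual: confirming that the restriction functor is well-defined and strict monoidal, and --- the more delicate point --- matching the structure constraints of the composite so that the half-braiding of \Cref{proposition:ZF_on_modules}, whose arguments lie in $\cC$, becomes that of \Cref{thm:ZF}, whose arguments are the objects $G(A)$ of $\cD$. The genuine new input over \Cref{theorem:frobenius_functors_frobenius_monoidal} is that \emph{both} the left and the right projection formula morphisms are mutual inverses: the left ones are exactly what permits the two expressions for $c^{FX}$ to be used interchangeably in the two braiding diagrams.
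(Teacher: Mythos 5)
Your proposal follows the paper's own route: the paper likewise obtains the functor on $\cZ(\cD)$ by composing the Frobenius monoidal functor of \Cref{proposition:ZF_on_modules} with a strong monoidal restriction functor $\cZ(\cD)\to\cZ(\cD^G)$ and concluding via \Cref{lemma:composition_of_Frobenius_monoidal_functors}. The only divergence is at the two points where the paper cites prior work: the restriction functor is quoted from \cite{FLP2}*{Lemma 4.7}, and the lax/oplax braiding compatibilities are quoted from \cite{FLP2}*{Theorems 4.10 and 4.15}, whereas you verify both directly. Your verifications are sound: restricting a half-braiding to the objects $G(A)$ satisfies \Cref{eq:Ztensorcomp} and \Cref{eq:Zunitcomp} by the hexagon for $c^X$ together with naturality at $\oplax^G$, and your computations for \Cref{eq:Frob-braided-lax} and \Cref{eq:Frob-braided-oplax} — expressing $\lax^F$ and $\oplax^F$ as composites through projection formula morphisms via the zigzag identities, cancelling a projection formula morphism against its inverse, and applying naturality of $c^X$ at $\counit^{G\dashv F}$ resp.\ $\unit^{F\dashv G}$ — go through, with the rewriting of $c^{FX}$ in the oplax case being exactly where the hypothesis that \emph{both} the left and right projection formula morphisms are mutual inverses is used. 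The net effect is a somewhat more self-contained argument than the paper's, at the cost of redoing checks that \cite{FLP2} already supplies.
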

\begin{proof} 
We compose the Frobenius monoidal functor of \Cref{proposition:ZF_on_modules} with the strong monoidal functor $\cZ(\cD)\to \cZ(\cD^G)$ from \cite{FLP2}*{Lemma 4.7}:
\[
\cZ(\cD) \to \cZ(\cD^G) \to \cZ( \cC ).
\]
It follows that the resulting functor is Frobenius monoidal.
Moreover, the resulting functor is lax braided and oplax braided by \cite{FLP2}*{Theorem 4.10 and 4.15}. Thus, it is braided.

\end{proof}

In the case when the strong monoidal functor $G$ is braided, it suffices for the right (or left) projection formula morphisms to be mutual inverses to obtain a Frobenius monoidal functor on Drinfeld centers. We state this result in the case when the right projection formulas are mutual inverses.

\begin{corollary}\label{cor:braided-case}
   In \Cref{context:main}, assume that the right projection formula morphisms are mutual inverses.  Additionally, assume that $(\cC,\Psi^\cC)$ and $(\cD,\Psi^\cD)$ are braided such that $G\colon \cC\to \cD$ is a braided functor. Then we obtain a braided Frobenius monoidal functor $$\cZ(F)\colon \cZ(\cD)\to \cZ(\cC),\quad (X,c^X)\mapsto (FX,c^{FX}), $$
   where $c_A^{FX}$ is defined by the diagram
\[
\xymatrix{
\ar[d]_{c_A^{FX}}FX\otimes A\ar[rr]^{\projr{X}{A}^{F, G \dashv F}}&&F(X\otimes GA)\ar[rr]^{c^X_{GA}}&&\ar[d]^{F(\Psi^{\cD}_{X,GA})^{-1}}F(GA\otimes X)\\
A\otimes FX&&\ar[ll]^{\Psi^{\cD}_{FX,A}}FX\otimes A &&F(X\otimes GA)\ar[ll]^{(\projr{X}{A}^{F, G \dashv F})^{-1}}
}
\]
and where the lax and oplax structure is directly inherited from the lax and oplax structure of $F$ as in \Cref{thm:ZF}.
\end{corollary}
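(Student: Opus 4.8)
The plan is to reduce the statement to \Cref{thm:ZF} after first upgrading the hypothesis. Since $(\cC,\Psi^{\cC})$ and $(\cD,\Psi^{\cD})$ are braided and $G$ is a braided strong monoidal functor, \Cref{lemma:braided_context} shows that the right projection formula morphisms being mutual inverses already forces the left projection formula morphisms to be mutual inverses as well, so in fact the full projection formula morphisms are mutual inverses in the sense of \Cref{definition:mutual_inverses}. We are therefore in the situation of \Cref{thm:ZF}, which immediately produces a braided Frobenius monoidal functor $\cZ(F)\colon \cZ(\cD)\to\cZ(\cC)$, with $(X,c^X)\mapsto(FX,c^{FX})$, whose lax and oplax structures are those of $F$ and whose half-braiding on $(FX,c^{FX})$ is
\[
c^{FX}_A = (\projl{A}{X}^{F, G \dashv F})^{-1} \circ F(c^X_{GA}) \circ \projr{X}{A}^{F, G \dashv F}.
\]
In particular the \emph{braided} assertion and the description of the lax/oplax structures are already contained in the conclusion of \Cref{thm:ZF}, so nothing further is needed on those points.

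It then remains only to check that this expression for $c^{FX}_A$ agrees with the composite depicted in the statement of the corollary, i.e.\ to trade the left projection formula morphism for the right one together with the braidings. For this I would invoke the compatibility that underlies the proof of \Cref{lemma:braided_context}: applied to the monoidal adjunction $G\dashv F$ (as opposed to the opmonoidal adjunction $F\dashv G$), the cited result \cite{FLP2}*{Proposition~2.5 and Lemma~3.13} gives
\[
F(\Psi^{\cD}_{X, GA}) \circ \projr{X}{A}^{F, G \dashv F} = \projl{A}{X}^{F, G \dashv F} \circ \Psi^{\cC}_{FX, A}
\]
for all $A\in\cC$, $X\in\cD$. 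Solving for $(\projl{A}{X}^{F, G \dashv F})^{-1}$ and substituting into the formula for $c^{FX}_A$ above rewrites it precisely as the boundary path of the diagram displayed in the statement. This is a short diagram chase with no hidden content.

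The only point that requires genuine care is the variance in the braiding identity: one must use the version attached to the adjunction $G\dashv F$, so that the morphisms $\projr{X}{A}^{F, G \dashv F}$ and $\projl{A}{X}^{F, G \dashv F}$ that enter the half-braiding of \Cref{thm:ZF} are exactly the ones being related, rather than the version attached to $F\dashv G$; \Cref{lemma:braided_context} supplies both, so this is bookkeeping. Everything else is a direct specialization of \Cref{thm:ZF} together with the substitution just described.
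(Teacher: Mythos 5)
Your proposal is correct and follows essentially the same route as the paper: upgrade the hypothesis via \Cref{lemma:braided_context}, apply \Cref{thm:ZF}, and then rewrite the half-braiding using the compatibility of the left and right projection formula morphisms with the braidings from \cite{FLP2}*{Lemma~3.13} (for the adjunction $G\dashv F$), which is exactly the paper's argument, just spelled out in more detail.
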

\begin{proof}
By \Cref{lemma:braided_context}, the projection formula morphisms are mutual inverses and we can apply \Cref{thm:ZF}. The explicit formula for the half-braiding $c^{FX}$ is obtained using \cite{FLP2}*{Lemma~3.13}.
\end{proof}

\begin{remark}
Our results in  \cite{FLP2}*{Theorems~4.10 and 4.15} imply that in \Cref{context:main}, there are two induced functors on Drinfeld centers
$$\cZ(R),\cZ(L)\colon \cZ(\cD)\to \cZ(\cC),$$
where we write $F=R$ when $F$ is viewed as the right adjoint of $G$ and $F=L$ when $F$ is viewed as the left adjoint of $G$. The functor $\cZ(R)$ is braided lax monoidal while $\cZ(L)$ is braided oplax monoidal. Given that the underlying functors $R=L=F$ are equal, we have that 
$$\cZ(R)(X,c^X)=(FX,c^{RX}), \qquad \cZ(L)(X,c^X)=(FX,c^{LX}),$$
meaning that the underlying objects of the images, which are objects in $\cZ(\cC)$, coincide and the difference lays in the half-braidings. Moreover, the functors $\cZ(R)$ and $\cZ(L)$ are identical on morphisms.

Now assume that the right projection formula morphisms are mutual inverses but the left projection formula morphisms are \emph{not} (or vice versa). That is, the assumptions required for \Cref{context:main} holds but the stronger assumptions for \Cref{thm:ZF} do not hold. We will later present examples where this is the case (see \Cref{ex:uqsl2-cont}). Then the two functors $\cZ(R)$ and $\cZ(L)$ will not coincide. Indeed, by construction of the half-braidings of $\cZ(R)$ and $\cZ(L)$ one can show that if $\projrnoarg^{F,G\dashv F}=(\projrnoarg^{F,F\dashv G})^{-1}$, then, by invertibility of $F(c^X_A)$, we have
that $c^{RX}=c^{LX}$ for an object $(X,c^X)$ of $\cZ(\cD)$ implies that 
$(\projl{A}{X}^{F,F\dashv G})^{-1}=\iprojl{A}{X}$ for all $A\in \cC$.
In particular, if the forgetful functor $\cZ(\cD)\to \cD$ is essentially surjective, then, under the assumption that the right projection formula morphisms are mutual inverses, $\cZ(R)=\cZ(L)$ is equivalent to the left projection formula morphisms also being mutual inverses. 
\end{remark}

\section{Frobenius monoidal functors induced by morphisms of Hopf algebras}
\label{sec:Hopf}

In this section, we will investigate our general categorical results in the case when the strong monoidal functor $G$ is a restriction functor 
$$\Res_\varphi\colon\lMod{H}\to \lMod{K}$$
along a morphism of Hopf algebras $\varphi\colon K\to H$. It is known that $G$ is part of an ambiadjunction $F\dashv G\dashv F$, i.e., induction $\Ind_\varphi$ and coinduction $\CoInd_\varphi$ are naturally isomorphic, if and only if $H$ is finitely generated projective over $K$ and there exists a so-called \emph{Frobenius morphism} $\tr\colon H\to K$, see \Cref{thm:ind-coind}. In this case, we will:
\begin{enumerate}
    \item[(a)] Use \Cref{theorem:frobenius_functors_frobenius_monoidal} to show that $\Ind_\varphi\colon \lMod{K}\to \lMod{H}$ is a Frobenius monoidal functor if $\tr$ is a morphism of right $H$-comodules, see \Cref{cor:Ind-Frob}; 
    \item[(b)] Use \Cref{thm:ZF} to show that $\cZ(\Ind_\varphi)\colon \cZ(\lMod{K})\to \cZ(\lMod{H})$ is a braided Frobenius monoidal functor if $\tr$ is a morphism of right $H$-$H$-bicomodules, see \Cref{cor:ZInd-Frob}.
\end{enumerate}
Moreover, we will give an explicit example for which $\Ind_\varphi$ satisfies (a) but not (b).

\subsection{Induction and coinduction of modules over Hopf algebras}

For a Hopf algebra $H$ over a field $\Bbbk$ with coproduct 
$$\Delta=\Delta_H\colon H\to H\otimes H,\quad h\mapsto h_{(1)}\otimes h_{(2)}$$
(using Sweedler's notation), counit $\varepsilon=\varepsilon_H\colon H\to\Bbbk$, 
and invertible antipode $S=S_H\colon H\to H$, consider the monoidal category $\lMod{H}$ of $H$-modules (see e.g.~\cites{Kas,Maj,Rad} for details). Given $\varphi\colon K\to H$ a morphism of Hopf algebras, the restriction functor
$$
G=\Res_\varphi\colon \lMod{H}\to \lMod{K}, \quad W\mapsto \left.W\right|_K
$$
has the structure of a strong monoidal functor. Its left and right adjoints are given by 
\begin{align}
  L=\Ind_\varphi(V):=H\otimes_K V, \quad R=\CoInd_\varphi(V)=\Hom_{\lMod{K}}(H,V)=\Hom_{K}(H,V).
\end{align}
The left $H$-module structures are given, for $\Ind_\varphi$, by left multiplication and, for $\CoInd_\varphi$, by
\begin{align}
    h\cdot f(g)=f(gh), \qquad f\in \Hom_{K}(H,V), ~~ h,g\in H.
\end{align}
Moreover, $\Hom_{K}(H,K)$ is an $H$-$K$-bimodule with a right $K$-action given by the right action on the target $K$.

The unit and counit isomorphisms of the adjunctions $$L=\Ind_\varphi\dashv G=\Res_\varphi, \qquad G\dashv R=\CoInd_\varphi$$
are given by
\begin{align*}
    \unit^{L\dashv G}_V&\colon V\to GL(V), &v&\mapsto 1\otimes_K v,\\
    \counit^{L\dashv G}&\colon LG(W)\to W, &h\otimes_K w&\mapsto hw,\\
    \unit^{G\dashv R}_W&\colon W\longrightarrow RG(W)=\Hom_K(H,\left.W\right|_K),     &  w&\mapsto (h\mapsto hw),\\
    \counit^{G\dashv R}_V&\colon GR(V)=\left.\Hom_K(H,V)\right|_K\longrightarrow V, &  f&\mapsto f(1),
\end{align*}
for $V\in \lMod{K}$, $W\in \lMod{H}$.
The oplax and lax monoidal structures are given by 
\begin{gather}
\oplax^{L}_{V,W}\colon  L(V\otimes U)\to L(V)\otimes L(U), \quad h\otimes_K (u\otimes v)\mapsto (h_{(1)}\otimes_K v)\otimes (h_{(2)}\otimes_K u)\label{eq:oplax-Hopf},\\
    \lax^R_{V,W}\colon R(V)\otimes R(U)\longrightarrow R(V\otimes U), \quad f\otimes g\mapsto \big(h\mapsto f(h_{(1)})\otimes g(h_{(2)})\big),\label{eq:lax-Hopf}
\end{gather}
for $V,U\in \lMod{K}$.

\begin{lemma}[{\cite{FLP2}*{Section~7}}]\label{lem:proj-iso-L-Hopf}
The projection formulas hold for $\Ind_\varphi$ with the isomorphisms given by
\begin{align*}
    \iprojl{W}{V}&\colon L(G(W)\otimes V)\to W\otimes L(V), &h\otimes_K (w\otimes v)&\mapsto h_{(1)}w\otimes (h_{(2)}\otimes_K v),\\
        (\iprojl{W}{V})^{-1}&\colon W\otimes L(V)\to L(G(W)\otimes V), &w\otimes (h\otimes_K v)&\mapsto h_{(2)}\otimes_K (S^{-1}(h_{(1)})w\otimes  v),\\
    \iprojr{V}{W}&\colon L(V\otimes G(W))\to L(V)\otimes W, &h\otimes_K (v\otimes w) &\mapsto (h_{(1)}\otimes_K v)\otimes h_{(2)}w,\\
    (\iprojr{V}{W})^{-1}&\colon L(V)\otimes W\to L(V\otimes G(W)), &(h\otimes_K v)\otimes w &\mapsto h_{(1)}\otimes_K (v\otimes S(h_{(2)})w).
\end{align*}
\end{lemma}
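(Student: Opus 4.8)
The plan is to verify directly that the stated formulas define morphisms in $\lMod{H}$ and are mutually inverse, by unwinding the definitions of the projection formula morphisms \eqref{eq:iproj}--\eqref{eq:iprojr} for $L = \Ind_\varphi$ in the context of the opmonoidal adjunction $L \dashv G$. First I would compute $\projr{V}{W}^{L, L\dashv G}$ from its definition: it is the composite $L(V \otimes GW) \xrightarrow{\oplax^L_{V, GW}} LV \otimes LGW \xrightarrow{LV \otimes \counit^{L\dashv G}_W} LV \otimes W$. Plugging in \eqref{eq:oplax-Hopf} and the formula for $\counit^{L\dashv G}$ gives $h \otimes_K (v \otimes w) \mapsto (h_{(1)} \otimes_K v) \otimes (h_{(2)} \otimes_K w) \mapsto (h_{(1)} \otimes_K v) \otimes h_{(2)}w$, which is exactly the claimed formula; the left version $\projl{W}{V}^{L,L\dashv G}$ is obtained symmetrically. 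So the content of the lemma is really the construction of the inverses.

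For the inverses I would first check they are well-defined on the balanced tensor products and $H$-linear, then verify the two composites are identities. For instance, to see $(\projr{V}{W})^{-1}$ as displayed is well-defined as a map out of $LV \otimes W = (H \otimes_K V) \otimes W$, one checks $(hk \otimes_K v)\otimes w$ and $(h \otimes_K \varphi(k)v) \otimes w$ have the same image using that $\Delta$ is an algebra map and $\varphi$ a coalgebra map, together with the antipode axiom; $H$-linearity in the left factor is immediate from $\Delta(gh) = g_{(1)}h_{(1)} \otimes g_{(2)}h_{(2)}$. Then the composite $\projr{V}{W} \circ (\projr{V}{W})^{-1}$ sends $(h \otimes_K v)\otimes w$ to $h_{(1)} \otimes_K (v \otimes S(h_{(2)})w) \mapsto (h_{(1)}\otimes_K v) \otimes h_{(2)}S(h_{(3)})w = (h \otimes_K v)\otimes w$ by the counit and antipode axioms; the reverse composite uses $S(h_{(1)})h_{(2)} = \varepsilon(h)1$ and the balancing relation to pull $S(h_{(1)})$ across $\otimes_K$. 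The left-sided identities are verified the same way using $S^{-1}$, noting $h_{(2)}S^{-1}(h_{(1)}) = \varepsilon(h)1$ (which holds since $S^{-1}$ is the antipode of $H^{\mathrm{cop}}$).

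The main obstacle is purely bookkeeping: keeping Sweedler indices straight across the balanced tensor $\otimes_K$, where one must repeatedly use that scalars from $K$ can be moved through the $\otimes_K$ via $\varphi$ and that $\varphi$ is a bialgebra (indeed Hopf) map, so that $\varphi(k)_{(1)} \otimes \varphi(k)_{(2)} = \varphi(k_{(1)}) \otimes \varphi(k_{(2)})$ and $S_H(\varphi(k)) = \varphi(S_K(k))$. There is no conceptual difficulty — the existence of the inverses is guaranteed abstractly because $\cD = \lMod{H}$ and $\cC = \lMod{K}$ are rigid, so the projection formula holds by \cite{FLP2}*{Corollary~3.20}; the point of the lemma is only to record the explicit formulas, which are needed for the computations in the subsequent Hopf-algebraic results. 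Accordingly I would present the verification tersely, perhaps checking one composite in detail and leaving the remaining three as analogous.
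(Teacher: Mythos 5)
Your proposal is correct and is essentially the paper's route: the paper gives no in-text argument beyond citing \cite{FLP2}*{Section~7}, and your direct unwinding of \eqref{eq:iproj}--\eqref{eq:iprojr} using \eqref{eq:oplax-Hopf} and the counit, followed by the Sweedler-calculus check that the displayed maps are balanced, $H$-linear, and mutually inverse via the antipode axioms (with $h_{(2)}S^{-1}(h_{(1)})=\varepsilon(h)1$ for the left-sided case), is exactly the computation being cited. One small caveat: your aside that invertibility is ``guaranteed abstractly'' by rigidity via \cite{FLP2}*{Corollary~3.20} is imprecise here, since $\lMod{H}$ and $\lMod{K}$ of all modules need not be rigid (and that corollary concerns the right adjoint), but this is harmless because your explicit inverses make the argument self-contained.
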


\subsection{Ambiadjunctions from Hopf algebra morphisms}

The following result derived from \cite{MN}*{Theorem~3.15} characterizes when induction and coinduction are isomorphic for a morphism of rings $\varphi\colon K\to H$ and, in particular, for a morphism of Hopf algebras.

\begin{proposition}[Menini--Nastasescu]\label{thm:ind-coind}
The following are equivalent for a morphism of rings $\varphi\colon K\to H$.
\begin{itemize}
    \item[(i)] The functors $\Ind_\varphi$ and  $\CoInd_\varphi$ are isomorphic.
    \item[(ii)] 
    \begin{itemize}
        \item[(a)] $H$ is finitely generated projective as a left $K$-module, and 
        \item[(b)] there is an isomorphism of $H$-$K$-bimodules
        $$\theta_K\colon H\to \Hom_{K}(H,K).$$
    \end{itemize}
    \item[(iii)]
       \begin{itemize}
        \item[(a)] $H$ is finitely generated projective as a left $K$-module, and 
        \item[(b)] there is a morphism of $K$-$K$-bimodules
        $$\tr \colon H \to K$$
which induces a bijection $\theta_K \colon H\to \Hom_K(H,K), h\mapsto \tr((-)h)$. Following \cite{FMS} we call $\tr$ a \emph{Frobenius morphism}.
    \end{itemize}
\end{itemize}
Under the equivalent conditions above, there are bijections between the following sets:
\begin{itemize}
    \item The set of natural isomorphisms $\theta\colon \Ind_\varphi\cong \CoInd_\varphi$;
    \item The set of isomorphisms of $H$-$K$-bimodules $\theta_K\colon H\to \Hom_K(H,K)$;
    \item The set of Frobenius morphisms $\tr\colon H\to K$.
\end{itemize}
\end{proposition}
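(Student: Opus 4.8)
The plan is to prove the three cited equivalences and bijections of \Cref{thm:ind-coind} (the Menini--Nastasescu result) by a chain of implications and explicit reciprocal constructions. I would first recall the content of \cite{MN}*{Theorem~3.15}, which in the generality of a ring morphism $\varphi\colon K\to H$ provides the core equivalence (i)$\Leftrightarrow$(ii). The implication (ii)$\Rightarrow$(i) is immediate: given (a) and an $H$-$K$-bimodule isomorphism $\theta_K\colon H\isomorph \Hom_K(H,K)$, one builds for each $V\in\lMod{K}$ a natural isomorphism
\[
\Ind_\varphi(V)=H\otimes_K V\isomorph \Hom_K(H,K)\otimes_K V\isomorph \Hom_K(H,V)=\CoInd_\varphi(V),
\]
where the second isomorphism uses that $H$ is finitely generated projective as a left $K$-module (so that the canonical map $\Hom_K(H,K)\otimes_K V\to\Hom_K(H,V)$ is an isomorphism), and the $H$-module structures match because $\theta_K$ respects the left $H$-actions. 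For (i)$\Rightarrow$(ii): a natural isomorphism $\theta\colon\Ind_\varphi\cong\CoInd_\varphi$ evaluated at $V=K$ gives the bimodule isomorphism $\theta_K\colon H\isomorph\Hom_K(H,K)$; finite generation and projectivity of $H$ as a left $K$-module then follows because $\CoInd_\varphi\cong\Ind_\varphi$ preserves colimits (being a left adjoint of $\Res_\varphi$) hence so does $\Hom_K(H,-)$, which forces $H$ to be finitely generated projective over $K$ (a standard argument: a representable functor preserves coproducts iff the representing object is finitely generated, and preserves cokernels iff it is projective).

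Next I would establish (ii)$\Leftrightarrow$(iii). Both include clause (a), so only the comparison of (ii)(b) and (iii)(b) is at stake. Given $\theta_K\colon H\isomorph\Hom_K(H,K)$ as $H$-$K$-bimodules, define $\tr\colon H\to K$ by $\tr(h):=\theta_K(1)(h)$, i.e.\ $\tr=\ev_1\circ\theta_K$ precomposed appropriately; one checks $\tr$ is a $K$-$K$-bimodule morphism (the left $K$-linearity uses that $\varphi$ is a ring map and $\theta_K$ is left $H$-linear, the right $K$-linearity is right $K$-linearity of $\theta_K$), and that $\theta_K(h)=\tr((-)h)$ follows from left $H$-linearity: $\theta_K(h)(g)=\theta_K(h\cdot 1)(g)=(h\cdot\theta_K(1))(g)=\theta_K(1)(gh)=\tr(gh)$. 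Wait --- I must be careful about which side the $H$-action sits on; the correct reading is that $\theta_K$ being a left $H$-module map for the $H$-action on $\Hom_K(H,K)$ given by $(h\cdot f)(g)=f(gh)$ yields exactly $\theta_K(h)=\tr((-)h)$. Conversely, given a Frobenius morphism $\tr$, set $\theta_K(h):=\tr((-)h)$; it is manifestly right $K$-linear in the target and left $H$-linear in $h$, it is injective and surjective by hypothesis, hence an $H$-$K$-bimodule isomorphism.

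Finally, for the bijections among the three sets I would assemble the maps already produced into mutually inverse correspondences. The map \{natural isos $\theta$\}$\to$\{bimodule isos $\theta_K$\} is $\theta\mapsto\theta_K$ (evaluation at $K$); the reverse direction is the construction in the (ii)$\Rightarrow$(i) step, producing a natural iso from $\theta_K$; one verifies these are inverse by naturality (any natural transformation $\Ind_\varphi\to\CoInd_\varphi$ is determined by its value at $K$ since $\Ind_\varphi$ is a left adjoint, equivalently since $V\cong K\otimes_K V$ and both functors commute with the relevant tensor structure --- this is the ``Eilenberg--Watts''-type rigidity). The map \{bimodule isos $\theta_K$\}$\to$\{Frobenius morphisms $\tr$\} and its inverse are the two constructions of the previous paragraph, and their mutual inverseness is the identity $\theta_K(h)=\tr((-)h)$ checked there. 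The step I expect to be the main obstacle --- or at least the one requiring the most care --- is the rigidity statement that a natural transformation between $\Ind_\varphi$ and $\CoInd_\varphi$ is completely and freely determined by its component at $K$ (so that evaluation at $K$ is a \emph{bijection}, not merely a map); this is where one genuinely uses that $\Ind_\varphi=H\otimes_K(-)$ is a left adjoint and that every $K$-module is a colimit of copies of $K$, together with the fact that $\CoInd_\varphi$, being isomorphic to $\Ind_\varphi$, also preserves such colimits. I would either cite \cite{MN} directly for the full package or spell out this Eilenberg--Watts argument; given that the paper attributes the result to \cite{MN}, it is legitimate to keep the proof short and mostly indicate the explicit formulas for $\tr$ and $\theta_K$, deferring the categorical rigidity to the reference.
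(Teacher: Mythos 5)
Your proposal is essentially correct, but it takes a genuinely different route from the paper: the paper gives no proof of this proposition at all --- it is stated as derived from \cite{MN}*{Theorem~3.15}, and the only added content is the remark immediately following it, which translates Menini--Nastasescu's condition (a pairing $(-,-)\colon H\otimes_K H\to K$ that is left onto and non-degenerate) into the Frobenius-morphism condition (iii)(b) via $\tr(h):=(1,h)$ and $(h,g):=\tr(hg)$. You instead sketch a self-contained argument, and its core is sound: the dictionary $\tr=\theta_K(1)$, $\theta_K(h)=\tr((-)h)$ for (ii)$\Leftrightarrow$(iii) is exactly right (including your self-correction about the $H$-action $(h\cdot f)(g)=f(gh)$), and the passage (ii)$\Rightarrow$(i) via $H\otimes_K V\cong \Hom_K(H,K)\otimes_K V\cong \Hom_K(H,V)$ is the standard one. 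Two points deserve adjustment. First, the rigidity step is easier than you make it: a natural transformation $\alpha\colon \Ind_\varphi\to\CoInd_\varphi$ is already determined by $\alpha_K$ using naturality against the $K$-linear maps $K\to V$, $k\mapsto kv$ (giving $\alpha_V(h\otimes v)(g)=\alpha_K(h\otimes 1)(g)\,v$), and conversely any $H$-$K$-bimodule map $\theta_K$ yields such an $\alpha$ by this formula; no adjointness or colimit-preservation of $\CoInd_\varphi$ is needed for the bijection with bimodule \emph{maps}, while finite generation and projectivity are what make the induced $\alpha$ invertible at every $V$, which is available ``under the equivalent conditions.'' Second, your parenthetical ``a representable functor preserves coproducts iff the representing object is finitely generated'' is not literally true: preservation of coproducts only makes $H$ a small (dually slender) $K$-module, which in general need not be finitely generated. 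Since you also extract projectivity from cokernel-preservation, the conclusion survives --- a small projective module is finitely generated, because the inclusion into a free module splits through a finite subsum --- but the argument should be phrased that way (or one simply defers (i)$\Rightarrow$(ii)(a) to \cite{MN}, as the paper effectively does). With that repair your proof is complete; what the paper's approach buys is brevity and the explicit bridge to the pairing language of \cite{MN}, while yours buys a reader-visible construction of the correspondences $\theta\leftrightarrow\theta_K\leftrightarrow\tr$ that the rest of \Cref{sec:Hopf} (e.g.\ \Cref{lem:thetaV} and \Cref{lem:kideltai}) in any case uses.
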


\begin{remark}
The last condition (iii)(b) in \Cref{thm:ind-coind} is stated as the existence of a pairing $(-,-)\colon H\otimes_K H\to K$ in \cite{MN} which is left onto and non-degenerate. This is equivalent to the above description since, given a pairing $(-,-)$, we can define $\tr(h):=(1,h)$, and given a trace map, we set $(h,g):=\tr(hg)$. 

If $\varphi: K \rightarrow H$ is an inclusion of Hopf algebras over a field $\Bbbk$ and if $H$ is finite-dimensional over $\Bbbk$, then the condition that $H$ is finitely generated projective over $K$ is automatically satisfied, as $H$ is free over $K$ \cite{NZ}. 
\end{remark}

For later use, the next lemma describes how (iii) implies (i) by giving an explicit natural isomorphism.

\begin{lemma}\label{lem:thetaV}
If the conditions in \Cref{thm:ind-coind} hold, then a Frobenius morphism $\tr\colon H\to K$ 
gives a natural isomorphism $\theta\colon \Ind_\varphi\to \CoInd_\varphi$ with component maps
$$\theta_V(h\otimes_K v)=\big(g\mapsto \tr(gh)v \big).$$

for $V \in \lMod{K}$, $v \in V$, $h \in H$.
\end{lemma}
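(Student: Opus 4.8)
The plan is to verify directly that the proposed formula $\theta_V(h \otimes_K v) = (g \mapsto \tr(gh)v)$ defines a natural isomorphism $\Ind_\varphi \to \CoInd_\varphi$, using the bijections already established in \Cref{thm:ind-coind}. The key observation is that the component $\theta_V$ should be obtained from the isomorphism of $H$-$K$-bimodules $\theta_K \colon H \to \Hom_K(H,K)$, $h \mapsto \tr((-)h)$ by applying $(-) \otimes_K V$ and then using the canonical isomorphism $\Hom_K(H,K) \otimes_K V \cong \Hom_K(H,V)$ which holds because $H$ is finitely generated projective as a left $K$-module (condition (iii)(a)). Tracing an element $h \otimes_K v$ through this composite yields exactly $g \mapsto \tr(gh)v$, so $\theta_V$ is an isomorphism of vector spaces for each $V$.

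The steps I would carry out are as follows. First, I would recall that by (iii)(b) the map $\theta_K$ is an isomorphism of $H$-$K$-bimodules, and note the canonical natural isomorphism $\Hom_K(H,K) \otimes_K V \xrightarrow{\sim} \Hom_K(H,V)$ sending $f \otimes_K v$ to $(g \mapsto f(g)v)$, which is an isomorphism precisely when $H$ is finitely generated projective over $K$. Composing, $\Ind_\varphi(V) = H \otimes_K V \xrightarrow{\theta_K \otimes_K V} \Hom_K(H,K) \otimes_K V \xrightarrow{\sim} \Hom_K(H,V) = \CoInd_\varphi(V)$ gives $\theta_V$, and evaluating the composite shows $h \otimes_K v \mapsto (g \mapsto \tr(gh)v)$. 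Second, I would check $H$-linearity of $\theta_V$: for $h' \in H$ the induced action on $\Ind_\varphi(V)$ sends $h \otimes_K v$ to $h'h \otimes_K v$, while the $H$-action on $\CoInd_\varphi(V)$ sends the function $g \mapsto \tr(gh)v$ to $g \mapsto \tr(gh'h)v$; these agree. Naturality in $V$ (for a $K$-linear map $u \colon V \to V'$) is immediate since both sides send $h \otimes_K v \mapsto (g \mapsto \tr(gh)u(v))$. Bijectivity of each $\theta_V$ follows from the two isomorphisms composed, or alternatively one can write down the inverse explicitly using the dual basis $\{e_i, e_i^* \}$ of $H$ over $K$ coming from finite projectivity.

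I would expect the only genuinely delicate point to be confirming that the canonical map $\Hom_K(H,K) \otimes_K V \to \Hom_K(H,V)$ is an isomorphism; this is the standard fact that $\Hom_K(P,-)$ commutes with such tensor products when $P$ is finitely generated projective, and it is exactly here that hypothesis (iii)(a) is used. The $H$-linearity and naturality checks are routine once the formula is in hand, and in fact the statement of \Cref{thm:ind-coind} already guarantees that \emph{some} natural isomorphism $\theta$ corresponds to $\tr$; the content of this lemma is simply to pin down its explicit component maps, so really the argument reduces to unwinding the bijection "Frobenius morphism $\leftrightarrow$ bimodule isomorphism $\theta_K$ $\leftrightarrow$ natural isomorphism $\theta$" from \Cref{thm:ind-coind} and reading off the formula, which I would present as a short direct computation.
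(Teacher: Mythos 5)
Your proposal is correct and matches the paper's intent: the paper states this lemma without proof, treating it exactly as you do — as an unwinding of the bijection in \Cref{thm:ind-coind}, i.e.\ tensoring the $H$-$K$-bimodule isomorphism $\theta_K$ with $V$ and using finite projectivity of $H$ over $K$ to identify $\Hom_K(H,K)\otimes_K V$ with $\Hom_K(H,V)$ (the explicit inverse via dual bases appears later in \eqref{eq:thetainv}). Your routine checks of $H$-linearity, naturality, and bijectivity are exactly the omitted details.
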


\begin{lemma}\label{lem:all-Frob-mors}
If the conditions in \Cref{thm:ind-coind} hold, then, for any Frobenius morphism $\tr\colon H\to K$, we have a bijection
\begin{align*}
\{ c \in H^{\times} \mid \forall x \in K: cx = xc \} &\rightarrow \{ \phi\colon H\to K \mid \text{$\phi$ is a Frobenius morphism} \}\\
c &\mapsto (h \mapsto \tr(hc))
\end{align*}
where $H^{\times} \subseteq H$ denotes the subset of units.
\end{lemma}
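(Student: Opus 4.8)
The plan is to exploit the bijection from \Cref{thm:ind-coind} between Frobenius morphisms $H \to K$ and $H$-$K$-bimodule isomorphisms $H \isomorph \Hom_K(H,K)$, and to analyse the latter set as a torsor under the automorphisms of the regular $H$-$K$-bimodule $H$. Fix the given Frobenius morphism $\tr$ and write $\theta_K\colon H \isomorph \Hom_K(H,K)$, $h \mapsto \tr((-)h)$, for the corresponding isomorphism.

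First I would record the standard description of endomorphisms of the regular module: every left $H$-module endomorphism of $H$ is right multiplication $\rho_c\colon h \mapsto hc$ by the element $c := \rho_c(1_H)$; such a map is in addition a right $K$-module endomorphism (for the $K$-action on $H$ via $\varphi$) if and only if $c$ commutes with $\varphi(K)$; and $\rho_c$ is bijective if and only if $c \in H^{\times}$ (if $\rho_c$ is surjective then $ac = 1$ for some $a$, and if it is moreover injective then $(ca-1)c = cac - c = 0$ forces $ca = 1$).

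Now, given any Frobenius morphism $\phi\colon H \to K$ with corresponding isomorphism $\theta_K^{\phi}\colon H \isomorph \Hom_K(H,K)$, $h \mapsto \phi((-)h)$, I would set $c := \big((\theta_K)^{-1}\circ \theta_K^{\phi}\big)(1_H)$. The composite $(\theta_K)^{-1}\circ \theta_K^{\phi}$ is an automorphism of the regular $H$-$K$-bimodule $H$, so by the previous paragraph it equals $\rho_c$ with $c$ a unit commuting with $\varphi(K)$, and $\theta_K^{\phi} = \theta_K \circ \rho_c$. Evaluating this identity at $h \in H$ and then at $1_H \in H$ yields $\phi(h) = \tr(hc)$. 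Hence every Frobenius morphism is of the asserted form.

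Conversely, for a unit $c$ commuting with $\varphi(K)$, the map $h \mapsto \tr(hc)$ is again a $K$-$K$-bimodule morphism (left $K$-linearity is inherited from $\tr$; right $K$-linearity uses that $c$ is $\varphi(K)$-central together with right $K$-linearity of $\tr$), and the $H$-$K$-bimodule map it induces is $\theta_K \circ \rho_c$, a composite of isomorphisms, so $h \mapsto \tr(hc)$ is a Frobenius morphism by \Cref{thm:ind-coind}. Finally, the assignment $c \mapsto (h \mapsto \tr(hc))$ is injective: if $\tr(hc) = \tr(hc')$ for all $h$, then replacing $h$ by $gh$ gives $\theta_K(hc) = \theta_K(hc')$ for all $h$, whence $hc = hc'$ by injectivity of $\theta_K$ and $c = c'$ on taking $h = 1_H$. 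The only non-formal ingredient is the elementary ring-theoretic fact that a bijective right-multiplication operator on $H$ comes from a two-sided unit; the rest is bookkeeping of the module structures involved.
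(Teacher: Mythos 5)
Your argument is correct and is essentially the paper's proof: both combine the bijections of \Cref{thm:ind-coind} with the observation that the set of Frobenius morphisms is a torsor under an automorphism group that gets identified with $\{c\in H^{\times}\mid c \text{ commutes with } \varphi(K)\}$ — the paper runs this at the level of $\Aut(\Ind_\varphi)$ via $\alpha\mapsto\alpha_K(1)$, while you run it at the level of $H$-$K$-bimodule automorphisms of $H$ acting on the isomorphisms $H\isomorph\Hom_K(H,K)$, which is the same thing since $\Ind_\varphi=H\otimes_K(-)$. The only difference is that you spell out the identification (bijective right multiplications $\rho_c$ with $c$ a unit centralizing $\varphi(K)$) that the paper merely cites as a fact, which is fine.
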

\begin{proof}
This follows from \Cref{thm:ind-coind} and the fact that the following map is a bijection:
\begin{align*}
\Aut( \Ind_\varphi ) &\rightarrow \{ c \in H^{\times} \mid \forall x \in K: cx = xc \} \\
\alpha &\mapsto \alpha_{K}(1) \qedhere
\end{align*}

\end{proof}

As a first example, we consider groups.

\begin{example}\label{ex:groups}
Let $\mathsf{G}$ be a group and $\sfK\subset \sfG$ a subgroup of finite index $n$. Consider the map 
$$\tr\colon \sfG\to K, \quad g\mapsto \begin{cases}
    g, & \text{if }g\in \sfK,\\
    0, &  \text{if }g\notin \sfK,
\end{cases}
$$
which extends to a morphism of $K$-$K$-bimodules $\tr\colon G=\Bbbk \sfG\to K=\Bbbk\sfK$. Note that choosing a coset decomposition $\sfG=\coprod_{i=1}^n \sfK g_i$ with $g_1=1$ shows that $G$ is free of finite rank as a left $K$-module and that $\Hom_K(G,K)$ has a basis given by the $K$-linear maps
$$\delta_i\colon G\to K, \quad g_i \mapsto \delta_{i,1}$$
where $\delta_{i,1}$ denotes the Kronecker delta.
Now, $\delta_1=\tr$ and under the $G$-action $g\cdot \tr(-)=\tr((-)g)$ we get that 
$g_i^{-1}\cdot \tr=\delta_i$. Hence, the equivalent conditions of \Cref{thm:ind-coind} hold for $\varphi$ being the inclusion $K\hookrightarrow G$.
\end{example}

The following counterexample shows that not even all embeddings of finite-dimensional Hopf algebras admit a Frobenius morphism.

\begin{example} \label{ex:Taft}
Let $\epsilon\in \Bbbk$ be a primitive $\ell$-th root of unity, with $\ell>1$, and denote $\mathsf{C}_\ell=\langle g|g^\ell\rangle$. Consider the \emph{Taft algebra} \cite{Taf}
$$T:=T_\ell(\epsilon)=\Bbbk \langle g,x\rangle/(g^\ell=1, x^\ell=0, gx=\epsilon xg),$$
which is a Hopf algebra with coproduct $\Delta$, counit $\varepsilon$, and antipode $S$  determined by 
$$\Delta(g)=g\otimes g, \quad \Delta(x)=x\otimes 1+g\otimes x, \quad \varepsilon(g)=1, \quad \varepsilon(x)=0, \quad S(g)=g^{-1}, \quad S(x)=-g^{-1}x.$$
%%Inductively, the coproduct and antipode are given by 
%%$$\Delta(x^k)=\sum_{l=0}^k \binom{k}{l}_\epsilon x^lg^{k-l}\otimes x^{k-l},\qquad S(x^k)=
%%where the $\epsilon$-binomial coefficients are
%%$$\binom{k}{l}_\epsilon=\frac{[k]_\epsilon!}{[l]_\epsilon![k-l]_\epsilon!},$$
%%for $[n]_\epsilon=1+\epsilon+\ldots +\epsilon^{n-1}$.
Note that $T$ is free as a left $K$-module, for $K=\Bbbk \sfC_\ell$, over the set $x^i$, $i=0,\ldots, \ell-1$. Now suppose that 
$\tr\colon T\to K$ is a $K$-bimodule map. Then $\tr$ is determined by the values $k_i=\tr(x^i)$. In particular, it follows that 
$$gk_i=\tr(gx^i)=\epsilon^i\tr(x^ig)=\epsilon^i k_ig=\epsilon^i gk_i.$$
This implies that $(1-\epsilon^i)k_i = 0$ and, thus, $k_i=0$ unless $i=0 \mod \ell$. In particular, $\tr( (-)x ) = 0$, which means that $\tr$ cannot provide an isomorphism as required in \Cref{thm:ind-coind}(ii)(b). Thus, $\Ind_\varphi$ and $\CoInd_\varphi$ are not isomorphic for $\varphi\colon K\hookrightarrow T$.
\end{example}

We now collect a few useful formulas as consequences of the above \Cref{lem:thetaV}, cf.~\cite{FMS}*{1.3.~Proposition}.

\begin{lemma}\label{lem:kideltai}
Suppose the conditions in \Cref{thm:ind-coind} hold. Then there exist generators $h_1,\ldots,h_n$ of $H$ as a left $K$-module and $\delta_1,\ldots, \delta_n$ of $H$ as a right $K$-module such that 
\begin{gather}
\sum_{i=1}^n \delta_i\otimes_K h_i h = \sum_{i=1}^n h\delta_i\otimes_K h_i, \qquad \forall h\in H, \label{eq:kideltai1} \\
h =\sum_{i=1}^n \varphi(\tr(h\delta_i))h_i= \sum_{i=1}^n \delta_i\varphi(\tr(h_i h)), \qquad \forall h\in H.
\label{eq:kideltai2}
\end{gather}
\end{lemma}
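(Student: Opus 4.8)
The statement of \Cref{lem:kideltai} is the classical "dual bases"-type lemma for a Frobenius extension, adapted to our setup, and I would prove it by unpacking the bijection $\theta_K\colon H \xrightarrow{\sim} \Hom_K(H,K)$ from \Cref{thm:ind-coind}(iii). Since $H$ is finitely generated projective as a left $K$-module, the standard dual basis lemma gives finitely many elements $h_1,\dots,h_n \in H$ and left $K$-linear maps $\xi_1,\dots,\xi_n \in \Hom_K(H,K)$ with $h = \sum_i \varphi(\xi_i(h)) h_i$ for all $h \in H$. The key move is to transport the $\xi_i$ through the isomorphism $\theta_K$: write $\delta_i := \theta_K^{-1}(\xi_i) \in H$, so that $\xi_i(h) = \tr(h\delta_i)$ by the formula for $\theta_K$ in \Cref{thm:ind-coind}(iii)(b). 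This immediately yields the first equality in \eqref{eq:kideltai2}, namely $h = \sum_i \varphi(\tr(h\delta_i)) h_i$.

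Next I would establish that the $\delta_i$ generate $H$ as a \emph{right} $K$-module and derive the second equality in \eqref{eq:kideltai2}. For this I would use that $\theta_K$ is an isomorphism of $H$-$K$-bimodules (this is exactly condition (ii)(b), equivalent to (iii)), so the right $K$-module structure on $\Hom_K(H,K)$ — given by $(\xi \cdot k)(h) = \xi(h)k$ — is matched with right multiplication on $H$ under $\theta_K$; and the \emph{left} $H$-action on $\Hom_K(H,K)$ is $(h' \cdot \xi)(h) = \xi(hh')$. Writing the identity map of the finitely generated projective module $H$ in terms of the dual basis on the other side, or equivalently applying $\xi_j$ to the first equation in \eqref{eq:kideltai2} and using faithfulness, one extracts that $\{\delta_i\}$ is a right-module generating set and that $h = \sum_i \delta_i \varphi(\tr(h_i h))$; concretely, this is the "symmetric" dual basis identity $\sum_i \delta_i \otimes_K h_i$ being a "separability-type" element, and it follows from $\theta_K$ being a bimodule isomorphism together with the already-established left-sided identity.

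Finally, for \eqref{eq:kideltai1} I would show that $e := \sum_i \delta_i \otimes_K h_i \in H \otimes_K H$ is central for the $H$-bimodule structure where $H$ acts on the left of the left tensor factor and on the right of the right tensor factor; equivalently $\sum_i \delta_i \otimes_K h_i h = \sum_i h \delta_i \otimes_K h_i$ for all $h$. This is the Casimir/Frobenius element identity and is a formal consequence of the dual-basis property: apply $\mathrm{id} \otimes (\text{eval against } \xi_j)$ type manipulations, or more cleanly, observe that both sides, when one applies $\varphi \circ \tr$ after right-multiplying the right factor by an arbitrary $h' \in H$ and using \eqref{eq:kideltai2}, reconstruct $hh'$, and then invoke that the pairing $H \otimes_K H \to K$, $a \otimes b \mapsto \tr(ab)$ is nondegenerate (which is precisely the content of \Cref{thm:ind-coind}(iii), via \cite{MN}) to conclude the two elements of $H \otimes_K H$ agree. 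This nondegeneracy step is where I expect the only real subtlety: one must check that "nondegenerate on each tensor factor" genuinely separates elements of $H \otimes_K H$, which uses the finite projectivity of $H$ over $K$ in an essential way, so I would be careful to phrase it as: $H \otimes_K H \cong \Hom_K(H, H)$ via $a \otimes b \mapsto (h \mapsto \tr(ah)b$ or similar$)$ and centrality of $e$ translates into a trivially-true $H$-bilinearity statement about this iso. The main obstacle is thus bookkeeping with the two tensor-factor pairings rather than any deep idea; everything reduces to the dual basis lemma plus the bimodule-isomorphism property of $\theta_K$.
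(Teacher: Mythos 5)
Your proposal is correct, and its first half coincides with the paper's argument: both start from the dual basis lemma for the finitely generated projective left $K$-module $H$, producing $h_i$ and functionals $f_i\in\Hom_K(H,K)$ with $h=\sum_i\varphi(f_i(h))h_i$ and $f=\sum_i f_i\,f(h_i)$, then set $\delta_i:=\theta_K^{-1}(f_i)$ so that $f_i(h)=\tr(h\delta_i)$, which gives the first equality of \eqref{eq:kideltai2}. For the second equality the paper simply expands $\theta_K(h)$ by the relation $f=\sum_i f_i\,f(h_i)$ and applies the right $K$-linear $\theta_K^{-1}$; this is the precise version of what you describe a bit loosely as "writing the identity in terms of the dual basis on the other side" (your alternative "apply $\xi_j$ and use faithfulness" is vaguer and not needed). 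Where you genuinely diverge is \eqref{eq:kideltai1}: the paper proves it by a direct rewriting --- expand $h_ih$ in the right tensor factor via the dual basis, slide the resulting $K$-coefficients across $\otimes_K$, and recognize $\sum_i\delta_i\varphi(f_j(h_ih))=h\,\theta_K^{-1}(f_j)$ using left $H$-linearity of $\theta_K^{-1}$ --- whereas you verify equality of the two elements of $H\otimes_K H$ by pairing against arbitrary elements and invoking injectivity of a map $H\otimes_K H\to\Hom_K(H,H)$. Your route does work, with the bookkeeping you anticipate: the correct map is $a\otimes_K b\mapsto\bigl(x\mapsto\varphi(\tr(xa))\,b\bigr)$ (the order $\tr(xa)$, not $\tr(ax)$, is what makes it well defined over $\otimes_K$, since the left factor carries the right $K$-action); it is the composite of $\theta_K\otimes\id$ with the canonical isomorphism $\Hom_K(H,K)\otimes_K H\cong\Hom_K(H,H)$, bijective exactly because $H$ is finitely generated projective over $K$, and both sides of \eqref{eq:kideltai1} are sent to $x\mapsto xh$ by \eqref{eq:kideltai2}. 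So your argument requires this extra (standard) injectivity input, which the paper's direct computation avoids, but it is a legitimate and somewhat more conceptual proof of the Casimir-type identity; by contrast, your first suggested variant (right-multiplying by $h'$ and appealing to nondegeneracy of $(a,b)\mapsto\tr(ab)$) would need a right-sided nondegeneracy statement not directly provided by \Cref{thm:ind-coind}, so the $\Hom_K(H,H)$ formulation is the one to use.
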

\begin{proof}
    By the dual basis lemma (see, e.g., \cite{Coh}*{Section 4.5, Proposition 5.5}) we can find elements $h_1,\ldots, h_n$ in $H$ and $f_1,\ldots, f_n$ in $H^\vee= \Hom_K(H,K)$ such that 
    \begin{equation}\label{eq:dualbasis}
    h=\sum_{i=1}^n \varphi(f_i(h))h_i , \qquad 
    f=\sum_{i=1}^n f_i f(h_i) \qquad\forall h\in H ,f \in H^\vee.
    \end{equation}
Recall the $H$-$K$-bimodule isomorphism $\theta_K(h)=\big(g\mapsto \tr(gh) \big)$ from \Cref{lem:thetaV} (where we set $V = K$) which satisfies $\theta_K(1)=\tr$.
We set 
$$\delta_i:=\theta_K^{-1}(f_i).$$
Then $f_i(h)=\tr(h\delta_i)$. Now \Cref{eq:kideltai2} follows from \Cref{eq:dualbasis}, for the second equality of \eqref{eq:kideltai2} by application of $\theta_K^{-1}$ to the second equality in \eqref{eq:dualbasis}.  Indeed,
$$ h =\theta_K^{-1}(\theta_K (h))=\sum_{i=1}^n \theta^{-1}_K (f_i)\varphi(\theta_K(h)(h_i))=\sum_{i=1}^n \delta_i\varphi(\tr(h_ih)).$$

To prove \Cref{eq:kideltai1}, first observe that left $H$-linearity of $\theta_K^{-1}$ and \Cref{eq:dualbasis} imply that
$$h \theta_K^{-1}(f)=\sum_{i=1}^n \delta_i\varphi(f(h_i h))
\qquad \forall h\in H, f\in H^\vee.
$$
Hence, again using the first equation in \Cref{eq:dualbasis}, we compute
\begin{align*}\sum_{i=1}^n \delta_i\otimes_K h_ih =& \sum_{i=1}^n \delta_i\otimes_K \sum_{j=1}^n \varphi(f_j(h_ih))h_j\\
=&\sum_{i,j=1}^n \delta_i\varphi(f_j(h_ih)) \otimes_K  h_j\\
=&\sum_{j=1}^n h\theta^{-1}_K(f_j)\otimes_K h_j=\sum_{i=1}^n h \delta_i \otimes_K h_i
\end{align*}
proving \Cref{eq:kideltai1}.
\end{proof}

With the notation of \Cref{lem:kideltai}, the inverse of $\theta_V$ from \Cref{lem:thetaV} can be described explicitly by
\begin{equation}
\theta_V^{-1}\colon \CoInd_\varphi(V)\to \Ind_\varphi(V), \quad f\mapsto \sum_{i=1}^n\delta_i\otimes_K f(h_i).\label{eq:thetainv}
\end{equation}
Thus, one derives the following ambiadjunction $\Ind_\varphi\dashv \Res_\varphi\dashv \Ind_\varphi$.
\begin{lemma}
Given that the equivalent conditions from \Cref{thm:ind-coind} hold, the functor $F=\Ind_\varphi$ is left and right adjoint to the monoidal functor $G=\Res_\varphi\colon \lMod{H}\to \lMod{K}$. The unit and counit morphisms can be described explicitly as
\begin{align}
    \unit^{G\dashv F}_W&\colon W\to FG(W), &w&\mapsto \sum_{i=1}^r \delta_i\otimes_K h_iw,\label{eq:unit1}\\ 
    \counit^{G\dashv F}_W&\colon GF(V)\to V, & h\otimes_K v& \mapsto \tr(h)v,\label{eq:counit1}\\
    \unit^{F\dashv G}_V&\colon V\to GF(V), &v&\mapsto 1\otimes_K v,\label{eq:unit2}\\
    \counit^{F\dashv G}_W&\colon FG(W)\to W, &h\otimes_K w&\mapsto hw,\label{eq:counit2}
\end{align}
for any  $K$-module $V$ and any $H$-module $W$.
\end{lemma}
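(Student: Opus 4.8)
The plan is to produce the two adjunctions from classical module theory together with a transport along the natural isomorphism $\theta$ of \Cref{lem:thetaV}, and then simply unwind the resulting structure maps. No new coherence has to be checked by hand, because the zigzag identities come for free from the transport construction.

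First, the adjunction $F\dashv G$, i.e.\ $\Ind_\varphi\dashv\Res_\varphi$, is the classical induction--restriction (tensor--hom) adjunction for the ring morphism $\varphi\colon K\to H$ and requires no hypotheses. Its unit and counit are exactly the maps $v\mapsto 1\otimes_K v$ and $h\otimes_K w\mapsto hw$ recalled above (there denoted $\unit^{L\dashv G}$ and $\counit^{L\dashv G}$), giving \eqref{eq:unit2} and \eqref{eq:counit2}; the zigzag identities are the standard ones.

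Second, for $G\dashv F$ I would start from the classical adjunction $G\dashv\CoInd_\varphi$, with unit $\unit^{G\dashv R}_W\colon w\mapsto(h\mapsto hw)$ and counit $\counit^{G\dashv R}_V\colon f\mapsto f(1)$ recalled above. Under the equivalent conditions of \Cref{thm:ind-coind}, \Cref{lem:thetaV} provides a natural isomorphism $\theta\colon\Ind_\varphi\isomorph\CoInd_\varphi$ with $\theta_V(h\otimes_K v)=(g\mapsto\tr(gh)v)$. Applying \Cref{remark:pullback_adjunctions} (transport of an adjunction along an isomorphism of the right adjoint), with $F'=\Ind_\varphi$, $F=\CoInd_\varphi$ and $\theta$ as above, yields an adjunction $G\dashv\Ind_\varphi$ with
\[
\unit^{G\dashv F}=(\theta^{-1}\ast G)\circ\unit^{G\dashv R},\qquad
\counit^{G\dashv F}=\counit^{G\dashv R}\circ(G\ast\theta).
\]
The zigzag identities for this transported adjunction hold automatically by \Cref{remark:pullback_adjunctions} (equivalently, by \Cref{lemma:uniqueness_of_adjoints}), and naturality is inherited from naturality of $\theta$ and of the two classical structure maps.

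It then remains only to compute these two composites explicitly. Using the formula for $\theta^{-1}$ from \eqref{eq:thetainv}, namely $\theta_W^{-1}(f)=\sum_{i=1}^n\delta_i\otimes_K f(h_i)$, the transported unit sends $w\in W$ to $\sum_i\delta_i\otimes_K h_i w$, which is \eqref{eq:unit1}; and since $\theta_V(h\otimes_K v)(1)=\tr(h)v$, the transported counit sends $h\otimes_K v$ to $\tr(h)v$, which is \eqref{eq:counit1}. The only slightly technical point is the well-definedness over $\otimes_K$ and the $H$-linearity of the map in \eqref{eq:unit1}; but this is precisely what the relations \eqref{eq:kideltai1}--\eqref{eq:kideltai2} of \Cref{lem:kideltai} encode, so it needs no separate argument. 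Hence there is no real obstacle here: all the substantive content is already packaged in \Cref{thm:ind-coind}, \Cref{lem:thetaV}, and \Cref{lem:kideltai}, and the proof is a matter of transport plus bookkeeping.
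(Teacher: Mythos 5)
Your proposal is correct and follows the same route the paper intends: the adjunction $F\dashv G$ is the classical tensor--restriction adjunction, and $G\dashv F$ is obtained by transporting the classical adjunction $G\dashv\CoInd_\varphi$ along the isomorphism $\theta$ of \Cref{lem:thetaV} (with inverse \eqref{eq:thetainv}), after which the stated unit and counit formulas drop out exactly as you compute. The paper gives no separate proof beyond this derivation, so there is nothing to add.
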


Moreover, we can derive formulas for the lax and oplax structure from \eqref{eq:lax-Hopf}--\eqref{eq:oplax-Hopf}.

\begin{lemma}\label{lem:Hopf-laxoplax}
Given that the equivalent conditions from \Cref{thm:ind-coind} hold, the functor $F=\Ind_\varphi$ is lax and oplax monoidal with
\begin{gather*}
    \lax_{V,W}\colon  F(V)\otimes F(U)\to F(V\otimes U),\\ \quad (h\otimes_K v)\otimes (g\otimes_K u)\mapsto \sum_i \delta_i\otimes_K (\tr((h_i)_{(1)}h)v\otimes \tr((h_i)_{(2)}g)u),\\
\oplax_{V,W}\colon  F(V\otimes U)\to F(V)\otimes F(U),\\ \quad h\otimes_K (u\otimes v)\mapsto (h_{(1)}\otimes_K v)\otimes (h_{(2)}\otimes_K u),
\end{gather*}
together with the unit and counit morphisms
\begin{gather*}
   \lax_0\colon \one \to F(\one), \quad 1\mapsto \sum_{i=1}^r \delta_i\otimes_K \varepsilon_H(h_i), \label{eq:unit-Hopf}\\
    \oplax_0\colon F(\one)\to \one, \quad h\otimes_K1 \mapsto \varepsilon_H(h), \label{eq:counit-Hopf}
\end{gather*}
for the counit $\varepsilon_H$ of $H$.
\end{lemma}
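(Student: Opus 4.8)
The plan is to read off both monoidal structures directly from the (op)monoidal adjunctions fixed in \Cref{context:main}, using the explicit unit and counit morphisms of the preceding lemma, and then to evaluate the relevant mate formulas.

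First, for the oplax structure: by \Cref{context:main} together with \Cref{remark:strong_implies_monoidal}, the oplax structure on $F = \Ind_\varphi$ is the unique one making $\Ind_\varphi \dashv \Res_\varphi$ an opmonoidal adjunction, and its binary component $\oplax_{V,U}$ is exactly the morphism \eqref{eq:oplax-Hopf}, so nothing is left to check there. For the counit $\oplax_0 \colon \Ind_\varphi(\one) = H \otimes_K \Bbbk \to \one$ I would compute the mate of $\oplax^G_0$ under $\Ind_\varphi \dashv \Res_\varphi$; since $\Res_\varphi$ is strong monoidal with $\oplax^G$ and $\oplax^G_0$ both equal to the identity (because $\varphi$ is a bialgebra map, so $\varepsilon_H \circ \varphi = \varepsilon_K$ and $\Delta_H \circ \varphi = (\varphi \otimes \varphi) \circ \Delta_K$), this mate reduces, via \eqref{eq:unit2}--\eqref{eq:counit2}, to $h \otimes_K 1 \mapsto \varepsilon_H(h)$; one then checks this descends over $\otimes_K$ and is $H$-linear, both of which are immediate.

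Next, for the lax structure: by \Cref{context:main} and \Cref{remark:strong_implies_monoidal} it is the unique lax structure on $F = \Ind_\varphi$ making $\Res_\varphi \dashv \Ind_\varphi$ a monoidal adjunction, where the adjunction data are the unit \eqref{eq:unit1} and counit \eqref{eq:counit1}. I would use the mate formula
$$\lax_{V,U} = F\big(\counit^{G \dashv F}_V \otimes \counit^{G \dashv F}_U\big) \circ F\big(\oplax^G_{FV, FU}\big) \circ \unit^{G \dashv F}_{FV \otimes FU},$$
in which $\oplax^G = (\lax^G)^{-1} = \id$, again by compatibility of $\varphi$ with the coproducts. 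Feeding $(h \otimes_K v) \otimes (g \otimes_K u)$ through it: \eqref{eq:unit1} yields $\sum_i \delta_i \otimes_K \big(((h_i)_{(1)} h) \otimes_K v\big) \otimes \big(((h_i)_{(2)} g) \otimes_K u\big)$, where one uses that the $H$-module structure on $F(V) \otimes F(U)$ is the diagonal one through $\Delta_H$; applying the counits \eqref{eq:counit1} inside then replaces the $H$-prefactors by their $\tr$-values, producing the asserted formula for $\lax_{V,U}$. Likewise $\lax_0 \colon \one \to F(\one)$ is the mate of $\lax^G_0$, which (as the relevant components of $\lax^G$ are identities) reduces to $\unit^{G \dashv F}_{\one}$ evaluated at $1$ with $H$ acting trivially, giving $\sum_i \delta_i \otimes_K \varepsilon_H(h_i)$.

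As an alternative route, and a consistency check on the bookkeeping with the $\delta_i, h_i$, I would instead transport the lax structure \eqref{eq:lax-Hopf} of $\CoInd_\varphi$ along the natural isomorphism $\theta \colon \Ind_\varphi \xrightarrow{\sim} \CoInd_\varphi$ of \Cref{lem:thetaV}: because $\theta$ exhibits an isomorphism of the relevant monoidal adjunctions (this is precisely how the ambiadjunction in \Cref{context:main} arises, cf.\ \Cref{remark:pullback_adjunctions}), one has $\lax_{V,U} = \theta^{-1}_{V \otimes U} \circ \lax^R_{V,U} \circ (\theta_V \otimes \theta_U)$, and substituting $\theta_V(h \otimes_K v) = (g \mapsto \tr(gh)v)$, the description \eqref{eq:thetainv} of $\theta^{-1}$, and \eqref{eq:lax-Hopf} recovers the same expression, with $\lax_0$ obtained from the analogous computation using $\lax^R_0 = \varepsilon_H$. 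The only genuinely delicate point in any of this is the bookkeeping: tracking the diagonal $H$-action on tensor products of induced modules when pushing the mate formula through, and checking at each stage that the maps are well defined on the relative tensor products $\otimes_K$. The Hopf-algebra identities $\Delta_H \varphi = (\varphi \otimes \varphi)\Delta_K$ and $\varepsilon_H \varphi = \varepsilon_K$, which collapse the strong monoidal structure of $\Res_\varphi$ to the identity, are what make everything else routine.
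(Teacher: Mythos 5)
Your proposal is correct and matches the paper's (implicit) derivation: the paper obtains these formulas exactly by keeping the oplax structure \eqref{eq:oplax-Hopf} of $L=\Ind_\varphi$ and transporting the lax structure \eqref{eq:lax-Hopf} of $R=\CoInd_\varphi$ along $\theta$ from \Cref{lem:thetaV} and \eqref{eq:thetainv}, which is your second route, and your first route via the mate formula with the unit/counit \eqref{eq:unit1}--\eqref{eq:counit2} is the same computation since that ambiadjunction is precisely the $\theta$-pullback (\Cref{remark:pullback_adjunctions}). Both evaluations reproduce the stated formulas, so nothing is missing.
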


We know from \cite{FLP2}*{Lemmas~7.1 and 7.10} that if $H$ is finitely generated projective over $K$, then the projection formula holds for $F$ as a left and right adjoint. Explicit formulas are given in the following lemma.

\begin{lemma}\label{lem:proj-Hopf}
Assuming that the equivalent conditions from \Cref{thm:ind-coind} hold, the projection formula isomorphisms are given by
\begin{align*}
    \projl{W}{V}^{F, G\dashv F}\colon W\otimes F(V)&\to F(G(W)\otimes V),  \\w\otimes (h\otimes_K v)&\mapsto \sum_{i=1}^r \delta_i\otimes_K ((h_i)_{(1)}w\otimes \tr((h_i)_{(2)}h)v),\\
    \projr{V}{W}^{F, G\dashv F}\colon F(V)\otimes W&\to F(V\otimes G(W)),  \\(h\otimes_K v)\otimes w&\mapsto \sum_{i=1}^r \delta_i\otimes_K (\tr((h_i)_{(1)}h)v\otimes (h_i)_{(2)}w),\\
    \projl{W}{V}^{F, F\dashv G}\colon F(G(W)\otimes V)&\to W\otimes F(V), \\h\otimes_K (w\otimes v)&\mapsto h_{(1)}w\otimes (h_{(2)}\otimes_K v),\\
    \projr{V}{W}^{F, F\dashv G}\colon F(V\otimes G(W))&\to F(V)\otimes W, \\h\otimes_K(v \otimes w) &\mapsto (h_{(1)}\otimes_K v)\otimes h_{(2)}w.
\end{align*}
\end{lemma}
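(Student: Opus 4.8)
The plan is purely one of unwinding definitions: all four morphisms are, by \eqref{eq:proj}--\eqref{eq:projr} and \eqref{eq:iproj}--\eqref{eq:iprojr}, composites of a unit or counit of one of the two adjunctions with the corresponding lax or oplax structure of $F=\Ind_\varphi$, and every one of those ingredients has already been written out explicitly above (in \Cref{lem:Hopf-laxoplax} and \eqref{eq:unit1}--\eqref{eq:counit2}). So I would simply substitute and simplify, the only nontrivial simplification being the dual-basis identities of \Cref{lem:kideltai}.

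First, the two morphisms $\projl{W}{V}^{F, F\dashv G}$ and $\projr{V}{W}^{F, F\dashv G}$. By their definitions \eqref{eq:iproj}--\eqref{eq:iprojr} these are built only from the oplax structure $\oplax^F$ of \Cref{lem:Hopf-laxoplax} (equivalently \eqref{eq:oplax-Hopf}) and the counit $\counit^{F\dashv G}$ of \eqref{eq:counit2}; neither involves the Frobenius morphism $\tr$. Substituting, one recovers verbatim the isomorphisms $\iprojl{W}{V}$ and $\iprojr{V}{W}$ already recorded for $L=\Ind_\varphi$ in \Cref{lem:proj-iso-L-Hopf}, so for this pair there is nothing left to prove.

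The only genuine computation concerns $\projl{W}{V}^{F, G\dashv F}$ and $\projr{V}{W}^{F, G\dashv F}$, which by \eqref{eq:proj}--\eqref{eq:projr} are composites of the unit $\unit^{G\dashv F}$ of \eqref{eq:unit1} with the lax structure $\lax^F$ of \Cref{lem:Hopf-laxoplax}. Feeding $\unit^{G\dashv F}_W(w)=\sum_j\delta_j\otimes_K h_j w$ into the first slot of $\lax^F$, the image of $w\otimes(h\otimes_K v)$ under $\projl{W}{V}^{F, G\dashv F}$ becomes the double sum
\[
\sum_{i,j}\delta_i\otimes_K\bigl(\tr\bigl((h_i)_{(1)}\delta_j\bigr)(h_j w)\otimes\tr\bigl((h_i)_{(2)}h\bigr)v\bigr).
\]
Here the scalar $\tr((h_i)_{(1)}\delta_j)\in K$ acts on $h_j w\in G(W)$ through $\varphi$, since the $K$-action on the restricted module $G(W)$ is the $H$-action precomposed with $\varphi$; summing over $j$ and invoking \eqref{eq:kideltai2} with the element $(h_i)_{(1)}$ collapses the first tensor factor to $(h_i)_{(1)}w$, which is precisely the asserted formula. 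The computation for $\projr{V}{W}^{F, G\dashv F}$ is the mirror image, with $\unit^{G\dashv F}_W(w)$ placed in the second slot of $\lax^F$ and \eqref{eq:kideltai2} applied to $(h_i)_{(2)}$. Well-definedness of these maps over $\otimes_K$ is checked using \eqref{eq:kideltai1}. Invertibility is not in question: it was already noted before the statement that the projection formula holds for $F$ as a left and as a right adjoint by \cite{FLP2}*{Lemmas~7.1 and~7.10}; alternatively one may verify directly that the four displayed maps compose to identities, once more via \eqref{eq:kideltai2}.

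The only real difficulty is bookkeeping: tracking the several $K$- and $H$-module structures at play, and in particular remembering that the $K$-action on $G(W)$ factors through $\varphi$, so that the $K$-valued partial traces $\tr(\,\cdot\,\delta_j)$ may be moved onto module elements and then absorbed by the dual-basis relation \eqref{eq:kideltai2}. There is no conceptual obstacle.
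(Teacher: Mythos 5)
Your proposal is correct and is in substance the same argument as the paper's (very terse) proof: the paper also obtains the four formulas by combining the explicit ambiadjunction data (which is transported from $\CoInd_\varphi$ via $\theta^{-1}$, \eqref{eq:thetainv}) with the dual-basis identities of \Cref{lem:kideltai}, citing the computations of \cite{FLP2}*{Section~7}, and your direct substitution of \eqref{eq:unit1}--\eqref{eq:counit2} and \Cref{lem:Hopf-laxoplax} into \eqref{eq:proj}--\eqref{eq:iprojr}, collapsing the double sum with \eqref{eq:kideltai2}, is exactly that calculation written out. Your further observations — that the $F\dashv G$-side morphisms reduce verbatim to \Cref{lem:proj-iso-L-Hopf}, and that invertibility is already guaranteed by \cite{FLP2}*{Lemmas~7.1 and 7.10} — are both accurate.
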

\begin{proof}
The formulas are derived, 
using \Cref{lem:kideltai} and \Cref{eq:thetainv}, from \cite{FLP2}*{Section~7}.
\end{proof}

\subsection{Conditions for induction to be a Frobenius monoidal functor}
We now determine conditions for induction along a morphism of Hopf algebras $\varphi\colon K\to H$ to be a Frobenius monoidal functor. We require the following notation.

\begin{definition}[$H^\reg$, $K^\varphi$]\label{def:comodulestructure}
We denote by
$H^\reg$ the \emph{regular} right $H$-comodule with coaction
$$\delta_H^\reg=\Delta_H \colon H\to H\otimes H,$$
and by $K^\varphi$ the right $H$-comodule structure on $K$ given by  co-restricting $\delta_K^\reg$ along $\varphi$, i.e.,
$$\delta^\varphi=(\id\otimes \varphi)\Delta_K \colon K\to K\otimes H.$$
\end{definition}

\begin{proposition}\label{prop:Hopf-right}
Assume that $H$ is finitely generated projective as a left $K$-module and given a Frobenius morphism $\tr\colon H\to K $, see \Cref{thm:ind-coind}(iii). Then the following conditions are equivalent.
\begin{enumerate}[(i)]
    \item For $F=\Ind_\varphi$ and $G=\Res_\varphi$, the natural transformations $\projrnoarg^{F, F\dashv G}$ and $\projrnoarg^{F, G\dashv F}$ are mutual inverses.
    \item  The following elements of $H\otimes_K(K\otimes H)$ are equal:

    \begin{align}  \label{eq:cond-Hopf-Frobenius2}
        \sum_{i=1}^n \delta_i\otimes_K (\tr((h_{i})_{(1)})\otimes (h_{i})_{(2)})=1\otimes_K (1\otimes 1).
    \end{align}
    \item 
    The Frobenius morphism $\tr\colon H^\reg\to K^\varphi$ is a morphism of right $H$-comodules. 
\end{enumerate}
 Moreover, if a Frobenius morphism satisfying the equivalent conditions (i)--(iii) exists, then it is unique up to a non-zero scalar.
\end{proposition}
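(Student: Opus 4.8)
The plan is to prove the equivalence of (i), (ii), (iii) and the uniqueness statement in four stages, relying on the explicit formulas for the unit, counit, lax/oplax structures and projection formula morphisms collected in \Cref{lem:proj-Hopf}, \Cref{lem:Hopf-laxoplax}, and the dual-basis relations \Cref{eq:kideltai1}--\Cref{eq:kideltai2} of \Cref{lem:kideltai}.

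\textbf{Step 1: (i) $\Leftrightarrow$ (ii).} First I would compute the composite $\projr{V}{W}^{F, F\dashv G}\circ\projr{V}{W}^{F, G\dashv F}$ on a generic element $(h\otimes_K v)\otimes w$ of $F(V)\otimes W$ using the two explicit formulas from \Cref{lem:proj-Hopf}. Applying $\projr{V}{W}^{F, G\dashv F}$ gives $\sum_i \delta_i\otimes_K(\tr((h_i)_{(1)}h)v\otimes (h_i)_{(2)}w)$, and then applying $\projr{V}{W}^{F, F\dashv G}$ (which splits off the rightmost tensor factor via the coproduct on the $H\otimes_K(-)$-slot) yields an expression that, after using coassociativity and that $\tr$ is $K$-bilinear to absorb $h$ and $w$, reduces to a sum over $i$ involving $\delta_i\otimes_K$ applied to the first leg of $\Delta((h_i)_{(1)})$ tensored against $(h_i)_{(2)}w$. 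Since one composite is manifestly the identity precisely when the ``structure constant'' in $H\otimes_K(K\otimes H)$ obtained by feeding in $v=1\in K$, $w=1\in H$, $h=1$ equals $1\otimes_K(1\otimes 1)$, and because the projection formula morphisms are already known to be isomorphisms (so it suffices to check one composite is the identity), this identifies (i) with \Cref{eq:cond-Hopf-Frobenius2}. The naturality of the projection formula morphisms guarantees that checking on these unit elements suffices; I would spell this reduction out carefully as it is the crux of the ``only if'' direction.

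\textbf{Step 2: (ii) $\Leftrightarrow$ (iii).} The right $H$-comodule map condition for $\tr\colon H^\reg\to K^\varphi$ says $\delta^\varphi\circ\tr=(\tr\otimes\id_H)\circ\Delta_H$ as maps $H\to K\otimes H$, i.e.\ $\tr(h)_{(1)}\otimes\varphi(\tr(h)_{(2)})=\tr(h_{(1)})\otimes h_{(2)}$ in $K\otimes H$ for all $h\in H$ (Sweedler notation for $\Delta_K\circ\tr$ on the left). To connect this to \Cref{eq:cond-Hopf-Frobenius2}, I would apply $\theta_K^{-1}\otimes\id$ (or rather use the isomorphism $H\cong\Hom_K(H,K)$, $h\mapsto\tr((-)h)$, from \Cref{thm:ind-coind}) together with the identity $h=\sum_i\delta_i\varphi(\tr(h_ih))$ from \Cref{eq:kideltai2}: the left-hand side of \Cref{eq:cond-Hopf-Frobenius2}, pushed through $H\otimes_K(K\otimes H)\to K\otimes H$ via $a\otimes_K(x\otimes b)\mapsto \varphi(\tr(a))x\otimes b$ — or more robustly, by pairing against an arbitrary element and using that $\{\delta_i\},\{h_i\}$ are dual bases — becomes exactly $\tr(h_{(1)})\otimes h_{(2)}$ evaluated appropriately, while the right-hand side becomes $\tr(h)_{(1)}\otimes\varphi(\tr(h)_{(2)})$ after rewriting $1\otimes_K(1\otimes 1)$ via the dual-basis relation. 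Here the main subtlety is that both sides live in $H\otimes_K(K\otimes H)$, and to extract a clean statement about comodule maps one must invoke that $H$ is finitely generated projective over $K$, so that $H\otimes_K(-)$ is faithful and the dual bases $\{h_i\},\{\delta_i\}$ genuinely detect equality; I expect getting the bookkeeping between $\otimes_K$ and $\otimes_\Bbbk$ right to be the fiddliest part, and I would state the pairing/evaluation map explicitly to keep it rigorous.

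\textbf{Step 3: uniqueness up to scalar.} For the final clause, suppose $\tr$ and $\tr'$ both satisfy (iii). By \Cref{lem:all-Frob-mors}, any two Frobenius morphisms differ by an invertible element $c$ in the centralizer $Z:=\{c\in H^\times\mid cx=xc\ \forall x\in K\}$, via $\tr'=\tr((-)c)$. I would then impose that both $\tr$ and $\tr((-)c)$ are right $H$-comodule maps $H^\reg\to K^\varphi$ and derive a constraint on $c$. Writing out $\delta^\varphi(\tr(hc))=(\tr\otimes\id)\Delta_H(hc)=\tr(h_{(1)}c_{(1)})\otimes h_{(2)}c_{(2)}$ and comparing with $(\tr(h)_{(1)}\otimes\varphi(\tr(h)_{(2)}))$-type expressions forces, after setting $h=1$ and using that $\tr$ is already a comodule map, the relation $\Delta_K(\tr(c_{(1)}))\otimes\varphi^{?}(c_{(2)})$-level identity that collapses to $\tr(c_{(1)})\otimes c_{(2)}=\tr(c)\otimes 1$ in $K\otimes H$, i.e.\ $c$ is group-like-like relative to $\tr$; combined with invertibility of $c$ and faithfulness of $\theta_K$, this pins $c$ down to lie in $\Bbbk^\times\cdot 1$. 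Alternatively, and perhaps more cleanly, I would phrase uniqueness as: the set of Frobenius morphisms satisfying (iii) is a torsor under $\{c\in Z\mid \Delta_H(c)=c\otimes 1\}=\Bbbk^\times$, the last equality because an invertible $H$-element whose coproduct is $c\otimes 1$ has $\varepsilon(c)$ invertible and $c=\varepsilon(c)1$. I expect this step to be short once Step 2 is in place, since the comodule condition is essentially linear in $\tr$.

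Overall, the main obstacle is Step 1 together with the $\otimes_K$-versus-$\otimes_\Bbbk$ bookkeeping in Step 2: one must be disciplined about which tensor products are over $K$ and which over $\Bbbk$, use coassociativity and $K$-bilinearity of $\tr$ at the right moments, and invoke finite projectivity to legitimize the dual-basis manipulations and the faithfulness needed to reduce (ii) to the comodule identity (iii).
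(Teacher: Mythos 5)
Your Steps 1 and 2 are essentially the paper's argument: the composite of the two right projection formula morphisms from \Cref{lem:proj-Hopf} is computed explicitly, the verification is reduced to $V=K$, $W=H$ and the generator $1\otimes_K(1\otimes 1)$ (and since both morphisms are already invertible by \Cref{lem:proj-Hopf}, checking one composite suffices), and (ii)$\Leftrightarrow$(iii) is handled through the dual-basis identities of \Cref{lem:kideltai}. One caveat in Step 2: your first proposed comparison map $a\otimes_K(x\otimes b)\mapsto \varphi(\tr(a))x\otimes b$ is not $K$-balanced, because the left $K$-action on $K\otimes H$ is the diagonal one, $k\otimes_K(l\otimes h)=1\otimes_K(k_{(1)}l\otimes\varphi(k_{(2)})h)$; your fallback --- detecting equality by multiplying the first tensor factor by an arbitrary $h\in H$, applying $\tr$, and using the dual bases together with \eqref{eq:kideltai2} --- is exactly what the paper does and is the version you must carry out.

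Step 3, however, contains a genuine gap. You reduce uniqueness to the claim that an invertible $c$ commuting with $K$ and satisfying $\tr(c_{(1)})\otimes c_{(2)}=\tr(c)\otimes 1$ must lie in $\Bbbk^\times\cdot 1$; this is false. Take $K=\Bbbk\subset H=\Bbbk\sfC_2$ with $\sfC_2=\langle t\rangle$, $\tr(1)=1$, $\tr(t)=0$ (this $\tr$ does satisfy (iii)), and $c=t$: then $\tr(c_{(1)})\otimes c_{(2)}=0=\tr(c)\otimes 1$, $c$ is invertible and commutes with $K$, and $\theta_K$ is bijective, yet $c\notin\Bbbk^\times\cdot 1$. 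The point is that setting $h=1$ discards most of the comodule condition for $\tr((-)c)$; to get $\Delta_H(c)=c\otimes 1$ (which your alternative ``torsor'' formulation simply presupposes) you would need the identity $\tr(h_{(1)}c)\otimes h_{(2)}=\tr(h_{(1)}c_{(1)})\otimes h_{(2)}c_{(2)}$ for \emph{all} $h\in H$ together with a nondegeneracy argument, and your sketch supplies neither. The paper proves uniqueness by a different, purely categorical route that avoids this computation altogether: a Frobenius morphism satisfying (i)--(iii) upgrades $F\dashv G\dashv F$ to an ambiadjunction internal to $\rMod{\cC}$ for $\cC=\lMod{H}$ (\Cref{theorem:strong_monoidal_to_ambiadjunction_right}); such ambiadjunctions are classified by automorphisms of $F$ as a right module functor (\Cref{remark:ambiadjunctions_classified_by_aut}); and \Cref{proposition:EndF} identifies these with $\End_{\lMod{K}}(\one)\cong\Bbbk$, giving uniqueness up to a non-zero scalar. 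Either adopt that argument or supply the missing all-$h$ nondegeneracy step in your Hopf-algebraic approach.
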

\begin{proof}
Assume that $H$ is finitely generated projective as a left $K$-module and assume given a Frobenius morphism $\tr\colon H\to K$. That is, the equivalent conditions from \Cref{thm:ind-coind} hold. Thus, $F\dashv G\dashv F$ is an ambiadjunction and the projection formula holds for both $F$ as a left and right adjoint by \Cref{lem:proj-Hopf}.

\emph{(i) $\Leftrightarrow$ (ii):}
The composition $\projr{W}{V}^{F,G\dashv F}\projr{W}{V}^{F,F\dashv G}$ is given by
\begin{align*}
\projr{W}{V}^{F,G\dashv F}\projr{W}{V}^{F,F\dashv G}(h\otimes_K (w\otimes v))=&
\sum_i h\delta_i\otimes_K (\tr((h_i)_{(1)})v\otimes (h_i)_{(2)}w).
\end{align*}
Whether this composition is equal to $\id_{F(W\otimes G(V))}$ can, equivalently, be verified on $V=K$ and $W=H$ and the generators $h=w=1_H, v=1_K$ which yields the equivalent condition \Cref{eq:cond-Hopf-Frobenius2}. 
As $H$ is finitely generated projective as a left $K$-module, by \Cref{lem:proj-Hopf}, both $\projrnoarg^{F,F\dashv G}$ and $\projrnoarg^{F, G\dashv F}$ are invertible. Thus, it suffices to check one composition is the identity.

\emph{(ii) $\Leftrightarrow$ (iii):}

Under the isomorphisms $K\otimes_K H\cong H\cong H\otimes_K K$, \Cref{eq:kideltai2} gives
\begin{equation}1\otimes_K h=\sum_{i=1}^n \tr(h\delta_i)\otimes_K h_i
\in K\otimes_K H,
%%\qquad h\otimes_K 1=\sum_{i=1}^n \delta_i\otimes_K \tr(h_ih)
 %%\in H\otimes_K K.
 \label{eq:H-selfdual2}
 \end{equation}

Hence, if \Cref{eq:cond-Hopf-Frobenius2} holds, we compute for any $h\in H$, 
\begin{align*}
1\otimes_K (\tr(h)_{(1)}\otimes \varphi(\tr(h)_{(2)}))&=
   \tr(h)\otimes_K (1\otimes 1)\\
   &= \sum_{i=1}^n \tr(h\delta_i)\otimes_K(\tr((h_i)_{(1)})\otimes (h_i)_{(2)})\\
    &= 1\otimes_K (\tr(h_{(1)})\otimes h_{(2)}).
\end{align*}
Here, the first equality uses the relative tensor product condition
$$k\otimes_K (l\otimes h)=1\otimes_K(k_{(1)}l\otimes \varphi(k_{(2)})h), \qquad \forall k,l\in K, h\in H,$$
in $K\otimes_K (K\otimes H)$, the second equality uses Equations \eqref{eq:cond-Hopf-Frobenius2} after application of $\tr$ to the left tensor factor, and the third uses \Cref{eq:H-selfdual2} after application of $\id\otimes_K(\tr\otimes\id)\Delta_H$. Applying the isomorphism $K\otimes_K (K\otimes H)\cong K\otimes H$ now implies that $\tr\colon H^\reg\to K^\varphi$ is a morphism of right $H$-comodules as claimed.

Conversely, assume that $\tr\colon H^\reg\to K^\varphi$ is a morphism of right $H$-comodules.  That is, 
$$\tr(h_{(1)})\otimes h_{(2)}=\tr(h)_{(1)}\otimes \varphi(\tr(h)_{(2)}).$$
Hence, 
\begin{align*}
    \sum_{i=1}^n \delta_i\otimes_K (\tr((h_i)_{(1)})\otimes (h_i)_{(2)})
    &=\sum_{i=1}^n \delta_i\otimes_K (\tr(h_i)_{(1)}\otimes\varphi(\tr(h_i)_{(2)}))\\
    &=\sum_{i=1}^n \delta_i\otimes_K (\tr(h_i)_{(1)}\otimes\tr(h_i)_{(2)}\cdot 1)\\
    &=\sum_{i=1}^n \delta_i\varphi(\tr(h_i))\otimes_K (1\otimes 1)=1\otimes_K( 1\otimes 1 ),
\end{align*}
where the last equality uses \Cref{eq:kideltai2}. Thus, \Cref{eq:cond-Hopf-Frobenius2} holds.

Last, if $\tr$ satisfies the equivalent conditions (i)--(iii), then $F \dashv G \dashv F$ can be regarded as an ambiadjunction of right $\cC$-modules (for $\cC = \lMod{H}$) by \Cref{theorem:strong_monoidal_to_ambiadjunction_right}.
By \Cref{remark:ambiadjunctions_classified_by_aut}, such ambiadjunctions are classified by the automorphisms of $F$ regarded as a right module functor. By \Cref{proposition:EndF}, we have
$\End(F) \cong \End_{\lMod{K}}(\one) \cong \Bbbk$. It follows that $\tr$ is uniquely determined up to a non-zero scalar.
\end{proof}

By \Cref{prop:Hopf-right}, we obtain the following result as a direct consequence of \Cref{theorem:frobenius_functors_frobenius_monoidal}.

\begin{corollary}\label{cor:Ind-Frob}
    Assume that $\varphi\colon K\to H$ is a morphism of Hopf algebras such that $H$ is finitely generated projective as a left $K$-module and that there exists a Frobenius morphism $\tr\colon H\to K$ which is a morphism of right $H$-comodules $\tr\colon H^\reg\to K^\varphi$. Then 
     $\Ind_\varphi\colon \lMod{K}\to \lMod{H}$, with lax and oplax monoidal structure from \Cref{lem:Hopf-laxoplax}, is a Frobenius monoidal functor.
\end{corollary}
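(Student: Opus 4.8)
The plan is to assemble the statement directly from \Cref{prop:Hopf-right} and \Cref{theorem:frobenius_functors_frobenius_monoidal}, so the proof should be short. First I would set $G := \Res_\varphi\colon \lMod{H}\to\lMod{K}$, which is strong monoidal, and $F := \Ind_\varphi$. Since $H$ is finitely generated projective as a left $K$-module and a Frobenius morphism $\tr$ is assumed to exist, \Cref{thm:ind-coind} provides a natural isomorphism $\Ind_\varphi\cong\CoInd_\varphi$; transporting the right-adjoint structure of $\CoInd_\varphi$ along this isomorphism turns $F$ into a functor that is simultaneously left adjoint to $G$ (since $\Ind_\varphi\dashv\Res_\varphi$) and right adjoint to $G$. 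Hence $F\dashv G\dashv F$ is an ambiadjunction of categories, and we are exactly in \Cref{context:main}. The lax and oplax monoidal structures on $F$ induced there (via \Cref{remark:strong_implies_monoidal} from $G\dashv F$ and $F\dashv G$) are the ones written out explicitly in \Cref{lem:Hopf-laxoplax}.

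Next I would feed in the hypothesis that $\tr\colon H^\reg\to K^\varphi$ is a morphism of right $H$-comodules. By the equivalence of conditions (iii) and (i) in \Cref{prop:Hopf-right}, this is precisely the assertion that the right projection formula morphisms $\projrnoarg^{F,F\dashv G}$ and $\projrnoarg^{F,G\dashv F}$ are mutual inverses. (As recorded in \Cref{lem:proj-Hopf}, finite generation and projectivity of $H$ over $K$ already guarantee that both of these natural transformations are isomorphisms, so the content here is genuinely the \emph{mutual inverse} clause, not mere invertibility.) Then \Cref{theorem:frobenius_functors_frobenius_monoidal} applies verbatim and yields that $(F,\lax^F,\oplax^F)$ is a Frobenius monoidal functor; since $\lax^F$ and $\oplax^F$ coincide with the structures of \Cref{lem:Hopf-laxoplax}, this is exactly the claim.

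I do not expect any real obstacle: all the substantive work has been done upstream, in the comodule reformulation of \Cref{prop:Hopf-right} and in the proof of \Cref{theorem:frobenius_functors_frobenius_monoidal} itself. The only points requiring a line of care are the identification of the abstract lax/oplax structures of \Cref{context:main} with the explicit Hopf-algebraic formulas (which is precisely what \Cref{lem:Hopf-laxoplax} is set up to provide) and, if one wishes to be scrupulous about the strictness hypotheses in \Cref{section:monoidal_induce_ambi_of_bim}, the harmless passage to a strictification of $\lMod{H}$ and $\lMod{K}$ using \Cref{lemma:Frobenius_along_equivalence}.
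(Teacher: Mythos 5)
Your proposal is correct and follows essentially the same route as the paper, which derives the corollary directly by combining \Cref{prop:Hopf-right} (the equivalence of the right-comodule condition on $\tr$ with the right projection formula morphisms being mutual inverses) with \Cref{theorem:frobenius_functors_frobenius_monoidal}. The additional care you note about placing $\Ind_\varphi\dashv\Res_\varphi\dashv\Ind_\varphi$ in \Cref{context:main} and identifying the lax/oplax structures with \Cref{lem:Hopf-laxoplax} is exactly the bookkeeping the paper's preceding lemmas are set up to supply.
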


We observe that, since $H$ is finitely generated over $K$ by assumption, $\Ind_\varphi$ restricts to the subcategories of finite-dimensional modules.

\begin{remark}
    We note that the Frobenius monoidal structure on $\Ind_\varphi$ obtained in \Cref{cor:Ind-Frob} changes when the Frobenius morphism $\tr$ is replaced by a non-zero scalar multiple $\lambda\tr$. Then we can replace $\delta_i$ by $\delta_i'=\lambda^{-1}\delta_i$ and, together with the same $h_i$, obtain a dual basis. Replacing $\tr$ by $\lambda\tr$, we observe that the oplax structure remains unchanged while $\lax$ is replaced by $\lambda\lax$ and $\lax_0$ is replaced by $\lambda^{-1}\lax_0$. This implies that the resulting Frobenius monoidal functors are not isomorphic. 
\end{remark}

\begin{example}\label{ex:uqsl2}
Consider the small quantum group $H=u_\epsilon(\fr{sl}_2)$ for $\epsilon\in \Bbbk=\mC$ a root of unity of odd order $\ell$, see e.g., \cite{Kas}*{Definition~VI.5.6}. We denote the generators 
of $u_\epsilon(\fr{sl}_2)$ by $e,f,k^{\pm 1}$ and recall that $k^{\pm}$ generate a group algebra $K=\Bbbk \sfC_\ell$ which is a Hopf subalgebra of $H=u_\epsilon(\fr{sl}_2)$. We let $\varphi$ denote the inclusion $K\hookrightarrow H$. 

The set $\{f^ie^j~|~0\leq i,j<\ell \}$ gives a basis for $H$ as a left (and right) $K$-module which we use to verify that a choice of Frobenius morphism is given by 
$$\tr\colon H\to K, \qquad f^ie^j\mapsto \delta_{i,\ell-1}\delta_{j,\ell-1} k^{\ell-1}.$$
Indeed, using the commutator relations, found e.g.\ in \cite{Kas}*{Lemma~VI.1.3}, one checks that 
$$e^if^jk^{1-\ell} \cdot \tr(f^me^n)=\tr(f^me^ne^if^jk^{1-\ell})=\delta_{i+n,\ell-1}\delta_{j+n,\ell-1}.$$
This implies that any dual basis element in $\Hom_K(H,K)$ can be generated by acting with elements from $H$ on $\tr$. In the sense of \Cref{lem:kideltai}, we have the following dual bases
$$\{h_{(i,j)}:= f^ie^j~|~0\leq i,j\leq \ell-1\}, \qquad  \{\delta_{(i,j)}:= e^{\ell-1-j}f^{\ell-1-i}k^{1-\ell}~|~0\leq i,j\leq \ell-1\}.$$

We now check condition (iii) in \Cref{prop:Hopf-right}:
\begin{align*}
    (\tr\otimes \id)\Delta_u(e^{\ell-1}f^{\ell-1})&=\epsilon^0\binom{\ell-1}{0}_\epsilon\binom{\ell-1}{\ell-1}_\epsilon \tr(e^{\ell-1}f^{\ell-1})\otimes e^0f^0k^{\ell-1}\\
    %%i=\ell-1, j=\ell-1, r=0 s=\ell-1
    &=k^{\ell-1}\otimes k^{\ell-1}\\
    &=(\id\otimes \varphi)\Delta_K(k^{\ell-1})=(\id\otimes \varphi)\Delta_K\tr(f^{\ell-1}e^{\ell-1}),
\end{align*}
where $(-)_\epsilon$ denotes the quantum binomial coefficient.
Here, we use \cite{Kas}*{Proposition~VII.1.3} for the coproduct $\Delta_H$.
Evaluating at $e^if^j$, with $i\neq \ell-1$ or $j\neq \ell-1$, instead of $e^{\ell-1}f^{\ell-1}$, we get zero on both sides of the equation as the highest order monomial $f^{\ell-1}e^{\ell-1}$ cannot appear in the tensor factors of the coproduct, and all other terms are annihilated by $\tr$. Hence, by \Cref{cor:Ind-Frob} we find that 
$$\Ind_\varphi\colon \lMod{\Bbbk \sfC_\ell}\to \lMod{u_\epsilon(\fr{sl}_2)}$$
has the structure of a Frobenius monoidal functor.
\end{example}

\begin{remark}\label{rem:uqsl2}
    \Cref{lem:all-Frob-mors} enables us to construct more Frobenius morphisms. For instance, in the above \Cref{ex:uqsl2}, there exists a family of Frobenius morphisms
    $$\tr_a\colon H\to K, \qquad f^ie^j\mapsto \delta_{i,\ell-1}\delta_{j,\ell-1}k^{\ell-1}a,$$
for $a\in K^\times$, a unit in the group algebra $K=\Bbbk \sfC_\ell$.

By \Cref{prop:Hopf-right} or a direct computation,  $\tr_a$ is a morphism of right $H$-comodules if and only if $a\in\Bbbk^\times$. This shows that whether $\projrnoarg^{F,F\dashv G}$ and $\projrnoarg^{F,G\dashv F}$ are mutual inverses depends, in general, on the choice of natural isomorphism $L\cong R$ between the left and right adjoint of the strong monoidal functor $G$. Hence, the equation of whether the lax and oplax monoidal structures on an ambiadjoint $F$ give a Frobenius monoidal functor depend on the choice of the unit and counit data of the ambiadjunction.
\end{remark}

\subsection{Frobenius monoidal functors on Yetter--Drinfeld module categories}

We now provide conditions  for the induction functor  along a morphism of Hopf  algebras $\varphi\colon K\to H$ to give a Frobenius monoidal functor $\cZ(\Ind_\varphi)\colon \cZ(\lMod{K})\to \cZ(\lMod{H})$ on Drinfeld centers. Extending \Cref{def:comodulestructure} we fix the following notation.

\begin{definition}[${}^\reg H^\reg$, ${}^\varphi K^\varphi$]\label{def:bicomodulestructure}
We denote by
${}^\reg H^\reg$ the $H$-$H$-bicomodule with left and right $H$-coactions given by the regular ones 
and by ${}^\varphi K^\varphi$ the $H$-$H$-bicomodule structure on $K$ given by $${}^\varphi \delta^\varphi=(\varphi\otimes \id\otimes \varphi)({}^\reg\delta_K^\reg) \colon K\to H\otimes K\otimes H,$$
i.e, given by co-restricting ${}^\reg\delta_K^\reg$ along $\varphi.$ 
\end{definition}

Recall that the Drinfeld center $\cZ(\lMod{H})$ is equivalent to the braided monoidal category $\lYD{H}$ of  \emph{Yetter--Drinfeld modules} (or \emph{YD modules}) over $H$ \cites{Yet,Maj,EGNO}.
This category $\lYD{H}$ has as objects $\Bbbk$-vector spaces $V$ which are both left $H$-modules and $H$-comodules with coaction
$$\delta\colon V\to H\otimes V, \qquad v\mapsto v^{(-1)}\otimes v^{(0)},$$
such that the YD condition
\begin{equation}\label{eq:YD-cond}
h_{(1)}w^{(-1)}\otimes h_{(2)}w^{(0)}=(h_{(1)}w)^{(-1)}h_{(2)}\otimes (h_{(1)}w)^{(0)}
\end{equation}
holds. Using the antipode $S$, this condition is equivalent to 
\begin{equation}\label{eq:YD-cond2}
\delta(h w)=h_{(1)}w^{(-1)}S(h_{(3)})\otimes h_{(2)}w^{(0)}.
\end{equation}
The half-braiding associated with a YD module $V$ is defined by
\begin{equation}\label{eq:YDbraiding}
    c^V_X\colon V\otimes W\to W\otimes V, \qquad v\otimes w \mapsto (v^{(-1)}\cdot w)\otimes v^{(0)}, 
\end{equation}
for any object $W$ of $\lMod{H}$ and $v\in V,w\in W$. In the following, we will work directly with the category of YD modules.

\smallskip

We can now give equivalent characterizations of the property that the projection formula morphisms for $\Ind_\varphi \dashv \Res_\varphi$ and $\Res_\varphi \dashv \Ind_\varphi$ are mutually inverse.

\begin{proposition}
\label{thm:Hopf}
Assume that $H$ is finitely generated projective as a left $K$-module and assume given a Frobenius morphism $\tr\colon H\to K $, see \Cref{thm:ind-coind}(iii). Then the following conditions are equivalent for $F=\Ind_\varphi$, $G=\Res_\varphi$.
\begin{enumerate}[(i)]
    \item $\projlnoarg^{F, F\dashv G}$ and $\projlnoarg^{F, G\dashv F}$ as well as  $\projrnoarg^{F, F\dashv G}$ and $\projrnoarg^{F, G\dashv F}$, are mutually inverse pairs of natural isomorphisms.
    \item  The equation
    \begin{align}\label{eq:cond-Hopf-Frobenius}
        \sum_{i=1}^n \delta_i\otimes_K ((h_i)_{(1)}\otimes \tr((h_i)_{(2)}))=1\otimes_K (1\otimes 1)\,\in\, H\otimes_K (H\otimes K),
    \end{align} and \Cref{eq:cond-Hopf-Frobenius2} hold.
    \item 
    The Frobenius morphism defines a morphism of $H$-$H$-bicomodules 
    $$\tr\colon {}^\reg H^\reg\to {}^\varphi K^\varphi.$$
\end{enumerate}
\end{proposition}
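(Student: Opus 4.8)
The plan is to prove the three conditions equivalent by combining the already-established right-handed case (\Cref{prop:Hopf-right}) with its left-handed mirror image, and by carefully unwinding the bicomodule condition into its two one-sided halves.

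\textbf{Strategy.} First I would observe that \Cref{prop:Hopf-right} already gives the equivalence, for the \emph{right} projection formula morphisms, among: (a) $\projrnoarg^{F,F\dashv G}$ and $\projrnoarg^{F,G\dashv F}$ being mutual inverses; (b) \Cref{eq:cond-Hopf-Frobenius2}; and (c) $\tr\colon H^\reg\to K^\varphi$ being a morphism of right $H$-comodules. By the $\otimes$-opposite symmetry used throughout the paper (the left-sided version of \Cref{theorem:frobenius_functors_frobenius_monoidal}, and the analogous left-sided version of \Cref{prop:Hopf-right} obtained by replacing $\cC=\lMod{H}$, $\cD=\lMod{K}$ by their $\otimes$-opposites, equivalently passing to $H^{\cop}$, $K^{\cop}$), one gets the mirror statement: $\projlnoarg^{F,F\dashv G}$ and $\projlnoarg^{F,G\dashv F}$ are mutual inverses iff \Cref{eq:cond-Hopf-Frobenius} holds iff $\tr$ is a morphism of \emph{left} $H$-comodules from ${}^\reg H$ to ${}^\varphi K$ (where ${}^\reg H$ has left coaction $\Delta_H$ and ${}^\varphi K$ has left coaction $(\varphi\otimes\id)\Delta_K$). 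Granting both of these, (i)$\Leftrightarrow$(ii) is immediate: (i) is the conjunction of the two one-sided invertibility statements, and (ii) is the conjunction of \Cref{eq:cond-Hopf-Frobenius} and \Cref{eq:cond-Hopf-Frobenius2}.

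\textbf{The bicomodule equivalence.} The remaining point is (ii)$\Leftrightarrow$(iii), i.e.\ that $\tr$ is a morphism of $H$-$H$-bicomodules ${}^\reg H^\reg\to{}^\varphi K^\varphi$ if and only if it is simultaneously a right $H$-comodule morphism $H^\reg\to K^\varphi$ and a left $H$-comodule morphism ${}^\reg H\to{}^\varphi K$. Here I would recall that a bicomodule structure is a left-comodule structure plus a right-comodule structure satisfying the coassociativity-type compatibility $(\id\otimes{}^\reg\delta)\circ\delta^\reg=({}^\reg\delta\otimes\id)\circ\delta^\reg$ (suppressing $\varphi$), and that for the regular bicomodule ${}^\reg H^\reg$ this compatibility is just coassociativity of $\Delta_H$, which holds automatically; likewise for ${}^\varphi K^\varphi$ it follows from coassociativity of $\Delta_K$ together with $\varphi$ being a coalgebra map. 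Therefore being a bicomodule morphism is \emph{exactly} the conjunction of being a left comodule morphism and a right comodule morphism — no extra cross-condition arises. Concretely one checks: if $\tr$ intertwines both one-sided coactions, then $(\id_H\otimes\tr\otimes\id_H)\circ{}^\reg\delta^\reg_H = {}^\varphi\delta^\varphi_K\circ\tr$ follows by applying the two intertwining identities in the two legs and using coassociativity, and conversely projecting the bicomodule identity along the counit in either the left or the right leg recovers the one-sided statements.

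\textbf{Expected main obstacle.} The genuinely new content is establishing the left-sided analogue of \Cref{prop:Hopf-right}; although it is "by symmetry," the explicit projection-formula formulas in \Cref{lem:proj-Hopf} for the left morphisms involve $S$ (via the opmonoidal structure \Cref{eq:oplax-Hopf}) in a way that is not literally the $\oop$-mirror of the right ones, so I would want to verify that composing $\projlnoarg^{F,G\dashv F}$ with $\projlnoarg^{F,F\dashv G}$ on the generators $h=w=1_H$, $v=1_K$ over $V=K$, $W=H$ reduces precisely to \Cref{eq:cond-Hopf-Frobenius}, and then that the latter is equivalent — using \Cref{lem:kideltai}, in particular \Cref{eq:kideltai1}, \Cref{eq:kideltai2} — to the left-comodule intertwining identity $\tr(h)_{(1)}\otimes\varphi(\tr(h)_{(2)})\otimes\text{(nothing)}$ matching $h_{(1)}\otimes\tr(h_{(2)})$ under the isomorphism $H\otimes_K K\cong H$. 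This is the same style of computation as in the proof of \Cref{prop:Hopf-right}, so I expect it to go through, but it is where the care is needed. Once it is in place, the proof is a short assembly: (i)$\Leftrightarrow$(ii) by the two one-sided propositions, (ii)$\Leftrightarrow$(iii) by the bicomodule decomposition above.
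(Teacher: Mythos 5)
Your proposal is correct and follows essentially the same route as the paper, whose proof simply invokes \Cref{prop:Hopf-right} for the right-handed conditions and notes that the left-handed conditions are handled analogously (with the bicomodule condition implicitly being the conjunction of the two one-sided comodule conditions). Your additional care about the left-handed computation and the decomposition of (iii) into its left and right halves is exactly the content the paper leaves to the reader.
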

\begin{proof}
We have already shown the equivalence of the conditions corresponding to $\projrnoarg^{F, F\dashv G}$ and $\projrnoarg^{F, G\dashv F}$ being inverse natural isomorphisms in \Cref{prop:Hopf-right}. Equivalence of the conditions for the left projection formula morphisms is shown analogously. 
\end{proof}

Assume that
$H$ is finitely generated projective as a left $K$-module. For $V$ a Yetter--Drinfeld module over $K$ with coaction
$$\delta^V\colon V\to K\otimes V,\quad  \delta^V(v)=v^{(-1)}\otimes v^{(0)},$$
we define
\begin{align}\label{eq:YDcoactionInd}
    \delta^{\Ind_\varphi(V)}(h\otimes_K v)=h_{(1)}\varphi(v^{(-1)})S(h_{(3)})\otimes (h_{(2)}\otimes_K v^{(0)}).
\end{align}
We recall from \cite{FLP2}*{Corollary~7.2}
that the assignments 
\begin{gather}
    (V,\delta^V)\mapsto (\Ind_\varphi(V),\delta^{\Ind_\varphi(V)}), \qquad f\mapsto \Ind_{\varphi}(f),
\end{gather}
for any morphism $f$ in $\lYD{K}$, give a braided oplax monoidal functor $\cZ(\Ind_\varphi)\colon \lYD{K}\to \lYD{H}$ with oplax monoidal structure is given by \Cref{eq:oplax-Hopf}.

The following result is now a direct consequence of \Cref{thm:Hopf} and \Cref{thm:ZF}.

\begin{corollary}\label{cor:ZInd-Frob}
Assume that $H$ is finitely generated projective as a left $K$-module and assume given a Frobenius morphism $\tr\colon H\to K $. 
If $\tr\colon {}^\reg H^\reg\to {}^\varphi K^\varphi$ is a morphism of $H$-$H$-bicomodules, then $\cZ(\Ind_\varphi)\colon \lYD{K}\to \lYD{H}$ is a braided Frobenius monoidal functor with lax and oplax structure given by \Cref{lem:Hopf-laxoplax}.
\end{corollary}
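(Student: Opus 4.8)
The plan is to derive \Cref{cor:ZInd-Frob} as an immediate combination of two facts already established: the characterization of when the projection formula morphisms are mutually inverse in the Hopf-algebraic setting (\Cref{thm:Hopf}), and the general induction theorem on Drinfeld centers (\Cref{thm:ZF}). So the proof is short and essentially a bookkeeping argument; the only thing to verify carefully is that the hypotheses of \Cref{thm:ZF} are met and that the concrete oplax/lax structures coming from \Cref{lem:Hopf-laxoplax} really are the ones produced abstractly.

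First I would set up \Cref{context:main} in the present situation: take $\cC = \lMod{H}$, $\cD = \lMod{K}$, let $G = \Res_\varphi$ be the (strong monoidal) restriction functor, and let $F = \Ind_\varphi$. Since $H$ is finitely generated projective as a left $K$-module and a Frobenius morphism $\tr\colon H\to K$ exists, \Cref{thm:ind-coind} gives $\Ind_\varphi\cong\CoInd_\varphi$, hence an ambiadjunction $F\dashv G\dashv F$; the explicit unit/counit data are recorded in \eqref{eq:unit1}--\eqref{eq:counit2}, and by \cite{FLP2}*{Lemmas~7.1 and 7.10} (see \Cref{lem:proj-Hopf}) the projection formula holds for $F$ regarded both as a left and as a right adjoint. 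This puts us squarely inside \Cref{context:main}.

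Next I would invoke \Cref{thm:Hopf}: the hypothesis that $\tr\colon {}^\reg H^\reg\to {}^\varphi K^\varphi$ is a morphism of $H$-$H$-bicomodules is exactly condition (iii) there, which is equivalent to condition (i), namely that $\projlnoarg^{F,F\dashv G}$ and $\projlnoarg^{F,G\dashv F}$, as well as $\projrnoarg^{F,F\dashv G}$ and $\projrnoarg^{F,G\dashv F}$, are mutually inverse pairs. In the terminology of \Cref{definition:mutual_inverses} this says precisely that the projection formula morphisms are mutual inverses. Therefore \Cref{thm:ZF} applies and yields a braided Frobenius monoidal functor $\cZ(F)\colon\cZ(\lMod{K})\to\cZ(\lMod{H})$. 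Under the equivalence $\cZ(\lMod{H})\simeq\lYD{H}$, this is the functor $\cZ(\Ind_\varphi)\colon\lYD{K}\to\lYD{H}$, and its action on objects is the Yetter--Drinfeld coaction \eqref{eq:YDcoactionInd}, as recorded in \cite{FLP2}*{Corollary~7.2}; the half-braiding formula of \Cref{thm:ZF}, translated through \eqref{eq:YDbraiding}, reproduces this. Finally, \Cref{thm:ZF} states that the lax and oplax monoidal structures of $\cZ(F)$ are inherited directly from those of the underlying functor $F=\Ind_\varphi$, which are the maps written out in \Cref{lem:Hopf-laxoplax}; so the Frobenius monoidal structure of $\cZ(\Ind_\varphi)$ is the asserted one, and braidedness is part of the conclusion of \Cref{thm:ZF}.

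The only mild obstacle is the identification of the abstractly produced functor $\cZ(F)$ with the concretely described $\cZ(\Ind_\varphi)$ on $\lYD{}$, i.e.\ matching the half-braiding in \Cref{thm:ZF} against the coaction \eqref{eq:YDcoactionInd}; but this matching is exactly the content of \cite{FLP2}*{Corollary~7.2} together with \Cref{lem:proj-Hopf}, so no new computation is needed. I would close simply by noting that the hypotheses of \Cref{thm:Hopf} are in force and hence those of \Cref{thm:ZF} are too, so the statement follows.
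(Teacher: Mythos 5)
Your proposal is correct and follows the paper's own route exactly: the paper states the corollary as a direct consequence of \Cref{thm:Hopf} (bicomodule condition $\Leftrightarrow$ projection formula morphisms mutually inverse) and \Cref{thm:ZF}, with the identification on Yetter--Drinfeld modules handled by the preceding recollection of \cite{FLP2}*{Corollary~7.2}. Your additional verification that \Cref{context:main} is in force is just an explicit spelling-out of what the paper leaves implicit.
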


\begin{remark}
    If $\tr\colon H\to K$ is only a morphism of right or left $H$-comodules, then the functor $\Ind_\varphi\colon \lMod{K}\to \lMod{H}$ is still Frobenius monoidal. This follows from \Cref{theorem:frobenius_functors_frobenius_monoidal} if $\tr$ is a morphism of right $H$-comodules and by a $\otimes^\oop$-dual statement, derived from \Cref{theorem:frobenius_functors_frobenius_monoidal}, if $\tr$ is a morphism of left $H$-comodules.

    In both of these situations, $\Ind_\varphi$ extends to Drinfeld centers in two different ways, once using $\Ind_\varphi$ as right or as left adjoint by \cite{FLP2}*{Theorems 4.10 and 4.15}. One of the functors is lax monoidal while the other one is oplax monoidal and we do not obtain a Frobenius monoidal functor on the Drinfeld centers unless both conditions hold.

    We also know in such a situation, there cannot be another Frobenius morphism $\tr'$ which is a morphism of $H$-$H$-bicomodules. This follows as by \Cref{prop:Hopf-right}, $\tr'$ is a non-zero scalar multiple of $\tr$ and hence $\tr'$ is a morphism of $H$-$H$-bicomodules if and only if $\tr$ is.
\end{remark}

\begin{example}\label{ex:groups2}
    As a first example, continuing \Cref{ex:groups}, we see that the equivalent conditions from \Cref{thm:Hopf} hold for group algebras $K\subseteq G$ such that $n=|\sfG:\sfK|<\infty$. Indeed, 
    given a choice of coset representatives, $\sfG=\coprod_{i=1}^n \sfK g_i$ we find that $\{h_i:=g_i\}_{1\le i\le n}$, $\{\delta_i:=g_i^{-1}\}_{1\le i\le n}$ are dual bases in the sense of \Cref{lem:kideltai}. Now we compute 
    \begin{align*}
        \sum_{i=1}^n g_i^{-1}\otimes g_i\otimes \tr(g_i)=g_1^{-1}\otimes g_1\otimes \tr(g_1)=1\otimes 1\otimes 1,
    \end{align*}
    since $\tr(g_i)=\delta_{i,1}$ and $g_1=1$ by choice. This shows that \Cref{eq:cond-Hopf-Frobenius} holds. By cocommutativity, \Cref{eq:cond-Hopf-Frobenius2} follows the same way. Thus, the equivalent conditions from \Cref{thm:Hopf} hold. The induced functor on Drinfeld centers is hence braided Frobenius monoidal, recovering the result from \cite{FHL}*{Proposition~B.1}.
\end{example}

\begin{example}\label{ex:classicalH}
    Consider the forgetful functor $G\colon \lMod{H}\to \Vect$ for a finite-dimensional Hopf algebra $H$. This functor corresponds to $\Res_{\varphi}$ for the inclusion $\varphi\colon \Bbbk\hookrightarrow H$. In this case, a Frobenius morphism $\tr\colon H\to \Bbbk$ is a cyclic generator for $H^*$ as a left $H$-module. In fact, any right integral for $H^*$, i.e., a linear map $\lambda\colon H\to \Bbbk$ satisfying 
    $$\lambda(h_{(1)})h_{(2)}=\lambda(h)1_H,$$
    for all $h\in H$ (cf.\ \cite{Rad}*{Section~10.1} gives such a Frobenius morphism $\tr=\lambda\colon H\to \Bbbk$ which, moreover, is a morphism of right $H$-comodules. Then, the conclusion of \Cref{cor:Ind-Frob} applies giving that $\Ind_\varphi(\Bbbk)$ is a Frobenius algebra which can be identified with $H$ as a coalgebra. The last statement in \Cref{prop:Hopf-right} recovers the fact that the space or right integrals for $H^*$ is one-dimensional. These statements recover the classical result of \cite{LS2}. 

Moreover, for $\tr=\lambda$ to be a morphism of $H$-$H$-bicomodules, $\lambda$ also needs to be a \emph{left} integral for $H$. This is the case when $H^*$ is \emph{unimodular} (see e.g.\ \cite{Rad}*{Definition~10.2.3}). In this case, \Cref{cor:Ind-Frob} applies and $H$ is a commutative Frobenius algebra in $\lYD{H}$ in this case. This leads to the following consequence.
\end{example}

\begin{corollary}\label{cor:Hunimodular}
If $H^*$ is a unimodular Hopf algebra, then $\Ind_\Bbbk(\one)$ has the structure of a commutative Frobenius algebra in $\lYD{H}$ which is isomorphic to $H$ as a coalgebra.
\end{corollary}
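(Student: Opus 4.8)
\textbf{Proof plan for Corollary~\ref{cor:Hunimodular}.}

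The plan is to specialize Corollary~\ref{cor:ZInd-Frob} (i.e.\ the corollary following \Cref{thm:Hopf} in the Yetter--Drinfeld setting) to the inclusion $\varphi\colon\Bbbk\hookrightarrow H$ of the trivial Hopf subalgebra, exactly as set up in \Cref{ex:classicalH}. First I would record that for this $\varphi$ the hypotheses of \Cref{thm:ind-coind} are met: $H$ is finite-dimensional, hence finitely generated projective (even free) as a $\Bbbk$-module, and a Frobenius morphism $\tr\colon H\to\Bbbk$ is precisely a cyclic generator of $H^{*}$ as a left $H$-module, i.e.\ a nonzero right integral $\lambda$ for $H^{*}$, which exists by the Larson--Sweedler theorem \cite{LS2}. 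So we may take $\tr=\lambda$.

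Second, I would verify the bicomodule condition in \Cref{thm:Hopf}(iii) for $\tr=\lambda$. Here $K=\Bbbk$, so $H\otimes_{K}(H\otimes K)\cong H\otimes H$ and ${}^{\varphi}K^{\varphi}=\Bbbk$ with the trivial (co)action; the condition that $\lambda\colon{}^{\reg}H^{\reg}\to{}^{\varphi}K^{\varphi}$ be a morphism of $H$-$H$-bicomodules unwinds to the two equations $\lambda(h_{(1)})h_{(2)}=\lambda(h)1_{H}$ and $h_{(1)}\lambda(h_{(2)})=\lambda(h)1_{H}$ for all $h\in H$, i.e.\ $\lambda$ is simultaneously a right integral and a left integral for $H^{*}$. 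The first holds by the choice of $\lambda$; the second holds exactly when $H^{*}$ is unimodular, which is the hypothesis. (Equivalently one checks \Cref{eq:cond-Hopf-Frobenius} and \Cref{eq:cond-Hopf-Frobenius2} using the dual bases $\{h_{i}\},\{\delta_{i}\}$ of \Cref{lem:kideltai}; both reduce to the integral identities.) Thus Corollary~\ref{cor:ZInd-Frob} applies and yields a braided Frobenius monoidal functor $\cZ(\Ind_{\varphi})\colon\cZ(\Vect)\to\lYD{H}$.

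Third, I would apply \Cref{lem:Frobalg-pres}: a (braided) Frobenius monoidal functor sends (commutative) Frobenius algebras to (commutative) Frobenius algebras. The tensor unit $\one=\Bbbk$ is the trivial commutative Frobenius algebra in $\cZ(\Vect)$, so its image $\Ind_{\varphi}(\one)=H\otimes_{\Bbbk}\Bbbk\cong H$ acquires the structure of a commutative Frobenius algebra in $\lYD{H}$. As noted in \Cref{ex:classicalH}, its underlying coalgebra is $H$ with its standard comultiplication, since the oplax structure of $\Ind_{\varphi}$ is given by \Cref{eq:oplax-Hopf} and $\oplax_{0}=\varepsilon_{H}$, so the induced comultiplication and counit on $\Ind_{\varphi}(\one)$ are $\Delta_{H}$ and $\varepsilon_{H}$; this gives the claimed coalgebra isomorphism $\Ind_{\Bbbk}(\one)\cong H$.

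The only genuine content is the second step, the translation of ``$H^{*}$ unimodular'' into the left/right-integral identities that make $\tr$ a bicomodule map; everything else is bookkeeping with the already-established machinery. The main obstacle is just keeping the identifications $K\otimes_{K}(-)\cong(-)$ and the comodule structures ${}^{\varphi}K^{\varphi}$ straight so that the bicomodule condition is matched correctly with the standard notions of left and right integral for $H^{*}$ (as opposed to for $H$); one has to be careful that a right integral for $H^{*}$ corresponds to the \emph{right}-comodule condition on $\tr\colon H^{\reg}\to K^{\varphi}$, which is the convention fixed in \Cref{def:comodulestructure} and \Cref{def:bicomodulestructure}.
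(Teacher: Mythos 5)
Your proposal is correct and follows essentially the same route as the paper: the paper's proof likewise specializes \Cref{cor:ZInd-Frob} via \Cref{ex:classicalH}, taking $\tr=\lambda$ a right integral for $H^*$ and using unimodularity of $H^*$ to make it a bicomodule map, then identifies the coalgebra structure on $\Ind_\varphi(\one)$ with $(\Delta_H,\varepsilon_H)$ (the paper additionally spells out the resulting Yetter--Drinfeld structure and multiplication via dual bases, which the statement does not require). The only tiny imprecision is your ``i.e.''\ equating Frobenius morphisms $H\to\Bbbk$ with right integrals for $H^*$ — not every cyclic generator of $H^*$ is an integral — but since you only use the implication that a nonzero right integral is a Frobenius morphism and a right $H$-comodule map, the argument is unaffected.
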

\begin{proof}
This result is a consequence of \Cref{cor:Ind-Frob} as explained in \Cref{ex:classicalH}.
The Yetter--Drinfeld module structure is given by the regular action and the adjoint coaction
$$h\cdot g=hg, \qquad \delta(h)=h_{(1)}S(h_{(3)})\otimes h_{(2)},$$
for $h,g\in H$. The coaction formula is derived from \Cref{eq:YDcoactionInd}.
The product $m$ on $H$ used here is obtained by dualizing the coproduct of $H$ along the non-degenerate pairing 
$$\inner{-,-}\colon H\otimes H\to \Bbbk, \quad \inner{h,g}=\lambda(hg).$$
That is, there exists elements $\delta_i,h_i\in H$ such that 
$$h=\sum_i\delta_i\lambda(h_ih), \qquad h=\sum_i h_i\lambda(h\delta_i),$$
for all $h\in H$ and the product is obtained from the lax structure of \Cref{lem:Hopf-laxoplax} as
$$m(h\otimes g)= \sum_i \delta_i \lambda((h_{i})_{(1)}h)\lambda((h_{i})_{(2)}g).$$
The coproduct obtained from the oplax structure is that of the Hopf algebra $H$. 
\end{proof}

We conclude with a detailed example of a morphism of Hopf algebras $\varphi$ such that, even though $\Ind_\varphi\cong \CoInd_\varphi$, only the right versions of the equivalent conditions of \Cref{thm:Hopf} are satisfied, but the left versions are \emph{not} satisfied. Thus, the Frobenius monoidal induction functor does not extend to centers in this example.

\begin{example}\label{ex:uqsl2-cont}
 Continuing \Cref{ex:uqsl2}, with $\varphi:K=\Bbbk \sfC_\ell \hookrightarrow u_\epsilon(\mathfrak{sl}_2)=H$, we note that $\tr$ is \emph{not} a morphism of \emph{left} $H$-comodules. This amounts to the computation that 
\begin{align*}
    (\id\otimes \tr)\Delta_H(f^{\ell-1}e^{\ell-1})&=e^0 f^0 k^{-\ell+1}\otimes \tr(e^{\ell-1} f^{\ell-1})\\
    %%i=\ell-1, j=\ell-1, r=0 s=\ell-1
    &=k^{-\ell+1}\otimes k^{\ell-1}\\
    &\neq (\id\otimes \varphi)\Delta(k^{\ell-1}),
\end{align*}
since $\ell> 2$. Hence, $\tr$ is \emph{not} a morphism of $H$-$H$-bicomodules. 
Therefore, by \Cref{thm:Hopf}, the ambiadjunction $\Ind_\varphi\dashv \Res_\varphi\dashv \Ind_\varphi$ satisfies $\projrnoarg^{F,F\dashv G}=(\projrnoarg^{F,G\dashv F})^{-1}$  but  $\projlnoarg^{F,F\dashv G}\neq (\projlnoarg^{F,G\dashv F})^{-1}$. In particular, the sufficient condition for the induced functors on the Drinfeld centers via \Cref{thm:ZF} to be Frobenius monoidal, does not hold.

Moreover, as any Frobenius morphism that is a morphism of right $H$-comodules is a scalar multiple of $\tr$, it is not possible to find a Frobenius morphism that is a morphism of \emph{both} left and right $H$-comodules.

Note that the functor $\Ind_\varphi$ still extends to Drinfeld centers as both a lax and an oplax monoidal functor by \cite{FLP2}*{Corollaries~7.2 and 7.11}, but with two distinct constructions of half-braidings. 
\end{example}

\begin{remark}\label{rem:uqsl2-cont}
We continue \Cref{rem:uqsl2} and \Cref{ex:uqsl2-cont} assuming that $\ell>2$ and consider the trace $\tr=\tr_{1}$. 
By the converse of \Cref{thm:Hopf}, we see that 
$$\projrnoarg^{F,F\dashv G}=(\projrnoarg^{F,G\dashv F})^{-1}\quad  \text{but} \quad\projlnoarg^{F,F\dashv G}\neq(\projlnoarg^{F,G\dashv F})^{-1},$$ where $K$ is the regular left $K$-module, $H$ is the regular left $H$-module. We can regard $K$ as an object in $\lYD{K}$ with the regular left action, given by multiplication, and trivial coaction, using that $K$ is cocommutative. 

Recall that we obtain two induced functors on YD modules \cite{FLP2}*{Corollaries~7.2 and 7.11}, namely
$$\cZ(\Ind_\varphi)\colon \lYD{K}\to \lYD{H}, \quad \cZ(\CoInd_\varphi)\colon \lYD{K}\to \lYD{H}.$$
Fixing an isomorphism $\Ind_\varphi\cong \CoInd_\varphi$ by virtue of choosing the Frobenius morphism $\tr_1$ from \Cref{rem:uqsl2}, both functors can be modelled on $\Ind_\varphi$. In particular, 
$$ \cZ(\Ind_\varphi)(K)=(H,\delta^1), \qquad \cZ(\CoInd_\varphi)(K)=(H,\delta^2),$$
where $\delta^1, \delta^2\colon H\to K\otimes H$ are the two coactions
\begin{align*}
    \delta^1(h)&=h_{(1)}S(h_{(3)})\otimes h_{(2)},\\
    \delta^2(h)&=\sum_i (\delta_i)_{(1)}(h_{i})_{(2)}\otimes (\delta_i)_{(2)}\tr((h_i)_{(1)}h).
\end{align*}
We can compare the images of the generator $1$ for the cyclic module $H=\Ind_\varphi(K)$. This yields
$$\delta^1(1)=1\otimes 1, \qquad \delta^2(1)=k^{-2}\otimes 1.$$
As $\ell>2$, these two coactions are not equal. In particular, by the Yetter--Drinfeld condition for $\delta^2$ we find that 
\begin{align*}
    \delta^2(h)&=h_{(1)} k^{-2} S(h_{(3)})\otimes h_{(2)}.
\end{align*}
We note that the YD modules $(H,\delta^1)$ and $(H,\delta^1)$ cannot be isomorphic. To be isomorphic as YD modules, we need an isomorphism of left $H$-modules. In particular, the image of $1$ needs to be a cyclic generator.

Note that the object $\CoInd_\varphi(K)\cong H$ has a commutative algebra structure in $\lYD{H}$ that is not the given algebra structure of $H$. For instance, its unit map is given by $\CoInd_\varphi(\Bbbk \xrightarrow{1\mapsto 1_K} K)\lax_0$, which corresponds to the element
$$\sum_{0\leq a,b\leq \ell-1} e^{\ell-1-b} f^{\ell-1-a} k^{-1}\otimes_K \epsilon_H(f^a e^b)1_K= e^{\ell-1}f^{\ell-1}k^{-1}\in H,$$
using the formula from \Cref{eq:lax-Hopf}.
\end{remark}

\bibliography{biblio}
\bibliographystyle{amsrefs}

\end{document}